 \setlist[enumerate]{leftmargin=4em}
\newtheorem{thm}{Theorem}[section]
\newtheorem{cor}[thm]{Corollary}
\newtheorem{lem}[thm]{Lemma}
\newtheorem{prop}[thm]{Proposition}
\theoremstyle{definition}
\newtheorem{definition}[thm]{Definition}
\newtheorem{rem}[thm]{Remark}
\newtheorem{assu}[thm]{Assumption}
\renewcommand{\AA}{{\mathbb{A}}}
\newcommand{\CC}{{\mathbb{C}}}
\newcommand{\PP}{{\mathbb{P}}}
\newcommand{\QQ}{{\mathbb{Q}}}
\newcommand{\ZZ}{{\mathbb{Z}}}
\newcommand{\CE}{{\mathcal{E}}}
\newcommand{\CM}{{\mathcal{M}}}
\newcommand{\vir}{{\mathrm{vir}}}
\DeclareMathOperator{\Hom}{Hom}
\DeclareMathOperator{\Ext}{Ext}
\DeclareMathOperator{\Exp}{Exp}
\DeclareMathOperator{\Ell}{Ell}
\DeclareMathOperator{\Spec}{Spec}
\DeclareMathOperator{\ch}{ch}
\DeclareMathOperator{\Pic}{Pic}
\DeclareMathOperator{\undeg}{\underline{deg}}
\DeclarePairedDelimiter\rough{\lfloor}{\rceil}
\newcommand{\Xtilde}{\widetilde{X}}
\newcommand{\Ytilde}{\widetilde{Y}}
\newcommand{\Ctilde}{\widetilde{C}}
\newcommand{\Ztilde}{\widetilde{Z}}
\title{Degeneration of Sheaves on Fibered Surfaces}
\author{Nikolas Kuhn}
\date{May 2023}
\begin{document}

\maketitle

\begin{abstract}
We construct moduli stacks of stable sheaves for surfaces fibered over marked nodal curves by using expanded degenerations. These moduli stacks carry a virtual class and therefore give rise to enumerative invariants. In the case of a surface with two irreducible components glued along a smooth divisor, we prove a degeneration formula that relates the moduli space associated to the surface with the relative spaces associated to the two components. For a smooth surface and no markings, our notion of stability agrees with slope stability with respect to a suitable choice of polarization. We apply our results to compute elliptic genera of moduli spaces of stable sheaves on some elliptic surfaces. 
\end{abstract}

\section{Introduction}
Let $X_0 = Y_1\cup_D Y_2$ be a projective surface that is the union of smooth surfaces $Y_i$ along a common smooth divisor $D$. In \cite[\S 1, \S 4]{Don_Flo} Donaldson raises the problem of constructing a good theory of stable sheaves on such an $X_0$, with the following properties. 
\begin{enumerate}[label = \Roman*.]
	\item It behaves well under smoothings. In other words, the numerical invariants of the theory on $X_0$ agree with the invariants of the usual moduli space of Gieseker-stable sheaves on a smoothing of $X_0$ when those are defined.
	\item It behaves well under decomposition. More precisely, there should be spaces of ``relative stable sheaves'' for a pair $(Y,D)$ of a smooth surface $Y$ with a smooth divisor $D$, such that the moduli space of sheaves on $X_0$ can be related to the relative spaces for the pairs $(Y_i,D)$.
\end{enumerate}

Such a theory would enable one to compute sheaf-theoretic invariants of projective surfaces through degenerations to a reducible surface. This has been successfully implemented in other settings -- notably in Gromov--Witten theory (\cite{Li_Stab}, \cite{Li_GWDeg}, \cite{ACGS_Dec}, \cite{Ran_Log}) and Donaldson--Thomas theory (see for example \cite{LiWu_Good}, \cite{MaRa_Log}). 

In this paper we answer Donaldson's questions for fibered surfaces. Let $f:X \to C$ be a surface fibered over a marked nodal curve (see Definition \ref{def_fib} for the precise meaning). Then we propose the following definition of "stability on the fiber".
\begin{definition}[cf. Definition \ref{deffstable}]
	A coherent sheaf $E$ on $X$ is $f$-stable, if it is torsion-free and 
	\begin{enumerate}[label = \roman*)]
		\item for every node or marked point $x\in C$, the restriction of $E$ to $f^{-1}(x)$ is a slope-stable vector bundle, and 
		\item for every generic point $\eta$ of an irreducible component of $C$, the restriction of $E$ to $f^{-1}(\eta)$ is slope stable.
	\end{enumerate}
\end{definition}

Our first main result is that this notion of stability leads to well-behaved moduli spaces. This gives an answer to I for fibered surfaces.

\begin{thm}[cf. Theorem \ref{thm_properdm} and Proposition \ref{prop_virclass}]\label{thm_mainthm}
 Let $d$ and $r>0$ be coprime integers. Let $c_1\in H^2(X,\ZZ)$ be a cohomology class with $c_1\cap f^{-1}(x) = d[pt]$ for any $x\in C$. Let $\alpha$ be a generic stability condition on $C$. Then for any $\Delta\in \ZZ$, there exists a proper Deligne--Mumford moduli stack
 \[M^{\alpha}_{X/C}(r,c_1,\Delta)\] 
 parametrizing $f$-stable, $\alpha$-balanced sheaves of rank $r$, first Chern class $c_1$ and discriminant $\Delta$ on expansions of $X$. Moreover, this stack has a natural virtual fundamental class and the numerical invariants are invariant under deformations of $X$ together with the fibration and choice of $c_1$.
\end{thm}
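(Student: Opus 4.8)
The plan is to set this up following the expanded-degeneration formalism of Jun Li, refined by Li--Wu and Maulik--Ranganathan, adapted to the fibered situation. First I would construct the (smooth, Artin) stack $\mathfrak{C}$ of expansions of the marked nodal curve $C$ — obtained by inserting chains of bubbles over the nodes and marked points — together with the universal expanded surface $\mathfrak{X}\to\mathfrak{C}$ gotten from $X$ by the corresponding base change and bubbling along fibers. The moduli stack $M^{\alpha}_{X/C}(r,c_1,\Delta)$ is then the stack over $\mathfrak{C}$ whose $T$-points are $T$-flat families of coherent sheaves on the fibers of $\mathfrak{X}_T/T$ with the prescribed invariants $(r,c_1,\Delta)$ that are $f$-stable and $\alpha$-balanced. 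Algebraicity follows from algebraicity of the stack of all coherent sheaves on $\mathfrak{X}/\mathfrak{C}$, together with (a) boundedness of $f$-stable sheaves of fixed $(r,c_1,\Delta)$ — reduced on each of the finitely many combinatorial types of expansion to the smooth case via the identification of $f$-stability with slope-stability for a suitable polarization plus Grothendieck's boundedness — and (b) openness of the $f$-stability and $\alpha$-balanced conditions.

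The Deligne--Mumford property comes from controlling automorphisms. Stability on the generic fiber already forces $\Ext^0(E,E)=\CC$, so $E$ is simple; with the determinant rigidified this kills all (infinitesimal) automorphisms of the sheaf, while the $\alpha$-balanced condition, with $\alpha$ generic, is precisely what rigidifies the $\mathbb{G}_m$-actions on the bubble components of an expansion, leaving a finite and reduced automorphism group. Properness is the valuative criterion. Separatedness — uniqueness of limits — uses that coprimality of $r$ and $d$ leaves no strictly semistable sheaves on the relevant fibers, so any two $f$-stable $\alpha$-balanced limits of a family over the punctured spectrum of a discrete valuation ring are isomorphic. For universal closedness one is handed an $f$-stable sheaf over the generic point of a discrete valuation ring $R$ and must produce, after finite base change, an $f$-stable $\alpha$-balanced limit on some expansion of the central fiber: here I would combine properness of $\mathfrak{C}$ with a Langton-type argument, running semistable reduction fiberwise, absorbing the instability of the limit into a bubble of the base by a modification, and iterating, with termination guaranteed by a discriminant/length bound. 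I expect this step — semistable reduction performed simultaneously with bubbling of the base, and landing in the $\alpha$-balanced, $f$-stable locus — to be the main obstacle, since it is here that the combinatorics of the expansions and the genericity of $\alpha$ interact most delicately.

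For the virtual class I would equip $M^{\alpha}_{X/C}(r,c_1,\Delta)$ with a perfect obstruction theory relative to $\mathfrak{C}$, given by the traceless part $\bigl(R\pi_{*}R\mathcal{H}om(\mathbb{E},\mathbb{E})_0[1]\bigr)^{\vee}$ of the derived endomorphisms of the universal sheaf (this descends even when a universal $\mathbb{E}$ exists only locally), using the relative dualizing complex of $\mathfrak{X}/\mathfrak{C}$ and working with fixed determinant; since $\mathfrak{C}$ is smooth this yields an absolute theory. Fixing the determinant and using $f$-stability gives $\Ext^0(E,E)_0=0$ at every point; the delicate point is the vanishing of $\Ext^2(E,E)_0$, which by relative Serre duality is $H^0$ of $\mathcal{E}nd_0(E)\otimes\omega$ along the fibers of $\mathfrak{X}/\mathfrak{C}$ — and here the fibered structure is essential, since the Leray spectral sequence for $f$ together with stability of the restrictions over the nodes and marked points forces the relevant pushforward to $C$ to vanish, making the obstruction theory perfect of amplitude $[-1,0]$ and producing $[M^{\alpha}_{X/C}(r,c_1,\Delta)]^{\vir}$. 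Finally, deformation invariance: for a deformation of $(X,f)$ and of $c_1$ over a smooth base $S$, the same construction gives a moduli stack proper over $S$ with a perfect obstruction theory relative to the relative expansion stack $\mathfrak{C}_S$; its fibers over $S$ are the spaces $M^{\alpha}$ of the fibers of the deformation and their virtual classes are the restrictions of the relative one, so every integral of tautological classes against the virtual class is independent of the chosen point of $S$.
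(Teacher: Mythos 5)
Your overall architecture matches the paper's: a stack of expansions with universal expanded surface, openness of $f$-stability and balancedness, boundedness via the identification of $f$-stability with slope stability for polarizations $H+tF$, a Langton-type semistable reduction interleaved with bubbling of the base for existence of limits, finiteness of automorphisms from strict balancedness for genericity of $\alpha$, and an obstruction theory from the traceless Atiyah class. However, there are two concrete errors. First, your claim that the delicate point for the virtual class is the vanishing of $\Ext^2_0(E,E)$, forced by a Leray spectral sequence argument along $f$, is both unnecessary and false. For a two-term obstruction theory $\bigl(R\pi_*R\mathcal{H}om_0(\mathcal{E},\mathcal{E})[1]\bigr)^{\vee}$ of amplitude $[-1,0]$, the group $\Ext^2_0$ sits in degree $-1$ and is precisely the obstruction space; what must vanish is the traceless $\Ext^0_0$, which follows from simplicity, as you also note. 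If $\Ext^2_0$ vanished the moduli space would be unobstructed and the virtual class would carry no information -- contradicting the whole point of the construction for surfaces with $p_g>0$. Moreover your proposed mechanism fails: $f_*(\mathcal{E}nd_0(E)\otimes\omega)$ is only controlled by stability of $E|_{f^{-1}(x)}$ at generic points, nodes and markings, not at all $x\in C$, and for higher fiber genus $\omega|_F$ has positive degree so even fiberwise stability gives nothing. The paper instead works relative to the (smooth, in characteristic zero) Picard stack $\Pic_{\Xtilde/\Exp}$ and defines the class by virtual pullback, never claiming unobstructedness.

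Second, your separatedness argument -- coprimality of $r$ and $d$ excludes strictly semistable sheaves, hence limits are unique -- misses the actual source of non-separatedness. Even with no strictly semistables anywhere, two limits over a DVR can differ by a twist by $\mathcal{O}_{\Ctilde_R}(Z)$ for $Z$ a union of components of the special fiber of the expanded curve (the Picard scheme of a reducible curve is non-separated), and they can a priori live on different expansions. This is exactly what the $\alpha$-balanced condition is designed to kill, and not only on the bubble components: it constrains the degrees of the normalizing line bundle $M(E)$ on \emph{all} components of $\Ctilde$, including the original components of $C$ when $C$ is reducible. The paper's uniqueness argument (Lemma \ref{lem_exun}) shows that $f_*\mathcal{H}om(E_1,E_2)$ is a line bundle $L_0$ with $E_2\cong E_1\otimes L_0$, and that strict balancedness of both sides forces $L_0\cong\mathcal{O}$ and forces the two expansions to coincide. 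Your sketch attributes the balancing condition only to rigidifying $\mathbb{G}_m$-scalings of bubbles (which is its role for the Deligne--Mumford property, Proposition \ref{prop_finaut}), so as written the separatedness step has a genuine gap.
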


Here we introduced two additional subtleties: Expansions of a degenerate surface (cf. Definition \ref{defexpansion}) and balancing with respect to a stability condition $\alpha$ on a curve (cf. \S \ref{subsec_balanced}).

Our construction automatically yields a notion of relative moduli space: Putting markings $y_1,\ldots,y_n$ on $C$ corresponds to working relative to the divisor $f^{-1}(y_1)\cup \cdots \cup f^{-1}(y_n)$. 

Our second main result is a proof of II in the case that $X=Y_1\cup_F Y_2$ is a union of two irreducible components which meet along a fiber $F$ of $f$, and so that we also have $C = C_1\cup_x C_2$. We state this result somewhat informally (see Theorem \ref{thm_decform} and Proposition \ref{prop_fixdetfundclass} for more precise versions).

\begin{thm}\label{thm_decomp}
	The enumerative invariants of moduli spaces of $f$-stable sheaves on $X$ can be recovered from the relative invariants associated to the pairs $(Y_i, F)$. 
\end{thm}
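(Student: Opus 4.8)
The plan is to prove the precise forms of this formula (Theorem~\ref{thm_decform} and Proposition~\ref{prop_fixdetfundclass}) by first exhibiting the absolute moduli stack as a fibre product of the two relative ones over the moduli of stable bundles on $F$, and then comparing virtual classes across this fibre square. Write $F = f^{-1}(x)$, a smooth projective curve; let $N$ be the (smooth) moduli stack of slope-stable bundles of rank $r$ and degree $d$ on $F$ --- and, in the fixed-determinant setting of Proposition~\ref{prop_fixdetfundclass}, with determinant matching that of the boundary restriction --- which, since $\gcd(r,d)=1$, is a $\mathbb{G}_m$-gerbe over a smooth projective variety carrying a universal family; and put $\alpha_i = \alpha|_{C_i}$. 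Restriction to the distinguished fibre gives morphisms $\rho_i\colon M^{\alpha_i}_{Y_i/C_i}(r,c_1^{(i)},\Delta_i)\to N$, and the first goal is an isomorphism of stacks
\[
M^{\alpha}_{X/C}(r,c_1,\Delta)\ \cong\ \coprod\ M^{\alpha_1}_{Y_1/C_1}(r,c_1^{(1)},\Delta_1)\times_{N}M^{\alpha_2}_{Y_2/C_2}(r,c_1^{(2)},\Delta_2),
\]
with the fibre product along $\rho_1,\rho_2$ and the disjoint union over the finitely many splittings of the discrete invariants compatible with $(r,c_1,\Delta)$: here $c_1 = c_1^{(1)}+c_1^{(2)}$ is governed by the Mayer--Vietoris sequence for $H^2(X)$, $X = Y_1\cup_F Y_2$ (with $c_1^{(1)}|_F = c_1^{(2)}|_F = d[pt]$ forced), and $\Delta = \Delta_1+\Delta_2$ up to the correction term coming from $\chi(\CO_F)$. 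Granting this, the degeneration formula follows from a virtual class comparison and the Künneth decomposition of the diagonal of $N$.

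For the isomorphism: an expansion $\CX$ of $X$ lies over an expansion $\Ctilde$ of the nodal curve $C$ in which the node $x$ is retained and chains of $\PP^1$'s are inserted on either side of it, so $\CX = \CX_1\cup_F\CX_2$ canonically with $\CX_i$ an expansion of $Y_i$ relative to $F$, and cutting at the retained node is unambiguous. Given $(\CX,E)\in M^{\alpha}_{X/C}$, set $E_i = E|_{\CX_i}$. Because $E$ is $f$-stable its restriction to every nodal fibre, in particular to the cut copy of $F$, is a slope-stable --- hence simple --- vector bundle; this is precisely predeformability there, so $E$ is flat along $F$, no torsion is created, the normalization sequence $0\to E\to E_1\oplus E_2\to E|_F\to 0$ is exact, and each $E_i$ is again torsion-free, $f$-stable and $\alpha_i$-balanced, with $\rho_1(E_1) = [E|_F] = \rho_2(E_2)$. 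Conversely, from $f$-stable, $\alpha_i$-balanced $E_i$ on expansions of $Y_i$ with $\rho_1(E_1)=\rho_2(E_2)$, an isomorphism $\varphi\colon E_1|_F\xrightarrow{\sim}E_2|_F$ exists and is unique up to a scalar because stable bundles on $F$ are simple; gluing $\CX_1,\CX_2$ along $F$ and $E_1,E_2$ along $\varphi$ yields an $f$-stable, $\alpha$-balanced sheaf on an expansion of $X$. The two scalar ambiguities combine into the single automorphism group $\mathbb{G}_m$ recorded by the stacky fibre product over $N$, so the two constructions are mutually inverse; properness and the Deligne--Mumford property of both sides are then consistent with Theorem~\ref{thm_mainthm}.

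The perfect obstruction theories are controlled by the traceless complexes $R\Hom_{\CX}(E,E)_0$, $R\Hom_{\CX_i}(E_i,E_i)_0$ and $R\Hom_F(E|_F,E|_F)_0$. Applying $R\Hom$ to the normalization sequence, using predeformability so that no spurious $\mathrm{Tor}$-terms appear, gives the excision triangle
\[
R\Hom_{\CX}(E,E)_0\ \longrightarrow\ R\Hom_{\CX_1}(E_1,E_1)_0\oplus R\Hom_{\CX_2}(E_2,E_2)_0\ \longrightarrow\ R\Hom_F(E|_F,E|_F)_0\ \xrightarrow{\,[1]\,}.
\]
Since $F$ is a curve, $\Ext^{\geq 2}_F$ vanishes, so $N$ is unobstructed and $R\Hom_F(E|_F,E|_F)_0$ is (dual to) its tangent complex; together with the analogous compatibility of the Artin stacks of expanded $\alpha$-balanced surfaces, the triangle exhibits the obstruction theory of $M^{\alpha}_{X/C}$ as the derived fibre product, over that of the smooth stack $N$, of the obstruction theories of $M^{\alpha_1}_{Y_1/C_1}$ and $M^{\alpha_2}_{Y_2/C_2}$. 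Functoriality of virtual classes under Gysin pullback along a smooth diagonal then yields
\[
\bigl[M^{\alpha}_{X/C}(r,c_1,\Delta)\bigr]^{\vir}=\sum\ \delta_N^{!}\Bigl(\bigl[M^{\alpha_1}_{Y_1/C_1}(r,c_1^{(1)},\Delta_1)\bigr]^{\vir}\times\bigl[M^{\alpha_2}_{Y_2/C_2}(r,c_1^{(2)},\Delta_2)\bigr]^{\vir}\Bigr),
\]
where $\delta_N\colon N\hookrightarrow N\times N$ is the diagonal and $\delta_N^{!}$ its refined Gysin map. Inserting the Künneth decomposition $[\delta_N(N)] = \sum_i\gamma_i\otimes\gamma^i$ with $\{\gamma_i\},\{\gamma^i\}$ dual bases of $H^*(N)$ and pairing against the tautological (descendent) classes converts this into the asserted degeneration formula: the $X$-invariant equals a sum, over splittings of the discrete data, of products of relative invariants of $(Y_1,F)$ and $(Y_2,F)$ with matched relative insertions running over a basis of $H^*(N)$.

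I expect the main obstacle to be the bookkeeping of expansions and balancing underlying both the isomorphism and the obstruction-theory comparison: one must verify carefully that expansions of the \emph{fibered} surface $X$ over the nodal curve $C$ retain the node $x$ and decompose canonically into expansions of $Y_1$ and $Y_2$, that a generic balancing condition $\alpha$ on $C$ restricts to generic $\alpha_1,\alpha_2$ and really does rigidify the bubble chains on each side (so that all stacks in sight are Deligne--Mumford and no strictly semistable or ill-balanced objects intervene), and that the Artin stacks of expanded balanced surfaces fit into a fibre square compatible with the one above. A secondary, more technical point is checking that predeformability along the cut fibre is equivalent to flatness there, which is what makes the normalization sequence exact and the excision triangle clean; once these are in place, the virtual-class and integration steps follow the now-standard pattern of degeneration formulas in the style of Jun Li, Li--Wu, and Maulik--Pandharipande.
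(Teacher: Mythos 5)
Your overall architecture (fibre product over the moduli of stable bundles on the gluing fibre, an excision triangle for the traceless $R\Hom$ complexes, and a diagonal/Künneth argument for the virtual classes) matches the shape of the paper's argument, but the foundation of your proof contains a genuine gap: the claimed \emph{canonical} decomposition $\mathcal{X}=\mathcal{X}_1\cup_F\mathcal{X}_2$ and the resulting isomorphism of stacks is false. In an expansion $\Ctilde\to C$ the node $x$ is not ``retained with chains inserted on either side of it''; it is \emph{replaced} by a chain $R_1\cup\cdots\cup R_j$ of rational curves (Lemma \ref{lem_expprop}), and the chain has $j+1$ nodes, none of them distinguished. To split $\Xtilde$ into an expansion of $(Y_1,F)$ glued to an expansion of $(Y_2,F)$ you must choose one of these $j+1$ nodes, and different choices give genuinely different points of the fibre product. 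Hence the gluing map $\Gamma$ is proper and quasi-finite but has $j+1$ preimages over any point whose underlying expansion has a length-$j$ chain; it is only generically bijective. If you assert the stack isomorphism and then sum virtual classes of the pieces, you overcount the deeper expansion strata. The paper circumvents exactly this: it never claims $\Gamma$ is an isomorphism, but instead establishes the cartesian square of Lemma \ref{lem_decfirst} over the relative Picard schemes, proves the cycle identity $[\Pic^{\overline{c_1}}_{\Xtilde/\Exp_C}]=\sum\Gamma'_*[\cdots]$ by checking it over the dense open locus of trivial expansions where the map is birational (Lemma \ref{lem_gluePic}), and then transports it to the moduli spaces by Manolache's virtual pullback (Proposition \ref{prop_virdec}). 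Your argument needs some substitute for this step — either a Costello/degree-one pushforward argument or the paper's reduction to Picard schemes — before the virtual class identity can be asserted.

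A second, smaller issue is the bookkeeping you defer: the splittings $c_1=c_1^{(1)}+c_1^{(2)}$ are not finite in the naive sense, since $(c_1^{(1)}-\ell F,\,c_1^{(2)}+\ell F)$ is again a splitting for every $\ell\in\ZZ$; this reflects the non-separatedness of $\Pic_C$ in the degeneration and is the reason the paper introduces the $\alpha$-balancing and the rounded stability conditions $\rough{\alpha_i}$ on the two sides. The matching of twists ($\ell=\ell_1'+\ell_2'$ and the choice of representatives with $|\alpha(c_1(L_i+\ell_i'F),\Delta_i)-\alpha_i|<1/2$) occupies the bulk of the proof of Theorem \ref{thm_decform} and cannot be absorbed into ``Mayer--Vietoris for $H^2$''. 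Your excision triangle and the resulting correction factor $A(T_{M_F})^{-1}$ in the pairing are consistent with Lemma \ref{lem_split2} and the definition of $*_\Phi$, and the Künneth form of your final formula is equivalent to the paper's; those parts are sound once the two gaps above are repaired.
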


For elliptic surfaces, the relative invariants can be determined from the absolute ones, which makes Theorem \ref{thm_decomp} more powerful in practice. To illustrate this, we give an application to elliptic genera.

For a proper scheme $M$ with perfect obstruction theory, let $\Ell^{\vir}(M)$ denote its virtual elliptic genus, as defined for example in the introduction of \cite{GK_Rank2}. We let also $\phi_{0,1}(q,y)$ be the weak Jacobi form and $\mathbf{L}(\phi_{0,1},p)$ its Borcherds lift as presented there. We recall that if $f:X\to \mathbb{P}^1$ is an elliptic surface, its \emph{degree} is the degree of the line bundle $(R^1f_*\mathcal{O}_X)^{\vee}$.

\begin{thm}\label{thm_ellgen}
Let $X$ be a degree $e\geq 2$ elliptic surface over $\mathbb{P}^1$ without multiple or reducible fibers. Let $d\geq 1$ be minimal so that $X$ has a $d$-section with divisor class $D$. Let $H$ be an ample line bundle on $X$ and $[F]$ the cohomology class of a fiber. Let $M_{X,H}(r, D,\Delta)$ denote the moduli space of $H$-Gieseker-stable sheaves on $X$ of rank $r>0$, first Chern class $D$ and discriminant $\Delta$. Assume that $d$ is coprime to $r$, and that $H$ is chosen so that stability equals semistability for sheaves of rank $r$ and first Chern class $c_1$, and all $\Delta$. Consider the generating series 
\[Z^{\Ell}_{X,r,c_1}(p):= \sum_{\substack{\Delta\in \ZZ \\ 0\leq \ell < r}} \Ell^{\vir}(M_{X,H}(r,c_1+\ell [F] ,\Delta))\, p^{\dim M_{X,H}(r,c_1+\ell [F] ,\Delta)}.\]
Then  
\[Z^{\Ell}_{X,r,c_1}(p) = \sum_{n\geq 0} \Ell(\operatorname{Hilb}_n(X))\, p^{2n} = \left(\frac{1}{\mathbf{L}(\phi_{0,1},p^2)}\right)^{\chi(\mathcal{O}_X)}.\]
\end{thm}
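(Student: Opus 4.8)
The plan is to deduce the elliptic-genus identity from the degeneration machinery of Theorems \ref{thm_mainthm} and \ref{thm_decomp} by degenerating $X$ to a chain of rational elliptic surfaces. Concretely, I would first choose a degeneration of the elliptic fibration $f\colon X\to\mathbb{P}^1$ in which the base $\mathbb{P}^1$ breaks into a nodal chain $C_1\cup_{x_1}\cdots\cup_{x_{k-1}}C_k$ of $k=\deg e$ copies of $\mathbb{P}^1$, so that $X$ degenerates to $X_0=Y_1\cup_{F_1}\cdots\cup_{F_{k-1}}Y_k$ where each $Y_i\to C_i\cong\mathbb{P}^1$ is a degree-$1$ elliptic surface (a rational elliptic surface, $\chi(\mathcal O_{Y_i})=1$) and consecutive components are glued along a smooth fiber. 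Since Theorem \ref{thm_mainthm} gives deformation invariance of the numerical invariants of $M^{\alpha}_{X/C}(r,c_1,\Delta)$ under deformations of $X$ together with the fibration, the elliptic genera computed on $X_0$ agree with those on the generic fiber $X$; and by the last clause of Theorem \ref{thm_ellgen}'s hypotheses (stability $=$ semistability, $d$ coprime to $r$) the $f$-stable moduli space agrees, for the smooth $X$, with the Gieseker moduli space $M_{X,H}(r,c_1+\ell[F],\Delta)$ for an appropriate polarization, as asserted in the introduction — this is where the ``$f$-stability $=$ slope stability for a suitable polarization'' remark is invoked. The bookkeeping of the shift $c_1+\ell[F]$ with $0\le\ell<r$ is exactly what is needed so that summing over $\ell$ accounts for the freedom in distributing the first Chern class among the components in the degeneration formula.

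Next I would apply the degeneration formula of Theorem \ref{thm_decomp} iteratively along the $k-1$ nodes. Each application expresses $Z^{\Ell}$ for a two-component total space as a sum over splittings of the discriminant and of $c_1$, with a gluing/matching contribution along the relative divisor $F_i$ coming from the stable restriction to the fiber (a fixed moduli space of slope-stable bundles on the elliptic curve $F$, which by the coprimality of $d$ and $r$ is a single point up to the relevant equivalence, or at worst contributes a fixed Euler-characteristic-type factor). Carrying this out for a chain of $k$ rational elliptic surfaces reduces $Z^{\Ell}_{X,r,c_1}(p)$ to a product of $k$ copies of the corresponding relative generating series for the degree-$1$ piece $(Y_i,F)$, times gluing factors; and since $\chi(\mathcal O_X)=e=k$ and $\chi(\mathcal O_{Y_i})=1$, a product of $k$ equal factors is consistent with the target expression $\bigl(\mathbf{L}(\phi_{0,1},p^2)^{-1}\bigr)^{-\chi(\mathcal O_X)}$ being an $e$-th power.

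It then remains to identify the building block. For this I would invoke the wall-crossing/rank-reduction phenomenon special to elliptic surfaces: on a rational elliptic surface, relative to a fiber, the relative moduli space of $f$-stable sheaves of rank $r$ with a $d$-section determinant ($d$ coprime to $r$) is, via a Fourier–Mukai transform along the elliptic fibration (spectral construction), birational — in fact derived-equivalent in a manner compatible with the virtual class and the elliptic genus — to a Hilbert scheme of points, so that its contribution to the generating series is $\sum_{n\ge0}\Ell(\operatorname{Hilb}_n(Y_i))\,p^{2n} = \mathbf{L}(\phi_{0,1},p^2)^{-\chi(\mathcal O_{Y_i})} = \mathbf{L}(\phi_{0,1},p^2)^{-1}$, using the known Borcherds-lift formula for elliptic genera of Hilbert schemes (as recalled from \cite{GK_Rank2}). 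Raising to the $e$-th power and matching the $p^{2n}$-grading (the factor of $2$ reflecting that $\dim\operatorname{Hilb}_n = 2n$ while the virtual dimensions on the sheaf side are $2\Delta + \text{const}$) yields the stated identity. I expect the main obstacle to be precisely this last identification: making the spectral-correspondence/Fourier–Mukai argument work \emph{at the level of the virtual class and the virtual elliptic genus on the expanded, relative moduli spaces} — i.e.\ checking that the Fourier–Mukai transform carries the $f$-stability condition and the $\alpha$-balancing to the Hilbert-scheme side, is compatible with the perfect obstruction theories, and interacts correctly with the gluing along $F$ in the degeneration formula. The chain-degeneration and deformation-invariance steps are essentially formal given the results already established; controlling the elliptic genus through the Fourier–Mukai equivalence, including the contributions of the boundary (expanded) strata, is the real work.
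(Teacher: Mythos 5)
Your overall degeneration strategy (peel the degree-$e$ surface into rational elliptic pieces via the de Jong--Friedman degenerations, apply the decomposition formula, note that $M_F(r,L_F)$ is a point by coprimality so the gluing is trivial) matches the paper's proof in \S\ref{subsec_ellfib}. But the step you yourself flag as ``the real work'' --- identifying the building block $Z^{\Ell}_{Y/\PP^1}$ for a rational elliptic surface $Y$ via a Fourier--Mukai/spectral correspondence with $\operatorname{Hilb}_n(Y)$, compatibly with the perfect obstruction theory, the $\alpha$-balancing, and the expanded boundary strata --- is a genuine gap: it is not carried out, it is not in the literature for relative/expanded moduli spaces, and the paper never does it. The paper's evaluation of the building block is much softer: first, Proposition \ref{prop_trivfactor} shows that inserting a trivial piece $E\times\PP^1$ contributes the series $1$, so relative invariants of $(Y,F)$ equal absolute ones; then one glues \emph{two identical copies} of $Y$ along a fiber, observes that this degenerate surface smooths to a K3 (Friedman), and applies Corollary \ref{cor_elldec} in reverse to get $\bigl(Z^{\Ell}_{Y/\PP^1}\bigr)^2 = Z^{\Ell}_{K3}$. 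The K3 side is then handled by the classical fact that moduli of stable sheaves on a K3 (stability $=$ semistability) are deformation equivalent to Hilbert schemes of points, plus the DMVV/Borcherds formula. No spectral construction on the rational elliptic surface is ever needed, and the $e$-th power structure in the answer drops out of the induction $Z_X = Z_{X'}\cdot Z_Y = (Z_{K3})^{(e-1)/2}(Z_{K3})^{1/2}$.

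Two smaller points. First, you should not assert $\sum_n \Ell(\operatorname{Hilb}_n(Y_i))\,p^{2n} = \mathbf{L}(\phi_{0,1},p^2)^{-\chi(\mathcal{O}_{Y_i})}$ for a rational elliptic surface as a ``known Borcherds-lift formula'': the DMVV-type product for Hilbert schemes of a general surface involves the full elliptic genus of the surface, not just $\chi(\mathcal{O})$; the formula in the shape you want is exactly what the self-gluing trick produces for the \emph{virtual} elliptic genus, so quoting it would be circular. Second, even granting that $M_F(r,L_F)$ is a point, the passage from relative to absolute invariants is not automatic --- bubbled components of the form $F\times\PP^1$ in the expanded degenerations could a priori contribute --- and this is precisely what Proposition \ref{prop_trivfactor} rules out (every such component either contributes a point or carries an elliptic-curve factor killing all virtual Chern numbers). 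Your proposal omits this.
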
  
In the case $e=2$ the surface $X$ is a K3-surface, and the statement reduces to the rank $1$ case, which was stated in \cite{DMVV} and has been proven in \cite{BoLi_Ell}, \cite{BoLi_McKay}.
\begin{rem}
	The proof of Theorem \ref{thm_ellgen} goes through with ``virtual elliptic genus'' replaced by virtual cobordism to give
	\[Z^{\operatorname{cob}}_{X,r,c_1}(p) = \left(\sum_{n\geq 0}[K3^{[n]}] p^{2n}\right)^{\chi(\mathcal{O}_X)/2},\]
	where the possible half-integer power is chosen to have constant coefficient one.  
	In particular, this answers Göttsche and Kool's Conjecture 7.7 in \cite{GK_Rank2} affirmatively for this class of elliptic surfaces.
\end{rem}

\paragraph{Background}

Our approach is inspired by earlier constructions of Gieseker--Li \cite{GiLi_Irr} and Li \cite{Li_Mod}, \cite{Li_Tow}: They consider moduli spaces of Simpson semi-stable sheaves on expansions of a degenerate surface $X_0$.
 Under good conditions, this does result in moduli spaces which behave well under deformations and carry a virtual fundamental class. However, their approach does not lead to a degeneration formula, since it is unclear how to relate the moduli space of sheaves on the degeneration with the relative spaces for the pairs $(Y_i,D)$. The problem is that Simpson stability of a sheaf on $X_0$ can not be determined by only looking at its restrictions to the $Y_i$, but also depends on how the sheaves are glued in a subtle manner. 
 
By restricting to fibered surfaces, we get around this issue: The notion of $f$-stability is defined in terms of restriction on fibers, which can be checked on each component separately. The use of expansions is crucial for us: As in \cite{GiLi_Irr}, it allows us to work only with sheaves that are locally free along the singular locus of $X_0$. Just as importantly, we can guarantee stability on the fibers over marked points and nodes, which gives us restriction maps to the moduli spaces of stable vector bundles on curves. This is central to the decomposition result of Theorem \ref{thm_decomp}. 

Since $f$-stability on $X\to C$ is unchanged with respect to tensoring by a line bundle pulled back from $C$, the moduli space of $f$-stable sheaves will not be separated in families when $C$ degenerates from a smooth to a reducible curve, since the Picard scheme of $C$ is not separated. Essentially, the issue is that in a one-parameter family of curves, the limit of the trivial line bundle doesn't need to be trivial, since one can twist by components of the special fiber. To deal with this issue, we need to restrict the possible twists of an $f$-stable sheaf. This is achieved by picking a stability condition $\alpha$ on $C$ when the base curve becomes reducible, and demanding a certain numerical balancing condition which singles out a unique choice of twist. 

Although $f$-stability at first seems unrelated to Gieseker or slope stability, it turns out that these notions agree on a smooth fibered surface, when one considers the latter with respect to a suitable choice of polarization. This is already present in the work of Yoshioka and is included here as Theorem \ref{thm_boundedwalls}. Moreover, due to results of Mochizuki \cite{Mo_Don}, for surfaces with $p_g(X)>0$, virtual enumerative invariants are independent of choice of polarization whenever stability equals semistability. Thus, for such surfaces, there is no loss of generality in considering $f$-stability for the computation of invariants.  
 
\paragraph{Structure of the paper.}
In \S 3, we introduce the definitions and constructions that go into Theorem \ref{thm_mainthm}. Since we work over a general base $B$, we automatically obtain deformation invariance. 

In \S 4, we show the decomposition in the situation of Theorem \ref{thm_decomp}. We include the proof of Theorem \ref{thm_ellgen} in \S \ref{subsec_ellfib}. 

In \S 2, we collect some material that is needed for the main constructions in the later sections. For a first reading, we suggest only taking a look at Definition \ref{defexpansion} and Lemma \ref{lem_expprop} in \S 2.1, and otherwise to refer to this section only as needed. 

\paragraph{Relation to other work.}
Since the modern mathematical definition of Vafa--Witten invariants on algebraic surfaces by Tanaka--Thomas (\cite{TT_VW1}, \cite{TT_VW2}), there has been renewed interest in the enumerative geometry of moduli spaces of sheaves on surfaces. 
Göttsche and Kool, in a series of work, developed many conjectures for the structure of such invariants (see \cite{GK_Sheaves} for an excellent overview). However, beyond the cases of Hilbert schemes and special classes of surfaces such as rational, elliptic or K3 surfaces, very little is known -- notable exceptions are results on Donaldson invariants (\cite{GNY}, \cite{GNY_SW}) and Blowup formulas (\cite{KT_Blowup}, \cite{KLT_Blowup}).

Recently, Dominic Joyce has announced results regarding deep structure theorems for enumerative invariants of surfaces with $p_g>0$, building on his theory of wall-crossing in abelian categories and his version of Mochizuki's rank reduction algorithm \cite{Joyce_En}. His result shows that the generating series of invariants are determined in terms of a number of universal power series and universal constants, and of finitely many fundamental enumerative invariants of the surface. We hope that the current work can be used to exhibit relations between, and therefore help determine his universal series.   

For rank one sheaves, our results reduce to a theory of Hilbert schemes on degenerations, which is treated in \cite{LiWu_Good} in the case of a smooth singular locus, and was later generalized to arbitrary normal crossings degenerations in \cite{MaRa_Log}. 

\paragraph{Further directions}
One problem that is not addressed here is to generalize the results of \S 4 to the case of a non-separating node, i.e. given a fibered surface $X\to C$ and a non-separating node $x$ of $C$, to describe the enumerative invariants for the moduli space of $f$-stable sheaves on $X\to C$ in terms of those on $X'\to C'$, where $C'\to C$ is the partial normalization of $C$ at $x$, and $X'=X\times_C C'$. This should be possible, but requires a closer analysis of the combinatorics of the Picard scheme of $C$. An application of this, suggested by Jørgen Rennemo, would be to obtain a $(1+1)$-dimensional cohomological field theory by considering surfaces of the form $F\times C \to C$, where $F$ is kept fixed and $C$ is an arbitrary marked nodal curve. 

In another direction, the constructions here should generalize beyond the case of surfaces, which we mostly require to obtain a good enumerative theory. We expect that the same methods generalize for example to give degeneration formulas for fibered Fano and Calabi-Yau threefolds. We thank Richard Thomas for pointing this out to us.     

\paragraph{Acknowledgements} The author would like to thank Nicola Pagani, John-Christian Ottem and Richard Thomas for helpful discussions during the writing of this paper. Special thanks goes to Jørgen Rennemo for suggesting this topic and for regular discussions. This research is funded by Research Council of Norway grant number 302277 - "Orthogonal gauge duality and non-commutative geometry".

\paragraph{Notations and Conventions.}

\begin{itemize}

\item All schemes and stacks we consider will be locally Noetherian over $\CC$.

\item By a curve over a base $B$, we mean a flat and finite type algebraic space over $B$ with one-dimensional fibers. We drop reference to the base when $B=\Spec \CC$.

\item By a marked nodal curve over $B$ we mean a curve over $B$ with at worst nodal singularities and a finite number of sections which are disjoint and do not meet the singularities. Here, by ``at worst nodal'', we mean \'etale locally of the form $Z(xy-f)\subseteq B\times \AA^2$, where $x,y$ are standard coordinates on $\AA^2$ and $f$ is a function on $B$.

\item Unless noted otherwise, we will assume all curves to be proper over the base and to have geometrically connected fibers. 

\item For (numerical) divisor classes $D_1,D_2$ on a proper algebraic surface $X$, and more generally for classes in the second cohomology of $X$, we denote by $(D_1,D_2) \in \mathbb{Q}$ their intersection product. 
\item We will often indicate the base (resp. base changes) of a family of curves  or surfaces simply by a subscript (resp. by the change thereof). 
\end{itemize}

In this paper, we will use the following notion of fibered surface.
\begin{definition}\label{def_fib}
 We say $f:X\to C$ is a fibered surface, if $C$ is a marked nodal curve, and 
	\begin{enumerate}[label = \roman*)]
		\item $f$ is flat and proper of dimension one with geometrically connected fibers,
		\item $f$ is smooth over the nodes and marked points of $C$,
		\item for every irreducible component $D\subseteq C$, the scheme-theoretic pre-image $f^{-1}(D)\subseteq X$ is a smooth projective surface.  
	\end{enumerate}
We say $f:X_B \to C_B$ is a family of fibered surfaces over a base $B$, if $C_B$ is a marked nodal curve over $B$ and $f$ is flat and proper, such that $X_b\to C_b$ is a fibered surface for every geometric point $b$ of $B$. 
\end{definition}

\section{Preliminaries}\label{sec_prelim}

\subsection{Expansions and Expanded Degenerations}\label{subsec_exp}
We define what we mean by a family of expansions of marked nodal curves and show some basic properties
Let $(C_B,\sigma_1,\ldots,\sigma_n)$ be a flat family of at most nodal marked curves (not necessarily assumed proper or connected) over a base $B$.  

\begin{definition}\label{defexpansion}
	Let $T$ be a scheme. A family of \emph{expansions of $C_B$ over $B$} parametrized by $T$ is given by a morphism $b:T\to B$, a flat family of nodal marked curves $(\Ctilde_T,\widetilde{\sigma}_1,\ldots,\widetilde{\sigma}_n)$ over $T$ and proper morphism of marked curves $c:\Ctilde_T \to C_T$ such that
	
	\begin{enumerate}[label = \roman*)]
		\item the natural map $\mathcal{O}_{C_T}\to Rc_*\mathcal{O}_{\Ctilde_T}$ is an isomorphism,
		\item for each $t\in T$, we have a fiberwise isomorphism of twisted dualizing sheaves $c^*\omega_{C_t}(\sigma_1,\ldots,\sigma_n) \equiv \omega_{\Ctilde_t}(\widetilde{\sigma}_1,\ldots,\widetilde{\sigma}_n)$.
	\end{enumerate}
	
	We let $\Exp_{C/B}$ denote the stack parametrizing expansions over $B$. 
\end{definition}

One can give an alternative more explicit characterization:
\begin{lem}\label{lem_expprop}
	Let $\Ctilde_T$ and $C_T$ be flat families of nodal marked curves over a base $T$ and let $c:\Ctilde_T\to C_T$ be a morphism of marked curves over $B$. Then condition i) of Definition \ref{defexpansion} is equivalent to
		
	\begin{enumerate}[label = \roman*')]
		\item The locus $\Sigma_c\subset C_T$ where the map $c$ is not an isomorphism is quasi-finite over $T$. For each $x \in \Sigma_c$, the scheme-theoretic fiber $c^{-1}(x)$ is a connected nodal curve of arithmetic genus zero.   
	\end{enumerate}
Assuming this condition holds, then ii) of Definition \ref{defexpansion} is equivalent to

	\begin{enumerate}[label = \roman*')]
		\setcounter{enumi}{1}
		\item If $x\in\Sigma_c$ is lying over $y\in T$, then $x$ is either a node or a marked point in the fiber $C_y\subset C_T$ and  $c^{-1}(x)$ is a chain of rational curves $R_1\cup \cdots \cup R_j$, containing two distinguished points of $\Ctilde_y$, one of which lies on $R_1$ and one on $R_j$ (these are two nodes if $x$ is a node, or one node and one marked points if $x$ is a marked point).
	\end{enumerate}	
	
\end{lem}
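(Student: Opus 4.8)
\noindent The plan is to reduce both equivalences to the case of a single geometric point $T=\Spec k$, $k$ algebraically closed, and then argue directly on fibres. The reduction for i) should be a routine application of cohomology and base change: $c_*\mathcal{O}_{\Ctilde_T}=\mathcal{O}_{C_T}$ is flat over $T$, and $R^1c_*\mathcal{O}_{\Ctilde_T}$ is the top direct image of a family of curves, so it commutes with arbitrary base change as soon as it vanishes; hence i) over $T$ holds iff it holds on every geometric fibre, and $c_*\mathcal{O}=\mathcal{O}$ on the total space follows from the fibrewise statement by flatness. Conditions i') and both halves of ii) are fibrewise by inspection — ``$\Sigma_c$ quasi-finite over $T$'' just says $\Sigma_c$ contains no component of any fibre of $C_T\to T$. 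So from now on take $T=\Spec k$ and write $c\colon\Ctilde\to C$.

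\smallskip
\noindent\textbf{i) $\Leftrightarrow$ i').} For the forward direction I would first note that $\mathcal{O}_C\xrightarrow{\ \sim\ }c_*\mathcal{O}_{\Ctilde}$ makes $c$ surjective with geometrically connected fibres (Stein factorisation). Since $\Ctilde,C$ are one--dimensional and $C$ is reduced, $c$ is an isomorphism over the generic point $\zeta$ of every component of $C$ — otherwise $c^{-1}$ of that component would be two--dimensional, so $c^{-1}(\zeta)$ is finite and $\Gamma(c^{-1}(\zeta),\mathcal{O})=\mathcal{O}_{C,\zeta}=\kappa(\zeta)$ (a field) forces $c^{-1}(\zeta)$ to be one reduced point and $c$ an isomorphism there — so $\Sigma_c$ is finite. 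For $x\in\Sigma_c$ the fibre $c^{-1}(x)$ is one--dimensional (else $c$, being proper with a finite fibre at $x$, is finite, hence a closed immersion, hence an isomorphism near $x$) and connected, and formal functions give $\varprojlim_n H^1(c^{-1}(x)^{[n]},\mathcal{O})=0$; as the transition maps of this system are surjective (their cokernels sit in $H^2$ of a one--dimensional scheme), every term vanishes, so $H^1(c^{-1}(x),\mathcal{O})=0$, hence also $H^1$ of the reduction. A connected reduced nodal curve of arithmetic genus zero is a tree of $\mathbb{P}^1$'s, and a short local computation (using that $C$ is nodal at $x$ and $\Ctilde$ reduced, so $\mathfrak{m}_x\mathcal{O}_{\Ctilde}$ is already radical) shows the scheme--theoretic fibre $c^{-1}(x)$ is reduced — this is i'). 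Conversely, given i', away from $\Sigma_c$ there is nothing to check, and near $x\in\Sigma_c$ I would again use formal functions: the graded pieces $\mathcal{I}^n/\mathcal{I}^{n+1}$ of the ideal $\mathcal{I}=\mathfrak{m}_x\mathcal{O}_{\Ctilde}$ are torsion sheaves on the tree $c^{-1}(x)$ (supported at the nodes where it meets the rest of $\Ctilde$), so their $H^1$ all vanish, giving $R^1c_*\mathcal{O}_{\Ctilde}=0$, while the parallel $H^0$--computation gives $c_*\mathcal{O}_{\Ctilde}=\mathcal{O}_C$. (Alternatively one factors $c$ into elementary contractions of a single $\mathbb{P}^1$ and composes trace maps.)

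\smallskip
\noindent\textbf{ii) $\Leftrightarrow$ ii'), assuming i').} Fix a geometric point $y$. If ii) holds, restrict the isomorphism $c^*\omega_{C_y}(\sigma_1,\dots,\sigma_n)\cong\omega_{\Ctilde_y}(\widetilde\sigma_1,\dots,\widetilde\sigma_n)$ to a component $R\cong\mathbb{P}^1$ of a fibre $c^{-1}(x)$, $x\in\Sigma_c$: as $R$ is contracted the left side has degree $0$, so $0=\deg_R\omega_{\Ctilde_y}(\widetilde\sigma_\bullet)=-2+\#\{\text{special points of }\Ctilde_y\text{ on }R\}$, i.e.\ every component of $c^{-1}(x)$ carries exactly two special points (nodes of $\Ctilde_y$ or marked points). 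Combined with the tree identity $\sum_R(\text{valence of }R)=2(j-1)$, this forces $c^{-1}(x)$ to be a chain $R_1\cup\dots\cup R_j$ with exactly two special points not at internal nodes, one on $R_1$ and one on $R_j$; each is either a node joining the chain to the rest of $\Ctilde_y$ (a local branch of $C_y$ through $x$) or a marked point (so $x=\sigma_i(y)$), and two marked points is impossible since distinct sections $\widetilde\sigma_i$ are disjoint and map to the disjoint $\sigma_i$. So $x$ is a node or marked point of $C_y$, which is ii'). Conversely, given i') and ii'), away from $\Sigma_c$ the map $c$ is an isomorphism and pullback of twisted differentials gives the isomorphism there; to extend it over a fibre $c^{-1}(x)$, which by ii') is a chain, I would factor $c$ near $x$ as $\Ctilde_y=Z_0\to Z_1\to\dots\to Z_m=C_y$ where each step contracts one $\mathbb{P}^1$ with two special points to a node or a smooth marked point of the next (still nodal) curve — possible precisely because the fibre is a chain with the distinguished points at its ends — each step trivially satisfying the analogue of i) and, by an explicit local computation near a node or a marked point (matching the pulled--back log differential $\tfrac{du}{u}=-\tfrac{dv}{v}$, resp.\ $\tfrac{dt}{t}$, with the meromorphic differential having simple poles and opposite residues along the inserted nodes), the analogue of ii); composing gives ii) for $c$.

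\smallskip
\noindent The soft part is i) $\Leftrightarrow$ i') — essentially Stein factorisation plus the classification of arithmetic--genus--zero nodal curves plus a formal--functions argument. The step I expect to cost the most is ii') $\Rightarrow$ ii): one has to genuinely build the isomorphism of twisted dualising sheaves across each contracted chain, and it is exactly here that the chain condition (as opposed to an arbitrary tree) and the position of the distinguished points are used essentially; a secondary, mostly routine nuisance is making the base--change reduction to a geometric point precise.
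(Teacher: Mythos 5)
Your proof is correct and follows essentially the same route as the paper's: you reduce to fibres, identify condition i) with the contracted fibres being connected nodal curves of arithmetic genus zero via vanishing of $R^1c_*\mathcal{O}_{\Ctilde}$ (the paper invokes compatibility with base change where you use formal functions), and derive ii') from ii) by computing that the twisted dualizing sheaf has degree zero on each contracted component, forcing exactly two special points per component and hence a chain, with the endpoint analysis pinning down $x$ as a node or marked point. Your write-up is more detailed than the paper's (which dispatches ii') $\Rightarrow$ ii) as ``direct computation''), but the underlying argument is the same.
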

\begin{proof}
	We address the first part. By properness, $c$ is finite over the locus where $c$ has zero-dimensional fibers.  Then condition i) holds over this locus, if and only if $c$ is an isomorphism there. 
	Now let $x\in C_T$ be a point where $c$ has a one-dimensional fiber. It follows that $c$ has to be a contraction of components. Since formation of $R^1c_*$ commutes with base change,  vanishing of $R^1c_*\mathcal{O}_{\Ctilde}$ around $x$ is equivalent to $c^{-1}(x)$ being of arithmetic genus zero. Taken together, this establishes that i) and i') are equivalent. 
		
	We address the second part. Assume i) and i') hold. If ii) holds, then the twisted dualizing sheaf must be trivial on any components contracted by $c$. We already know each such component is rational, so they must have precisely two distinguished points. In particular, they must be arranged as a chain. The distinguished points on the end of the chain must come from intersection with a remaining component of $\Ctilde$ or from marked points. If there is a marked point, it must map to a marked point of $C$ (in particular, there can be only one). Otherwise, the two points of intersection must come from the intersection with two branches of a node on $C$.
	
	Conversely, we have that ii') implies ii) by direct computation.
\end{proof}

\begin{lem}
Let $\Ctilde_T$ and $C_T$ be flat and proper families of nodal marked curves over a base $T$ and let $c:\Ctilde_T\to C_T$ be a morphism of marked curves over $T$. Then condition i) is an open condition. On the locus where i) holds, ii) is an open condition.
\end{lem}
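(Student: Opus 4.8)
The plan is to reduce both assertions to the corresponding fibrewise statements over a point of $T$ and then to upgrade them to openness using upper semicontinuity together with the properness of $C_T\to T$; since whether i) or ii) holds at $t$ depends only on the fibre over $t$, and since a subset of $T$ is open iff its preimage in $T_{\mathrm{red}}$ is, we may and do assume $T$ reduced throughout. For i), write $h\colon C_T\to T$. As $c$ is proper with one--dimensional fibres, the coherent sheaf $R^1c_*\CO_{\Ctilde_T}$ on $C_T$ has formation commuting with arbitrary base change (cohomology and base change in top degree), so its closed support meets the fibre $C_t$ exactly when $R^1c_{t*}\CO_{\Ctilde_t}\neq 0$; by properness of $h$ the locus where $R^1c_{t*}\CO_{\Ctilde_t}=0$ is thus open in $T$. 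Over that locus $R^1c_*\CO_{\Ctilde_T}=0$, the sheaf $c_*\CO_{\Ctilde_T}$ is $T$--flat with formation commuting with base change (local criterion of flatness, using $\operatorname{Tor}^{\CO_T}_1(c_*\CO_{\Ctilde_T},\kappa(t))=0$), and the adjunction map $\phi\colon\CO_{C_T}\to c_*\CO_{\Ctilde_T}$ is a map of $T$--flat coherent sheaves whose restriction to $C_t$ is the analogous map for $c_t$. Condition i) holds at $t$ iff $\phi|_{C_t}$ is an isomorphism; since $\operatorname{coker}\phi$ and -- over the locus where $\phi$ is fibrewise surjective, where the Tor sequence shows $\ker\phi$ to be again $T$--flat -- $\ker\phi$ are coherent, their images under $h$ are closed, and their complements are the open loci of fibrewise surjectivity and injectivity of $\phi$. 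Hence i) is an open condition, and it holds for the whole family over the open set where it holds.

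Now restrict to the open set $U\subseteq T$ where i) holds and put $\pi\colon\Ctilde_U\to U$. The twisted relative dualizing sheaves $\mathcal{A}:=\omega_{C_U/U}(\sigma_1+\dots+\sigma_n)$ on $C_U$ and $\mathcal{B}:=\omega_{\Ctilde_U/U}(\widetilde\sigma_1+\dots+\widetilde\sigma_n)$ on $\Ctilde_U$ are line bundles (nodal curves are Gorenstein and the markings avoid the nodes), and $\mathcal{L}:=c^*\mathcal{A}\otimes\mathcal{B}^{-1}$ is a line bundle on $\Ctilde_U$ whose restriction to $\Ctilde_t$ is $c_t^*\omega_{C_t}(\sigma_1+\dots)\otimes\omega_{\Ctilde_t}(\widetilde\sigma_1+\dots)^{-1}$. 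Thus ii) holds at $t$ precisely when $\mathcal{L}|_{\Ctilde_t}\cong\CO_{\Ctilde_t}$; by Lemma~\ref{lem_expprop} this is the same as requiring each $c^{-1}(x)$ to be a chain of rational curves with its two distinguished points at its ends, but the argument below does not need this reformulation.

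The step I expect to be the main obstacle is that fibrewise triviality of a line bundle is in general neither an open nor a closed condition, so condition i) must genuinely be used. First, if $\mathcal{L}|_{\Ctilde_t}\cong\CO_{\Ctilde_t}$ then, $\Ctilde_t$ being proper, reduced and geometrically connected, $h^0(\Ctilde_t,\mathcal{L}|_{\Ctilde_t})=h^0(\Ctilde_t,\mathcal{L}^{-1}|_{\Ctilde_t})=1$. Let $S_1\subseteq U$ be the locus where both these dimensions equal $1$; it is open by semicontinuity and contains the locus where ii) holds. Over $S_1$, Grauert's theorem (here we use that $T$ is reduced) shows that $\pi_*\mathcal{L}$ and $\pi_*\mathcal{L}^{-1}$ are line bundles whose formation commutes with base change, while $\pi_*\CO_{\Ctilde_{S_1}}=\CO_{S_1}$; the multiplication map $\pi_*\mathcal{L}\otimes\pi_*\mathcal{L}^{-1}\to\CO_{S_1}$ is therefore a global section $s$ of a line bundle on $S_1$. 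At $t\in S_1$, the value $s(t)$ is nonzero iff the pairing $H^0(\Ctilde_t,\mathcal{L}|_{\Ctilde_t})\otimes H^0(\Ctilde_t,\mathcal{L}^{-1}|_{\Ctilde_t})\to H^0(\Ctilde_t,\CO_{\Ctilde_t})=\kappa(t)$ is nonzero, i.e.\ iff there exist sections $u$ of $\mathcal{L}|_{\Ctilde_t}$ and $v$ of $\mathcal{L}^{-1}|_{\Ctilde_t}$ with $uv$ a nonzero constant; this forces $u$ to be nowhere vanishing and hence $\mathcal{L}|_{\Ctilde_t}\cong\CO_{\Ctilde_t}$, and the converse is immediate. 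Consequently $\{\,t\in U:\text{ii) holds at }t\,\}=\{\,t\in S_1:s(t)\neq 0\,\}$, which is open, being the non-vanishing locus of a section of a line bundle on the open subset $S_1\subseteq U$. This completes the plan.
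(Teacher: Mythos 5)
Your treatment of condition i) is correct and is just a careful writing-out of what the paper dismisses as ``straightforward'': top-degree base change for $R^1c_*\mathcal{O}$, flatness and base change for $c_*\mathcal{O}$ once $R^1$ vanishes, and properness of $C_T\to T$ to push the supports of $\ker\phi$ and $\operatorname{coker}\phi$ down to closed subsets of $T$. No complaints there.

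For condition ii) you take a genuinely different route from the paper, and it has a gap at the decisive step. You claim that $S_1=\{t: h^0(\mathcal{L}_t)=h^0(\mathcal{L}_t^{-1})=1\}$ ``is open by semicontinuity''. Semicontinuity makes $\{h^0\le 1\}$ open and $\{h^0\ge 1\}$ closed, so $\{h^0=1\}$ is only locally closed; nothing forces $h^0(\mathcal{L}_t^{\pm1})\ge 1$ on a neighbourhood of the locus where ii) holds. A clean way to see that something must be missing: after restricting to $U$, your argument uses no property of $\mathcal{L}$ beyond its being a line bundle, and no consequence of condition i) beyond $\pi_*\mathcal{O}_{\Ctilde}=\mathcal{O}$ (which holds for any proper flat family with reduced connected fibres). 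If the argument were complete it would therefore prove that fibrewise triviality of an \emph{arbitrary} line bundle on such a family is an open condition — which is false (a degree-zero line bundle on a family of elliptic curves, trivial on exactly one fibre, has $h^0$ jumping from $0$ to $1$ at that fibre; this is exactly $S_1$ failing to be open). And the failure does occur for the actual $\mathcal{L}$ under condition i) alone: if a markingless rational tree is contracted to a smooth unmarked point and is attached to the rest of $\Ctilde_t$ at a single node, one computes $\deg\mathcal{L}_t=+1$ on that component, and configurations with two such sprouts give $h^0(\mathcal{L}_t)=2$, while a bridge component carrying an extra sprout gives $h^0(\mathcal{L}_t^{-1})=0$. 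So to finish you would have to show that no such fibre can \emph{generize} from a fibre satisfying ii), i.e.\ that $S_1$ contains a neighbourhood of the ii)-locus — but that is essentially the openness statement you are trying to prove, so the appeal to semicontinuity is circular at exactly the crucial point.

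The paper's argument avoids this entirely by converting ii) into a \emph{vanishing} statement: granting i), condition ii$'$) is equivalent to $R^1c_*\bigl((\omega_{\Ctilde_T/T}(\widetilde\sigma_1+\dots+\widetilde\sigma_n))^{\otimes 2}\bigr)=0$ (on a contracted tree this line bundle has degree $2(k_R-2)$ on a component with $k_R$ special points, so $H^1$ detects components with too few special points), and vanishing of a top-degree direct image is open by base change in top degree together with properness of $C_T\to T$ — i.e.\ one only ever needs the open condition $h^1=0$, never the non-open condition $h^0=1$. I would either adopt that reformulation or supply the missing argument that $h^0(\mathcal{L}_t)=h^0(\mathcal{L}_t^{-1})=1$ persists under generization from the ii)-locus.
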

\begin{proof}
Openness of i) is straightforward. Assume that i) holds for $c$.
Then condition ii') is equivalent to vanishing of $R^1c_*(\omega_{\Ctilde_B/B}(\sigma_1'+\ldots + \sigma_n'))^{\otimes 2 }$, which is open on the base. 
\end{proof}

The following should be true without the assumption of properness, but we only consider that case for simplicity. 
	
\begin{prop}\label{propexpstack}
	Let $C_B\to B$ be a proper family of nodal marked curves.
	The stack of expansions $\Exp_{C_B/B}\to B$ is algebraic. It is locally of finite presentation and flat over $B$ of pure dimension zero. 
\end{prop}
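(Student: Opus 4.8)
The plan is to realize $\Exp_{C_B/B}$ as an open substack of a suitable Hilbert-type or Hom-type stack and then verify the dimension and flatness claims by an explicit local analysis. First I would construct the stack abstractly: a family of expansions over $T$ is a proper morphism $c:\Ctilde_T \to C_T$ with $\Ctilde_T$ a flat family of nodal marked curves satisfying conditions i) and ii); equivalently, by Lemma \ref{lem_expprop}, conditions i') and ii'). To see algebraicity, I would exhibit $\Exp_{C_B/B}$ as the stack whose objects over $T\to B$ are flat families of prestable (= nodal) marked curves $\Ctilde_T$ together with a $C_T$-morphism $c$ whose non-isomorphism locus is quasi-finite with fibers chains of rational curves. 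Fixing a relatively ample line bundle on $C_B/B$ (possible after refining $B$, or one uses the twisted dualizing sheaf from condition ii) directly, which is relatively ample on $\Ctilde$ away from contracted chains — in fact $\omega_{\Ctilde}(\sigma')^{\otimes 3}$ will work fiberwise since each contracted chain has the end structure of ii')), the pair $(\Ctilde_T, c)$ is determined by the graph of $c$ inside $C_T\times_B \mathcal{M}$ for an appropriate moduli stack $\mathcal{M}$ of nodal curves, or more concretely by a point of a relative Hilbert scheme $\mathrm{Hilb}_{C_B\times_B (\text{universal nodal curve})/B}$. Algebraicity and local finite presentation then follow from algebraicity and local finite presentation of Hilbert schemes / the stack of nodal curves, together with the fact (Lemma \ref{lem_expprop}, second displayed lemma) that conditions i) and ii) are open. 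Alternatively, and perhaps more cleanly, one can note that an expansion is the same data as a choice, for each node and marked point of each fiber, of how to insert a chain; this presents $\Exp_{C_B/B}$ étale-locally as a product of the standard local expansion stacks, each of which is known to be algebraic.

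The heart of the matter is the claim that $\Exp_{C_B/B}\to B$ is flat of pure relative dimension zero. This I would check étale-locally on $C_B$, reducing to the two local models dictated by ii'): the neighborhood of a smooth marked point, and the neighborhood of a node. Near a marked point the curve is $B\times \AA^1$ with section $\{0\}$, and an expansion over $T\to B$ replaces the point by a chain $R_1\cup\cdots\cup R_j$ of $\mathbb{P}^1$'s; the data of such an expansion is precisely the data classically packaged by the Artin stack $\mathcal{A} = [\AA^1/\mathbb{G}_m]$ of expanded degenerations (the "accordion"), or its marked-point analogue. Near a node the local picture is $Z(xy-f)\subseteq B\times\AA^2$ for a function $f$ on $B$, and an expansion inserts a chain between the two branches; over the locus $f\neq 0$ the node is smooth and no chain can be inserted (the fiber of $c$ over a smooth point is a point by i')), so $\Exp$ is trivial there, while over $f=0$ one gets the standard expanded-degeneration stack. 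In each case the relevant local stack is the quotient stack $[\,\Spec k[t_1,\dots,t_m][s]/(t_1\cdots t_m - s)\ \big/\ \mathbb{G}_m^{m}\,]$ (or a slight variant), which is well known to be smooth — in particular flat — over $\Spec k[s]$ of relative dimension zero, since the equation $t_1\cdots t_m = s$ defines a flat family and quotienting by the $m$-torus drops dimension by $m$. Pure dimension zero is then immediate: the generic fiber (over the smooth locus of $C_B\to B$ and away from the singular locus in $B$) is just $B$ itself, i.e. there is a unique trivial expansion, so the relative dimension is zero on a dense open, and the local models show it never jumps.

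The main obstacle I expect is the bookkeeping required to globalize the local models into a single algebraic stack and to check flatness coherently across the whole family — in particular handling the interaction between nodes of $C_B$ and the base $B$ (the function $f$ cutting out the degeneration locus), and the fact that expansions at different nodes/marked points are inserted independently, so that $\Exp_{C_B/B}$ is a fibered product over $B$ of the individual local expansion stacks. One must be careful that these loci are disjoint (which holds: marked points are disjoint from nodes and from each other by the definition of marked nodal curve) so that the local contributions genuinely multiply without interference, and that "pure dimension zero" survives taking this fibered product (it does, since each factor is flat of relative dimension zero over $B$ and flatness plus relative dimension zero is stable under fibered products over the base). Once the local analysis is in place — essentially citing the smoothness over $\AA^1$ of the standard expanded degeneration stack, as in \cite{Li_Stab} or \cite{ACFW}, adapted to the marked-point case — the global statement follows by a straightforward gluing argument, so I would organize the write-up as: (1) algebraicity and local finite presentation via Hilbert schemes plus openness of i), ii); (2) reduction of flatness and dimension to the two étale-local models; (3) explicit identification of each model with a known smooth Artin stack over $\AA^1$; (4) assembling via the fibered product and concluding purity of dimension zero.
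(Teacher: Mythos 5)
Your proposal matches the paper's proof: algebraicity and local finite presentation are obtained by realizing $\Exp_{C_B/B}$ as a locally closed substack of the Hom-stack $\Hom_{\mathscr{M}\times B}(\mathscr{C},C_B)$ over the stack $\mathscr{M}$ of nodal marked curves (your graph/Hilbert-scheme formulation is the same device, using openness of conditions i) and ii)), and flatness of pure relative dimension zero is checked by reducing \'etale-locally on $C_B$ to the standard local models $\Exp_{(\AA^1,0)}$ and $\Exp_{\AA^2/\AA^1}$ of Li and ACFW and assembling them as a fiber product over $B$. The one inaccuracy is your parenthetical claim that the local model is \emph{smooth} over $\AA^1$ --- it is only flat with relative normal crossings singularities, since the atlas is $\AA^n\to\AA^1$ given by multiplication of the coordinates --- but as you only use flatness and relative dimension zero, this does not affect the argument.
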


The following basic lemma lets us study expansions of curves locally on the curve.

\begin{lem}\label{lemexppullback}
	Let $C_1,C_2$ be flat families of nodal marked curves over $B$ and suppose that we have an \'etale morphism $\gamma:C_1\to C_2$ over $B$ that induces isomorphisms of the singular and the marked loci. Then pullback along $\gamma$ induces an isomorphism $\Exp_{C_2/B}\to \Exp_{C_1/B}$.
\end{lem}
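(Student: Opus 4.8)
The plan is to exhibit pullback along $\gamma$ as an equivalence by constructing an explicit inverse via étale descent. Since $\gamma$ is étale it automatically identifies the relative non-smooth loci in both directions ($\gamma^{-1}$ of the non-smooth locus of $C_2/B$ equals that of $C_1/B$), and by hypothesis this restriction is an isomorphism, as is the corresponding restriction on the marking loci; writing $N_i\subseteq C_i$ for the union of the non-smooth and marking loci we thus have $N_1=\gamma^{-1}(N_2)$ and an isomorphism $\gamma|_{N_1}\colon N_1\xrightarrow{\sim}N_2$. Put $U_i=C_i\setminus N_i$. I will use throughout that an expansion $c\colon\Ctilde_T\to C_T$ is an isomorphism over $U_T$: since $c$ is an expansion it satisfies i')--ii') of Lemma \ref{lem_expprop}, so the locus $\Sigma_c$ where $c$ fails to be an isomorphism is quasi-finite over $T$ and consists of nodes and marked points in the fibres, hence lies in $N_T$. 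For the pullback morphism itself: given $T\to B$ and an expansion $(\Ctilde_{2,T},c_2)$ of $C_{2,T}$, set $\Ctilde_{1,T}:=\Ctilde_{2,T}\times_{C_{2,T}}C_{1,T}$ with $c_1$ the projection and the marking sections pulled back along $\gamma_T$ (legitimate as $\gamma^{-1}(N_2)=N_1$). As $\Ctilde_{1,T}\to\Ctilde_{2,T}$ is étale (a base change of $\gamma_T$), $\Ctilde_{1,T}$ is again a flat family of nodal marked curves over $T$ with $c_1$ proper; and $\Sigma_{c_1}=\gamma_T^{-1}(\Sigma_{c_2})$ is quasi-finite over $T$ and contained in $\gamma_T^{-1}(N_{2,T})=N_{1,T}$, while each fibre $c_1^{-1}(x)$ is a base-field extension of the corresponding fibre of $c_2$ and so still a chain of rational curves of the shape required in ii'); by Lemma \ref{lem_expprop} this makes $(\Ctilde_{1,T},c_1)$ an expansion of $C_{1,T}$, functorially and compatibly with base change in $T$. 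This gives the morphism $\gamma^*\colon\Exp_{C_2/B}\to\Exp_{C_1/B}$.

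Next I would build the inverse $G\colon\Exp_{C_1/B}\to\Exp_{C_2/B}$ by étale descent. The family $\{\gamma_T\colon C_{1,T}\to C_{2,T},\ U_{2,T}\hookrightarrow C_{2,T}\}$ is an étale covering of $C_{2,T}$, jointly surjective because $N_{2,T}=\gamma_T(N_{1,T})\subseteq\gamma_T(C_{1,T})$. Given an expansion $(\Ctilde_{1,T},c_1)$ of $C_{1,T}$, I descend along this cover the pair consisting of $c_1\colon\Ctilde_{1,T}\to C_{1,T}$ on the first chart and $\mathrm{id}_{U_{2,T}}$ on the second. Since $\gamma_T$ is unramified and separated, $C_{1,T}\times_{C_{2,T}}C_{1,T}$ is the disjoint union of the diagonal $\cong C_{1,T}$ and an off-diagonal open $R$; moreover $R$ maps into $U_{2,T}$, since over $N_{2,T}$ one has $(C_{1,T}\times_{C_{2,T}}C_{1,T})|_{N_{2,T}}=N_{1,T}\times_{N_{2,T}}N_{1,T}=N_{1,T}$ --- here using that $\gamma|_{N_1}$ is an isomorphism --- so that part of the fibre product sits entirely on the diagonal. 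Hence $p_1(R),p_2(R)\subseteq\gamma_T^{-1}(U_{2,T})=U_{1,T}$, where $\Ctilde_{1,T}$ is canonically trivial, so over $R$, over $C_{1,T}\times_{C_{2,T}}U_{2,T}=U_{1,T}$, and over $U_{2,T}\times_{C_{2,T}}U_{2,T}=U_{2,T}$ all the relevant restrictions of both objects are canonically pulled back from $C_{2,T}$; the tautological identifications then form a descent datum, with cocycle condition over the triple fibre products automatic for the same reason. By effectivity of étale descent for algebraic spaces one obtains a family $c_2\colon\Ctilde_{2,T}\to C_{2,T}$, with marking sections descending likewise. Being an expansion is étale-local on $C_{2,T}$ and holds on both charts (trivially on $U_{2,T}$, by hypothesis on $C_{1,T}$), so $(\Ctilde_{2,T},c_2)\in\Exp_{C_2/B}(T)$; this is natural in $T$ and defines $G$.

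It remains to see that $\gamma^*$ and $G$ are mutually inverse. The isomorphism $\gamma^*\circ G\cong\mathrm{id}$ is immediate, since applying $\gamma_T^*$ to a descended family returns the $C_{1,T}$-chart of its descent datum, namely $\Ctilde_{1,T}$. For $G\circ\gamma^*\cong\mathrm{id}$ one checks that the descent datum the construction of $G$ attaches to $\gamma_T^*\Ctilde_{2,T}$ agrees with the canonical descent datum of $\Ctilde_{2,T}$ along the cover --- over $R$ both are the identity because $\gamma_T\circ p_1=\gamma_T\circ p_2$ and $\Ctilde_{2,T}$ is trivial over $U_{2,T}$ --- so uniqueness of descent supplies the natural isomorphism. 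All of this is compatible with base change, so $\gamma^*$ is an isomorphism of stacks over $B$ with inverse $G$. I expect the only genuinely delicate point, and the one place where the full strength of the hypothesis is needed, to be the claim in the second paragraph that $R$ maps over $U_{2,T}$: this rests on $N_1\times_{N_2}N_1=N_1$, i.e.\ on $\gamma$ restricting to an isomorphism (and not merely an injection) on the singular and marked loci, whereas everything else is formal bookkeeping with étale base change and descent.
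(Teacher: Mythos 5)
Your argument is correct and is in essence the paper's own proof: both rest on the observation that an expansion is an isomorphism away from the singular and marked loci, so it can be reconstructed by descent from its behaviour near those loci, where $\gamma$ is an isomorphism. The paper phrases this via agreement of the formal completions of $C_1$ and $C_2$ along those loci followed by fpqc descent, whereas you descend directly along the \'etale cover $\{C_1\to C_2,\ U_2\hookrightarrow C_2\}$; this is a clean implementation of the same idea, and you correctly isolate the one non-formal point --- that the off-diagonal part of $C_1\times_{C_2}C_1$ lies over $U_2$, which is exactly where the hypothesis that $\gamma$ is an isomorphism (not merely injective) on the singular and marked loci is used.
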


\begin{proof}
	The assumptions imply that the completions of $C_2$ and $C_1$ along each singular and marked locus agree. Since an expansion is an isomorphism away from these loci, the statement then follows from fpqc descent. 
\end{proof}

\begin{proof}[Proof of Proposition \ref{propexpstack}.]
Consider the stack $\mathscr{M}$ of all nodal marked curves \cite[\href{https://stacks.math.columbia.edu/tag/0DSX}{Tag 0DSX}]{stacks-project} with universal family $\mathscr{C}$, and on $\mathscr{M}\times B$, consider the morphism space $\Hom_{\mathscr{M}\times B}(\mathscr{C},C_B)$, which is an algebraic stack over $B$ locally of finite presentation \cite[\href{https://stacks.math.columbia.edu/tag/0DPN}{Tag 0DPN}]{stacks-project}. The locus that  preserves the markings is closed. The locus in which i), ii) of Definition \ref{defexpansion} holds is then open in this closed substack. Thus, we get $\Exp_{C_B/B}$ as a locally closed substack, and in particular it is algebraic and locally of finite presentation over $B$. 

For the remaining properties we may work locally on $B$, and assume without loss of generality that $B=\Spec R$ is the spectrum of a henselian local ring with separably closed residue field. Then the result follows from Lemma \ref{lemlocalexp} 
\end{proof}

\begin{lem}\label{lemlocalexp}
	Let $B = \Spec R$ be the spectrum of a henselian local ring with separably closed residue field $k$ and let $C_B\to B$ be a proper flat family of nodal marked curves with markings $\sigma_1,\ldots,\sigma_n$ and with $q_1,\ldots,q_r$ the nodes of the special fiber $C_k$. Then $\Exp_{C_B/B}\to B$ is flat of relative dimension zero, and the singularities are products of pullbacks of the singularities of the form $\AA^n\to \AA^1$ given by multiplication of the coordinates.
\end{lem}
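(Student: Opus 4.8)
The plan is to reduce the global statement to a purely local study of expansions near each node and each marked point of the special fiber $C_k$, and then to identify those local pieces explicitly. First I would use Lemma \ref{lemexppullback}: by working \'etale-locally on $C_B$ around the markings $\sigma_1,\ldots,\sigma_n$ and the nodes $q_1,\ldots,q_r$ of $C_k$, and noting that an expansion is an isomorphism away from the singular and marked loci, the stack $\Exp_{C_B/B}$ decomposes as a fiber product over $B$ of the local expansion stacks $\Exp_{U_i/B}$, where each $U_i$ is an \'etale neighborhood of a single node $q_i$ or a single marking $\sigma_j$. (Since $R$ is henselian with separably closed residue field, the nodes and markings are all rational points, and such \'etale neighborhoods exist with the required form.) Thus it suffices to prove flatness of relative dimension zero, and to identify the singularity, for each of these local factors separately, and then take the product.

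Next I would treat the two local cases. For a marked point, the marking $\sigma_j$ is cut out by a section, the family is smooth there, and Lemma \ref{lem_expprop}ii') says an expansion over such a neighborhood inserts a chain of rational curves $R_1\cup\cdots\cup R_\ell$ at the marking, with one marked point on $R_\ell$ and the attaching node on $R_1$. The moduli of such a chain (with no further constraints, since the ambient curve is smooth and the deformation parameter is trivial) is rigid, so $\Exp_{U_j/B}\to B$ is \'etale, in particular smooth of relative dimension zero — this is the trivial factor. For a node $q_i$, after shrinking we may assume $U_i = Z(xy - t_i)\subseteq B\times\AA^2$ for a function $t_i$ on $B$ (the local equation of the node, a coordinate on an $\AA^1$); here Lemma \ref{lem_expprop}ii') says an expansion inserts a chain $R_1\cup\cdots\cup R_\ell$ between the two branches of the node. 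The key computation is that the stack of such expansions of the single node $Z(xy-t_i)$ is exactly the stack of expanded degenerations of the family $\AA^1\to\AA^1$, $t_i \mapsto$ (product of coordinates): an expansion into a chain of length $\ell$ corresponds to factoring $t_i = u_1\cdots u_{\ell+1}$, giving the standard local model $\Spec\big(R[u_1,\ldots,u_{\ell+1}]/(u_1\cdots u_{\ell+1} - t_i)\big)$ glued along the usual open charts. This identifies $\Exp_{U_i/B} \to B$ with the pullback along $t_i: B\to\AA^1$ of the map $\mathbb{A}^n \to \mathbb{A}^1$ given by multiplying coordinates (in each chart of the expanded degeneration stack), which is flat of relative dimension zero.

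Finally I would assemble: $\Exp_{C_B/B}$ is the fiber product over $B$ of the trivial (\'etale) factors coming from the markings and the factors $\Exp_{U_i/B}$ coming from the nodes $q_1,\ldots,q_r$, so $\Exp_{C_B/B}\to B$ is flat of relative dimension zero and its singularities are, locally, products of pullbacks of the model $\AA^n\to\AA^1$, exactly as claimed. The main obstacle I anticipate is making the identification in the nodal case fully rigorous: one must check that the functor of expansions of $Z(xy-t_i)$ really is represented by the expanded-degeneration local model — i.e. that conditions i') and ii') of Lemma \ref{lem_expprop} pin down precisely the chains arising from factorizations $t_i = u_1\cdots u_{\ell+1}$, with the correct gluing and no spurious automorphisms or extra moduli — and that this identification is compatible with base change in $B$, so that the "pullback of $\AA^n\to\AA^1$" description is literally correct and not just true on geometric fibers. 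Everything else (the \'etale-local decomposition, the rigidity at marked points) is routine given Lemmas \ref{lem_expprop} and \ref{lemexppullback}.
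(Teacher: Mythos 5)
Your overall strategy is the same as the paper's: use Lemma \ref{lemexppullback} and the henselian hypothesis to split $\Exp_{C_B/B}$ into a fiber product over $B$ of local factors, one for each node $q_i$ and each marking $\sigma_j$; identify the nodal factor with the pullback along $g_i:B\to\AA^1$ of $\Exp_{\AA^2/\AA^1}\to\AA^1$, whose charts are exactly your $\Spec\bigl(R[u_1,\ldots,u_{\ell+1}]/(u_1\cdots u_{\ell+1}-t_i)\bigr)$, i.e.\ $\AA^n\to\AA^1$ by multiplication of coordinates; and observe that the marked-point factors contribute nothing to the singularities. The paper simply cites Li and ACFW for these explicit local models rather than recomputing them, which also disposes of the representability/base-change worry you flag at the end.

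One correction, though: your justification for the marked-point factor is wrong. A chain of rational curves inserted at a marking is \emph{not} rigid --- every node of the chain (including the attaching node) can be smoothed, so expansions with chains of different lengths deform into one another, and $\Exp_{(\AA^1,0)}$ is a nontrivial Artin stack with a smooth cover by affine spaces $\AA^n$ and $\mathbb{G}_m^n$ automorphism groups; it is neither \'etale over the base nor a "trivial factor". (The same phenomenon occurs at the nodes: the chain length is not locally constant, which is precisely why the local model there is $\AA^n\to\AA^1$ rather than a disjoint union of copies of the base.) What saves your argument is that $\Exp_{(\AA^1,0)}$ is still \emph{smooth} of relative dimension zero --- an $n$-dimensional cover modulo $n$-dimensional automorphisms --- so it is flat and contributes no singularities, which is all the lemma needs; but "rigid, hence \'etale" is not the right reason, and if taken literally it would misdescribe the stack as a disjoint union of strata indexed by chain length.
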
	

\begin{proof}
	By the \'etale local structure of nodes and Lemma \ref{lemexppullback} and by our assumptions on $B$, we may reduce to the case that 
	$C$ is a disjoint union of open sets of the following form:
	For each node $q_i$, there is a morphism $g_i: B\to \AA^1$, such that the component $U_i$ containing $q_i$ is isomorphic to the pullback along $g_i$ of a standard degeneration $\AA^2\to \AA^1$ given by multiplication of the coordinates. 
	Each marked point is contained in a component $V_j\simeq B\times \AA^1$, with the marking given by the origin of $\AA^1$. 
	It follows that $\Exp_{C_B/B}$ is a fiber product over $B$ of pullbacks of the stacks $\Exp_{\AA^2/\AA^1}\to \AA^1$ and $\Exp_{(\AA^1,0)}\to \Spec \CC$. These stacks can be explicitly described (see \cite[\S 1]{Li_Stab} and \cite[\S 1 + \S 6]{ACFW}): The stack $\Exp_{(\AA^1,0)}$ has a smooth cover by  affine spaces $\AA^n$, while $\Exp_{\AA^2/\AA^1}$ has a cover by affine spaces $\AA^n$, with the mapping to $\AA^1$ corresponding to multiplication of the coordinates. In particular, the morphism $\Exp_{\AA^2/\AA^1}\to \AA^1$ is flat with relative normal crossings singularities.   
\end{proof}

\subsection{Moduli spaces of vector bundles on curves}
We recall some results regarding existence of universal bundles and the structure of the Picard group for moduli spaces of stable vector bundles on curves. We also derive a version of the Bogomolov--Gieseker inequality for fiber-stable sheaves on a product with $\PP^1$.
Fix coprime integers $r,d$ with $r>0$. 
Let $F$ be a smooth projective curve of genus $g$ and let $M_F(r,d)$ denote the moduli space of rank $r$, degree $d$ stable vector bundles on $F$. For a degree $d$ line bundle $L$ on $F$, and let $M_F(r,L)$ denote the fiber of the determinant morphism $M_F(r,d) \to \Pic^d F$ over $[L]$.
\begin{prop}\label{propmodoncurve}
	Suppose that $g\geq 2$.
\begin{enumerate}[label = \roman*)]
    \item  For any $L\in \operatorname{Pic}^d F$, the Picard group of $M_F(r,L)$ is canonically isomorphic to $\mathbb{Z}$, identifying the ample generator with $1$. 
    \item Let $V$ be a vector bundle on $F$ satisfying $\operatorname{rk} V = r$ and $\deg V = r(g-1) - d$. Then for any choice of universal sheaf $\mathcal{E}^u$ on $F\times M_F(r,L)$, the line bundle
    \[L_V:= \left(\det R\pi_*(\mathcal{E}^u\otimes p^*V)\right)^{\vee}\]
    is an ample generator of $\Pic M_F(r,L)$. 
    \item \label{modonciv}There is a unique choice of universal bundle on $M_F(r,L)$ such that we have $c_1(E|_{\{pt\}\times M_F(r,d)})  \simeq L_V^{k}$ for some integer $0\leq k< r$. This $k$ is the unique integer in these bounds that satisifies $dk -r k' =1$ for some $k'\in \mathbb{Z}$. In particular, we have $(k,r)=1$. 
\end{enumerate}
\end{prop}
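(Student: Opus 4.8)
The plan is to deduce i) from the literature and build ii) and iii) on top of it. For i), I would simply invoke the theorem of Dr\'ezet and Narasimhan on Picard groups of moduli spaces of semistable bundles with fixed determinant: for $g\geq 2$ the group $\Pic M_F(r,L)$ is infinite cyclic, freely generated by an ample line bundle which I will call $\mathcal{L}$. (This holds for arbitrary $r,d$; coprimality is not needed for i), but it is needed below to know that $M_F(r,L)$ is a fine moduli space.)

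For ii), the first step is the numerical observation that the hypothesis $\operatorname{rk}V=r$, $\deg V=r(g-1)-d$ is exactly what forces $\chi(E_m\otimes V)=0$ for every $[E_m]\in M_F(r,L)$: indeed $\deg(E_m\otimes V)=r\deg V+rd=r^2(g-1)$, so $\chi=r^2(g-1)+r^2(1-g)=0$. Because this fibrewise Euler characteristic vanishes, replacing $\mathcal{E}^u$ by $\mathcal{E}^u\otimes\pi_M^*N$ multiplies $\det R\pi_*(\mathcal{E}^u\otimes p^*V)$ by $N^{\otimes\chi}=\mathcal{O}$, so $L_V$ is a well-defined element of $\Pic M_F(r,L)=\mathbb{Z}\,\mathcal{L}$, independent of the choice of universal bundle. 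The remaining step is to check that $L_V$ is the generator and not a proper multiple. Here I would again use Dr\'ezet--Narasimhan: the determinant-of-cohomology bundle attached to a bundle $W$ of slope $g-1-d/r$ equals $\mathcal{L}^{\otimes \operatorname{rk}(W)/r_0}$ with $r_0=r/\gcd(r,d)$, and since $\gcd(r,d)=1$ we have $r_0=r=\operatorname{rk}V$, hence $L_V=\mathcal{L}$; ampleness is part of the same theorem. (If one wanted to avoid the level formula, one could instead exhibit a Hecke-modification curve $\PP^1\to M_F(r,L)$ on which $L_V$ has degree one, which forces $L_V$ to generate.)

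For iii), I would start from the fact that, $M_F(r,L)$ being fine, any two universal bundles differ by twisting with a line bundle pulled back from the base, i.e. $\mathcal{E}^u\mapsto\mathcal{E}^u\otimes\pi_M^*\mathcal{L}^{\otimes j}$ for some $j\in\mathbb{Z}$. Setting $\delta:=c_1(\mathcal{E}^u|_{\{pt\}\times M_F(r,L)})$ and writing $\delta=a\,c_1(\mathcal{L})$ with $a\in\mathbb{Z}$ (using i), ii)), such a twist sends $a$ to $a+rj$; hence $a\bmod r$ is intrinsic and there is a unique universal bundle with $c_1(\mathcal{E}^u|_{\{pt\}\times M_F(r,L)})=k\,c_1(\mathcal{L})=c_1(L_V^{\otimes k})$ and $0\leq k<r$, namely the one with $k\equiv a\pmod r$. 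To compute $k$, I would apply Grothendieck--Riemann--Roch along $\pi\colon F\times M_F(r,L)\to M_F(r,L)$. Using $\ch(V)\operatorname{td}(T_F)=r-d\,[pt]$ on $F$, this gives in $A^1(M_F(r,L))_{\mathbb{Q}}=\mathbb{Q}\,c_1(\mathcal{L})$ the identity
\[ c_1(L_V)=d\,\delta-r\,\pi_*\ch_2(\mathcal{E}^u). \]
Running the same computation with $\mathcal{O}_F$ in place of $V$ gives $c_1\bigl((\det R\pi_*\mathcal{E}^u)^{\vee}\bigr)=-\pi_*\ch_2(\mathcal{E}^u)-(1-g)\delta$, which lies in $\Pic M_F(r,L)=\mathbb{Z}\,c_1(\mathcal{L})$; combined with $\delta\in\mathbb{Z}\,c_1(\mathcal{L})$ this shows $\pi_*\ch_2(\mathcal{E}^u)\in\mathbb{Z}\,c_1(\mathcal{L})$. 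Plugging $c_1(L_V)=c_1(\mathcal{L})$ (from ii)) into the displayed identity then gives $1=da-r\cdot(\text{integer})$, i.e. $dk\equiv da\equiv 1\pmod r$; so $k$ is the unique residue of $d^{-1}$ in $[0,r)$, it satisfies $dk-rk'=1$ with $k'=(dk-1)/r$, and automatically $(k,r)=1$.

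The main obstacles I anticipate are, first, getting the normalisation in part ii) exactly right, i.e. confirming that $L_V$ is precisely the generator and not some other divisor-index multiple of it; and, second, the integrality statement $\pi_*\ch_2(\mathcal{E}^u)\in\mathbb{Z}\,c_1(\mathcal{L})$ used in iii). Together these are exactly what collapse the Grothendieck--Riemann--Roch identity to the clean congruence $dk\equiv 1\pmod r$, so the sign and normalisation bookkeeping in the level formula and in the two GRR computations is where I would be most careful.
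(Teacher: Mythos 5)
Your proposal is correct, and for parts i) and ii) it follows the same route as the paper, which simply cites Dr\'ezet--Narasimhan's Theorem B; your added details (the computation $\chi(E_m\otimes V)=r^2(g-1)+r^2(1-g)=0$ showing independence of the choice of $\mathcal{E}^u$, and the level formula $\lambda(W)=\mathcal{L}^{\operatorname{rk}(W)/r_0}$ with $r_0=r/\gcd(r,d)=r=\operatorname{rk}V$ identifying $L_V$ with the ample generator) are exactly the right way to extract the stated form of ii) from that theorem. For iii) the paper only cites a remark of Ramanan, whereas you give a self-contained derivation; your Grothendieck--Riemann--Roch bookkeeping checks out: $\ch(p^*V)\operatorname{td}(T_\pi)=r-d\,\omega$ gives $c_1(L_V)=d\,\delta-r\,\pi_*\ch_2(\mathcal{E}^u)$, the auxiliary computation with $\mathcal{O}_F$ shows $\pi_*\ch_2(\mathcal{E}^u)\in\ZZ\,c_1(\mathcal{L})$, and since a twist by $\pi_M^*\mathcal{L}^{\otimes j}$ shifts $a$ by $rj$, the congruence $da\equiv 1\pmod r$ pins down the unique normalised $k$. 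Two small points worth making explicit: the passage from the equality of first Chern classes to an isomorphism $\det(\mathcal{E}^u|_{\{pt\}\times M})\cong L_V^{\otimes k}$ uses that $\Pic M_F(r,L)\cong\ZZ$ is torsion-free so $c_1$ is injective (which you have from i)), and the uniqueness of $k$ in $[0,r)$ with $dk\equiv 1\pmod r$ uses the standing coprimality of $r$ and $d$ (also needed for $M_F(r,L)$ to be a fine moduli space, as you note). Neither is a gap.
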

\begin{proof}
The first two points are Theorem B in \cite{DreNa_Group}. The last point is \cite[Remark 2.9]{Ram_Mod}.
\end{proof}

Let $L$ be a line bundle on $\mathbb{P}^1\times F$ that is the pullback of a degree $d_0\geq 1$ line bundle from $F$. We have the projection $\pi: \PP^1\times F\to \PP^1$. Let $V:= L^{\otimes r(g-1)-d}\oplus \mathcal{O}_{\PP^1\times F}^{\oplus d_0r-1}$, so that $\operatorname{rank} V =rd_0$ and $\deg V = d_0(r(g-1)-d)$. We define 
\[L_V(E) := (\det R\pi_*(E\otimes V) )^\vee.\]

A computation using Grothendieck--Riemann--Roch gives
\begin{equation}\deg L_V(E) = d_0(c_1(E)^2/2 - r\ch_2(E)) = \frac{d_0}{2}\Delta(E).
\end{equation}
We list related results:
\begin{lem}
    Let $E$ be a rank $r$ coherent sheaf satisfying $(\det E,F) = d$ and let $N$ be a line bundle on $\mathbb{P}^1$.
    Then $L_V(E) \cong L_V(E\otimes \pi^* N)$. 
\end{lem}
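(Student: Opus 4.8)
The plan is to reduce the statement to a computation of the determinant of cohomology under twisting by a line bundle pulled back from $\PP^1$. Write $N = \CO_{\PP^1}(a)$ for some $a \in \ZZ$; since $\Pic \PP^1 = \ZZ$ it suffices to treat $a = 1$, i.e. $N = \CO_{\PP^1}(1)$, and then iterate. So the goal is to show
\[
\det R\pi_*\bigl((E\otimes \pi^*\CO_{\PP^1}(1))\otimes V\bigr) \;\cong\; \det R\pi_*(E\otimes V).
\]
By the projection formula, $(E\otimes \pi^*\CO_{\PP^1}(1))\otimes V = (E\otimes V)\otimes \pi^*\CO_{\PP^1}(1)$, so writing $G := E\otimes V$ (a coherent sheaf on $\PP^1\times F$ whose restriction to fibers of $\pi$ has fixed rank $rd_0$ and degree $d_0\bigl(r(g-1)-d\bigr) + d_0(r(g-1)-d)$... in any case a fixed Euler characteristic), the claim becomes: $\det R\pi_*(G\otimes \pi^*\CO_{\PP^1}(1)) \cong \det R\pi_*(G)$ for $G$ a fixed coherent sheaf on $\PP^1\times F$.

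The key step is the multiplicativity of the determinant of cohomology in short exact sequences on the base. The sheaf $\pi^*\CO_{\PP^1}(1)$ sits in the Euler sequence $0 \to \CO_{\PP^1} \to \CO_{\PP^1}(1)^{\oplus 2} \to \CO_{\PP^1}(2) \to 0$, but a cleaner route is to use that $\CO_{\PP^1}(1)$ has a section vanishing at a point $p \in \PP^1$, giving $0 \to \CO_{\PP^1} \to \CO_{\PP^1}(1) \to \CO_p \to 0$. Pulling back to $\PP^1\times F$ and tensoring with $G$ (which is flat over... not necessarily, but the relevant point is exactness after tensoring with the locally free sheaf in the middle, or one uses a locally free resolution of $G$), one gets, after applying $R\pi_*$ and taking determinants,
\[
\det R\pi_*(G\otimes\pi^*\CO_{\PP^1}(1)) \;\cong\; \det R\pi_*(G)\otimes \det R\pi_*\bigl(G|_{\{p\}\times F}\bigr)\otimes(\text{correction}).
\]
The second factor $\det R\pi_*(G|_{\{p\}\times F})$ is the determinant of the cohomology of a sheaf on a single fiber, hence a one-dimensional vector space, i.e. canonically trivial as a line bundle on $\PP^1$ (it is supported at $p$, but pushes forward to a skyscraper whose determinant is a trivialized line). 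More precisely $R\pi_*(i_{p*}(G|_{\{p\}\times F})) = i_{p*}R\Gamma(F, G|_{\{p\}\times F})$, a skyscraper at $p$ with fiber a fixed finite-dimensional graded vector space; its determinant line bundle on $\PP^1$ is $\CO_{\PP^1}(-\chi \cdot p)$ where $\chi$ is the Euler characteristic — so in fact this contributes a genuine twist, and one must track it together with the "correction" term coming from the derived tensor. The honest statement is that all the $p$-dependent and $\chi$-dependent twists cancel, which is exactly the content of the well-known fact that $\deg \det R\pi_*(G\otimes\pi^*M) = \deg\det R\pi_*(G) + \chi(G_{\mathrm{fiber}})\cdot \deg M$ combined with the observation that here this should hold at the level of line bundles on $\PP^1$ and we want it to reduce to an isomorphism. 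The resolution is that we are comparing two line bundles on $\PP^1$ and we only need an abstract isomorphism: but $\deg \det R\pi_*(G\otimes \pi^*\CO(1)) - \deg\det R\pi_*(G) = \chi \cdot 1$ need NOT vanish in general, so I should double-check — the actual mechanism is that $L_V$ was designed with $\operatorname{rank} V = rd_0$ and $\deg V = d_0(r(g-1)-d)$ precisely so that $\chi$ of the relevant fiber restriction vanishes, forcing the degree difference to be zero, and then on $\PP^1$ a degree-zero line bundle is trivial, giving the isomorphism.

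So the clean argument is: (1) compute $\chi\bigl((E\otimes V)|_{\{p\}\times F}\bigr)$ by Riemann--Roch on $F$, using $\operatorname{rk}(E\otimes V) = r\cdot rd_0$, $\deg\bigl((E\otimes V)|_F\bigr) = rd_0\cdot(\text{something}) + r(g-1)\cdot\ldots$ — the point being that the numerics of $V$ are rigged so this $\chi$ is zero; (2) invoke Grothendieck--Riemann--Roch for $\pi$ (as already used in the displayed formula $\deg L_V(E) = \frac{d_0}{2}\Delta(E)$ in the excerpt) to conclude $\deg L_V(E\otimes\pi^*N) - \deg L_V(E) = -\chi\cdot\deg N = 0$; (3) conclude that $L_V(E)$ and $L_V(E\otimes\pi^*N)$ differ by a degree-zero line bundle on $\PP^1$, hence are isomorphic. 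The main obstacle is verifying step (1), i.e. that the specific choice $\deg V = d_0(r(g-1)-d)$, $\operatorname{rk} V = rd_0$ makes the fiberwise Euler characteristic of $E\otimes V$ vanish — this is a short Riemann--Roch bookkeeping exercise on the curve $F$, using $(\det E, F) = d$, and it is precisely the reason those numbers appear in the definition of $V$. Once that vanishing is in hand, steps (2) and (3) are immediate from GRR and $\Pic\PP^1 = \ZZ$.
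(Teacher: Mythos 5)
Your proposal is correct and, after the self-correcting detour through the sequence $0\to\CO_{\PP^1}\to\CO_{\PP^1}(1)\to\CO_p\to 0$, lands on exactly the paper's argument: the whole point is that the numerics of $V$ force $\chi\bigl((E\otimes V)|_{\{p\}\times F}\bigr)=0$, i.e.\ $\operatorname{rk}R\pi_*(E\otimes V)=0$, after which the projection formula gives the claim. The only (harmless) difference is that you conclude via degrees and $\Pic\PP^1=\ZZ$, whereas the paper uses $\det(K\otimes N)\cong\det K\otimes N^{\operatorname{rk}K}$ directly, which yields the isomorphism without invoking any special feature of $\PP^1$.
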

\begin{proof}
	One checks that $\operatorname{rk} R\pi_*(E\otimes V) =0$. The lemma then follows from the projection formula and properties of the determinant. 
\end{proof}
Let $0\leq k<r$ be the unique integer in this range satisfying $dk - rk'=1$ for some $k'$.
\begin{lem}\label{lem_mult}
  Let $E$ be a degree $r$ coherent sheaf on $\mathbb{P}^1\times F$ such that $c_1(E)$ has degree $d$ on fibers over $\PP^1$. Then   
  \begin{equation}\label{eqdegree}
   (L,c_1(E)) \equiv k\deg L_V(E) \mod d_0r.
    \end{equation}
\end{lem}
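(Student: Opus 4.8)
The plan is to turn the congruence into a short computation with numerical divisor classes on $\PP^1\times F$, using the Grothendieck--Riemann--Roch formula already displayed above.

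First I would fix the numerical classes on $\PP^1\times F$: by Künneth, $H^2(\PP^1\times F,\ZZ)$ is freely generated by the class $\sigma$ of $\PP^1\times\{pt\}$ and the class $\phi$ of $\{pt\}\times F$, with $\sigma^2=\phi^2=0$ and $(\sigma,\phi)=1$. The hypothesis that $c_1(E)$ has degree $d$ on the fibers of $\pi$ means precisely $(c_1(E),\phi)=d$, so $c_1(E)=d\,\sigma+b\,\phi$ with $b:=(c_1(E),\sigma)\in\ZZ$. Since $L$ is the pullback of a degree-$d_0$ line bundle from $F$, it is trivial on $\PP^1\times\{pt\}$ and of degree $d_0$ on $\{pt\}\times F$, so $c_1(L)=d_0\,\sigma$.

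Next I would compute both sides of \eqref{eqdegree}. Directly, $(L,c_1(E))=(d_0\sigma,\ d\sigma+b\phi)=d_0 b$. For the right-hand side, $c_1(E)^2=(d\sigma+b\phi)^2=2db$, so the displayed identity $\deg L_V(E)=d_0\!\left(c_1(E)^2/2-r\ch_2(E)\right)$ becomes $\deg L_V(E)=d_0\,(db-r\ch_2(E))$; note $\ch_2(E)\in\ZZ$ here because $c_1(E)^2$ is even. Subtracting, $(L,c_1(E))-k\,\deg L_V(E)=d_0\big(b(1-kd)+kr\,\ch_2(E)\big)$. The term $kr\,\ch_2(E)$ is divisible by $r$, and $1-kd$ is divisible by $r$ because $k$ was chosen with $dk-rk'=1$; hence the whole expression is divisible by $d_0 r$, which is the claim.

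I do not expect a genuine obstacle: the content is just the observation that the hypothesis on $(c_1(E),\phi)$ pins the $\sigma$-coefficient of $c_1(E)$ to $d$, after which everything collapses modulo $r$. The only points deserving a line of care are the description of $H^2(\PP^1\times F)$ and the fact that the displayed GRR formula applies to an arbitrary coherent sheaf, both standard. (Alternatively, one could note that both $(L,c_1(-))$ and $\deg L_V(-)$ are additive on $K(\PP^1\times F)$ and verify the congruence on a convenient spanning set, but the direct computation is shorter.)
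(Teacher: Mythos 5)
Your proof is correct and is essentially the paper's own argument: both decompose $c_1(E)=d\sigma+b\phi$ using the Künneth basis, apply the displayed formula $\deg L_V(E)=\tfrac{d_0}{2}\Delta(E)$, and reduce modulo $d_0r$ using $kd\equiv 1 \pmod r$. The only cosmetic difference is that you compute the difference $(L,c_1(E))-k\deg L_V(E)$ directly rather than reducing $k\deg L_V(E)$ step by step as the paper does.
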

\begin{proof}
	We have that $c_1(E)= d[\PP^1\times y] +\ell [x\times F]$ for some $\ell\in \mathbb{Z}$. Thus $c_1(E)^2 = 2\ell d$, and
	\[k\deg L_V(E) =k d_0r c_2(E) - kd_0(r-1)\ell d\equiv d_0\ell k d \equiv d_0\ell = (L,c_1(E)) \mod d_0 r.\]
\end{proof}

\begin{lem}\label{lem_discpos}
	Suppose that $E$ is a rank $r$ torsion-free coherent sheaf on $\PP^1\times F$ such that the restriction of $E$ to the generic fiber over $\PP^1$ is stable of degree $d$. Then $\Delta(E)\geq 0$, with equality if and only if $E$ a tensor product of the pullbacks of a stable sheaf on $F$ and a line bundle on $\mathbb{P}^1$.
\end{lem}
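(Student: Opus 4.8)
The plan is to replace $E$ by a fiberwise-stable modification whose discriminant is no larger, to interpret that modification as a family of stable bundles over the \emph{projective} base $\PP^1$, and to extract both the inequality and its equality case from the ampleness of the generator of $\Pic M_F(r,L)$ in Proposition~\ref{propmodoncurve}. First I would reduce to the case that $E$ is locally free: since $\PP^1\times F$ is a smooth surface, $E^{\vee\vee}$ is locally free, its restriction to the generic fiber over $\PP^1$ agrees with that of $E$ (the cokernel $E^{\vee\vee}/E$ is supported at finitely many closed points, none over the generic point of $\PP^1$), and $\Delta(E)=\Delta(E^{\vee\vee})+2r\operatorname{length}(E^{\vee\vee}/E)\geq\Delta(E^{\vee\vee})$; so it suffices to treat $E^{\vee\vee}$, and if $\Delta(E)=0$ then $E=E^{\vee\vee}$ is already locally free.

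Assuming $E$ locally free, there are only finitely many $x\in\PP^1$ where $E_x:=E|_{\{x\}\times F}$ is not stable (equivalently semistable, as $(r,d)=1$). At each such $x$ I would run Langton's semistable-reduction procedure for the flat family $E$ over the local ring of $\PP^1$ at $x$, with respect to slope stability on the fiber $F$. Each elementary step has the form $0\to E_{i+1}\to E_i\to\iota_{x,*}Q_i\to0$ with $Q_i$ a destabilizing quotient of $E_i|_{\{x\}\times F}$, and since the fiber has trivial normal bundle, Grothendieck--Riemann--Roch gives $c_1(E_{i+1})=c_1(E_i)-\operatorname{rk}(Q_i)[\{x\}\times F]$ and $\ch_2(E_{i+1})=\ch_2(E_i)-\deg Q_i$, whence, using $c_1(E_i)\cdot[\{x\}\times F]=d$ and $[\{x\}\times F]^2=0$,
\[\Delta(E_{i+1})-\Delta(E_i)=2\bigl(r\deg Q_i-\operatorname{rk}(Q_i)\,d\bigr)=2r\operatorname{rk}(Q_i)\Bigl(\tfrac{\deg Q_i}{\operatorname{rk}Q_i}-\tfrac dr\Bigr)<0,\]
the last inequality precisely because $Q_i$ is a destabilizing \emph{quotient}. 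Performing this at all (finitely many, disjointly supported) bad fibers produces a locally free subsheaf $\widetilde E\subseteq E$, flat over $\PP^1$, equal to $E$ away from those fibers, with $\widetilde E_x$ stable of rank $r$ and degree $d$ on $F$ for every $x$, and with $\Delta(\widetilde E)\leq\Delta(E)$ --- equality holding if and only if no modification was ever needed, i.e.\ $E_x$ is stable for all $x$.

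Now $\widetilde E$ is a family of stable bundles on $F$ over the projective base $\PP^1$, and its fiberwise determinant $L=\det\widetilde E_x$ is independent of $x$ because $\Pic(\PP^1\times F)=\Pic(\PP^1)\times\Pic(F)$. For $g\geq2$ this yields a morphism $\bar\phi\colon\PP^1\to M_F(r,L)$; using (in place of the weight bundle $V$) a rank-$r$, degree-$(r(g-1)-d)$ bundle on $F$, so as to land on the ample generator of $\Pic M_F(r,L)$ in Proposition~\ref{propmodoncurve}, the determinant-of-cohomology construction yields a line bundle on $\PP^1$ which --- by twist-invariance ($\operatorname{rk}R\pi_*$ of the relevant sheaf is zero) and base change --- equals $\bar\phi^*(\text{ample generator})$ and has degree $\tfrac12\Delta(\widetilde E)$; since that pullback is nef, $\Delta(\widetilde E)\geq0$, with equality if and only if $\bar\phi$ is constant. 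For $g\leq1$ the conclusion is immediate without the moduli space: $g=1$ with $(r,d)=1$ forces $\widetilde E_x$ to be the unique stable bundle of determinant $L$, and $g=0$ forces $r=1$; in either case $\widetilde E_x\cong E_0$ is independent of $x$, whence (by the evaluation-map argument below) $\widetilde E\cong p^*E_0\otimes\pi^*N$ has $\Delta(\widetilde E)=0$. In all cases $\Delta(E)\geq\Delta(\widetilde E)\geq0$.

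For equality, suppose $\Delta(E)=0$. Then both inequalities above are equalities, so no Langton modification occurred ($E=\widetilde E$) and $\bar\phi$ is constant, say equal to $[E_0]$; hence $E_x\cong E_0$ for all $x$. Then $\pi_*\mathcal{H}om(p^*E_0,E)$ has one-dimensional fibers $\Hom(E_0,E_x)\cong\Hom(E_0,E_0)$, hence is a line bundle $N$ on $\PP^1$, and the evaluation morphism $p^*E_0\otimes\pi^*N\to E$ is a fiberwise isomorphism, hence an isomorphism; so $E\cong p^*E_0\otimes\pi^*N$ with $E_0\cong E_x$ stable on $F$. The converse implication is immediate (a direct Chern class computation). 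I expect the main obstacle to be the Langton step --- checking that the modifications at the several bad fibers assemble into one flat, locally free family over $\PP^1$ and getting the sign of $r\deg Q_i-\operatorname{rk}(Q_i)d$ right --- while the passage through $M_F(r,L)$ is routine given Proposition~\ref{propmodoncurve}. A shortcut to the inequality alone: stability of $E$ on the generic fiber forces slope-stability of $E$ for the polarization $[\{x\}\times F]+t[\PP^1\times\{y\}]$ with $0<t\ll1$ (Theorem~\ref{thm_boundedwalls}), so the classical Bogomolov inequality applies.
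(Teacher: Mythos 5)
Your proposal is correct and follows essentially the same route as the paper: reduce to the locally free case via the double dual, apply Langton's elementary modifications to make the restriction to every fiber stable while (strictly) decreasing $\Delta$, and then interpret the result as a map $\PP^1\to M_F(r,L)$ and use ampleness of the generator of $\Pic M_F(r,L)$ from Proposition \ref{propmodoncurve}. Your additional details (the explicit discriminant-drop computation, the $g\leq 1$ cases, and the evaluation-map argument for the equality case) fill in steps the paper leaves implicit, and your closing remark about deducing the inequality alone from Theorem \ref{thm_boundedwalls} and Bogomolov is exactly the paper's own subsequent remark.
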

\begin{proof}
	Since $E$ is torsion-free, it maps injectively to its double dual with zero-dimensional cokernel. We have $\Delta(E)\geq \Delta(E^{\vee\vee})$, with equality if and only if $E$ is locally free. By replacing $E$ with $E^{\vee \vee}$ if necessary, we may therefore assume that $E$ is locally free. 
	
	By Langton's procedure of elementary modifications \cite{Lang_Val}, one may find a locally free subsheaf $E' \subset E$ whose restriction to every fiber over $\PP^1$ is stable, and such that $E'$ is obtained from $E$ through successive elementary modifications along maximally destabilizing quotients of fibers. One checks that $\Delta(E)$ strictly decreases after each such modification, so $\Delta(E')\leq\Delta(E)$ with equality if and only if $E$ is already stable on every fiber. By replacing $E$ with $E'$ if necessary, we may assume that this is the case. 
	
	Then, after possibly tensoring $E$ by a line bundle from $\mathbb{P}^1$, we may assume that $E$ is a pull-back of the universal sheaf along a morphism $\nu:\mathbb{P}^1\to M_F(r,L)$ for some $L$. If $g=1$, this implies that $\nu$ is constant. Otherwise if $g\geq 2$, we have $\Delta(E)=2k\deg \nu^* L_V$ for $L_V$ as in \ref{propmodoncurve}. Since $L_V$ is ample, this implies that $\deg \nu^*L_V\geq 0$ with equality if and only if $\nu$ is constant. 
\end{proof}
\begin{rem}
	Without the statement about the case of equality, Lemma \ref{lem_discpos} also follows from the Bogomolov--Gieseker inequality for Gieseker-stable sheaves in view of Theorem \ref{thm_boundedwalls}.
\end{rem}
\subsection{Components of Relative Picard schemes}\label{subsec_relpic}
In this subsection, we construct a stack parametrizing connected components of the relative Picard scheme for a family of fibered surfaces. We also construct a further quotient identifying line bundles which differ by component twists. This will be used for making precise the notion of ``fixing the first Chern class'' in a family of fibered surfaces.

Let $X_B\to C_B\to B$ be a family of fibered surfaces over $B$ with structure morphism $\pi:X_B\to B$. We make the following technical assumption.
\begin{assu}\label{assu_locfree}
	The sheaf $R^1\pi_* \mathcal{O}_{X_B}$ is locally free on $B$. 
\end{assu}
\begin{rem}
This always holds if $\pi$ is representable by schemes: The family of nodal marked curves $C_B\to B$ induces natural log-structures on $B$ and $C_B$, with respect to which $C_B\to B$ is log-smooth \cite{Kato_Log}. Pulling back along $X_B\to C_B$, we get a log-structure on $X_B$, with respect to which $X_B\to B$ is log-smooth, vertical and exact. It follows from \cite[Corollary 7.1]{IKN_Quasi} that $R^1\pi_*\mathcal{O}_{X_B}$ is locally free on $B$ and commutes with any base change. 
\end{rem}
 
\begin{rem}
	Assumption \ref{assu_locfree} is likely unnecessary. One way to see this would be if one had a generalization of \cite[Corollary 7.1]{IKN_Quasi} to log-algebraic spaces. It has been pointed out to us by Luc Illusie that the proof there should work in the more general setting, see also \cite[4.2.5]{DelIll_Rel}. It would be desirable to have a direct proof in our situation that doesn't rely on logarithmic geometry. 
\end{rem}

We collect some facts about the relative Picard scheme under these hypothesis:
\begin{thm}\label{thm_picardsmooth}
	Suppose that Assumption \ref{assu_locfree} holds, for example if $X_B$ is a relative scheme over $B$.
	\begin{enumerate}[label = \roman*)]
		\item The dimension of the identity component $\Pic_{X_b}^0$ of the Picard scheme of the fiber $X_b$ is locally constant on $B$.
		\item There is an open group subscheme $\Pic^0_{X_B/B}\subset \Pic_{X_B/B}$ which restricts to the identity component of the Picard scheme over every point of $B$. 
		\item The morphism $\Pic^0_{X_B/B}\to B$ is smooth.
	\end{enumerate}
\end{thm}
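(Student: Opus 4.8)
The plan is to deduce all three statements from standard deformation theory of the Picard scheme, using Assumption~\ref{assu_locfree} to control the relative tangent space $R^1\pi_*\mathcal{O}_{X_B}$. First I would recall that for a proper flat morphism $\pi:X_B\to B$ with geometrically connected and geometrically reduced fibers (which holds here, since a fibered surface is in particular reduced and connected), the relative Picard functor $\Pic_{X_B/B}$ is representable by an algebraic space locally of finite presentation over $B$, and its tangent space at the origin over a point $b$ is canonically $H^1(X_b,\mathcal{O}_{X_b})$. The obstruction to smoothness lives in $H^2(X_b,\mathcal{O}_{X_b})$, but since we only want smoothness of the \emph{identity component} (equivalently, of $\Pic_{X_B/B}$ along the zero section), the relevant point is that $\Pic^0$ is always reduced in characteristic zero (Cartier's theorem: group schemes over a field of characteristic zero are smooth) — so for each fixed fiber $\Pic^0_{X_b}$ is an abelian variety (using properness of $\Pic^0$ for a projective fiber) or at least a smooth group scheme of dimension $h^1(\mathcal{O}_{X_b})$. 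This gives (i) \emph{pointwise}; the content of the theorem is to make it work in the family.

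For (i), the precise claim is local constancy of $\dim\Pic^0_{X_b} = h^1(X_b,\mathcal{O}_{X_b})$ on $B$. This is immediate from Assumption~\ref{assu_locfree}: $R^1\pi_*\mathcal{O}_{X_B}$ is locally free, hence has locally constant rank, and by cohomology and base change (the locally free hypothesis forces base change to hold for $R^1\pi_*\mathcal{O}$) its rank at $b$ computes $h^1(X_b,\mathcal{O}_{X_b})$. For (ii), I would invoke the general existence result for the identity component of a smooth-or-at-least-flat group algebraic space: when $\Pic_{X_B/B}\to B$ is flat with geometrically connected fibers in a neighborhood of the origin, one extracts the open subgroup scheme whose fibers are the identity components (this is \cite[Tag 0D0B]{stacks-project}-type reasoning, or Grothendieck's FGA construction for the relative Picard scheme of a projective morphism; in the projective case $\Pic^0_{X_B/B}$ is even proper over $B$). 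So (ii) reduces to knowing enough flatness of $\Pic_{X_B/B}$ along the zero section, and (iii) — smoothness of $\Pic^0_{X_B/B}\to B$ — is the real point, since (ii) and (iii) together are what is needed downstream.

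For (iii): smoothness of $\Pic^0_{X_B/B}\to B$ along the zero section can be checked via the infinitesimal lifting criterion. Given a square-zero extension $A'\to A$ of local Artinian $B$-algebras and a line bundle on $X_A$ deforming to a point of $\Pic^0$, the obstruction to lifting it to $X_{A'}$ lies in $H^2(X_b,\mathcal{O}_{X_b})\otimes I$; a priori this need not vanish. The standard trick is instead to prove smoothness by a dimension count combined with flatness: one shows $\Pic_{X_B/B}\to B$ is \emph{flat} along the zero section (using that $R^1\pi_*\mathcal{O}_{X_B}$ locally free implies, by base change, that the formal deformation space of the trivial bundle is a power series ring of the expected dimension $h^1(\mathcal{O})$ over $B$ — essentially because the obstruction map, while landing in $H^2$, actually vanishes when the base $R^i\pi_*$ are locally free, by a theorem of Mumford/Artin on the structure of $\Pic$; alternatively use that $\mathbf{R}\pi_*\mathcal{O}_{X_B}$ is a perfect complex whose cohomology sheaves are locally free, so it splits and $\Pic_{X_B/B}$ is smooth of dimension $h^1$ near the origin). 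Then a group scheme that is flat and locally of finite presentation over $B$ with fibers smooth of the \emph{same} dimension is smooth. The cleanest route, and the one I would take, is: (a) $\mathbf{R}\pi_*\mathcal{O}_{X_B}$ is a perfect complex in $[0,2]$ whose cohomology sheaves $\mathcal{O}_B$, $R^1\pi_*\mathcal{O}$, $R^2\pi_*\mathcal{O}$ are all locally free (the first always, the second by hypothesis, the third then by duality or by the perfect-complex structure), hence the complex is formal and splits; (b) therefore the deformation theory of line bundles over $B$ is unobstructed and $\Pic_{X_B/B}\to B$ is smooth in a neighborhood of the zero section; (c) cut out the identity component to get (ii), and smoothness is inherited, giving (iii), and (i) follows by reading off the fiber dimension.

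The main obstacle is step (b)/(iii): genuinely ruling out the $H^2(\mathcal{O})$-valued obstruction in families. The honest way to handle it is to show that the natural obstruction class for deforming a line bundle actually lands in the image of a map that vanishes because the relevant direct image sheaves are locally free and their formation commutes with base change — i.e.\ the obstruction theory of $\Pic_{X_B/B}/B$ is controlled by $\tau_{\geq 1}\mathbf{R}\pi_*\mathcal{O}_{X_B}[1]$, and local freeness of all cohomology sheaves of $\mathbf{R}\pi_*\mathcal{O}_{X_B}$ makes this complex perfect of amplitude $[-1,0]$ hence the deformation functor smooth. This is exactly the kind of statement that \cite[Corollary 7.1]{IKN_Quasi} (invoked in the Remark after Assumption~\ref{assu_locfree}) is designed to feed into, so I would phrase the argument to use the conclusion "$R^i\pi_*\mathcal{O}_{X_B}$ locally free and compatible with base change for all $i$" as a black box and then run the perfect-complex/smoothness argument.
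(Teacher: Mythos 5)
Your parts (i) and (ii) are essentially the paper's argument: local constancy of $h^1(X_b,\mathcal{O}_{X_b})$ from Assumption \ref{assu_locfree} plus Cartier's theorem in characteristic zero gives (i), and the existence of the open subgroup scheme of identity components is quoted from the literature (the paper cites \cite[Proposition 5.20]{Klei_Pi}). The divergence, and the problem, is in (iii).

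Your mechanism for (iii) --- ``$R\pi_*\mathcal{O}_{X_B}$ is perfect with all cohomology sheaves locally free, hence splits, hence the deformation theory of line bundles is unobstructed and $\Pic_{X_B/B}\to B$ is smooth near the zero section'' --- is not a valid implication. The complex $\tau_{\geq 1}R\pi_*\mathcal{O}_{X_B}[1]$ governing deformations of line bundles has cohomology in degrees $0$ and $1$ (namely $R^1\pi_*\mathcal{O}$ and $R^2\pi_*\mathcal{O}$), so it has amplitude $[0,1]$, not $[-1,0]$ as you claim; and a two-term obstruction theory with \emph{locally free} obstruction sheaf in degree $1$ does not force the obstruction classes themselves to vanish. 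A concrete illustration: for a non-isotrivial family of K3 surfaces all $R^i\pi_*\mathcal{O}$ are locally free of ranks $1,0,1$, yet a line bundle on a special fiber is genuinely obstructed from deforming sideways (the Noether--Lefschetz phenomenon) --- the nonzero obstructions live exactly in the locally free $H^2(\mathcal{O})$. What saves the day for $\Pic^0$ is not formal splitting of the perfect complex but a characteristic-zero input: either Cartier's theorem combined with a dimension/flatness count, or (as in the paper, via \cite[Remark 5.21]{Klei_Pi}) the analytic exponential sequence and GAGA, which identify $\Pic^0_{X_B/B}$ after analytification with the quotient of the vector bundle $R^1\pi_*\mathcal{O}_{X_B}$ by the image of the local system $R^1\pi_*\mathbb{Z}$; this quotient is visibly smooth over $B$. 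The reference to ``a theorem of Mumford/Artin'' asserting that local freeness of the $R^i\pi_*$ kills the obstruction map does not exist in the form you need. There is also a secondary gap: deducing that $R^2\pi_*\mathcal{O}_{X_B}$ is locally free from local freeness of $R^0$ and $R^1$ requires base-change compatibility of $R^1$ (constancy of fiber dimensions does not give local freeness over a possibly non-reduced $B$), so even the splitting in your step (a) is not free of charge. The fix is to replace your step (b) entirely by the GAGA/exponential-sequence argument (or an explicit semiregularity argument showing the obstruction class of a line bundle in $\Pic^0$ vanishes in characteristic zero).
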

\begin{proof} 
By Assumption \ref{assu_locfree}, the dimension of the tangent space of the Picard scheme $\Pic_{X_b}$ at the identity is locally constant. Since we are in characteristic zero, this implies i). Then ii) follows as in \cite[Proposition 5.20]{Klei_Pi}. Finally, iii) follows from the same argument as in \cite[Remark 5.21]{Klei_Pi} (the projectivity there is only used to invoke the GAGA theorem, which holds more generally for a proper morphism).
\end{proof}

Since the identity component is smooth over $B$, we may take the quotient  of the relative Picard scheme by it. We denote this by $\mathcal{NS}_{X_B/B}$, the \emph{relative N\'eron--Severi group}. The discussion goes through for $C_B\to B$,  $\mathcal{NS}_{C_B/B}$.

\begin{prop}
	\begin{enumerate}[label= \roman*)]
		\item The algebraic space $\mathcal{NS}_{X_B/B}$ is unramified over $B$. 
		\item The algebraic space $\mathcal{NS}_{C_B/B}$ is \'etale over $B$. 
		\item The sub-algebraic space $\mathcal{NS}_{C_B/B}^0$ parametrizing line-bundles with total degree zero is open and closed in $\mathcal{NS}_{C_B/B}$. 
		\item Pull-back of line bundles induces an open and closed immersion \[\mathcal{\mathcal{NS}_{C_B/B}}\to \mathcal{NS}_{X_B/B}.\]
	\end{enumerate}
\end{prop}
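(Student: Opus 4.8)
The strategy is to read off all four statements from properties of the fppf quotient morphisms $\Pic_{C_B/B}\to\mathcal{NS}_{C_B/B}$ and $\Pic_{X_B/B}\to\mathcal{NS}_{X_B/B}$, exploiting that by Theorem \ref{thm_picardsmooth} the subgroup $\Pic^0$ is \emph{open}. For (i): $\mathcal{NS}_{X_B/B}$ is locally of finite type over $B$, being a quotient of such, so it suffices to show it is formally unramified, i.e. $\Omega_{\mathcal{NS}_{X_B/B}/B}=0$. The sheaf of relative differentials of a group algebraic space over $B$ is the pullback along its structure morphism of its restriction $\omega$ to the identity section, so the vanishing reduces to $\omega=0$, i.e. to the vanishing of the Lie algebra of $\mathcal{NS}_{X_B/B}$ over $B$. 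Since $\Pic^0_{X_B/B}$ is open in $\Pic_{X_B/B}$, the two share the same Lie algebra along the identity section, so the quotient has zero Lie algebra, as required. The same argument applies verbatim to $\mathcal{NS}_{C_B/B}$.

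For (ii), recall that $\Pic_{C_B/B}\to B$ is smooth (the obstruction space $H^2$ of a fibre curve vanishes). The quotient map $\Pic_{C_B/B}\to\mathcal{NS}_{C_B/B}$ is a torsor under $\Pic^0_{C_B/B}$, hence an fppf covering; since smoothness is local on the source for the fppf topology, $\mathcal{NS}_{C_B/B}\to B$ is smooth, and being unramified by (i) it is étale. For (iii), the relative degree $L\mapsto\chi(L)-\chi(\CO)$ is a locally constant function on $\Pic_{C_B/B}$, Euler characteristics being locally constant in flat families; it is additive and defines a morphism $\Pic_{C_B/B}\to\underline{\ZZ}_B$ which kills $\Pic^0_{C_B/B}$ and therefore factors through a morphism $\deg\colon\mathcal{NS}_{C_B/B}\to\underline{\ZZ}_B$. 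Then $\mathcal{NS}^0_{C_B/B}=\deg^{-1}(\text{zero section})$, and the zero section is open and closed in $\underline{\ZZ}_B$, so its preimage is open and closed in $\mathcal{NS}_{C_B/B}$.

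For (iv) I first use that $f_*\CO_{X_B}=\CO_{C_B}$, compatibly with all base change, which holds because the fibres of $f$ are proper, geometrically connected and satisfy $h^0(\CO)=1$. By the projection formula, $f^*\colon\Pic_{C_B/B}\to\Pic_{X_B/B}$ is then a monomorphism, and it carries $\Pic^0$ into $\Pic^0$ (pullback preserves the identity component fibrewise), so it descends to $\bar f^*\colon\mathcal{NS}_{C_B/B}\to\mathcal{NS}_{X_B/B}$. Over a geometric point $b$ of $B$, $\bar f^*$ is the homomorphism of finitely generated abelian groups $\mathrm{NS}(C_b)\to\mathrm{NS}(X_b)$ sending the class of a point of an irreducible component $D\subseteq C_b$ to the class $[F_D]$ of a fibre of the smooth surface $f^{-1}(D)$; pairing with a curve in $f^{-1}(D)$ dominating $D$ shows that the classes $[F_D]$ are $\ZZ$-linearly independent, so this homomorphism is injective. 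Hence $\bar f^*$ is a monomorphism, its kernel $(f^*)^{-1}\Pic^0_{X_B/B}/\Pic^0_{C_B/B}$ being detected on fibres. Moreover $\bar f^*$ is flat: $\mathcal{NS}_{C_B/B}$ is flat over $B$ by (ii), and over each point of $B$ the map $\bar f^*$ is the open immersion of constant group schemes just described, so the fibral flatness criterion applies. A flat monomorphism locally of finite presentation is an open immersion, which gives the ``open'' part.

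The remaining point — that the image of $\bar f^*$ is also closed — is the one I expect to be the real obstacle, because over a general base an open subgroup of a group algebraic space need not be closed. I would reduce to a trait $\Spec R\to B$ and check stability under specialization. The key input is that any two extensions to $X_B$ of a line bundle on the generic fibre differ by a sum of components $Y_i$ of the special fibre of $X_B$, and that $\CO_{X_B}(Y_i)=f^*\CO_{C_B}(C_i)$ lies in the image of $f^*$; thus the image of $\bar f^*$ is stable under the relevant component twists. Combined with the facts that $f^*$ commutes with specialization and is injective on Néron–Severi groups, this forces the image of $\bar f^*$ to be a union of connected components of $\mathcal{NS}_{X_B/B}$, hence open and closed. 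Making this precise while tracking the twist combinatorics when the special fibre of $C_B$ is reducible is where the actual work of (iv) lies.
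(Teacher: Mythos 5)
Parts (i)--(iii) of your argument match the paper's proof in substance: the paper likewise gets (i) from the vanishing of relative K\"ahler differentials (which is exactly your Lie-algebra observation that the quotient by an \emph{open} subgroup kills the tangent space along the identity), gets (ii) from smoothness of $\Pic_{C_B/B}$ descending to the quotient, and gets (iii) from local constancy of the Euler characteristic. For the openness half of (iv) you take a genuinely different route: you prove $\bar f^*$ is a flat monomorphism locally of finite presentation directly (fibrewise injectivity of $\mathrm{NS}(C_b)\to \mathrm{NS}(X_b)$ via the classes $[F_D]$, plus the fibral flatness criterion), whereas the paper first establishes closedness and then observes that an \'etale monomorphism (source \'etale over $B$, target unramified over $B$) is an open immersion. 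Your route is valid and has the mild advantage of not needing closedness as an input to openness.

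The genuine gap is exactly where you flag it: the closedness of the image of $\bar f^*$. The paper fills this with Lemma \ref{lem_pullcl}, which shows that $\Pic_{C_B/B}\to\Pic_{X_B/B}$ is already a \emph{closed immersion} (a quasi-compact proper monomorphism), from which the closed embedding of N\'eron--Severi spaces is deduced. The existence part of the valuative criterion there is checked by showing that if $L$ on $X_R$ is pulled back from $C_\eta$ over the generic point, then $f_*L$ is a line bundle and $f^*f_*L\to L$ is an isomorphism everywhere: this holds away from the finitely many points of $C_\xi$ with singular fibre because $f_*\mathcal{O}_{X}=\mathcal{O}_{C}$ and the fibres are generically connected curves, and then extends over those points because $X_R$ is Cohen--Macaulay and a line bundle is determined by its restriction off a codimension-two locus. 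Note that no component-twist combinatorics enters at all: the limit of a pulled-back bundle is itself pulled back on the nose. By contrast, the key input you propose --- that any two extensions of $L_\eta$ to $X_R$ differ by a sum of components of the special fibre --- presupposes (local) factoriality of the total space $X_R$, which fails here in general: \'etale locally $C_R$ has singularities $xy=t^n$, so $X_R$ acquires $A_{n-1}$-type singularities along the fibres over the nodes, and $\mathcal{O}(Y_i)$ need not even be a line bundle. So the specialization argument you sketch would need to be replaced, not merely made precise; the paper's direct pushforward argument is the cleaner way through.
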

\begin{proof}
	By construction, the morphism $\mathcal{NS}_{X/B/B}\to B$ is locally of finite type and has everywhere vanishing relative Kähler differentials, which implies i). The same holds for $ \mathcal{NS}_{C_B/B}$, which is moreover smooth over $B$. Hence, it is \'etale over $B$, which gives ii).  
	Point iii) follows from the local constancy of the Euler characteristic.
	The morphism in point iv) is well defined, since the morphism $\Pic_{C_B/B}\to \Pic_{X_B/B}$ preserves the identity components. By Lemma \ref{lem_pullcl} it is a closed embedding. It follows that the induced morphism $\mathcal{NS}_{C_B/B}\to \mathcal{NS}_{X_B/B}$ is a  closed embedding. Since $\mathcal{NS}_{C_B/B} $ is \'etale over $B$ and since $\mathcal{NS}_{X_B/B}$ is unramified over $B$, it follows that $\mathcal{NS}_{C_B/B}$ is \'etale over $\mathcal{NS}_{X_B/B}$. Any \'etale (even flat) monomorphism is in particular an open immersion. 
\end{proof}

\begin{lem}\label{lem_pullcl}
	The morphism $\Pic_{C_B/B}\to \Pic_{X_B/B}$ induced by pullback is a closed immersion. 
\end{lem}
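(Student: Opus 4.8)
The plan is to show that $f^*$ is a monomorphism and that it is universally closed; since a universally closed monomorphism locally of finite type is a closed immersion, this gives the statement. (One could instead argue componentwise — via the identity component $\Pic^0$, a monomorphism of smooth group spaces over $B$ that is closed by the fibrewise criterion, together with the relative Néron–Severi spaces, which are étale resp.\ unramified over $B$ — but the valuative argument is cleaner to set up.)

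\emph{Monomorphism.} Since $f$ is proper and flat with geometrically connected fibres, cohomology and base change shows that $\mathcal{O}_{C_T}\to f_{T*}\mathcal{O}_{X_T}$ is an isomorphism for every $B$-scheme $T$, so $f_{T*}f_T^*\mathcal{N}\cong\mathcal{N}$ for a line bundle $\mathcal{N}$ on $C_T$ by the projection formula. Hence $f_T^*$ is injective on isomorphism classes and $f^*$ is a monomorphism.

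\emph{Universal closedness.} As both Picard spaces are locally of finite type over the locally Noetherian $B$, it suffices to verify the valuative criterion along discrete valuation rings; and since $f^*$ is a monomorphism we may replace the DVR by any extension, as the resulting lift descends uniquely. Let $R$ be a DVR over $B$ with fraction field $K$, write $S=\Spec R$, and suppose $\mathcal{M}$ is a line bundle on $X_S$ with $\mathcal{M}|_{X_K}\cong f_K^*\mathcal{N}_K$; I must produce $\mathcal{N}$ on $C_S$ with $f_S^*\mathcal{N}\cong\mathcal{M}$. The candidate is $\mathcal{N}:=f_{S*}\mathcal{M}$, and by cohomology and base change it is invertible with $f_S^*\mathcal{N}\xrightarrow{\sim}\mathcal{M}$ provided $\mathcal{M}$ restricts to $\mathcal{O}$ on every fibre $f_S^{-1}(c)$, $c\in C_S$. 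For $c\in C_K$ this is the hypothesis. For $c$ over the closed point of $S$ — necessarily away from the nodes and marked points of $C_\kappa$, where $f$ is smooth — I split according to whether $f_S^{-1}(c)$ is integral. Over the open locus $C_S^{\circ}\subseteq C_S$ of integral fibres, the relative $\Pic^0$ of the fibre family is an open subscheme of the (proper) compactified Jacobian, hence separated over $C_S^{\circ}$; the locus where $\mathcal{M}$ is fibrewise trivial is then closed, contains the generic point of every component of $C_S$ (which lies in $C_K$), and so equals all of $C_S^{\circ}$. For $c$ in the complement — a reducible or non-reduced fibre — the special fibre $C_\kappa=V(\varpi)$ is smooth at $c$, so $C_S$ is regular there; since $f^{-1}$ of the ambient component of $C_\kappa$ is a smooth surface and equals $V(f_S^*\varpi)$ near $f_S^{-1}(c)$, the threefold $X_S$ is regular along $f_S^{-1}(c)$. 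On the regular local model $X_A:=X_S\times_{C_S}\Spec\mathcal{O}_{C_S,c}$, triviality of $\mathcal{M}|_{X_A}$ on the generic fibre means that, after subtracting a principal divisor, $\mathcal{M}|_{X_A}\cong\mathcal{O}_{X_A}(V)$ with $V$ a vertical Weil divisor, i.e.\ supported on the fibres of $X_A\to\Spec\mathcal{O}_{C_S,c}$ over the height-one primes. Summands of $V$ over a height-one prime with integral fibres are pullbacks; for the remaining ones, the generic point of the corresponding curve of bad fibres lies in $C_K$, where $\mathcal{M}$ is a pullback and hence fibrewise trivial, and Zariski's lemma (the intersection form of the components of a fibre of a fibration of a smooth surface over a curve is negative semidefinite with kernel spanned by the vector of multiplicities) forces the coefficients of the components in that summand to be proportional to the multiplicities — so that summand is a pullback too. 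Hence $V$ is a pullback from $\Spec\mathcal{O}_{C_S,c}$, and $\mathcal{M}$ is trivial on $f_S^{-1}(c)$.

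This shows $\mathcal{M}$ is trivial on every fibre of $f_S$, verifying the valuative criterion, so $f^*$ is a closed immersion. The main technical point is the treatment of the reducible (and possibly type-jumping) special fibres; what makes it work is Definition \ref{def_fib}(iii), which forces $X_S$ to be regular near those fibres, thereby reducing the question to a computation of divisor classes that is pinned down by the hypothesis over $K$ together with Zariski's lemma.
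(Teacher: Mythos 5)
Your monomorphism step is the paper's, and your fibrewise‑triviality analysis is a genuinely different (and in places more careful) route to the existence part of the valuative criterion: where the paper extends $f_*L$ and $f^*f_*L\to L$ across the finitely many bad points of $C_{\xi}$ by a Hartogs‑type argument using that $X_R$ is Cohen--Macaulay, you use Definition \ref{def_fib}(iii) to get regularity of $X_S$ along the non‑integral fibres and then compute in the divisor class group of the local threefold. That part is essentially sound, with one small caveat: Zariski's lemma only gives \emph{rational} proportionality to the multiplicities, so over a horizontal prime whose fibre is a multiple fibre (so $\gcd(m_i)>1$) you need one more word to see that the summand is an \emph{integral} pullback --- e.g.\ note that on the localization of $X_S$ at that prime $\mathcal{M}$ is the pullback of a line bundle on a local scheme, hence trivial, so the vertical divisor there is principal; and a principal vertical divisor is an integral multiple of the whole fibre because its defining function restricts to a constant on the geometrically integral generic fibre.

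The genuine gap is quasi-compactness. ``Locally of finite type plus the existence part of the valuative criterion'' does not imply universally closed: the monomorphism $\coprod_{n\in\mathbb{N}}\{n\}\hookrightarrow \AA^1$ is locally of finite type and satisfies the existence part of the valuative criterion (a DVR whose generic point maps to a closed point maps entirely to that point), yet its image is not closed. The implication ``valuative criterion $\Rightarrow$ universally closed'' needs quasi-compactness as an \emph{input}, and only afterwards does ``universally closed $\Rightarrow$ quasi-compact'' become available; so your reduction is circular at exactly this point. Since $\Pic_{C_B/B}$ and $\Pic_{X_B/B}$ are only locally of finite type over $B$, with infinitely many components, quasi-compactness of the pullback map is a real issue, and the paper devotes a separate paragraph to it (reduction to constant topological type, Chow's Lemma and a flattening stratification to reach the projective case, then the stratification of the Picard scheme by Hilbert polynomials). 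You need to supply this step before your valuative argument closes the proof.
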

\begin{proof}	
A closed immersion is the same as a proper monomorphism.
Since $f_*\mathcal{O}_{X_B} = \mathcal{O}_{C_B}$, we have for any line bundle $L$ on $C_B$ that $f_*f^*L = L$. The the same holds after any base change on $B$, so pullback indeed gives a monomorphism. To check the existence part of the valuative criterion for properness (the uniqueness is automatic), we may assume $B = \Spec R$ for a DVR $R$ with generic point $\eta$ and that we are given a line bundle $L$ on $X_B$, so that $L_{\eta}$ is isomorphic to a line bundle pulled back from $C_{\eta}$, or equivalently so that $f_*L$ is a line bundle free on $C_{\eta}$ and that $f^*f_L\to L$ is an isomorphism over $X_{\eta}$. We want to show that these conditions hold over all of $C_R$ and $X_R$ respectively.   Since $X_R\to C_R$ is generically a family of geometrically connected curves, we can conclude that $f_*L$ is free and $f^*f_*L\to L$ an  isomorphism at least over the points where the fiber of $f$ is smooth. Thus, these conditions hold except possibly over a finite set of points $x_1,\ldots,x_n\in C_{\xi}$. But, since $X_R$ is Cohen--Macaulay, any locally free sheaf is determined by its restriction away from any codimension two locus. This implies that $f_*L$ is locally free and $f^*f_*L$ an isomorphism everywhere. 

It remains to show that $\Pic_{C_B/B}\to \Pic_{X_B/B}$ is quasi-compact. For this, we may assume that $C_B$ has constant topological type over $B$, that is, that $C_B$ is a union of smooth curves over $B$. Then one can further reduce to the case that $C_B$ is smooth over $B$. By using Chow's Lemma and possibly passing to a flattening stratification, we may assume that $X_B\to C_B$ is projective. In this case, quasi-compactness of  $\Pic_{C_B/B}\to \Pic_{X_B/B}$ follows from the stratification of the Picard scheme by Hilbert polynomials. 
\end{proof}

It follows that the quotients $\mathcal{NS}_{X_B/B}/\mathcal{NS}_{C_B/B}$ and $\mathcal{NS}_{X_B/B}/\mathcal{NS}^0_{C_B/B}$ are well-defined. We denote them by $\overline{\mathcal{NS}}_{X_B/C_B}$ and $\overline{\mathcal{NS}}_{X_B/B}$ respectively. They are separated and unramified algebraic spaces over $B$.

\begin{lem}\label{lem_pullns}
	Let $\Xtilde_T\to \Ctilde_T$ be an expansion of $X_T\to C_T$. Then pullback along $\Xtilde_T\to X_T$ induces isomorphisms $\overline{\mathcal{NS}}_{X_T/C_T} = \overline{\mathcal{NS}}_{X_B/C_B}\times_B T\xrightarrow{\sim} \overline{\mathcal{NS}}_{\Xtilde_T/\Ctilde_T}$ and $\overline{\mathcal{NS}}_{X_T/T} = \overline{\mathcal{NS}}_{X_B/B}\times_B T\xrightarrow{\sim} \overline{\mathcal{NS}}_{\Xtilde_T/T}$. 
\end{lem}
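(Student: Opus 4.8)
The plan is to reduce both statements to a computation on geometric fibres over $T$, and then to read that computation off from the explicit structure of the contraction $q\colon\Xtilde_T\to X_T$ provided by Lemma~\ref{lem_expprop}.

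\emph{Reductions.} Since $X_T\to C_T$ is flat (Definition~\ref{def_fib}), flat base change applied to condition i) of Definition~\ref{defexpansion} gives $Rq_*\mathcal{O}_{\Xtilde_T}=\mathcal{O}_{X_T}$, and this holds after any further base change $T'\to T$. Hence $R(\Xtilde_T\to T)_*\mathcal{O}_{\Xtilde_T}=R(X_T\to T)_*\mathcal{O}_{X_T}$, so $R^1(\Xtilde_T\to T)_*\mathcal{O}_{\Xtilde_T}$ is locally free and stable under base change (Assumption~\ref{assu_locfree} for $X_B$ together with the remark following it); thus Assumption~\ref{assu_locfree} holds for $\Xtilde_T\to\Ctilde_T\to T$ and all the spaces $\mathcal{NS}_{\Xtilde_T/T}$, $\overline{\mathcal{NS}}_{\Xtilde_T/\Ctilde_T}$, $\overline{\mathcal{NS}}_{\Xtilde_T/T}$ are defined. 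The same identity and the projection formula show that $q^*$ is a monomorphism $\Pic_{X_T/T}\hookrightarrow\Pic_{\Xtilde_T/T}$ preserving identity components, hence induces a monomorphism $\mathcal{NS}_{X_T/T}\hookrightarrow\mathcal{NS}_{\Xtilde_T/T}$ which, via $q^*f^*=f_{\Xtilde}^*c^*$, descends to the morphisms $\overline{q^*}$ of the statement. Because $\Pic$, $\Pic^0$ and quotients by them commute with base change, the formation of $\overline{\mathcal{NS}}_{X_B/C_B}$, $\overline{\mathcal{NS}}_{X_B/B}$, $\mathcal{NS}_{C_B/B}$, $\mathcal{NS}^0_{C_B/B}$ and of their analogues for $\Xtilde,\Ctilde$ commutes with base change, so the two equalities in the statement are automatic. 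A standard fpqc-descent argument — working étale-locally on $T$, using that $\overline{q^*}$ is a monomorphism and that all the $\overline{\mathcal{NS}}$'s are unramified over $T$ with formation commuting with base change — then reduces the remaining claim, that $\overline{q^*}$ is an isomorphism in each case, to the situation $T=\Spec k$ with $k$ algebraically closed.

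\emph{Structure of $q$ and the key decomposition.} By Lemma~\ref{lem_expprop}, $c\colon\Ctilde\to C$ contracts a disjoint union of chains of rational curves; let $R_1,\dots,R_N$ be their irreducible components, each over a node or marked point $x_i$ of $C$, and note that the other components of $\Ctilde$ are the strict transforms of the components of $C$. Since $f$ is smooth over $x_i$ (Definition~\ref{def_fib}) and $R_i$ maps to the point $x_i$, the component $B_i:=f_{\Xtilde}^{-1}(R_i)$ is the product $\PP^1\times F_i$ with $F_i=f^{-1}(x_i)$, with $f_{\Xtilde}|_{B_i}$ the projection to $R_i=\PP^1$; and $q$ is an isomorphism away from $\bigcup_iB_i$ and contracts each $B_i$ onto $F_i\subset X$ along its $\PP^1$-factor. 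I would then prove the direct sum decomposition
\[\mathcal{NS}_{\Xtilde}\;=\;q^*\mathcal{NS}_X\;\oplus\;\bigoplus_{i=1}^N\ZZ\,\eta_i,\qquad \eta_i:=[\mathcal{M}_i],\]
where $\mathcal{M}_i$ is the line bundle on $\Xtilde$ restricting to $\operatorname{pr}_1^*\mathcal{O}_{\PP^1}(1)$ on $B_i$ and to $\mathcal{O}$ on every other component — it exists because $\mathcal{O}_{\PP^1}(1)$ is trivial at the two attaching points of $R_i$. Concretely, the map $\mathcal{NS}_{\Xtilde}\to\ZZ^N$, $L\mapsto(\deg L|_{\ell_i})_i$, where $\ell_i\subset B_i$ is a fibre of $q$, is surjective with the $\eta_i$ mapping to a basis, is split by $(a_i)\mapsto\sum a_i\eta_i$, and has kernel $q^*\mathcal{NS}_X$; the nontrivial inclusion here is that a line bundle on $\Xtilde$ with all these degrees zero is trivial on every fibre of $q$ (a chain of $\PP^1$'s) and hence descends along $q$ — the analogue, fibred over $C$, of the argument in the proof of Lemma~\ref{lem_pullcl}, using that $X$ is Cohen--Macaulay. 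The identical argument on curves gives $\mathcal{NS}_{\Ctilde}=c^*\mathcal{NS}_C\oplus\bigoplus_i\ZZ N_i$ with $N_i$ of degree $1$ on $R_i$ and $0$ on the other components, and one checks directly that $f_{\Xtilde}^*N_i=\mathcal{M}_i$ in $\mathcal{NS}_{\Xtilde}$.

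\emph{Conclusion.} Combining the two decompositions, $f_{\Xtilde}^*\mathcal{NS}_{\Ctilde}=q^*f^*\mathcal{NS}_C+\bigoplus_i\ZZ\eta_i$, whence
\[\mathcal{NS}_{\Xtilde}\big/f_{\Xtilde}^*\mathcal{NS}_{\Ctilde}=\Bigl(q^*\mathcal{NS}_X\oplus\textstyle\bigoplus_i\ZZ\eta_i\Bigr)\Big/\Bigl(q^*f^*\mathcal{NS}_C+\textstyle\bigoplus_i\ZZ\eta_i\Bigr)\;\cong\;\mathcal{NS}_X/f^*\mathcal{NS}_C,\]
the last isomorphism because $q^*$ is injective; this is exactly $\overline{q^*}\colon\overline{\mathcal{NS}}_{X/C}\xrightarrow{\sim}\overline{\mathcal{NS}}_{\Xtilde/\Ctilde}$. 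For the second isomorphism I would note that $c^*$ preserves total degree and $N_i$ has total degree $1$, so that $f_{\Xtilde}^*\mathcal{NS}^0_{\Ctilde}=\{\,q^*f^*M+\sum_ia_i\eta_i:\deg M+\sum_ia_i=0\,\}$; quotienting $\mathcal{NS}_{\Xtilde}$ by this subgroup again kills all the $\eta_i$ and yields $q^*\mathcal{NS}_X/q^*f^*\mathcal{NS}^0_C\cong\mathcal{NS}_X/f^*\mathcal{NS}^0_C$, i.e. $\overline{q^*}\colon\overline{\mathcal{NS}}_{X_T/T}\xrightarrow{\sim}\overline{\mathcal{NS}}_{\Xtilde_T/T}$. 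I expect the main obstacle to be the decomposition $\mathcal{NS}_{\Xtilde}=q^*\mathcal{NS}_X\oplus\bigoplus_i\ZZ\eta_i$ — specifically the descent statement that a line bundle on the (possibly singular) surface $\Xtilde$ which is trivial on every fibre of $q$ is pulled back from $X$, which needs the fibred version of Lemma~\ref{lem_pullcl} and some care since individual components of $\Xtilde$ and $\Ctilde$ need not be Cartier; the bookkeeping with $\eta_i$, $N_i$ and the base-change reduction are then routine.
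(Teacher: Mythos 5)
Your fibrewise computation is essentially right, and it is the fibrewise shadow of the paper's argument: over an algebraically closed field one indeed has $\mathcal{NS}_{\Xtilde}=q^*\mathcal{NS}_X\oplus\bigoplus_i\ZZ\,\eta_i$ with the $\eta_i$ lying in $f_{\Xtilde}^*\mathcal{NS}_{\Ctilde}$ (resp.\ absorbed into $f_{\Xtilde}^*\mathcal{NS}^0_{\Ctilde}$ modulo $q^*f^*\mathcal{NS}_C$, since the $N_i$ have total degree one). The gap is in the reduction to $T=\Spec k$. The spaces $\overline{\mathcal{NS}}_{X_T/T}$ and $\overline{\mathcal{NS}}_{\Xtilde_T/T}$ are only \emph{unramified} over $T$, not \'etale or flat (the Picard number of the surface fibres can jump), and a monomorphism of algebraic spaces unramified over $T$ that induces a bijection on every geometric fibre of $T$ need not be an isomorphism: take $T=\Spec k[t]$, $Y=\Spec k[t]/(t^2)$, $X=\Spec k[t]/(t)$; both are closed in $T$, hence unramified over it, and $X\hookrightarrow Y$ is a monomorphism which is an isomorphism on every geometric fibre but not an isomorphism. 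Neither ``formation commutes with base change'' nor fpqc descent excludes this behaviour, so surjectivity must be proved on $S$-valued points for an arbitrary $T$-scheme $S$. Your decomposition does not directly supply this, because the line bundles $\mathcal{M}_i$ and $N_i$ only exist where the combinatorial type of the expansion is locally constant; over a general $S$ the bubbles appear and disappear, and the component twists $\sum_i a_i(s)\eta_i$ chosen fibre by fibre need not glue to a line bundle on $\Ctilde_S$.

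The paper's proof is exactly the relative version of your argument and is what is needed here: after passing to an \'etale cover of $T$ on which $\Ctilde_T\to T$ admits sections through every irreducible component of every fibre, a section of $\overline{\mathcal{NS}}_{\Xtilde_T/T}$ over $S$ is represented locally on $S$ by a line bundle $L$ on $\Xtilde_S$, which one twists by a line bundle $N$ of total degree zero pulled back from $\Ctilde_S$ --- an object defined over all of $S$ --- so that the result has degree zero on the fibres of $q$ and therefore descends to $X_S$; uniqueness of the resulting class holds because any two admissible choices of $N$ differ by a pullback from $C_S$. If you replace your reduction step by this relative twisting (your fibrewise decomposition then serves to verify that a suitable $N$ exists and that the descended class is well defined), the proof closes.
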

\begin{proof}
	We only treat the case of $\overline{\mathcal{NS}}$, the other is similar. Since the pullback map $\Pic_{X_T/T}\to \Pic_{\Xtilde_T/T}$ preserves the identity components, it descends to a morphism $\mathcal{NS}_{X_T/T}\to \mathcal{NS}_{\Xtilde_T/T}$. Since the total degree is preserved under pullback along $\Ctilde_T\to C_T$, this descends to a morphism $\overline{\mathcal{NS}}_{X_T/T}\to \overline{\mathcal{NS}}_{\Xtilde_T/T}$. We claim that this is an isomorphism. It is enough to show that each section of $\overline{\mathcal{NS}}_{\Xtilde_T/T}$ has a unique preimage. For this, we may work \'etale locally on $T$ and assume that $\Ctilde_T\to T$ has sections meeting each irreducible component of each fiber. Suppose that $\overline{c_1} \in \overline{\mathcal{NS}}_{\Xtilde_T/T}(S)$ for some $T$-scheme $S$. Then, locally on $S$ we may assume that $\overline{c_1}$ is represented by some line bundle $L$ on $\Xtilde_S$. Up to twisting $L$ by a line bundle $N$ pulled back from $\Ctilde_S$ of total degree zero, we may assume that $L$ is pulled back from $X_S$, so that $\overline{c_1}$ comes from an element $\overline{c_1'}\in \overline{\mathcal{NS}}_{X_T/T}(S)$. Moreover, we see that $\overline{c_1'}$ is uniquely determined, since any two possible choices of $N$ differ by a line bundle pulled back from $C_S$.  
\end{proof}

\subsection{Stability on Fibered surfaces}
We collect some results of Yoshioka that allow us to compare $f$-stability on a smooth fibered surface with slope-stability for a suitable polarization (see Theorem \ref{thm_boundedwalls})

Let $X$ be a smooth projective surface, and $f: X \to C$ be a surjective morphism to a curve $C$ with connected fibers. For a coherent sheaf $E$ on $X$ of rank $r>0$, we define its \emph{discriminant} as
\[\Delta(E) = 2 rc_2(E) -(r-1)c_1(E)^2 = c_1(E)^2-2r\ch_2(E)\in \ZZ .\]

We recall the following result of Yoshioka \cite[Lemma 2.1]{Yo_Cha}: 
\begin{lem}\label{lem_yosh1}
	Given an exact sequence $0\to G_1 \to E\to G_2 \to 0$, where $E_1$ and $E_2$ have ranks  $r_1>0$ and $r_2>0$ respectively, we have an equality
	\[\frac{1}{r} \Delta(E) = \frac{1}{r_1}\Delta(G_1)+\frac{1}{r_2}\Delta(G_2) -\frac{1}{r r_1 r_2}\left(r_2c_1(G_1)-r_1c_1(G_2)\right)^2.\]
\end{lem}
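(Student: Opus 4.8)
The plan is to reduce the claimed identity to an elementary algebraic manipulation by exploiting additivity of the Chern character in a short exact sequence. Writing $r = r_1 + r_2$, from $0\to G_1\to E\to G_2\to 0$ we get $\ch(E) = \ch(G_1) + \ch(G_2)$ in $H^*(X,\QQ)$; reading off degrees zero, one and two this is $r = r_1+r_2$, $c_1(E) = c_1(G_1)+c_1(G_2)$, and $\ch_2(E) = \ch_2(G_1)+\ch_2(G_2)$. The key preliminary point is to use the form $\Delta(E) = c_1(E)^2 - 2r\,\ch_2(E)$ of the discriminant (as recorded just above the statement), since $\ch_2$ — unlike $c_2$ — is additive over the sequence; this is really the only place any care is needed.

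First I would abbreviate $c_i := c_1(G_i)$ and $d_i := \ch_2(G_i)$, so that $c_1(E) = c_1 + c_2$ and $\ch_2(E) = d_1 + d_2$, and rewrite the left-hand side as $\frac{1}{r}\Delta(E) = \frac{(c_1+c_2)^2}{r} - 2(d_1+d_2)$. Next I would expand the right-hand side using $\frac{1}{r_i}\Delta(G_i) = \frac{c_i^2}{r_i} - 2d_i$: the $-2d_1$ and $-2d_2$ terms match the $-2(d_1+d_2)$ on the left immediately, so it remains to compare the parts quadratic in $c_1,c_2$. Expanding $(r_2 c_1 - r_1 c_2)^2 = r_2^2 c_1^2 - 2r_1 r_2\, c_1 c_2 + r_1^2 c_2^2$ and collecting coefficients, the coefficient of $c_1^2$ on the right becomes $\frac{1}{r_1} - \frac{r_2}{r r_1} = \frac{r - r_2}{r r_1} = \frac{1}{r}$, symmetrically the coefficient of $c_2^2$ is $\frac{1}{r}$, and the coefficient of $c_1 c_2$ is $\frac{2}{r}$. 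Hence the right-hand side equals $\frac{c_1^2 + 2 c_1 c_2 + c_2^2}{r} - 2(d_1+d_2) = \frac{(c_1+c_2)^2}{r} - 2(d_1+d_2)$, which is exactly the left-hand side.

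There is no genuine obstacle here: the entire content is the identity $\frac{1}{r_i} - \frac{r_j}{r r_i} = \frac{1}{r}$ (for $\{i,j\}=\{1,2\}$), which reflects that $\Delta/r$ is the natural quantity that adds up to a correction term. The only thing worth flagging is the bookkeeping choice of expressing $\Delta$ through $\ch_2$ rather than $c_2$, so that additivity of the Chern character can be applied directly without separately tracking the non-additive behaviour of $c_2$ under extensions.
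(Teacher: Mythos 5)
Your proof is correct and follows essentially the same route as the paper: additivity of the Chern character makes the $\ch_2$ terms cancel, reducing everything to the quadratic-form identity in $c_1(G_1), c_1(G_2)$, which you verify by direct expansion. No issues.
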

\begin{proof}By additivity of the Chern character, we have
	\begin{align*}
		& \Delta(E)/r-\Delta(G_1)/r_1-\Delta(G_2)/r_2 \\
		= &\, (c_1(G_1)+c_1(G_2))^2/(r_1+r_2)-c_1(G_1)^2/r_1 -c_1(G_2)^2/r_2 \\
		=&-\frac{1}{r r_1 r_2}\left(r_2c_1(G_1)-r_1c_1(G_2)\right)^2
	\end{align*}
\end{proof}
Let $F$ be the numerical divisor class of an arbitrary fiber of $f$ and let $H$ be an ample line bundle on $X$. For a positive rational number $t$, we let $H_t := H+t F$. 
\begin{prop}
	Let $D$ be a divisor satisfying $(D,F)\neq 0$. Suppose that $(D,H_t)=0$ for some $t>0$. Then 
	\[D^2 \leq -\frac{1}{(H,F)^2}(H^2+2t(H,F)).\]
\end{prop}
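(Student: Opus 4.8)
The plan is to use the Hodge index theorem on the smooth projective surface $X$, which says that the intersection form on $H^2(X,\QQ)$ has signature $(1,\rho-1)$; concretely, for any divisor $A$ with $A^2>0$, the orthogonal complement $A^{\perp}$ is negative definite. First I would observe that $H_t^2 = H^2 + 2t(H,F) + t^2 F^2 = H^2 + 2t(H,F)$, since $F^2 = 0$ as $F$ is a fiber class; because $H$ is ample and $t>0$, this is strictly positive, so $H_t$ is a suitable ``positive'' class. By hypothesis $(D,H_t)=0$, i.e. $D\in H_t^{\perp}$, so the Hodge index theorem gives $D^2\le 0$, with equality only if $D$ is numerically trivial. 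Since $(D,F)\neq 0$, $D$ is not numerically trivial, so in fact $D^2<0$; but I want the sharper quantitative bound.

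For the quantitative estimate I would work in the rank-two sublattice spanned by $H$ and $F$ (or rather apply Hodge index to a cleverly chosen auxiliary class). The cleanest route: consider the class $A := (D,F)\,H_t - (H_t, D)\cdot(\text{something})$ — more simply, since $D\perp H_t$, decompose $D$ against the two classes $H_t$ and $F$. Set $a = (D,F)$ and note $(H_t,F) = (H,F)$. The vector $v := a\,H_t - (H_t,F)\,D = a H_t - (H,F) D$ satisfies $(v,F) = a(H,F) - (H,F)a = 0$ and $(v,H_t) = a\cdot H_t^2 - 0 = a(H^2+2t(H,F))$. Now $v$ lies in $F^{\perp}$. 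Since $F^2 = 0$, the form on $F^{\perp}$ is negative semidefinite with radical spanned by $F$; so $v^2 \le 0$. Expanding,
\[
v^2 = a^2 H_t^2 - 2a(H,F)(D,H_t) + (H,F)^2 D^2 = a^2\big(H^2+2t(H,F)\big) + (H,F)^2 D^2 \le 0,
\]
which rearranges to
\[
D^2 \le -\frac{a^2}{(H,F)^2}\big(H^2 + 2t(H,F)\big) \le -\frac{1}{(H,F)^2}\big(H^2+2t(H,F)\big),
\]
the last step using $a^2 = (D,F)^2 \ge 1$ since $(D,F)$ is a nonzero integer. (One must check $(H,F)\neq 0$, which holds because $H$ is ample and $F$ is an effective nonzero curve class, so the denominator is legitimate.)

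The only slightly delicate point is justifying $v^2\le 0$: I would either invoke the standard fact that on a surface the intersection form restricted to the orthogonal complement of a nef class with self-intersection zero is negative semidefinite (a consequence of Hodge index applied to $H_t$, writing $F^{\perp} = \QQ H_t^{\perp}\cap F^{\perp} \oplus \QQ F$ inside $H_t^{\perp}\oplus\QQ F$... ), or, more robustly, perturb: for small $\epsilon>0$ the class $F + \epsilon H_t$ has positive self-intersection, apply Hodge index in its orthogonal complement, and let $\epsilon\to 0$. I expect this semidefiniteness argument to be the main obstacle, but it is entirely standard; everything else is the bilinear algebra above together with $F^2=0$ and $(D,F)\in\ZZ\setminus\{0\}$.
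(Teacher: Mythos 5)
Your argument is correct. The paper itself gives no proof here --- it simply cites \cite[Lemma 1.1]{Yo_Ell} --- and your computation is exactly the standard Hodge-index argument that reference uses: put $v = (D,F)H_t - (H,F)D \in F^{\perp}$, use that $F^{\perp}$ is negative semidefinite because $F$ is a nonzero class with $F^2=0$ and $(F,H)>0$, expand $v^2$ using $(D,H_t)=0$, and finish with $(D,F)^2\ge 1$. The one loose spot is your justification of $v^2\le 0$: the perturbation $F+\epsilon H_t$ does not have $v$ in its orthogonal complement (indeed $(v,H_t)=(D,F)H_t^2\neq 0$), so as literally stated that route needs an extra projection step; the cleaner argument is simply that if $v^2>0$ then the Hodge index theorem makes $v^{\perp}$ negative definite, yet $F\in v^{\perp}$ is nonzero with $F^2=0$, a contradiction. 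With that phrasing the proof is complete.
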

\begin{proof}
	This is \cite[Lemma 1.1]{Yo_Ell}.
\end{proof}

Now we can prove an important consequence, which already appears e.g. in \cite{Yo_Ell}, although it is stated there only for elliptic surfaces.
\begin{thm}\label{thm_boundedwalls}
	Fix values $r, \Delta \in \ZZ$ with $r>0$, and $c_1\in H^2(X,\ZZ)$, such that $r$ and $(F,c_1)$ are relatively prime. Then 
	\begin{enumerate}[label = \arabic*), left = .8\parindent]
		\item  There exists a constant $C(r,c_1,\Delta)$ so that the collection of $H_t$-semistable sheaves with rank $r$, discriminant at most $\Delta$ and first Chern class $c_1$ is independent of $H_t$ for all $t\geq C(r,c_1,\Delta)$. 
		
		\item For any $t \geq C(r,c_1,\Delta)$, semistability with respect to $H_t$ equals stability and a sheaf is stable with respect to $H_t$ if and only if its restriction to the generic fiber of $f$ is stable as a sheaf on a curve. 
	\end{enumerate}
\end{thm}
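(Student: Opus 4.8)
The plan is to run the standard variation-of-polarization argument for the ray $H_t = H + tF$: to show that only finitely many ``walls'' occur for the invariants $(r,c_1,\le\Delta)$, that they all lie in a bounded interval $[0,C]$ — this is where the Proposition above is used — and that past the last wall $H_t$-stability is governed by stability on the generic fiber, the coprimality of $r$ and $d:=(F,c_1)$ being exactly what makes this work. Throughout, $\eta$ denotes the generic point of $C$.

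\emph{Bounding the walls.} Suppose a torsion-free $E$ with $\operatorname{rk} E=r$, $c_1(E)=c_1$, $\Delta(E)\le\Delta$ is strictly $H_{t^\ast}$-semistable for some $t^\ast>0$, via a saturated subsheaf $0\subsetneq G_1\subsetneq E$ with $\mu_{H_{t^\ast}}(G_1)=\mu_{H_{t^\ast}}(E)$; then $G_1$ and $G_2:=E/G_1$ are both $\mu_{H_{t^\ast}}$-semistable, so $\Delta(G_1),\Delta(G_2)\ge0$ by the Bogomolov inequality ($H_{t^\ast}$ is ample since $t^\ast>0$). Rearranging Lemma~\ref{lem_yosh1} for the class $\xi:=\operatorname{rk}(G_2)\,c_1(G_1)-\operatorname{rk}(G_1)\,c_1(G_2)$ gives $\xi^2\ge-\operatorname{rk}(G_1)\operatorname{rk}(G_2)\,\Delta\ge-\tfrac{r^2}{4}\Delta=:-N$. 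Next, $(\xi,F)\equiv-\operatorname{rk}(G_1)\,d\pmod r$, which is nonzero because $\gcd(r,d)=1$ and $0<\operatorname{rk}(G_1)<r$; hence $\xi$ is orthogonal to at most one member $H_t$ of the ray, at $t^\ast=-(\xi,H)/(\xi,F)$, and feeding $\xi^2\ge-N$ into the Proposition (with $D=\xi$) bounds $t^\ast$ above by an explicit constant $C=C(r,c_1,\Delta)$ depending only on $N$, $H^2$ and $(H,F)$. Combined with the (standard) local finiteness of walls for a surface, this shows that every wall for the invariants $(r,c_1,\le\Delta)$ lies in $[0,C]$. Thus for $t>C$ the collection of $H_t$-semistable such sheaves does not change — this is 1) — and a strictly $H_t$-semistable member with $t>C$ would yield a wall at $t^\ast=t>C$, so on this set $H_t$-semistability equals $H_t$-stability, which is the first assertion of 2).

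\emph{Comparison with the generic fiber.} Write $\mathcal{S}$ for the (now $t$-independent) set of $H_t$-semistable sheaves of type $(r,c_1,\le\Delta)$ with $t>C$. Since $\gcd(r,d)=1$ and we are in characteristic zero, a sheaf of rank $r$ and degree $d$ on the generic fiber $X_\eta$ is semistable iff it is stable. For a torsion-free quotient $E\twoheadrightarrow Q$ of rank $0<q<r$, set $\zeta:=r\,c_1(Q)-q\,c_1(E)$, so $(\zeta,F)=r\deg(Q_\eta)-qd\equiv-qd\not\equiv0\pmod r$ and $\mu_{H_t}(Q)-\mu_{H_t}(E)=(\zeta,H_t)/(rq)$. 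If $E\in\mathcal{S}$ but $E_\eta$ is unstable, then spreading out a destabilizing quotient of $E_\eta$ over a neighbourhood of $\eta$ and removing torsion produces such a $Q$ with $(\zeta,F)<0$; then $\mu_{H_t}(Q)-\mu_{H_t}(E)\to-\infty$ as $t\to\infty$, contradicting that $E$ is $H_t$-stable for all $t>C$. Conversely, if $E$ is torsion-free of the given type with $E_\eta$ stable, then every such $Q$ has $(\zeta,F)>0$ by stability of $E_\eta$, while $(c_1(Q),H)\ge q\,\mu_{H,\min}(E)$ (as $Q$ is a quotient) bounds $-(\zeta,H)$ above over all such $Q$; hence $(\zeta,H_t)>0$ for every such $Q$ once $t$ exceeds a finite $t_E$, so $E$ is $H_t$-stable for $t>t_E$. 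Choosing one $t_0>\max(t_E,C)$ places $E\in\mathcal{S}$, and then $t$-independence of $\mathcal{S}$ together with the previous paragraph shows $E$ is $H_t$-stable for every $t>C$. The two directions give the remaining assertion of 2).

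\emph{Main obstacle.} The crux is the uniform estimate $\xi^2\ge-N$ holding for \emph{all} destabilizing configurations of the given numerical type at once: this is precisely what Lemma~\ref{lem_yosh1} together with the Bogomolov inequality delivers, and without it the Proposition cannot be leveraged to confine the walls. The coprimality hypothesis is indispensable exactly because it forbids a wall with $(\xi,F)=0$; such a wall would be ``vertical'', persisting as $t\to\infty$, and would destroy both the stabilization of $\mathcal{S}$ and the comparison with the generic fiber. Once these two points are secured, the remaining steps — that $\mathcal{S}$ is $t$-independent, and that membership in $\mathcal{S}$ is equivalent to stability on $X_\eta$ — are comparatively soft.
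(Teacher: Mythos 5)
Your proposal is correct and follows essentially the same route as the paper: walls are confined via Lemma~\ref{lem_yosh1} plus the Bogomolov inequality applied to the two pieces of a destabilizing sequence, the coprimality of $r$ and $d$ forces $(\xi,F)\neq 0$ so that the quoted Proposition bounds the wall location, and the comparison with the generic fiber is the same divide-by-$t$ limit argument in both directions (your phrasing in terms of quotients versus the paper's subsheaves, and your direct derivation of ``semistable $=$ stable'' from the wall bound rather than via the generic fiber, are only cosmetic differences). No gaps beyond the level of detail the paper itself leaves implicit.
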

\begin{proof}
	Let $\mu_t$ denote the slope with respect to $H_t$.
	Suppose that there is a change of stability condition at $t_0$, i.e. there is a sheaf $E$ of the given invariants which is $H_t$-stable for $t=t_0$, but not for bigger (resp. smaller) values of $t$. Then we may find an exact sequence 
	\[0\to E_1 \to E\to E_2 \to 0,\]
	which is destabilizing for values of $t$ slightly bigger (resp. smaller) than $t_0$, but which consists of semistable objects for $t=t_0$ (take part of a HN filtration). In particular, $\mu_{t_0}(E_1) = \mu_{t_0}(E_2)$. By Lemma \ref{lem_yosh1} and the Bogomolov inequality applied to $E_1$, $E_2$, we have 
	\[\Delta(E)\geq -\frac{1}{r r_1r_2} (r_2c_1(E_1)-r_1c_1(E_2))^2=-D^2/(r r_1 r_2),\]
	where we set $D:=r_2c_1(E_1)-r_1c_2(E_1)$ and $r_i$ is the rank of $E_i$. The assumption that $r$ and $(c_1, F)$ are coprime implies that $(D, F)=r_2(c_1(E_1),F)-r_1(c_1(E_2),F)\neq 0$. We also have, $(D,H_{t_0})=r_1r_2(\mu_{t_0}(F_1)-\mu_{t_0}(F_2)) = 0$. Therefore, Proposition 1.2 applies to $D$, and we find that
	\[\Delta(E)\geq \frac{1}{r r_1 r_2 (H,F)^2}(H^2+2t_0 (H,F)) \geq \frac{H^2 +2t_0(H,F)}{r^3 (H,F)^2}.\]
	This shows that the values that $t_0$ can take are bounded above, so 1) follows. 
	
	To address 2), let $\eta\in C$ denote the generic point and $F_{\eta}:= f^{-1}(\eta)$. For any coherent sheaf $G$ on $X$ of rank $r_G>0$ the slope with respect to $H_t$ is
	\[\mu_t(G) =  ((c_1(G)),H)+t (c_1(G),F))/r_G.\]
	Suppose that $E$ is semistable beyond the last wall. Then for any subsheaf $E'$, we have 
	\[\mu_t(E')\leq \mu_t(E)\]
	for sufficiently large $t$. Dividing by $t$ and taking the limit as $t\to \infty$, gives  
	\[\mu(E'|_{F_{\eta}}) = (c_1(E'),F)/r'  \leq (c_1(E),F)/r = \mu(E|_F).\] 
	By the coprimeness assumption, we have strict inequality, therefore the restriction of $E$ to $F_{\eta}$ is stable. 
	
	Conversely, assume that the restriction of $E$ to the generic fiber of $f$ is stable. Then for any subsheaf $E'$, we know that $(E',F)/r'<(E,F)/r$. Let $\mu_{0,max}(E)$ be the maximum value of $\mu_0$ of a lower rank subsheaf of $E$, and let $\mu_{max}(E|_F)<\mu(E|_F)$ be the least slope of a non-zero lower rank subsheaf of $E|_F$. 
	Let $E'\subset E$ be an arbitrary lower rank subsheaf. Then we have
	\[\mu_t(E') =(E',H)/r'+t (E',F)/r'\leq \mu_{0,max}(E) + t\mu_{max}(E).  \]
	and the right hand side is strictly smaller than $\mu_t(E) = \mu_0(E) +t\mu(E|_F)$ for sufficiently large $t$, independent of $E'$. Therefore, there is no destabilizing subsheaf for sufficiently large $t$. 
	
	It follows from what we have shown that $H_t$-semistability of $E$ for $t\gg 0$ is equivalent to $H_t$-stability and equivalent to stability of $E|_{F_{\eta}}$ as desired.  
\end{proof}

\section{Moduli Spaces of sheaves on fibrations}\label{sec_modfib}
Throughout this section, we fix coprime integers $r,d$ with $r>0$. Recall that we assume all curves to be proper and connected.
\subsection{Stacks of fiber-stable sheaves}\label{subsec_modfib}

Let $f:X\to C$ be a fibered surface over a nodal marked curve $C$ as defined in Definition \ref{def_fib}. Recall that this implies that fibers of $f$ over nodes, marked points and generic points of components of $C$ are smooth projective curves.  

We define stability of sheaves relative to a fibration. 
\begin{definition}\label{deffstable}
	Let $\Ctilde \to C$ be an expansion of $C$ and let $\Xtilde := X\times_C \Ctilde$. A torsion-free coherent sheaf $E$ of rank $r$ and with fiber-degree $d$ on $\Xtilde$ is called \emph{$f$-stable}, if it satisfies the following conditions:
	\begin{enumerate}[label = \roman*)]
		\item The sheaf $E$ is locally free at the fibers of $\Xtilde \to \Ctilde$ over singular and marked points.
		\item For any generic point $\eta$ of $\Ctilde$, the restriction of $E$ to the fiber $\Xtilde_{\eta}$ over $\eta$ is slope stable.
		\item For any marked or singular point $c$ of $\Ctilde$, the restriction of $E$ to the fiber over $c$ is slope stable. 
	\end{enumerate}
\end{definition}

We extend this notion to arbitrary families. 

\begin{definition}\label{def_fstablefam}
	Let $f:X_B\to C_B$  be a family of fibered surfaces over some base $B$.
	Let $T$ be a $\mathbb{C}$-scheme. A \emph{family of $f$-stable sheaves on an expansion of $X_B$ over $B$}, valued in $T$, is given by the following pieces of data
	\begin{enumerate}[label = \roman*)]
		\item A morphism $T\to B$ with pullbacks $X_T\to C_T$,
		\item An expansion $c:\Ctilde_T\to C_T$ with associated fibered surface $\Xtilde_T:=X_T\times_{C_T}\Ctilde_T \to \Ctilde_T$.
		\item A $T$-flat coherent sheaf $E_T$ on  $\Xtilde$ such that, for every $t\in T$, the fiber $E_t$ is $f$-stable for $X_t\to C_t$. 
	\end{enumerate}
\end{definition}

Let $f:X_B\to C_B$ be a family of fibered surfaces over some base $B$.
\begin{prop}
	There is an algebraic stack $\mathcal{M}_{X_B/C_B}(r,d)$ over $B$ parametrizing $f$-stable sheaves of rank $r$ and fiber-degree $d$ on expansions of $X_B$ over $B$.
\end{prop}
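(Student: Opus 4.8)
The plan is to exhibit $\mathcal{M}_{X_B/C_B}(r,d)$ as an open substack of a stack of coherent sheaves on the universal expansion, using that $f$-stability is an open condition, and then to bound the stack so that algebraicity follows from known algebraicity results. First I would let $\mathcal{X}\to \Exp_{C_B/B}$ denote the universal fibered surface over the stack of expansions, which is algebraic and locally of finite presentation over $B$ by Proposition \ref{propexpstack}; concretely, over an expansion $c:\Ctilde_T\to C_T$ parametrized by $T$, one has the surface $\Xtilde_T := X_T\times_{C_T}\Ctilde_T$. On $\mathcal{X}$ one forms the stack $\mathfrak{Coh}_{\mathcal{X}/\Exp_{C_B/B}}$ of flat families of coherent sheaves on the fibers of $\mathcal{X}\to \Exp_{C_B/B}$; since $\mathcal{X}$ is proper over $\Exp_{C_B/B}$ (the expansion map is proper and $X_B\to C_B$ is proper), this is an algebraic stack locally of finite presentation over $\Exp_{C_B/B}$, hence over $B$, by the standard representability of $\underline{\mathrm{Coh}}$ for proper morphisms (e.g. \cite[Tag 08KA]{stacks-project} or Lieblich's results on the stack of coherent sheaves). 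One then cuts out the locally closed substack where the sheaf has rank $r$ and fiber-degree $d$ — this is a numerical (locally constant) condition on the support and Chern character along fibers, so it is open and closed in the locus of sheaves supported in the right dimension.

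Next I would check that $f$-stability is an open condition on this stack. Torsion-freeness is open. Condition i) of Definition \ref{deffstable} — local freeness along the fibers over the marked and singular locus of $\Ctilde$ — is open: the non-locally-free locus of a flat family of sheaves is closed, its image in $\Ctilde_T$ is closed by properness, and we ask that this image be disjoint from the (closed, and in fact finite over the base) marked-and-nodal locus, which is again an open condition on $T$. For conditions ii) and iii): stability of the restriction to a fiber over a fixed section (a marked or nodal point) is open in families by the standard openness of stability for sheaves on a fixed smooth projective curve, applied to the restriction of $E_T$ to the pullback of $\Xtilde_T$ along that section (using that $E_T$ is locally free there, so restriction is well-behaved and stays flat). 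Stability on the generic fiber of each component is the subtlest: here I would argue that on a fibered surface the restriction of $E$ to the generic fiber $\Xtilde_\eta$ is stable iff the restriction to a general closed fiber over that component is stable (destabilizing subsheaves of the generic-fiber restriction spread out, and conversely), so this too is detected by an open condition, e.g. via relative Quot-scheme arguments bounding potential destabilizing quotients on fibers. Taken together, $f$-stability carves out an open substack $\mathcal{M}_{X_B/C_B}(r,d)$ of the stack of rank-$r$, fiber-degree-$d$ coherent sheaves on $\mathcal{X}$, which is therefore algebraic and locally of finite presentation over $B$.

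The main obstacle, and the step I would spend the most care on, is the openness of condition ii) — comparing stability on the generic fiber of a component with stability on nearby closed fibers, uniformly in families, and in particular handling the interaction with the expansion: as $\Ctilde_T$ varies, generic points of components appear and disappear, and one must ensure the locus is well-defined and open on the stack $\Exp_{C_B/B}$ rather than just on a single curve. I expect this is handled by working étale-locally on $\Exp_{C_B/B}$, where the universal expansion has a section through every component of every fiber (as used in the proof of Lemma \ref{lem_pullns}), reducing generic-fiber stability along a component to a relative statement over a smooth projective curve family with a section, and then invoking boundedness of the family of potentially destabilizing subsheaves (via the Grothendieck Quot scheme together with the coprimality of $r$ and $d$, which rules out strictly semistable degenerations) to get openness. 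I should also remark that the stack is a priori only locally of finite presentation; it need not be of finite type or separated over $B$ without the further $\alpha$-balancing and expansion-stabilization conditions of Theorem \ref{thm_mainthm}, so no properness is claimed at this stage.
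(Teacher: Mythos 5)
Your proposal follows essentially the same route as the paper: realize $\mathcal{M}_{X_B/C_B}(r,d)$ as the open locus of $f$-stable sheaves inside the algebraic stack of (torsion-free) coherent sheaves of rank $r$ and fiber-degree $d$ on the universal expansion over $\Exp_{C_B/B}$. The paper simply asserts that conditions i)--iii) of Definition \ref{deffstable} cut out an open substack, whereas you supply the openness arguments (the key one, for condition ii), reducing to semicontinuity of the fiber dimension of the non-stable locus in $\Ctilde_T$); this is a correct elaboration of the same proof.
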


\begin{rem}
We make this explicit in the case $B=\Spec\CC$.
Say $X\to C$ is a fibered surface. Then a $\CC$-point of $\mathcal{M}_{X_B/C_B}(r,d)$ is given by a pair $(c, E)$, where $c:\Ctilde\to C$ is an expansion of $C$ and $E$ an $f$-stable rank $r$ sheaf on the induced $\Xtilde:= X\times_C\Ctilde$ with fiber-degree $d$. We will sometimes write this as $(\Xtilde, E)$. 

An automorphism of the pair $(\Xtilde, E)$ is a pair $(g, \gamma)$, where $g:\Xtilde\to \Xtilde$ is an automorphism that commutes with the contraction to $X$, and $\gamma:g^*E\to E$ is an isomorphism.	
\end{rem}

\begin{proof}
	We work over the relative stack of expansions $\Exp_{C_B/B}$. Let $c:\mathcal{C}\to C_B$ denote the universal expansion, and let $\mathcal{X}:=X_B\times_{C_B}\mathcal{C} \to \mathcal{C}$ denote the induced family of fibered surfaces over $\Exp_{C_B/B}$. Let $\mathscr{M}(r,d)$ denote the stack of all torsion-free coherent sheaves of rank $r$ and fiber-degree $d$ on $\mathcal{X}$ over $\Exp_{C_B/B}$.  Then one can see that the locus of those sheaves satisfying i)-iii) of Definition \ref{deffstable} is open. 
	Thus we get $\mathcal{M}_{X_B/C_B}(r,d)\subseteq \mathscr{M}(r,d)$ as the open locus of $f$-stable sheaves.
\end{proof}

\begin{prop}
	The morphism $\mathcal{M}_{X_B/C_B}(r,d)\to B$ satisfies the existence part of the valuative criterion of properness.
\end{prop}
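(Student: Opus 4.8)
The plan is to verify the valuative criterion directly: given a DVR $R$ with fraction field $K$, a map $\Spec R \to B$, and a $K$-point $(\Ctilde_K, E_K)$ of $\mathcal{M}_{X_B/C_B}(r,d)$, I must (after possibly a finite base change on $R$, which is allowed since the stack is of finite type over the already-available stack of sheaves and valuative criteria for stacks permit such extensions) produce an expansion $\Ctilde_R$ of $C_R$ extending $\Ctilde_K$ together with an $R$-flat $f$-stable sheaf $E_R$ on $\Xtilde_R := X_R\times_{C_R}\Ctilde_R$ restricting to $E_K$. First I would handle the base curve: the stack $\Exp_{C_B/B}$ is, by Proposition \ref{propexpstack}, flat of relative dimension zero over $B$ with only toric (normal-crossings-type) singularities, so after a ramified base change on $R$ the given expansion $\Ctilde_K$ extends to a family of expansions $\Ctilde_R \to C_R$; this is exactly the statement that the stack of expansions satisfies the valuative criterion, which follows from the explicit local description in Lemma \ref{lemlocalexp} (the standard argument for expanded degenerations, cf. \cite{Li_Stab}, \cite{ACFW}). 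This fixes $\Xtilde_R$.

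Next I would extend the sheaf. Starting from the generically given $E_K$ on $\Xtilde_K$, choose any coherent extension $\mathcal{E}$ to $\Xtilde_R$ which is $R$-flat and torsion-free (e.g. take a coherent extension and saturate, or take the closure of the graph); this much is the usual existence of limits for coherent sheaves over a DVR. The point is that this limit need not be $f$-stable — conditions i), ii), iii) of Definition \ref{deffstable} may fail on the special fiber. To fix condition ii) (stability on generic fibers of components of $\Ctilde_R\times_R \kappa$) and condition i)/iii) (local freeness and stability on fibers over special and marked points), I would run Langton's procedure of elementary modifications, exactly as invoked in the proof of Lemma \ref{lem_discpos}: replace $\mathcal{E}$ by a sequence of elementary modifications along maximally destabilizing sub/quotient sheaves of the relevant special fibers. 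Langton's theorem guarantees this process terminates in a sheaf whose restriction to each of the finitely many distinguished fibers (generic points of components, nodes, markings of the special fiber) is semistable, and then the coprimeness of $r$ and $d$ upgrades semistable to stable on each such fiber; semistability of a locally free sheaf on a curve also forces local freeness near those fibers. One subtlety is that there are several fibers to control simultaneously, but since they are disjoint the modifications can be performed independently and the process still terminates; another is that modifications performed to fix one fiber do not disturb the others because they are supported on distinct fibers.

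The main obstacle, and the step requiring the most care, is showing that after these modifications the limit is *genuinely* $f$-stable and not merely that each individual restriction is semistable — in particular one must make sure Langton's discriminant-decreasing argument applies on a reducible total space (the $\Xtilde_R$ here has reducible special fiber) and that torsion-freeness is preserved. Here the key input is again coprimeness together with Lemma \ref{lem_yosh1} and the Bogomolov-type bound of Lemma \ref{lem_discpos}, which keep the discriminant of the modified sheaves bounded below so the process cannot continue forever. Finally I would note that uniqueness is \emph{not} required for the existence part of the criterion (indeed it genuinely fails before imposing the $\alpha$-balancing condition, because of twisting by components of the base curve — precisely the phenomenon the introduction flags), so the proof ends once a single $f$-stable extension has been produced.
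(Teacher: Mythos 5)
There is a genuine gap, and it sits at the heart of why expansions are needed at all. Your plan fixes the expansion $\Ctilde_R$ \emph{first} (``this fixes $\Xtilde_R$'') and then tries to produce an $f$-stable limit of the sheaf on that fixed family by Langton-type elementary modifications. This cannot work in general. Conditions i) and iii) of Definition \ref{deffstable} concern the restriction of $E$ to the fibers of $\Xtilde_\xi\to\Ctilde_\xi$ over \emph{nodes and marked points} of the special curve; these fibers are codimension-two loci in $\Xtilde_R$, so a failure of local freeness or stability there cannot be repaired by an elementary modification along a divisor of a fixed total space. The only remedy is to \emph{further expand} the special fiber of $\Ctilde_R$ --- blow up the curve at the offending node or marked point so that the destabilizing locus becomes divisorial on a new exceptional component, and then modify. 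This is exactly what the paper's Proposition \ref{PropExtendSheaf} does: it tracks the destabilizing behaviour via a relative Quot scheme over $C_R$, observes that its image $Z\subset C_R$ may be a non-principal subscheme supported at special points of $C_\xi$, and invokes Lemma \ref{lemmakeprinc} (a sequence of blow-ups plus a ramified base change) to principalize $Z$ on a larger expansion before any elementary modification can be performed. In your argument the expansion never changes after step 1, so the process gets stuck precisely at those fibers. Relatedly, the assertion that ``semistability of a locally free sheaf on a curve also forces local freeness near those fibers'' is backwards: local freeness of the limit along the distinguished fibers is an input that must be arranged (the paper cites the Gieseker--Li argument for it, and that argument already requires expanding), not a consequence of fiberwise semistability.

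A second, smaller omission: when the generic expansion $\Ctilde_\eta$ is itself nontrivial, the paper does not simply ``extend the expansion and then the sheaf''; it reduces to the smooth-base case of Proposition \ref{PropExtendSheaf} by an induction that cuts $\Ctilde_\eta$ at separating and non-separating nodes, extends the pieces separately, and then re-glues --- and the re-gluing step requires twisting one side by multiples of components of the special fiber (or inserting an extra bubble at a non-separating node) to match the gluing isomorphism over the closed point. Your proposal does not address how the limits on the pieces are made compatible. The correct takeaway is that the expansion over the closed point must be treated as part of the unknown to be constructed, adaptively and together with the sheaf, rather than chosen in advance.
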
	
\begin{proof}
	We may assume that $B = \Spec R$ for a DVR $R$ with generic point $\eta$, that $\Ctilde_{\eta}\to C_{\eta}$ is a given  expansion of $C_B$ over $\eta$ and that $E_{\eta}$ is an $f$-stable sheaf on $\Xtilde_{\eta}:=\Ctilde_{\eta}\times_C X$. Our goal is to show that we can extend this to a family of $f$-stable sheaves, possibly after passing to some extension of $R$. Let $\xi$ denote the closed point of $\Spec R$.
	
	\paragraph{Case 1: $\Ctilde_{\eta}$ is smooth.}  
	This implies that $\Ctilde_{\eta}\to C_{\eta}$ is an isomorphism. 
	Then the desired result follows from Proposition \ref{PropExtendSheaf} below.
	
	\paragraph{Case 2: $\Ctilde_{\eta}$ is of compact type.}  
	We proceed by induction on the number of components of $\Ctilde_{\eta}$. If there is only one component, we are in Case 1. Otherwise, we may decompose $\Ctilde_{\eta} = \Ctilde^1_{\eta}\cup_q \Ctilde^2_{\eta}$ along any chosen node $q$. We add a marked point $q_i$ on each $\Ctilde_i$ where the node was. Then we can find limiting families for the restrictions of $E$ to both $C^1_{\eta}$ and $C^2_{\eta}$. In order to glue the total family back together over $R$, we need to extend the isomorphism of $E^1|_{q_1}$ and $E^1|_{q_2}$ from $\eta$ over all of $\Spec R$. But since these are fiberwise isomorphic families of stable sheaves, there exists such an extensions, possibly after twisting one of $E^1$ or $E^2$ by a multiple of $\Xtilde_{\xi}^i$.  
	
	\paragraph{Case 3: $\Ctilde_{\eta}$ is arbitrary.}
	We do an induction on the first Betti number of the dual graph of $\Ctilde_{\eta}$. If it is zero, we are in Case 2. Otherwise, we may choose a non-separating node $q_{\eta}$ on $\Ctilde_{\eta}$ and take a partial normalization $\nu:\Ctilde'\to \Ctilde$ around $q_{\eta}$, while remembering the preimage of a node through adding in markings $q_{\eta,1}$ and $q_{\eta,2}$. Then by our inductive hypothesis, we can find some completion of $\Ctilde'_{\eta}$ and $\nu^*E_{\eta}$. It remains to show that we can glue together the total family along the markings $q_1$ and $q_2$. This is always possible after expanding the special fiber by, say, blowing up once along $q_1$ and then twisting by some integer multiple of the exceptional component. 
\end{proof}

\begin{prop}\label{PropExtendSheaf}
	Let $R$ be a DVR with generic point $\eta$ and let $X_R\to C_R \to \Spec R$ be a family of fibered surfaces over $\Spec R$ with $C_{\eta}$ smooth over $\eta$. Let $E_{\eta}$ be an $f$-stable sheaf of rank $r$ and fiber degree $d$ on $X_{\eta}$. 
	Then, after possibly performing a base change on $R$, we can find an expansion $\Ctilde_R\to C_R$ which is an isomorphism over $\eta$ and an $f$-stable sheaf $E_R$ on $\Xtilde_R$.
\end{prop}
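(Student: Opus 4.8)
The plan is a semistable-reduction argument: spread $E_{\eta}$ out to an $R$-flat coherent sheaf on $X_R$ and then repair it by alternating two operations --- Langton-style elementary modifications, which force fiberwise stability, and the bubbling construction of expanded degenerations, which forces local freeness along fibers over nodes. First I would take any coherent extension of $E_{\eta}$ to $X_R$, quotient out the subsheaf supported on the special fiber, and pass to the double dual; since $X_R$ is normal --- indeed regular along the fibers of $f$ over the nodes of $C_{\xi}$, where $f$ is smooth --- this yields a reflexive sheaf $\mathcal{E}$ on $X_R$, flat over $R$, with $\mathcal{E}_{\eta}\cong E_{\eta}$ and unchanged over $\eta$. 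As $C_R$ is irreducible (being $R$-flat with irreducible generic fiber), the generic fiber of $f:X_R\to C_R$ coincides with the generic fiber of $f$ over $\eta$, so $\mathcal{E}$ is automatically slope stable there.

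Next, for each irreducible component $D$ of $C_{\xi}$ I would make the restriction of $\mathcal{E}$ to the generic fiber of $f^{-1}(D)\to D$ slope stable. Working at the generic point of the prime divisor $f^{-1}(D)\subset X_R$ --- whose local ring is a discrete valuation ring over which $\mathcal{E}$ restricts to a slope-stable sheaf on a curve --- Langton's procedure of elementary modifications \cite{Lang_Val}, or equivalently (via Theorem \ref{thm_boundedwalls}) the valuative criterion for $H_t$-Gieseker stability on the smooth surface $f^{-1}(D)$ with $t\gg 0$, modifies $\mathcal{E}$ to achieve this. A modification supported on $f^{-1}(D)$ alters $\mathcal{E}$ only along that divisor, so properties already arranged on the other components survive, and the process terminates because a suitable discriminant, tracked using Lemma \ref{lem_yosh1}, strictly decreases while staying bounded below.

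For local freeness, at each node $q$ of $C_{\xi}$ with fiber $F_q=f^{-1}(q)$ (a smooth curve) I would measure the defect of local freeness of $\mathcal{E}_{\xi}$ along $F_q$; after a base change on $R$ raising the ``thickness'' of every node far enough to accommodate the required number of new components, one forms an expansion $c:\Ctilde_R\to C_R$, an isomorphism over $\eta$, that inserts a chain of rational curves at each $q$. The key point is that the preimage $f^{-1}(R_i)$ of each inserted component $R_i$ is a product $\PP^1\times F_q$ --- because $R_i$ is contracted to the point $q$ and $f$ is smooth there --- so the Bogomolov-type inequality of Lemma \ref{lem_discpos} and the computations of Lemma \ref{lem_mult} apply to sheaves on it. Following the surface constructions of Gieseker--Li \cite{GiLi_Irr} and Li \cite{Li_Mod,Li_Stab}, I would perform elementary modifications of $\mathcal{E}$ along these bubble components and twist by their divisor classes, so that after finitely many steps the resulting sheaf on the expanded total space $\Xtilde_R$ is locally free near every fiber over a node of $\Ctilde_{\xi}$ (using the characterization of Lemma \ref{lem_expprop} and that an $R$-flat sheaf with torsion-free generic fiber is locally free iff its central fiber is). There then remain only finitely many fibers of $\Xtilde_{\xi}\to\Ctilde_{\xi}$ over nodes and marked points, along each of which $\mathcal{E}_{\xi}$ is now locally free of rank $r$ and degree $d$; since $r$ and $d$ are coprime, semistability there is stability, and one last round of Hecke modifications supported on these fibers makes all of them stable without changing the restriction of $\mathcal{E}_{\xi}$ to any generic fiber. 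The resulting sheaf $E_R$ has $f$-stable central fiber in the sense of Definition \ref{deffstable}, and since $f$-stability is an open condition, $E_R$ is $f$-stable over all of $\Spec R$.

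The main obstacle is to organize these repairs into a single terminating process, so that a modification made to fix stability along one fiber, or a bubble-and-twist made to fix local freeness at one node, does not create a new defect elsewhere. I would control this with a lexicographically ordered invariant built from the total discriminant (via Lemma \ref{lem_yosh1}) and the length of the non-locally-free locus, exploiting that the bubble components are products $\PP^1\times F_q$, where Lemma \ref{lem_discpos} keeps the discriminant bookkeeping in hand, and that a modification supported on a single fiber leaves restrictions to all other fibers untouched. The base change on $R$ enters exactly to furnish the room --- enough thickness at each node --- that lets the bubbling be performed over a discrete valuation ring, just as in Li's construction of expanded degenerations.
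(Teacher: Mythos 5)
Your overall strategy (spread out, reflexivize, then alternate Langton-type elementary modifications with bubbling/expansion) is the same family of argument as the paper's, which also starts from the Gieseker--Li extension with torsion-free special fiber and local freeness along the fibers over nodes and marked points. The step that makes the restriction to the generic fiber of each component $f^{-1}(D)\to D$ stable via Langton at the divisor $f^{-1}(D)\subset X_R$ is fine. The gap is in your final step: ``one last round of Hecke modifications supported on these fibers makes all of them stable.'' A fiber $F_q=f^{-1}(q)$ over a node or marked point $q$ of $C_{\xi}$ is codimension \emph{two} in the threefold $X_R$ (since $q$ is a closed point of the surface $C_R$). An elementary modification $E'=\ker(E\to i_{F_q,*}Q)$ along such a locus produces a sheaf whose special fiber $E'_{\xi}$ contains $\mathcal{T}or_1^R(i_*Q,k(\xi))\cong i_*Q$ as a torsion subsheaf supported on the divisor $F_q\subset X_{\xi}$; so it destroys torsion-freeness of the central fiber, and reflexivizing simply undoes the modification. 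Langton's procedure only applies when the destabilizing locus in $C_R$ is a divisor, and the whole difficulty of the proposition is that the instability can be concentrated scheme-theoretically at a fat point of $C_R$ sitting at a node or marked point.

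The paper resolves exactly this: using the minimal invariants $(a,\rho)$ (least slope and maximal rank of a maximally destabilizing quotient), the relative Quot scheme $\operatorname{Quot}_{E_R,X_R/C_R}(\rho,a)\to C_R$ is locally a closed embedding, so the destabilizing locus is a closed subscheme $Z\subset C_R$ with associated points in $P(C_{\xi})$. If $Z$ contains a component of $C_{\xi}$ one modifies along that divisor; otherwise $Z$ is a non-principal fat point and one must first perform a base change and an expansion that \emph{principalizes} the preimage of $Z$ (Lemma \ref{lemmakeprinc}), spreading the defect onto new exceptional components where divisorial modifications apply. Your bubbling step is invoked only to repair local freeness, not the stability defect at nodes and marked points, and your proposed lexicographic invariant does not substitute for this principalization; without it the induction cannot even take its first step at a node. (Secondary issues: $C_R$ must first be made regular by blowing up the nodes before $X_R$ is regular along those fibers, and the termination bookkeeping is cleaner via $(a,\rho)$ with $a\in\frac{1}{r}\ZZ$ bounded above than via a global discriminant count across expansions.)
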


\begin{proof}
	We may modify $C_R$ by repeatedly blowing up singular points to obtain a family with regular total space \cite[\href{https://stacks.math.columbia.edu/tag/0CDE}{Tag 0CDE}]{stacks-project}, which will automatically be an expansion of $C_R$. Without loss of generality, we may therefore assume that $C_R$ is nonsingular.
	
	Let $\xi$ denote the closed point of $\Spec R$.
	By the argument in \cite[Proof of the last statement of Proposition 3.3]{GiLi_Irr}, after passing to some extension of $R$ and further expanding $C_R$ over the special fiber, we can assume that $E_{\eta}$ extends to a torsion-free coherent sheaf $E_R$ on $X_R$ with the following properties:
	\begin{enumerate}[label =(\alph*)]
	\item  the restriction of $E_R$ to $X_\xi$ is torsion-free,
	\item the sheaf $E_R$ is locally free along the fibers of $X_\xi\to C_{\xi}$ over singular and marked points.  
	\end{enumerate}
	
	Let $P(C_{\xi})$ denote the collection of singular, marked and generic points of $C_{\xi}$. Unless $E_R$ is $f$-stable, there exists $x \in P(C_{\xi})$, such that the restriction $E_x$ of $E_R$ to the curve $f^{-1}(x)$ is unstable. Let $a$ be the minimum of the values $\mu_min(E_x)$, where $\mu_{min}$ denotes the minimal slope in a Harder--Narasimhan filtration. Let $\rho$ be the maximum rank of a maximally destabilizing quotient of $E_{x}$, ranging over those $x\in P(C_{\xi})$ for which $\mu_{min}(E_x) = a$. We claim that after taking a base change on $R$ and a further expansion of $C_R$, we may find a different completion of $E_{\eta}$ such that either $a$ increases or $a$ stays the same and $\rho$ decreases. Since there are only finitely many possible values for $\rho$, and since the set of possible values of $a$  lies in $\ZZ/r$ and is bounded above by $d/r$, we find that after doing so finitely many times, we end up with an $f$-stable extension of $E_{\eta}$.
	
	To prove this claim, consider the relative Quot-scheme \[q:\operatorname{Quot}_{E_R,X_R/C_R}(\rho,a)\to C_R,\] parametrizing quotients of $E_R$ of rank $\rho$ and slope $a$ on the fibers of  $X_R\to C_R$. By assumption, the image of $q$ contains some points of $P(C_{\xi})$. From the minimal choice of $a,\rho$ it follows from an analysis of the relative deformation space, that at each such point, the map $q$ is locally a closed embedding (see the proof of Theorem 5 in \cite{Ni_Sche}). 
	
	By repeatedly blowing up $C_R$ along the marked points of the special fiber, we may assume that around each marked point, the image of $q$ is supported on $C_{\xi}$. By doing a base change on $R$ and resolving the pullback of $C_R$, we may assume the same holds around any node in $C_{\xi}$. Under these assumptions, it follows that there is a closed subscheme $Z\subset C_R$, whose associated points are all in $P(C_{\xi})$ and such that around each of its associated points, it agrees with the closed subscheme defined by the Quot-scheme. 
	
	If $Z$ contains a component $C_i$ of $C_{\xi}$, we may replace $E_R$ by the elementary modification of $E_R$ along a maximally destabilizing quotient on that component. By the minimal choice of $a$ and $\rho$, the resulting sheaf will still be locally free at fibers over marked points and nodes.  This has the effect of dividing the ideal sheaf of $Z$ by the uniformizer of that component. Thus, if $Z$ is locally principal, one can do so until $Z$ becomes empty, in which case there is no more point in $P(C_{\xi})$ with maximally destabilizing subsheaf of slope $a$ and rank $\rho$. In case $Z$ is not locally principal, one can use Lemma \ref{lemmakeprinc} to find an extension $R'$ of $R$ and an expansion $c:\Ctilde_{R'}\to C_{R'}$ over $R'$ which is trivial over the generic fiber, such that the scheme-theoretic preimage of $Z$ in $\Ctilde_{R'}$ is principal. Since the relative Quot-scheme is compatible with pullback, this reduce us to the case that $Z$ is principal, which we already treated.  
\end{proof}

\begin{lem}\label{lemmakeprinc}
Let $\pi:C_R\to \Spec R$ be a nodal marked curve over a DVR $R$ with closed point $\xi$. Suppose that $\pi$ has smooth special fiber and that $C_R$ is regular. Let $Z\subset C_R$ be a closed subscheme supported on $C_{\xi}$ whose associated points are special, marked or generic points of components of the special fiber.  Then there is an extension of DVRs $R\subset R'$ and an expansion $c:\Ctilde_{R'}\to C_{R'}$ which is trivial over the generic point of $R'$ such that $c^{-1}(Z)$ has no marked points or nodes of $\Ctilde_{\xi}$ as associated points. 
\end{lem}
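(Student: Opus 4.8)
The plan is to reduce the statement to a local principalization problem at the marked points of $C_\xi$, and to solve it by iterated blowups together with ramified base changes at the nodes that appear along the way. First I would note that any expansion is an isomorphism over the complement of the marked points and nodes of the fibers of $C_R\to\Spec R$, and that $C_\xi$, being smooth, has no nodes; hence $c$ is automatically an isomorphism over $C_R$ away from the finitely many marked points of $C_\xi$, where $c^{-1}(Z)=Z$ has only generic points of components of $C_\xi$ among its associated points. Since moreover every node and every marked point of $\Ctilde_\xi$ lies over a marked point of $C_\xi$, it suffices to work in a formal neighbourhood of each marked point $p\in C_\xi$ and to produce there a chain of admissible modifications after which the transform of $I_Z$ is principal at every node and marked point over $p$; the ambient surface being regular, $c^{-1}(Z)$ then has no embedded point there, and the local constructions glue to the global $c\colon\Ctilde_{R'}\to C_{R'}$.

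Near $p$ the local ring is two-dimensional regular, with $C_\xi=\{t=0\}$ ($t$ a uniformizer of $R$) and the section $\sigma=\{u=0\}$ transverse; I set $I:=I_{Z,p}$, which by hypothesis has $\operatorname{Ass}(\mathcal O/I)\subseteq\{(t),\mathfrak m_p\}$, so there is something to do only when $I$ fails to be principal at $p$. The admissible modifications are precisely the local pieces of the stacks $\Exp_{(\AA^1,0)}$ and $\Exp_{\AA^2/\AA^1}$ appearing in Lemma \ref{lemlocalexp}: at a marked point (a smooth point of the current regular total space) one may blow up, extending the chain of rational curves; at a node, \'etale-locally of the form $\{t=xw\}$, one may base change $t=s^e$ and take the minimal resolution of the resulting $A_{e-1}$-singularity, inserting a reduced chain of $e-1$ rational curves. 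Each of these is a genuine expansion by Lemma \ref{lem_expprop}. Crucially, blowing up a \emph{free} point of an exceptional curve is \emph{not} admissible — it produces a rational bubble violating condition (ii) of Definition \ref{defexpansion} — so one cannot simply invoke full principalization of $I$.

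The procedure then has two passes. In the first I would repeatedly blow up marked points at which the transformed ideal is not principal; by the standard behaviour of colengths under point blowups of a regular surface — at each such blowup the total colength of the non-divisorial parts of the ideal, taken over all marked points, drops strictly — this terminates, and afterwards $I$ is principal at every marked point, the non-principal locus lying in the nodes and in free points of exceptional curves. In the second pass I clear the nodes: a base change $R\subseteq R'$ of large enough degree $N$ (depending only on the finitely many local colengths) turns every node into an $A_{N-1}$-singularity, and taking the minimal resolution of each makes the pulled-back ideal principal near the inserted chain — the model computation being that on the minimal resolution of $A_{e-1}$ a monomial ideal $(w^a,x^b)$ becomes invertible once $e$ is large (for instance $e\geq 1+\max(a,b)/\min(a,b)$), with a general $\mathfrak m$-primary ideal treated by first monomializing it along the same resolution and, if needed, a further such base change. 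Since the base change keeps the already-cleared marked points principal and the nodes are mutually disjoint, the second pass also terminates, and at the end $I\mathcal O_{\Ctilde_{R'}}$ is invertible at every node and marked point of $\Ctilde_\xi$; hence $c^{-1}(Z)$ is a Cartier divisor there and has no embedded point, which is the claim.

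The main obstacle is the tension between remaining inside the class of genuine expansions and making enough progress to terminate. Full principalization of $I$ is unavailable, since it would force blowups at free points of exceptional curves; the way out is that the conclusion only asks to remove the \emph{forbidden} associated points, so the non-principal locus is allowed to escape onto free points. Clearing the non-principal locus when it has migrated to a node is what genuinely requires passing to a ramified extension $R\subseteq R'$ — this is the origin of the base change in the statement — and the delicate part is bounding how much ramification is needed, i.e.\ controlling the transform of an arbitrary ideal under $A_{e-1}$-resolutions; the rest is the routine colength bookkeeping indicated above.
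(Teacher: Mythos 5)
Your reduction to the marked points of $C_\xi$, your catalogue of admissible moves, and your first pass (blowing up marked points, with termination controlled by a colength invariant) are all sound and agree in substance with the first step of the paper's proof. The gap is in the second pass. The claim that a base change of \emph{large} degree $e$ followed by the minimal resolution of the resulting $A_{e-1}$-singularity principalizes the residual ideal at the new nodes is false as stated: what is needed is a \emph{divisibility} condition on $e$, not a lower bound. Concretely, take $I=(x,w)$ at a node $\{xw=\pi\}$ and $e=3$, which satisfies your sample bound $e\ge 1+\max(a,b)/\min(a,b)=2$. On the minimal resolution of $xw=\sigma^3$ one has $\operatorname{div}(x)=3\widetilde{B}_x+2E_1+E_2$ and $\operatorname{div}(w)=3\widetilde{B}_w+E_1+2E_2$, so at the node $E_1\cap E_2$ (local coordinates $u,v$ with $x=u^2v$, $w=uv^2$) the pulled-back ideal is $(u^2v,uv^2)=uv\,(u,v)$, which is not principal; the same failure occurs at the middle node of the chain for every odd $e$. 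The correct statement is that $J\cdot\mathcal{O}$ becomes invertible at all nodes of the chain once $e$ is divisible by $p+q$ for every primitive inward normal $(p,q)$ of a compact edge of the Newton polyhedron of $J$ in the branch coordinates $(x,w)$: the nodes $E_i\cap E_{i+1}$ where invertibility fails are exactly those for which the segment from $(e-i,i)$ to $(e-i-1,i+1)$ crosses a wall of the normal fan, and divisibility places every wall on one of the rays through the $(e-i,i)$, forcing the residual onto free points of the chain. Your fallback (``monomializing \dots a further such base change'') gestures at this but is not an argument, and your stated termination reason for the second pass (disjointness of the nodes) does not address whether the iteration at a single node terminates. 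So as written, the key step of the node case is unproved, although the statement is true with the divisibility condition and the Newton-polyhedron argument above.

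It is worth comparing this with how the paper treats the nodes, since the difference is exactly at the problematic point. The paper principalizes $Z$ at the nodes by \emph{blowing up the nodes themselves}, using the same kind of strictly decreasing invariant ($\ell(Z_s)+\ell(Z_t)$ for the two local branches) as at marked points; this temporarily leaves the category of expansions, because blowing up a node inserts a component of multiplicity two into the special fiber. Only at the very end does one perform a single ramified base change of sufficiently divisible degree, normalize, and resolve, to convert the resulting non-reduced chain back into an honest expansion. By that stage the ideal is already principal near every node and marked point, and principality is preserved under arbitrary further pullback, so no control over how ideals transform under $A_{e-1}$-resolutions is needed. If you want to keep your two-pass structure entirely inside the category of expansions, you must replace ``sufficiently large'' by ``sufficiently divisible'' and supply the Newton-polyhedron (or Rees-valuation) argument; otherwise the paper's reordering --- principalize first by unrestricted point blowups, semistabilize last --- is the cleaner route.
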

\begin{proof}
We first argue that by repeatedly blowing up at marked points, one can achieve that the preimage of $Z$ is principal around each marked point. Indeed, for a local calculation around the marked point $x$ we may assume that $Z$ is supported at $x$, and that the family is locally given by $\Spec R[t]\to \Spec R$, with the section given by $t=0$. Let $\pi$ be a uniformizer of $R$. Let $Z_{\pi}$ and $Z_t$ the intersections of $Z$ with the loci $(\pi = 0)$ and $(t=0)$ respectively. We claim that the invariant $\ell(Z_{\pi})+\ell(Z_{t})$ decreases for the preimage of $Z$ on the blowup. Indeed, the new marked point on the blowup is cut out by coordinates $\pi, u$, where $t= \pi u$. There exist elements $g_1 = t^a+\pi f_1$ and $g_2 = \pi^b +t f_2$ a in the defining ideal $I_Z$ of $Z$, where $a=\ell(Z_{\pi})$ and $b = \ell(Z_{t})$. Let also $k$ be maximal, so that $I_Z\subseteq (\pi,t)^k$. Thus $q^{-1}Z$ contains the exceptional divisor to order at least $k$. Note that $1\leq k\leq a,b$. Let $\Ztilde$ the non-principal part of $q^{-1 }Z$ at the marked point.  Then by a direct computation, one has 
\begin{align*}
\ell(\Ztilde_\pi) & \leq  k\\
\ell(\Ztilde_u) & \leq b - k  
\end{align*}
In particular, $\ell(\Ztilde_\pi) +\ell(\Ztilde_\pi) \leq b < \ell(Z_\pi) +\ell(Z_t)$.
Essentially the same argument works for $x$ a node in $C_{\xi}$,  where one has parameters $s,t$ locally cutting out the components of $C_{\xi}$ at $x$. Here, one needs to repeatedly blow up the nodes in the reduced preimage of $C_{\xi}$. This yields a modification $\widehat{C}_R\to C_{R}$, which principalizes $Z$ and is an isomorphism over the generic fiber, but where the fiber $\widehat{C}_\xi$ may have non-reduced components. After taking a ramified extension $R'$ of $R$ with sufficiently divisible degree, taking the normalization of $\widehat{C}_{R'}$, and resolving the singularities through repeated blow-ups, we obtain an expansion of $\Ctilde_{R'} \to C_{R'}$ with the desired properties. 
\end{proof}

\subsection{Fixing twists from the base}\label{subsec_balanced}
In the last subsection, we saw that the moduli stack $\mathcal{M}_{X_B/C_B}(r,d)$ satisfies the existence part of the valuative criterion of properness. To get a proper moduli space, we need to introduce a further numerical stability condition, which fixes twists by line bundles from $C_B$. For stability of line bundles on a curve, we use heavily ideas from \cite{OdaSe_Comp} and \cite{EP_Sem}.
  
Here, we fix $g\geq 0$ and consider only fibered surfaces $f:X\to C$ whose fibers have arithmetic genus $g$. 

For a marked nodal curve $C$ over a field, let $\operatorname{Irr}(C)$ denote the set of irreducible components of $C$. For a line bundle $N$ on $C$, we use $\deg L$ to denote the total degree, and $\undeg L$ to denote the component-wise degree, which is a function on $\operatorname{Irr}(C)$.
\begin{definition}\label{def_stability}
\begin{enumerate}[label = \roman*)]
	\item Let $C$ be a marked nodal curve over a field. A \emph{stability condition} on $C$ is a map $\alpha: \operatorname{Irr}(C)\to \mathbb{R}$. We define the \emph{degree} of $\alpha$ as $\sum_{D\in \operatorname{Irr}(C)} \alpha(D)$.
	\item Let $C_B\to B$ be a family of marked nodal curve. A $\emph{stability condition}$ on $C_B$ over $B$ is given by a collection of stability conditions $(\alpha_x)$ for each field-valued point $x$ of $B$, which are compatible in the following sense: If $\eta$ specializes to $\xi$ in $B$, there is an induced surjective morphism $\operatorname{Irr}(C_{\xi})\to \operatorname{Irr}(C_\eta)$. We require that for each $D\in \operatorname{Irr}(C_{\eta})$, we have that $\alpha_{\eta}(D)$ is equal to the sum of $\alpha_{\xi}(D')$ for $D'$ mapping to $D$. 
\end{enumerate}
\end{definition}
\begin{rem}
	It follows that for a family of curves over a finite type base $B$, a stability condition is uniquely defined by its values on the most degenerate strata. For example, if $B$ is the spectrum of a DVR, then giving a stability condition on $C_B$ is the same as giving one over the special fiber.  
\end{rem}

Let $X\to C$ be a given fibration over a nodal marked curve and let $E$ be an $f$-stable sheaf on an expansion $\Xtilde\to \Ctilde$.
\begin{definition}
	\begin{enumerate}[label = \roman*)]
	\item We say that a component of $\Ctilde$ (resp. of $\Xtilde$) is \emph{exceptional} if it is contracted by $\Ctilde\to C$ (resp. by $\Xtilde\to X$).
	
	\item	We say that the expansion $\Xtilde$ is \emph{minimal} if there is no intermediate expansion $\Xtilde\to \Xtilde' \to X$ such that $E$ is isomorphic to a pullback from $\Xtilde'$. 
		\end{enumerate}
\end{definition}

We use the following abuse of notation: Let $\Ctilde \to C$ be an expansion, and $\alpha$ a stability condition on $C$. For $D\subset \Ctilde $ an irreducible component, we set 
\[\alpha(D):=\begin{cases}
	0, & \mbox{ if $D$ is exceptional;}\\
	\alpha(c(D)) & \mbox{ if $D$ maps isomorphically to its image.} 
\end{cases}\]

For the rest of this subsection, let $f:X_B\to C_B$ a family of fibered surfaces with genus $g$ fibers over a base $B$. 
Let $L_0$ be a line bundle on $X_B$ which has positive degree $d_0$ on each fiber over $C_B$. Let $0\leq k<r$ be the unique integer, such that $kd - rk' = 1$ for some $k'\in \mathbb{Z}$. 
Define $W:= L_0^{\otimes (g-1)r-d}\oplus \mathcal{O}_X^{\oplus d_0r-1}$. For any coherent sheaf $E$ on $X$ of finite cohomological dimension, consider the line bundle 
\begin{equation}\label{eq_defW}
M(E):= \frac{\det Rf_*((\det E)\otimes L_0) }{\det Rf_*(\det E)} \otimes  \left(\det Rf_*(E\otimes W)\right)^{\otimes k}.
\end{equation}
We similarly define $M(E)$ for any expansion of $X_B$ by pulling back $L_0$.
\begin{rem}
	This definition is chosen so that $M$ has the following two properties, which is all that we will use in what follows:
	\begin{enumerate}[label = (\roman*)]
		\item (Nonzero weight) For any line bundle $N$ on $C_B$, we have 
		\[M(E\otimes f^* N) = M(E)\otimes N^{d_0 r}.\]
		\item (Normalizable on exceptional components) Let $E$ be an $f$-stable sheaf on an expansion $\Xtilde\to \Ctilde$ of $f$ over any geometric point of $B$. Then there exists a line bundle $L_1$ such that $M(E\otimes L_1)$ has degree zero on every exceptional component of $\Ctilde$. 
	\end{enumerate} 
	The first property can be seen directly from Grothendieck--Riemann--Roch. The second is a consequence of Lemma \ref{lem_mult}.
\end{rem}

We let $\alpha$ be a stability condition on $C_B$ over $B$. For a subcurve $Z\subset C$ of a nodal marked curve, we let $Z^c$ denote the ``complementary'' subcurve formed by the union of components of $C$ not contained in $Z$.

First, we consider the case where $X\to C$ is a fibered surface over a field base $B=\Spec k$.
\begin{definition}\label{def_balanced}
	Let $\Xtilde\to X$ be an expansion. 
	We say that an $f$-stable sheaf $E$ on $\Xtilde$ is \emph{$\alpha$-balanced} if 
	\begin{enumerate}[label = \arabic*)]
	\item for each proper sub-curve $\emptyset \subsetneq Z \subsetneq \Ctilde$ of $\Ctilde$ we have:
	\begin{equation}\label{eqbalanceddef}
	\left| \frac{\deg M(E)|_Z}{rd_0 } - \sum_{D\subset Z} \alpha(D) \right| \leq \frac{\#(Z\cap Z^c)}{2} ,
		\end{equation} 
	 \item For each exceptional component $D\subset \Ctilde$ we have $\deg M(E)|_D$ is non-negative. 
	
	\end{enumerate}
	We say that $E$ is \emph{strictly} $\alpha$-balanced if we moreover have 
	\begin{enumerate}[label = \arabic*)]
\setcounter{enumi}{2}
	\item If we have equality in \eqref{eqbalanceddef}, then one of $Z$ or $Z^c$ is a union of exceptional components.
		\end{enumerate}
\end{definition}
One can see that (strict) $\alpha$-balancedness is an open condition in families of $f$-stable sheaves, so this definition gives a well-behaved moduli functor for families over a general base $B$.

\begin{definition}
	We let $\mathcal{M}^{\alpha}_{X_B/C_B}(r,d)\subseteq \mathcal{M}_{X_B/C_B}(r,d)$ denote the open substack consisting of $\alpha$-balanced sheaves on minimal expansions.
\end{definition}  

\begin{prop}\label{prop_valex}
	The stack $\mathcal{M}^{\alpha}_{X_B/C_B}(r,d)$ satisfies the existence part of the valuative criterion of properness.
\end{prop}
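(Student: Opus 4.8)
The plan is to combine the existence part of the valuative criterion already established for $\mathcal{M}_{X_B/C_B}(r,d)$ (without the balancing condition) with a twisting argument that corrects any limit produced there into an $\alpha$-balanced one, and then to pass to the minimal expansion. So let $R$ be a DVR with generic point $\eta$ and closed point $\xi$, and suppose we are given a family over $\eta$ consisting of an expansion $\Ctilde_\eta\to C_\eta$ and an $f$-stable, $\alpha$-balanced sheaf $E_\eta$ on the induced $\Xtilde_\eta$, living on a minimal expansion. By the valuative criterion for $\mathcal{M}_{X_B/C_B}(r,d)$ proved above, after a finite base change on $R$ we may extend this (forgetting balancing) to an expansion $\Ctilde_R\to C_R$, trivial over $\eta$, together with an $f$-stable sheaf $E_R$ on $\Xtilde_R:=X_R\times_{C_R}\Ctilde_R$. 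What remains is to show that, after further expanding the special fiber and twisting by a line bundle pulled back from $\Ctilde_R$, we may arrange that the special fiber $E_\xi$ is $\alpha$-balanced, and finally that we may take the minimal expansion on which $E_\xi$ lives without destroying this. Since the twisting and the base-change/expansion operations do not change anything over $\eta$, the generic fiber stays the originally given $f$-stable $\alpha$-balanced sheaf.

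The heart of the argument is thus a purely special-fiber statement: given an $f$-stable sheaf $E$ on an expansion $\Xtilde\to X$ over a field, one can twist $E$ by a line bundle pulled back from $\Ctilde$ (and possibly further expand) to make it $\alpha$-balanced. First I would use property (i) of the remark on $M(E)$ (``nonzero weight''): twisting $E$ by $f^*N$ for $N$ a line bundle on $\Ctilde$ changes $\deg M(E)|_Z$ by $r d_0 \deg(N|_Z)$, so the quantity $\tfrac{1}{rd_0}\deg M(E)|_Z$ is shifted by $\deg(N|_Z)$, an integer on each component; by choosing the multidegree of $N$ appropriately one moves the vector $\big(\tfrac{1}{rd_0}\deg M(E)|_D\big)_{D}$ inside the polytope cut out by the inequalities \eqref{eqbalanceddef}. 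This is exactly the combinatorial mechanism of \cite{OdaSe_Comp}, \cite{EP_Sem}: the inequalities \eqref{eqbalanceddef} define (for a generic stability condition $\alpha$, which ensures strictness is achievable and no boundary coincidences occur) a fundamental domain for the action of the lattice of multidegrees of component twists on the space of ``total degree $\deg M(E)$'' multidegree vectors, so there is a twist placing $E$ in it. Property (ii) of the remark (``normalizable on exceptional components'', via Lemma \ref{lem_mult}) guarantees that after a preliminary twist the degree of $M(E)$ on each exceptional component can be taken nonnegative, which is condition 2) in Definition \ref{def_balanced}; one checks these two normalizations can be achieved simultaneously, possibly after blowing up the expansion once more along the exceptional locus to create room. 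Here the point is that a further expansion of $\Xtilde$ over $\xi$ introduces new exceptional components with their own twist freedom, which only enlarges the lattice and hence can only help.

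The main obstacle I expect is the interaction between the balancing normalization and minimality. After twisting $E_\xi$ to be $\alpha$-balanced it may no longer live on a minimal expansion — some exceptional component of $\Ctilde_\xi$ may carry a restriction of $E_\xi$ that is a pullback from the next component down. Contracting such a component is a nontrivial operation on $M(E)$ and could in principle move $E_\xi$ out of the balanced polytope or change which subcurves achieve equality in \eqref{eqbalanceddef}. The resolution should be to contract exceptional components one at a time, check using the explicit formula \eqref{eq_defW} and the genus-zero, two-marked-point structure of chains of rational curves in an expansion (Lemma \ref{lem_expprop}) that $\deg M(E)$ on a contracted $\PP^1$ is zero exactly when $E_\xi$ is a pullback there, so that the relevant coordinate of the multidegree vector is being dropped without disturbing the others; then re-twist if necessary and iterate, noting the process terminates because the number of exceptional components strictly decreases. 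One should also verify that the strict $\alpha$-balanced condition 3) can be arranged, using genericity of $\alpha$ to rule out the degenerate case in which equality holds in \eqref{eqbalanceddef} for a $Z$ with neither $Z$ nor $Z^c$ exceptional; this is where the hypothesis that $\alpha$ is generic is essential, mirroring the role of generic polarizations in the classical theory. Assembling these pieces gives the extension over all of $\Spec R$, and uniqueness is not claimed here (it is the separatedness statement, handled elsewhere), so the valuative criterion for existence follows.
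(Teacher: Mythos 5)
Your skeleton matches the paper's: extend first without the balancing condition (via the valuative criterion already proved for $\mathcal{M}_{X_B/C_B}(r,d)$), then correct the special fiber by a twist pulled back from $\Ctilde_R$, then contract to a minimal expansion using the fact that $\deg M(E)$ vanishes exactly on the components where $E$ is a pullback (Lemma \ref{lem_discpos}). The endgame about minimality is also handled essentially as in the paper.

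The gap is in the central step. You assert that the inequalities \eqref{eqbalanceddef} cut out a fundamental domain for the action of ``the lattice of multidegrees of component twists,'' so that a balancing twist exists. On a \emph{fixed} special fiber this is false in general: the twists available over a DVR are the line bundles $\mathcal{O}(\sum a_i D_i)$, whose multidegrees span only the image of the intersection matrix of the components of $\Ctilde_\xi$ --- a sublattice of the degree-zero multidegrees whose index is the number of spanning trees of the dual graph. When $C_\xi$ (hence $\Ctilde_\xi$) has cycles, some multidegrees simply cannot be moved into the balanced polytope by component twists alone; this is exactly why compactified Jacobians contain torsion-free sheaves that are not locally free at nodes, or equivalently why one must further expand at nodes and place degree on new exceptional $\PP^1$'s. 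You acknowledge that expanding ``only enlarges the lattice and hence can only help,'' but showing that it helps \emph{enough} --- and that the resulting configuration simultaneously satisfies condition 2) of Definition \ref{def_balanced} on the newly created exceptional components --- is precisely the nontrivial content, and it is not a combinatorial triviality. The paper sidesteps this by a different device: it chooses $L_0$ and an $rd_0$-th root $L_1$ with $L_1^{\otimes rd_0}\cong M(E)\otimes L_0$, proves (Lemmas \ref{lemstabequivi} and \ref{lemstabequivii}) that $\alpha$-balancedness of $E$ is equivalent to Oda--Seshadri $\alpha'$-semistability of $c_*L_1$ for a shifted condition $\alpha'=\alpha+\undeg L_0/(rd_0)$, and then imports the existence of a semistable limit from the properness of the compactified Jacobian (\cite{KP_Stab}, via Simpson stability); the structure of that limit automatically delivers degrees $0$ or $1$ on exceptional chains, i.e.\ condition 2). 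To complete your route you would have to reprove this tiling-with-expansions statement from scratch. A separate minor point: genericity of $\alpha$ is not needed for existence (the paper's limit argument works for any $\alpha$); it enters only for separatedness and finiteness of automorphisms, so your appeal to it in the existence step is a red herring.
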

\begin{proof}
	Let $R$ be a DVR with generic and closed points $\eta$ and $\xi$, and with a given morphism $\Spec R\to B$. Let 
	  $E_{\eta}$ be a sheaf on an expansion $\Xtilde_{\eta}\to \Ctilde_{\eta}$ of $X_{\eta}\to C_{\eta}$, such that $E_{\eta}$ is $f$-stable and strictly $\alpha$-balanced. By Proposition \ref{PropExtendSheaf}, we can find \emph{some} extension $\Xtilde_R\to \Ctilde_R$ of this data to an $f$-stable family $E_R$ (possibly after replacing $R$ by an extension).  We claim that we can modify this data to obtain an $\alpha$-balanced bundle on a minimal expansion.
	
	For this, pick a line bundle $L_0$ on $\Ctilde_R$ that has degree zero on exceptional components of $\Ctilde_{\xi}$ and such that $ M(E_R)\otimes L_0$ has degree a multiple of $rd_0$ on each component of $C_{\xi}$ (this may require further extending $R$). Then by Lemma \ref{lem_mult} we can pick $L_1$ on $\Ctilde_R$, such that $\underline{\deg}\, L_1|_{\Ctilde_\xi}^{\otimes rd_0} = \underline{\deg} \,(M(E|_{Z})\otimes L_0)|_{\Ctilde_{\xi}}$. By changing $L_0$, we may in fact assume without loss of generality that $L_1^{\otimes r d_0} \cong M(E) \otimes L_0$ on $\Ctilde_R$.
	
	Now consider the stability condition $\alpha' := \alpha + \undeg \, L_0 / (r d_0)$. Then we have the following lemma, whose proof is straightforward:
	\begin{lem}\label{lemstabequivi}
	 $E$ is $\alpha$-balanced if and only if $L_1$ is $\alpha'$-semistable in the following sense:
	 $L_1$ has non-negative degree on exceptional components, and for every subcurve $\emptyset\subsetneq Z \subsetneq \Ctilde$, we have
	
	\[\left| \deg L_1|_Z - \sum_{D\subset Z} \alpha'(D) \right| \leq \frac{\# (Z\cap Z^c)}{2}.\]
	\end{lem}

	We consider the coherent sheaf $L_1':=c_*L_1$ on $C_R$ with adjunction map $\psi:c^*L_1'\to L_1$. If this is surjective, we obtain an induced map $P:\Ctilde_{R}\to \mathbb{P}(L_1')$. 
	\begin{lem}\label{lemstabequivii}
	The following are equivalent (over the generic and closed point of $R$ respectively):
	\begin{enumerate}[label = \roman*)]
	\item The line bundle $L_1$ is $\alpha'$-semistable in the sense of Lemma \ref{lemstabequivi}.
	\item 	\begin{enumerate}[label = (\alph*)]
		\item The sheaf $L_1'$ is torsion-free, the morphism $\psi$ is surjective and identifies $L_1$ with the pullback along $P$ of the universal quotient of $L_1'$, and
		\item $L_1'$ is Oda-Seshadri $\alpha'$-semistable in the sense of \cite[Definition 4.1]{KP_Stab}.
	\end{enumerate}
	\end{enumerate}
 
	\end{lem}
	\begin{proof}
	By the arguments in \cite[\S 5]{EP_Sem}, it follows that ii), (a) is equivalent to the condition that $L_1$ has only degrees $0,1$ on exceptional components, and total degree at most $1$ on each chain of exceptional components. 
	
	Then, one can check by hand that $\alpha'$-stability for $L_1$ (in the sense of Lemma \ref{lemstabequivi}) and Oda-Seshadri $\alpha'$-semistability for $L_1'$ are equivalent by using a destabilizing subcurve for the one to construct one for the other.
	\end{proof}
	
	As in \cite[Corollary 4.3]{KP_Stab}, it follows from Simpson stability, that any $\alpha'$-semistable torsion-free sheaf on the generic fiber has an $\alpha'$-semistable limit.
	Let $L'_2$ be an $\alpha'$-semistable limit that agrees with $L_1'$ on the generic fiber. After possibly further expanding $\Ctilde_{\xi}$, we can assume that we have a morphism $P_2:\Ctilde_R\to \mathbb{P}(L_2')$, and denote by $L_2$ the pullback of the universal line bundle along $P_2$. Note that this implies that $L_1$ and $L_2$ are isomorphic over $\eta$. We consider the line bundle $L_E:=L_2\otimes L_1^{\vee}$ on $\Ctilde_R$.
	
	\paragraph{Claim:}$E_1:= E\otimes L_E$ is $\alpha$-balanced. 
	
	To see this, note that $M(E_1) = M(E)\otimes L_E^r$.
	Therefore  
	\[\undeg \, M(E_1) = \undeg\, M(E) +r(\undeg \,L_2 -\undeg \,L_1) = -\undeg L_0 +r\undeg L_2.\]
	In particular, using the reverse direction of Lemma \ref{lemstabequivii} and Lemma \ref{lemstabequivi}, we find that $L_2$ is $\alpha'$-stable, and that $E_1$ is $\alpha$-balanced. Since $L_E$ is trivial along $\Xtilde_{\eta}$, we find that $E_1$ is an $\alpha$-balanced extension of $E_{\eta}$. 
	
	Finally, one can obtain a minimal expansion $\Ctilde$ by contracting the components $D$ of $\Ctilde_{\xi}$ over which $E_1$ is isomorphic to a pullback along $F\times D\to F$, where $F$ is the fiber of $X_{\xi}\to C_{\xi}$ over the image of $D$. Since we assumed that $\Xtilde_{\eta}\to \Ctilde_{\eta}$ was minimal, one can do this contraction without affecting the generic point. This uses that every component of $\Xtilde_{\eta}$ contains in its closure at least one component that is not contracted, which one can see for example using Lemma \ref{lem_discpos}.	
\end{proof}

\begin{prop}\label{prop_finaut}
	Let $E$ be a strictly $\alpha$-balanced $f$-stable sheaf on an expansion $\Xtilde\to \Ctilde$. Then the subgroup of scalar automorphisms has finite index in the automorphism group of $(E,\Xtilde)$.
\end{prop}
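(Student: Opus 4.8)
The plan is to study the obvious homomorphism $\rho\colon \mathrm{Aut}(E,\Xtilde)\to \mathrm{Aut}_X(\Xtilde)$, $(g,\gamma)\mapsto g$. Its kernel is $\{(\mathrm{id},\gamma):\gamma\in\mathrm{Aut}_{\mathcal{O}_{\Xtilde}}(E)\}=\mathrm{End}(E)^\times$, and the scalar subgroup $\mathbb{C}^\times$ sits inside this kernel. Since $\mathrm{Aut}(E,\Xtilde)$ is the automorphism group of an object of an algebraic stack of finite type over $\CC$, it is an algebraic group, and $\Gamma:=\mathrm{im}(\rho)$ is a closed subgroup of $\mathrm{Aut}_X(\Xtilde)$. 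Thus it suffices to prove: (a) $\mathrm{End}(E)=\CC$, so that $\ker\rho$ equals the scalars; and (b) $\Gamma$ is finite. Together these give $\mathrm{Aut}(E,\Xtilde)/\CC^\times\cong\Gamma$, which is the assertion. (Throughout we use that $\Xtilde$ is minimal, as is the case for objects of $\mathcal{M}^{\alpha}_{X_B/C_B}(r,d)$; without minimality the statement is false, e.g.\ a trivial exceptional bubble contributes a copy of $\mathbb{G}_m$ to $\Gamma$.)

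For (a), let $\phi\in\mathrm{End}(E)$. For each irreducible component $D$ of $\Ctilde$ the surface $\Xtilde_D:=f^{-1}(D)$ is smooth (Definition \ref{def_fib} iii) for non-exceptional $D$, and $\Xtilde_D\cong f^{-1}(x)\times\mathbb{P}^1$ for exceptional $D$ lying over $x\in C$). By condition i) of $f$-stability $E$ is locally free along the fibers over nodes, i.e.\ along $\Xtilde_D\cap\bigcup_{D'\neq D}\Xtilde_{D'}$; together with torsion-freeness of $E$ on $\Xtilde$ this shows $E|_{\Xtilde_D}$ is torsion-free. Its restriction to the generic fiber $\Xtilde_{\eta_D}$ of $\Xtilde_D\to D$ is slope-stable, hence simple, so $\phi$ acts there by a scalar $\lambda_D$; then $\phi-\lambda_D\cdot\mathrm{id}$ vanishes over the generic point of $D$, so its image in $E|_{\Xtilde_D}$ is a torsion subsheaf of a torsion-free sheaf, hence zero, and $\phi$ acts by $\lambda_D$ on all of $E|_{\Xtilde_D}$. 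If $D,D'$ meet along a fiber $\Xtilde_x$ over a node $x$, then $E|_{\Xtilde_x}\neq 0$ forces $\lambda_D=\lambda_{D'}$, and since $\Ctilde$ (hence $\Xtilde$) is connected all $\lambda_D$ coincide. So $\mathrm{End}(E)=\CC$.

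For (b), recall that the identity component of $\mathrm{Aut}_X(\Xtilde)$ is a torus acting by rescaling the exceptional components of $\Xtilde$ (cf.\ \cite{Li_Stab}, \cite{ACFW}); in particular every positive-dimensional subgroup of $\Gamma$ contains a copy of $\mathbb{G}_m$, so it is enough to rule out nontrivial one-parameter subgroups $\lambda\colon\mathbb{G}_m\to\mathrm{Aut}_X(\Xtilde)$ with $\lambda(t)^*E\cong E$ for all $t$. Such a $\lambda$ rescales some exceptional component $\Xtilde_D\cong F_x\times\mathbb{P}^1$ with nonzero weight, acting as $\mathrm{id}_{F_x}\times(\text{scaling of }\mathbb{P}^1)$. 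By $f$-stability the family $E|_{\Xtilde_D}$ is a family of stable bundles on $F_x$ over a dense open of $\mathbb{P}^1$ containing the two attaching points, giving a classifying map $\nu$ to $M_{F_x}(r,d)$; since $\lambda(t)^*E\cong E$, $\nu$ is invariant under the $\mathbb{G}_m$-action, which is transitive on a dense orbit, so $\nu$ is constant. Bookkeeping the $\mathbb{G}_m$-weights of the isomorphism $\lambda(t)^*E\cong E$ at the two attaching fibers $F_x\times\{0\}$ and $F_x\times\{\infty\}$ — these differ by the weight on $\mathcal{O}_{\mathbb{P}^1}(m)$, where $m$ is the degree of the $\mathbb{P}^1$-twist, and can only be reconciled by a single scalar of $E|_{\Xtilde_D}$, which acts with the same weight at both ends — forces $m=0$, so $E|_{\Xtilde_D}$ is the pullback of a stable bundle from $F_x$ (cf.\ Lemma \ref{lem_discpos}). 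But then $E$ descends along the contraction of $D$ in $\Ctilde$, contradicting minimality of $\Xtilde$. Hence $\Gamma$ contains no $\mathbb{G}_m$, so it is finite, and the proof is complete.

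The main obstacle is step (b): the reduction in the first paragraph and step (a) are essentially formal, but ruling out a one-parameter subgroup in $\Gamma$ requires combining the rigidity of $\mathbb{G}_m$-invariant families of stable bundles on a fixed curve with a careful analysis of the line-bundle twist along the exceptional $\mathbb{P}^1$ (equivalently, of the $\mathbb{G}_m$-weights at the two nodes of the exceptional component) in order to arrive at a genuine contradiction with minimality of the expansion.
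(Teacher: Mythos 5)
Your reduction to steps (a) and (b), and your proof of (a), are sound and agree in substance with the first paragraph of the paper's argument. The gap is in the last step of (b). The assertion that the weights of $\gamma_t\colon\lambda(t)^*E\to E$ at the two attaching fibers of the exceptional component ``can only be reconciled by a single scalar \dots which acts with the same weight at both ends'' is not true: the two attaching fibers are glued to two \emph{different} adjacent components, on each of which $\gamma_t$ restricts to an independent scalar, and these two free scalars absorb the weight discrepancy $t^{\pm m}$ for any $m$. Concretely, take $\Ctilde = D_1\cup R\cup D_2$ with $R$ exceptional and $E|_{\Xtilde_R}\cong p^*V\otimes q^*\mathcal{O}_{\mathbb{P}^1}(1)$; for every $a$ the gluing conditions $c_1 = c_R\,w_0(a)$, $c_2 = c_R\,w_\infty(a)$ are solvable, so $\lambda(a)^*E\cong E$ for all $a$ and $\mathrm{Aut}(E,\Xtilde)$ genuinely contains a $\mathbb{G}_m$. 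Such an $E$ can be arranged to be $f$-stable, on a minimal expansion, and $\alpha$-balanced (for $\alpha(D_1)=\deg M(E)|_{D_1}/(rd_0)+1/2$) — it is only \emph{strict} $\alpha$-balancedness that fails, since $Z=D_1$ then achieves equality in \eqref{eqbalanceddef} with neither $Z$ nor $Z^c$ a union of exceptional components. Your argument never invokes strictness, so it would ``prove'' the proposition for this $E$ as well; it therefore cannot be correct.

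The paper's route through this case is different and is where the strictness hypothesis does its work: minimality and balancedness force the twist to be $\ell=1$ (so there is no contradiction with minimality available), and one instead tracks the induced automorphism of the pair $(\Ctilde, M(E))$. If $a$ is not an $rd_0$-th root of unity, the scaling of $M(E)$ is non-constant across the nodes; the maximal connected subcurve $Z$ on which it is constant is then bounded by exceptional components $D'$ with $\deg M(E)|_{D'} = rd_0$, which forces equality in \eqref{eqbalanceddef} for a subcurve violating condition 3) of Definition \ref{def_balanced}. Note also that the correct conclusion in this case is only that $a$ is an $rd_0$-th root of unity — the case is not vacuous, contrary to what your argument would show. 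To repair (b) you must replace the local weight bookkeeping on the exceptional component by this global numerical argument using strict $\alpha$-balancedness.
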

\begin{proof}
	  By $f$-stability, every automorphism of $E$ as a sheaf on $\Xtilde$ must be scalar: Since the restriction to each fiber over a generic point $\eta$ of $\Ctilde$ is geometrically stable, any automorphism must be scalar over a dense open of $\Ctilde$, and therefore scalar since $\Ctilde$ and $E$ is flat over $\Ctilde$. 
	  
	In particular, for every automorphism $\gamma$ of $\Ctilde$ over $C$, there exists at most one isomorphism $\phi:\gamma^* E\to E$ up to scaling. On the other hand, each automorphism of $\Ctilde$ is given by scaling exceptional components. Let $D\subset \Ctilde$ be an exceptional component. By restricting to $D$, we may assume that without loss of generality, we may assume that $D=\mathbb{P}^1$, and that $\gamma$ acts by multiplication with $a\in \mathbb{G}_m$. Then the restriction $E_D$ of $E$ to $X_D=\mathbb{P}^1\times F$ is stable on the generic fiber over $\mathbb{P}^1$ (and over the fibers over $0,\infty$). If the map $\nu_E:\mathbb{P}^1\to M_F(r,d)$ induced by $E_D$ is nontrivial, then it is finite onto its image, and $a$ must preserve the fibers of $\nu_E$. In particular, there are only finitely many possible values for $a$. If $\nu_E$ is constant, there might still be distinguished points in $\mathbb{P}^1$ over which the restriction of $E_D$ to the fiber is not locally free or not stable, in this case again $a$ must permute the finite set of those points, so must belong to a finite set. If neither of these occur, then $E_D$ is a pullback of a stable bundle from $F$ twisted by the pullback of a degree $\ell$ line bundle from $\mathbb{P}^1$. By $\alpha$-balancedness, we have $\ell$ is $0$ or $1$, and by minimality of $\Xtilde$ we must have $\ell = 1$. We claim that on any such component, $a$ must be an $rd_0$'th root of unity, which shows that there are only finitely many possible choices of $\gamma$.
	
	We now show this last claim. 
	Since scaling fixes the points $0, \infty\in \mathbb{P}^1$, we have that $\phi$ induces an automorphism of the restriction of $E$ to the fibers over $0,\infty$, say $\phi_0,\phi_\infty$, which are given by scalar multiplication. They are related by $\phi_{\infty} = a^{-1}\phi_0$. 
	In particular, if $a$ is not an $rd_0$'th root of unity, then $\phi$ induces an automorphism of the pair $(\Ctilde, M(E))$ that is given by scaling $M(E)$ differently at different nodes which are fixed by $\gamma$.  Without loss of generality, we may assume that there is one node of $\Ctilde$ at which this scaling is trivial. Then let $Z\subseteq \Ctilde$ be the maximal connected subcurve of $\Ctilde$ containing this node, such that at all nodes in $Z$, the automorphism induced by $\phi$ is trivial. Then any irreducible component $D'$ of $\Ctilde$ intersecting $Z$ in a finite set must be exceptional, and have $\deg M(E)|_{D' } = rd_0$. If $Z$ is a chain of exceptional components, this means inequality \eqref{eqbalanceddef} is violated. Otherwise, for this $Z$, the \emph{strict} inequality in \eqref{eqbalanceddef} is violated. In either case, this contradicts the assumption that $E$ is strictly $\alpha$-balanced. 
\end{proof}

\subsection{Boundedness and Properness}\label{subsec_proper}
Let $f:X_B\to C_B$ be a family of fibered surfaces over $B$. We assume here that $B$ is connected. Let $L_0$ be a line bundle on $X_B$ with degree $d_0>0$ on fibers over $C_B$ and let $\alpha$ be a stability condition on $C_B$. 
To get a bounded moduli space, we need to fix numerical invariants. For this, we will use the relative N\'eron--Severi scheme of a family constructed in \S \ref{subsec_relpic}. In order for the results there to apply, we will impose Assumption \ref{assu_locfree} from here until the end of \S \ref{sec_degen}.

We fix a section $\overline{c_1}\in \overline{\mathcal{NS}}_{X_B/B}(B)$ which has fiberwise degree $d$, and $\Delta\in \ZZ$.
\begin{definition}
	We let $\mathcal{M}_{X_B/C_B}^{\alpha} (r,\overline{c_1},\Delta)\subseteq \mathcal{M}^{\alpha}_{X_B/C_B}(r,d)$ denote the substack of sheaves whose fiberwise discriminant is $\Delta$ and for which the associated section of $\overline{\mathcal{NS}}_{X_B/B}$ defined by the determinant agrees with the pullback of $\overline{c_1}$.
	
	If $C_B$ has geometrically irreducible fibers over $B$, then $\mathcal{NS}_{X_B/B}= \overline{\mathcal{NS}}_{X_B/B}$, and we also use the notation $\mathcal{M}_{X_B/C_B}(r,c_1,\Delta)$ for $c_1\in \mathcal{NS}_{X_B/B}$.
\end{definition}
Note that this makes sense by Lemma \ref{lem_pullns}.
\begin{lem}
	The stack $\mathcal{M}_{X_B/C_B}^{\alpha}(r,c_1,\Delta)$ is an open and closed substack of $\mathcal{M}_{X_B/C_B}^{\alpha}(r,d)$
\end{lem}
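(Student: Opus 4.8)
The plan is to show that fixing the discriminant and the $\overline{\mathcal{NS}}$-class each cuts out an open and closed substack, and then intersect. First I would recall that the morphism $\mathcal{M}^{\alpha}_{X_B/C_B}(r,d)\to B$ factors, via the determinant and the discriminant, through morphisms to the relative Néron--Severi space $\overline{\mathcal{NS}}_{X_B/B}$ (the determinant line bundle of a flat family of sheaves on an expansion descends to this quotient by Lemma \ref{lem_pullns}) and to the constant sheaf $\ZZ$ on $B$ (the fiberwise discriminant). Concretely, there is a morphism
\[
\delta:\ \mathcal{M}^{\alpha}_{X_B/C_B}(r,d)\longrightarrow \overline{\mathcal{NS}}_{X_B/B}\times_B \underline{\ZZ}
\]
over $B$, where the first component sends a family $E_T$ to the class of $\det E_T$ (well-defined after pulling back along the expansion, by Lemma \ref{lem_pullns}) and the second sends it to $\Delta(E_t)$, which is locally constant in $T$. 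The substack $\mathcal{M}^{\alpha}_{X_B/C_B}(r,c_1,\Delta)$ is by definition the preimage under $\delta$ of the section $(\overline{c_1},\Delta)$.

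Next I would argue that this section is an open and closed immersion into the target, so that its preimage is open and closed. The factor $\underline{\ZZ}\to B$ is étale (in fact a disjoint union of copies of $B$), so the section $\Delta$ is open and closed there. For the other factor, recall from the Proposition preceding Lemma \ref{lem_pullcl} that $\overline{\mathcal{NS}}_{X_B/B}$ is a separated, unramified algebraic space over $B$; a section of a separated unramified morphism is a closed immersion (by separatedness) which is also étale onto its image hence open (an unramified monomorphism, equivalently a flat monomorphism of finite presentation in the relevant local model, is an open immersion — this is exactly the argument already used in the proof of the Proposition on $\overline{\mathcal{NS}}$). So $(\overline{c_1},\Delta):B\to \overline{\mathcal{NS}}_{X_B/B}\times_B\underline{\ZZ}$ is an open and closed immersion. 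Pulling back along $\delta$, which is a morphism of algebraic stacks, preserves open and closed immersions, giving the claim. One should note here that Lemma \ref{lem_pullns} is what makes $\delta$ well-defined on the whole stack of sheaves-on-expansions, not just on the locus where the expansion is trivial; this is the only slightly non-formal input.

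The main obstacle — really the only point requiring care — is verifying that $\delta$ is genuinely a morphism of stacks, i.e. that the determinant construction is functorial in families over expansions and compatible with the identification of $\overline{\mathcal{NS}}$'s under pullback along expansions from Lemma \ref{lem_pullns}. For a $T$-flat coherent sheaf $E_T$ on $\widetilde{X}_T$ of finite cohomological dimension (which holds here since $\widetilde{X}_T$ is regular and $E_T$ is $T$-flat), the determinant $\det E_T$ is a line bundle on $\widetilde{X}_T$ commuting with base change, hence defines a $T$-point of $\mathcal{NS}_{\widetilde{X}_T/T}$ and thus, via Lemma \ref{lem_pullns}, a $T$-point of $\overline{\mathcal{NS}}_{X_B/B}\times_B T$; the fiberwise degree being $d$ places it in the degree-$d$ locus. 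Compatibility with the section $\overline{c_1}$ is then exactly the condition imposed in the definition. Everything else is formal: openness and closedness are local on the target, and intersecting an open-and-closed substack with another is again open and closed.
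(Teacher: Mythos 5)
Your proposal is correct and follows essentially the same route as the paper: local constancy of the fiberwise discriminant gives an open and closed condition, and the section $\overline{c_1}$ of the separated, unramified space $\overline{\mathcal{NS}}_{X_B/B}$ (made sense of on expansions via Lemma \ref{lem_pullns}) cuts out an open and closed substack, since a section of a separated unramified morphism is an open and closed immersion. Your write-up just supplies more detail than the paper's two-line argument.
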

\begin{proof}
 The condition that the degree of the cycle $2 r c_2(E) -(r-1)c_1(E)^2$ equals $\Delta$ is an open and closed condition. Since $\overline{\mathcal{NS}}_{X_B/B}$ is separated and unramified over $B$, the section $c_1$ determines an open and closed subspace.
\end{proof} 
 
\begin{prop}
	The stack $\mathcal{M}^{\alpha}_{X_B/C_B}(r,\overline{c_1},\Delta)$ is of finite type over $B$.
\end{prop}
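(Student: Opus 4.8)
The plan is to deduce the claim from a boundedness statement. Since $\mathcal{M}^{\alpha}_{X_B/C_B}(r,\overline{c_1},\Delta)$ is already locally of finite presentation over $B$ (it is a locally closed substack of the stack of torsion-free sheaves on the universal expanded surface over $\Exp_{C_B/B}$, and the latter is locally of finite presentation over $B$ by Proposition \ref{propexpstack}), it suffices to prove that it is quasi-compact over $B$. This can be checked locally on $B$, so I would assume $B$ Noetherian, and then the task becomes equivalent to showing that the set of isomorphism classes of pairs $(\Xtilde_b\to\Ctilde_b,E)$ — with $b\to B$ a geometric point, $\Ctilde_b\to C_b$ a minimal expansion, and $E$ an $\alpha$-balanced $f$-stable sheaf of rank $r$, fixed $\overline{c_1}$ and discriminant $\Delta$ — is bounded. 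I would carry this out in two stages: first bound the expansions $\Ctilde_b$ that can occur, then bound the sheaves $E$ on each.

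\textbf{Bounding the expansions.} The key step is to produce an integer $N$, depending only on $\Delta$ and on $C_B\to B$, so that any minimal expansion $\Ctilde_b$ carrying such an $E$ has at most $N$ exceptional components. By Lemma \ref{lem_expprop} the exceptional components of $\Ctilde_b$ are organized into chains of rational curves, one over each node or marked point of $C_b$; since $B$ is Noetherian there is a uniform bound on the number of such points, hence on the number of chains. For a fixed exceptional component $D\cong\PP^1$, condition i) of Definition \ref{deffstable} forces $E$ to be locally free along $f^{-1}(D)\cong\PP^1\times F$, and its restriction there is a rank $r$ bundle, stable on the generic fiber, with fiber-degree $d$. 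Lemma \ref{lem_discpos} then gives $\Delta(E|_{f^{-1}(D)})\geq 0$, with equality only when $E|_{f^{-1}(D)}$ is the pullback of a stable bundle on $F$ twisted by some $\mathcal{O}_{\PP^1}(\ell)$. Minimality of the expansion rules out $\ell=0$ on such a component (it could otherwise be contracted without changing the isomorphism class of $E$), and the balancing inequality \eqref{eqbalanceddef} applied to the initial sub-chains, together with the computation of $\deg M(E)|_D$ afforded by Lemma \ref{lem_mult} and the two properties of $M$ recorded after \eqref{eq_defW}, forces $\ell\in\{0,1\}$ and allows at most one $\Delta=0$ exceptional component in each chain. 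Finally, using that $E$ is locally free along the gluing fibers, the discriminant is additive over the components of $\Xtilde_b$, so $\sum_D\Delta(E|_{f^{-1}(D)})\leq\Delta$, whence there are at most $\Delta$ exceptional components with $\Delta(E|_{f^{-1}(D)})\geq 1$. Combining, the total number of exceptional components is at most $\Delta$ plus the number of chains, the desired bound $N$.

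\textbf{Bounding the sheaves.} It follows that $\Ctilde_b$ lies in the substack $\Exp^{\leq N}_{C_B/B}\subset\Exp_{C_B/B}$ of expansions with at most $N$ exceptional components, which is open, closed and of finite type over the Noetherian base $B$. Over the finite-type base $T_0:=\Exp^{\leq N}_{C_B/B}$, with its universal expanded family $\widetilde{\mathcal{X}}\to T_0$, I would finish by bounding the relevant sheaves. Over a geometric point of $T_0$ the sheaf $E$ is locally free along the double locus of $\widetilde{\mathcal{X}}$, hence is equivalent to the data of its restrictions $E_i$ to the smooth surface components $Y_i$ of $\widetilde{\mathcal{X}}$ together with gluing isomorphisms along the fibers over the nodes. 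Each $Y_i\to D_i$ is a smooth fibered surface (a component of $X_b$, or a ruled surface $\PP^1\times F$), each $E_i$ is $f$-stable of rank $r$, with $c_1(E_i)$ in a bounded subset of $\Pic Y_i$ (it is fixed in $\overline{\mathcal{NS}}$, and $\Pic^0$ is an abelian variety) and $\Delta(E_i)\leq\Delta$. By Theorem \ref{thm_boundedwalls} — whose bounding constant depends only on $r$ and $\Delta$, not on $c_1$, and can be chosen uniformly over the bounded families of surfaces and invariants involved — each $E_i$ is Gieseker-semistable for a fixed polarization, hence ranges in a bounded family by the standard boundedness of Gieseker-semistable sheaves with fixed numerical invariants. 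Since the $Y_i$ themselves vary in bounded families over $T_0$, and the finitely many gluing isomorphisms between (restrictions to fibers of) bounded families of stable bundles on curves again form a bounded family, the sheaves $E$ form a bounded family. Hence $\mathcal{M}^{\alpha}_{X_B/C_B}(r,\overline{c_1},\Delta)$ is of finite type over $B$.

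\textbf{The hard part.} I expect the real obstacle to be the first stage: without minimality and $\alpha$-balancedness the expansions are genuinely unbounded, since one can always splice in longer chains of trivially-glued rational curves. The argument must therefore extract from \eqref{eqbalanceddef}, from the line bundle $M$ of \eqref{eq_defW}, and from Lemma \ref{lem_discpos} a genuinely numerical mechanism limiting the length of each chain — the point being that all but boundedly many exceptional components carry strictly positive discriminant, whose total is controlled by $\Delta$.
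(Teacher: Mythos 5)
Your overall strategy is the same as the paper's: first bound the number of exceptional components of a minimal expansion using Lemma \ref{lem_discpos}, minimality and $\alpha$-balancedness, then bound the sheaves on each fixed expansion by reducing to Gieseker stability via Theorem \ref{thm_boundedwalls}. However, there are two places where you assert a boundedness that does not follow from what you cite, and both are points the paper's proof handles explicitly.

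First, the inequality $\sum_D \Delta(E|_{f^{-1}(D)})\leq \Delta$ over the exceptional components does not follow from additivity alone: additivity gives $\Delta = \sum_{\mathrm{exc}}\Delta(E|_{f^{-1}(D)}) + \sum_i \Delta(E|_{X_i})$, where the $X_i$ are the non-exceptional components, and you need the second sum to be bounded below. A priori $\Delta(E|_{X_i})$ could be very negative and be compensated by arbitrarily many exceptional components of positive discriminant. The paper's first claim in its proof supplies exactly this missing lower bound: for each non-exceptional component, $f$-stable sheaves with one of the finitely many admissible first Chern classes and discriminant at most $0$ are slope-stable for a fixed polarization by Theorem \ref{thm_boundedwalls}, hence form a bounded family, hence have discriminant bounded below by some (possibly negative) constant $N_1$. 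Without this uniform lower bound your count of exceptional components does not close.

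Second, in the sheaf-bounding stage you claim $c_1(E_i)$ lies in a bounded subset of $\Pic Y_i$ because ``it is fixed in $\overline{\mathcal{NS}}$ and $\Pic^0$ is an abelian variety.'' That is not sufficient: $\overline{\mathcal{NS}}$ is by construction the quotient of $\mathcal{NS}$ by twists coming from the base curve, so fixing the class there leaves $c_1(E)$ undetermined up to an unbounded lattice of component twists as soon as the expansion is reducible, and correspondingly the restrictions $c_1(E_i)$ are not pinned down. It is again the $\alpha$-balancing inequality \eqref{eqbalanceddef}, through the degrees of $M(E)$ on the components, that restricts the restricted first Chern classes to finitely many values; this is the step the paper phrases as ``by $\alpha$-stability, for each component there is only a finite choice of possible values that the first Chern class of a restriction can take.'' Both gaps are fillable with tools already present in the paper, but as written the argument does not establish boundedness.
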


\begin{proof}
	It only remains to show that it is quasi-compact over $B$. We may work locally on $B$ and stratify $B$ by the singularity type of $C_B$. In particular, we may assume that $B$ is a finite type $\CC$-scheme and that the singular locus of $C_B$ is a disjoint union of copies of $B$.
	Let $C_1,\ldots,C_n$ denote the components of $C_B$, which are smooth over $B$, and $X_1,\ldots,X_n$ their preimages under $f$. 
	\paragraph{Claim 1:} There exists an integer $N_1$, such that for every $b\in B$, every $\alpha$-balanced $f$-stable sheaf $E$ on an expansion of $X_b$ and every $i$, we have $\Delta(E|_{X_i})\geq N_1$.
	\begin{proof}
		Since the discriminant is invariant under tensoring with a line bundle, we may assume that $c_1(E|_{X_i}) = c_1|_{X_i} + k F$ for a fixed lift $c_1$ of $\overline{c_1}$ and some $k\in \{0,\ldots r-1\}$. By Theorem \ref{thm_boundedwalls}, we can find a polarization $H_i$ on $X_i$ such that $f$-stability for $X_i\to C_i$ agrees with slope stability with respect to $H_i$ for all sheaves of rank $r$, first Chern class of the form $c_1|_{X_i} + k F$ and with discriminant at most $0$, say. In particular, the collection of such sheaves is bounded, so their discriminant is bounded below by some constant $N_1^i$. Taking the minimum of all the $N_1^i$ gives us the desired $N_1$.    
	\end{proof}
	\paragraph{Claim 2:} There exists a number $N_2$, so that for an $\alpha$-balanced $f$-stable sheaf on a minimal expansion $\Xtilde_b\to \Ctilde_b$, the number of exceptional components is at most $N_2$.
	\begin{proof}
		By Lemma \ref{lem_discpos}, on each exceptional component $Y$ of $\Xtilde_b$,  $E_Y$ is either a pullback tensored by a line-bundle from $\Ctilde$, or $\Delta(E_Y)>0$. By $\alpha$-balancedness, there can be at most $g(C_b)$ components for which the first possibility occurs (and the line bundle has to be of degree one on the corresponding component). It follows that the total number of exceptional components is bounded by $\Delta - Irr(C_b)N_1 + g(C_b)$.
	\end{proof}
	From these two claims, it follows that there is an a-priori bound for $\Delta(E|_Y)$ for any $f$-stable sheaf on an expansion, and $Y$ an arbitrary component of the expansion.  
	
	From Claim 2, we also see that $\mathcal{M}^{\alpha}_{X_B/C_B}(r,\overline{c_1},\Delta)$ factors through a quasi-compact open subset of $\Exp_{C_B/B}$. Thus we may further reduce to the case that the preimage of each stratum is in $\Exp_{C_B/B}$ is quasi-compact. This reduces us to showing that the space of sheaves on a given expansion $\Ctilde_B\to C_B$ is quasi-compact. By $\alpha$-stability, for each component there is only a finite choice of possibly values that the first Chern class of a restriction can take. 
	
	In this case, an $f$-stable sheaf on $\Xtilde_B$ is the same as giving suitable $f$-stable sheaves on each component, together with isomorphisms. This reduces us to the case where $\Xtilde_B$ has a single component. In this case, the space of $f$-stable sheaves with given $\overline{c_1}$ and $\Delta$ is open in the space of stable sheaves with respect to a suitably chosen polarization, hence quasi-compact.  
\end{proof}

\begin{definition}
	Let $X_B\to C_B\to B$ be a family of fibered surfaces. 
	We say that a stability condition $\alpha$ on $C_B$ is generic, if every $\alpha$-balanced sheaf on a minimal expansion of $X_B$ is in fact strictly $\alpha$-balanced. 
\end{definition}

\begin{rem}
	Suppose that $C_B\to B$ has a single most degenerate stratum over a closed point $b$. Then one can always choose a non-degenerate stability condition, by choosing an $\alpha:\operatorname{Irr}(C_b)\to \mathbb{R}$, whose values form a $\mathbb{Q}$-vector space of dimension $|\operatorname{Irr}(C_b)|-1$.
\end{rem}

\paragraph{Properness.}
 
\begin{definition}
	Let $\alpha$ be a generic stability condition on $C_B$. 
 	We denote the $\mathbb{G}_m$-rigidification of $\mathcal{M}_{X_B/C_B}^{\alpha}(r,\overline{c_1},\Delta)$ along the scalar automorphisms by
 	\[M_{X_B/C_B}^{\alpha}(r,\overline{c_1},\Delta).\] 
\end{definition}

By Proposition \ref{prop_finaut}, for a choice of generic stability condition, the stack $M_{X_B/C_B}^{\alpha}(r,\overline{c_1},\Delta)$ has finite stabilizer groups at every point, and therefore is Deligne-Mumford. 

\begin{thm}\label{thm_properdm}
	Let $\alpha$ be a generic stability condition. Then $M_{X_B/C_B}^{\alpha}(r,\overline{c_1},\Delta)$ is a proper Deligne--Mumford stack over $B$.
\end{thm}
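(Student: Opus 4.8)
The plan is to collect what has already been established and to supply the one genuinely new ingredient, separatedness. By the preceding propositions, $\mathcal{M}^{\alpha}_{X_B/C_B}(r,\overline{c_1},\Delta)$ is an algebraic stack of finite type over $B$, it satisfies the existence part of the valuative criterion of properness (Proposition \ref{prop_valex}), and for generic $\alpha$ every object has automorphism group equal to the product of the scalars with a finite group (Proposition \ref{prop_finaut}); hence its $\mathbb{G}_m$-rigidification $M^{\alpha}_{X_B/C_B}(r,\overline{c_1},\Delta)$ is Deligne--Mumford and of finite type over $B$. I would first note that the existence part of the valuative criterion descends to the rigidification: given a DVR $R$ and an $R$-point of $M^{\alpha}$ defined over the generic point $\eta$, one lifts it over $\eta$ to an object of $\mathcal{M}^{\alpha}$ after a finite extension of $R$ (the rigidification morphism being a $\mathbb{G}_m$-gerbe, in particular smooth and surjective), extends that object over $\Spec R$ by Proposition \ref{prop_valex}, and pushes it forward. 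So the theorem reduces to checking that $M^{\alpha}_{X_B/C_B}(r,\overline{c_1},\Delta)$ is separated over $B$, i.e.\ the uniqueness part of the valuative criterion.

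For separatedness, let $R$ be a DVR with generic point $\eta$ and closed point $\xi$, and suppose given two families $(\Xtilde_R\to\Ctilde_R, E_R)$ and $(\Xtilde'_R\to\Ctilde'_R, E'_R)$ of $\alpha$-balanced $f$-stable sheaves on minimal expansions with the fixed invariants $(r,\overline{c_1},\Delta)$, together with an isomorphism of the two as $R$-points of $M^{\alpha}$ over $\eta$; after a further extension of $R$ this may be taken to be an honest isomorphism of pairs over $\eta$. The aim is an isomorphism over all of $\Spec R$. I would pass to an expansion $\widehat{\Ctilde}_R\to C_R$ refining both $\Ctilde_R$ and $\Ctilde'_R$ — a common refinement of the chains of rational curves inserted over the nodes and markings of $C_\xi$ — set $\widehat{X}_R := X_R\times_{C_R}\widehat{\Ctilde}_R$, and pull $E_R$ and $E'_R$ back to sheaves $\widehat{E}_R,\widehat{E}'_R$ on $\widehat{X}_R$. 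These remain $f$-stable, because the restriction of a pulled-back sheaf to a fiber over a newly created node or marking is the restriction of the original sheaf to a nearby fiber, hence slope-stable; and since $\widehat{X}_\eta=\Xtilde_\eta=\Xtilde'_\eta$, the given isomorphism identifies $\widehat{E}_\eta$ with $\widehat{E}'_\eta$.

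Next I would run a Langton-type argument relative to $\widehat{\Ctilde}_R$, in the spirit of the proofs of Lemma \ref{lem_discpos} and Proposition \ref{PropExtendSheaf}: the isomorphism over $\eta$ is a rational map $\widehat{E}_R\dashrightarrow\widehat{E}'_R$, and elementary modifications along fibral components $f^{-1}(\widehat{C}_i)\subset\widehat{X}_\xi$ — each of which amounts to a twist by $f^{*}$ of a line bundle from $\widehat{\Ctilde}_R$ — can be used to turn this into an isomorphism over $\Spec R$ between $\widehat{E}_R$ and $\widehat{E}'_R\otimes f^{*}L$ for some line bundle $L$ on $\widehat{\Ctilde}_R$ trivial over $\eta$. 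The two facts that make this work are: (i) over the dense open of $\widehat{X}_R$ where the fibers of $f$ are smooth and the restrictions of both sheaves are stable — which by $f$-stability includes the fibers over all generic points of components, and over all nodes and markings, of $\widehat{\Ctilde}_\xi$ — separatedness of the moduli of stable bundles on curves already forces $\widehat{E}_R$ and $\widehat{E}'_R$ to agree up to the twist ambiguity of the universal family; and (ii) $\widehat{X}_R$ is Cohen--Macaulay, so this isomorphism, defined away from a codimension-two locus, extends over the remaining singular fibers.

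The last and hardest step, which I expect to be the main obstacle, is to show that after contracting back to the minimal expansions the twist $L$ is trivial — this is exactly where genericity of $\alpha$ is essential. Over $\widehat{X}_\xi$ both $\widehat{E}_\xi$ and $\widehat{E}'_\xi\otimes f^{*}L_\xi$ are pulled back from $f$-stable sheaves on $\Xtilde_\xi$, resp.\ $\Xtilde'_\xi$, so the line bundles $M(\widehat{E}_\xi)$ and $M(\widehat{E}'_\xi)\otimes L_\xi^{\otimes rd_0}$ coincide; translating $\alpha$-balancedness of $E_\xi$ and $E'_\xi$ through the construction of $M^{\alpha}$ (Definition \ref{def_balanced} and the equivalences in the proof of Proposition \ref{prop_valex}, i.e.\ Lemmas \ref{lemstabequivi}--\ref{lemstabequivii}), this becomes $\alpha'$-stability of the associated line bundles on $\Ctilde_\xi$ and $\Ctilde'_\xi$ for the appropriate shifted stability condition $\alpha'$, and since $\alpha$ is generic these are in fact \emph{stable}. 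Uniqueness of the stable representative in Oda--Seshadri theory (\cite{OdaSe_Comp}, \cite{EP_Sem}) then forces $L_\xi$ to be trivial on the non-exceptional components, while minimality of the two expansions together with $\alpha$-balancedness forces it to be trivial on the exceptional chains as well; in particular $\Ctilde_\xi=\Ctilde'_\xi$ and $L$ is trivial, so the isomorphism $\widehat{E}_R\cong\widehat{E}'_R$ descends to an isomorphism of the original minimal-expansion data over $\Spec R$. The genuine difficulty here is that the Langton argument only pins down $\widehat{E}_R$ up to an a priori uncontrolled twist by a line bundle pulled back from the base curve, and the entire point of introducing $\alpha$-balancedness, and of restricting to generic $\alpha$, is precisely to make the $\alpha$-balanced lift of an $f$-stable sheaf on an expansion unique — so the crux is to convert the Oda--Seshadri stability and uniqueness statements for line bundles on nodal curves into this rigidity while simultaneously matching the two minimal expansions.
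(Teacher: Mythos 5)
Your proposal is correct and follows essentially the same route as the paper: reduce to the uniqueness half of the valuative criterion, pass to a common refinement of the two minimal expansions, show the two sheaves differ by a twist by a line bundle on the base curve that is trivial over the generic point, and then use strict $\alpha$-balancedness (genericity of $\alpha$) to force that twist to be trivial and the two expansions to coincide. The only real difference is in how the twist is produced: the paper packages it at once as the rank-one reflexive sheaf $f_*\mathcal{H}om(E_1,E_2)$ on $\Ctilde_R$ and shows directly that it is a line bundle with $E_1\otimes L_0\cong E_2$ (Lemma \ref{lem_exun}), rather than via your Langton-style elementary modifications followed by extension across a codimension-two locus.
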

\begin{proof}
We already know that it is a finite type Deligne--Mumford stack. It satisfies the existence part of the valuative criterion of properness, since by Proposition \ref{prop_valex} this is true for $\mathcal{M}^{\alpha}_{X_B/C_B}(r, d)$. It only remains to address the uniqueness part of the valuative criterion. 
For this, we may assume that $B=\Spec R$, and that we are given expansions $\Xtilde_1\to \Ctilde_1$ and $\Xtilde_2\to \Ctilde_2$ of $X_B\to C_B$ together with respective $\alpha$-balanced $f$-stable sheaves $E_1$ and $E_2$ and an isomorphism $\Psi$ of the restrictions to the generic fibers. Let $\pi$ be a uniformizer for $R$. Then we need to show that $\Ctilde_1\simeq \Ctilde_2$ and that for some $\ell$, the isomorphism $\pi^\ell \Psi$ can be extended to an isomorphism of $E_1$ and $E_2$ over the isomorphism of expansions. 

We first choose a common further expansion $\Ctilde_1\xleftarrow{c^1}\Ctilde_3\xrightarrow{c^2} \Ctilde_2$ which is an isomorphism over $\eta$ and is minimal in the sense that no component of $\Ctilde_3$ over $\xi$ is contracted by both $c^1$ and $c^2$. 
Then, both $(c^1)^*E_1$ and $(c^2)^*E_2$ are $\alpha$-balanced $f$-stable sheaves on $\Xtilde_3$ and we have a given isomorphism $\psi: (c^1)^*E_1|_{\Xtilde_{3,\eta}} \to (c^2)^*E_2|_{\Xtilde_{3,\eta}}$. Let $\pi$ be a uniformizer of $R$. There is a unique choice of integer $\ell$, such that $\pi^{\ell}\psi$ extends to a morphism $E_1\to E_2$ whose restriction to $\Xtilde_{3,\xi}$ is nonzero. This extension is then unique, and we will denote it again by $\pi^{\ell}\Psi$. By Lemma \ref{lem_exun} below, $\pi^\ell \Psi$ is an isomorphism. In particular, any component of $\Ctilde_{3}$ which is contracted by $c^1$ is also contracted by $c^2$, so we have $\Ctilde_1\cong \Ctilde_3\cong \Ctilde_2$. This is precisely what we wanted to show.
\end{proof}

\begin{lem}\label{lem_exun}
	Let $f:X_R\to C_R$ be a family of fibered surface over a DVR, let $\alpha$ be a stability condition on $C_R$, and let $E_1$ and $E_2$ be strictly $\alpha$-balanced $f$-stable sheaves on some expansion $\Xtilde_R\to \Ctilde_R$ of $f$. Let $\Psi: E_1\to E_2$ be a morphism that is an isomorphism over the generic point of $R$ and non-zero over the closed point. Suppose that $\Delta(E_1) = \Delta(E_2)$ and that $c_1(E_1) \equiv c_1(E_2)$ in $\mathcal{NS}_{X_B/B}$. 
	Then $\Psi$ is an isomorphism. 
\end{lem}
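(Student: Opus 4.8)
The goal is to show that $Q:=\operatorname{coker}(\Psi)$ vanishes; fix a uniformizer $\varpi$ of $R$. Since $E_1$ is $R$-flat it is $\varpi$-torsion free, and $\Psi$ is an isomorphism over the generic point $\eta$ of $R$, so $\Psi$ is injective and $Q$ is supported on the special fibre $\Xtilde_\xi$. Write $\Xtilde_\xi=\bigcup_i Y_i$ with $Y_i=f^{-1}(D_i)$, $D_i\in\operatorname{Irr}(\Ctilde_\xi)$, and let $\eta_i$ be the generic point of $Y_i$. As $\Xtilde_\xi$ is reduced and $E_1,E_2$ are fibrewise torsion free, $E_1$ and $E_2$ are locally free of rank $r$ at each $\eta_i$; moreover $f^{-1}(\eta_i)$ is a smooth projective curve (the generic fibre of the smooth surface $Y_i$ over $D_i$), on which $E_{1,\xi}|_{f^{-1}(\eta_i)}$ and $E_{2,\xi}|_{f^{-1}(\eta_i)}$ are slope-stable of rank $r$ and degree $d$, by $f$-stability.

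The key observation is the following trichotomy. Writing $\bar\Psi:=\Psi\otimes_R k$, its restriction to $f^{-1}(\eta_i)$ is a morphism of slope-stable bundles of the same slope $d/r$, hence is either zero or an isomorphism; and it is an isomorphism exactly when $E_1$ and $E_2$ coincide at $\eta_i$ (Nakayama), i.e.\ exactly when $[Y_i]$ does not occur in $c_1(Q)$. Set $\Gamma:=\{i : \bar\Psi|_{f^{-1}(\eta_i)}=0\}$, so that $c_1(E_2)-c_1(E_1)=c_1(Q)=\sum_{i\in\Gamma}m_i[Y_i]$ with all $m_i>0$. If $\Gamma=\emptyset$, then $Q$ is supported in codimension $\ge 2$ in $\Xtilde_R$; then $Q[\varpi]=\operatorname{Tor}^R_1(Q,k)$ is a torsion subsheaf of the torsion-free sheaf $E_{1,\xi}$, hence vanishes, and since $Q$ is killed by a power of $\varpi$ it follows that $Q=0$, i.e.\ $\Psi$ is an isomorphism, which is what we want. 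If $\Gamma=\operatorname{Irr}(\Ctilde_\xi)$, then $\bar\Psi$ vanishes at every generic point of $\Xtilde_\xi$, so its image is a torsion subsheaf of the torsion-free sheaf $E_{2,\xi}$ and hence $\bar\Psi=0$, contradicting the hypothesis that $\Psi$ is nonzero over $\xi$. Thus we are reduced to excluding the intermediate case $\emptyset\subsetneq\Gamma\subsetneq\operatorname{Irr}(\Ctilde_\xi)$.

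To exclude it, note that $\det E_2\otimes(\det E_1)^{-1}=f^{*}\mathcal{O}_{\Ctilde_R}\bigl(\sum_{i\in\Gamma}m_iD_i\bigr)$ is the pullback of a nonzero effective divisor supported on a proper nonempty set of components of the special fibre of $\Ctilde_R\to\Spec R$, which by Zariski's lemma is not numerically trivial on $\Ctilde_R$. If the hypothesis ``$c_1(E_1)\equiv c_1(E_2)$'' is read in the N\'eron--Severi group $\mathcal{NS}_{X_R/R}$ itself, this already contradicts it, since $f^{*}$ is injective on N\'eron--Severi groups by Lemma \ref{lem_pullcl}. To make the argument work with only the weaker statement that the $c_1$'s agree in $\overline{\mathcal{NS}}$ -- which is what one has in the application -- one instead uses strictness of $\alpha$-balancedness: translating $E_1,E_2$ into the weight-$rd_0$ line bundles $M(E_1),M(E_2)$ on $\Ctilde_R$ of \eqref{eq_defW}, the twist identity above together with the ``nonzero weight'' property of $M$ and its defining formula shows that $M(E_2)\otimes M(E_1)^{-1}$ is a line bundle on $\Ctilde_R$, trivial over $\eta$ but not on all of $\Ctilde_R$; on the other hand, passing to the associated Oda--Seshadri $\alpha'$-stable rank-one torsion-free sheaves on $C_R$ exactly as in the proof of Proposition \ref{prop_valex} (strictness of $\alpha$-balancedness giving \emph{stability}), these are two $\alpha'$-stable extensions of one and the same rank-one sheaf over $\eta$, hence isomorphic by separatedness of the stable locus of the Oda--Seshadri moduli (\cite{OdaSe_Comp},\cite{EP_Sem}), so $M(E_1)\cong M(E_2)$ -- a contradiction. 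Either way the intermediate case is impossible, and the lemma follows.

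The soft inputs -- injectivity of $\Psi$ and the zero-or-isomorphism trichotomy -- use only $R$-flatness, torsion-freeness, slope-stability on generic fibres, and the equality of rank and fibre-degree of $E_1,E_2$. The main obstacle is the intermediate case: one must upgrade the qualitative statement ``$\bar\Psi$ vanishes along some but not all components of $\Xtilde_\xi$'' into the precise behaviour of $\det E_2\otimes(\det E_1)^{-1}$ and then of $M(E_2)\otimes M(E_1)^{-1}$, keeping track of possible higher-order vanishing of $\Psi$ along the $Y_i$ and of the codimension-$\ge 2$ part of $Q$, and then invoke separatedness of the Oda--Seshadri stable locus (or the numerical hypothesis on $c_1$) to finish. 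A subsidiary technical point is to set up the reductions -- harmless enlargements of the expansion and normalization along the special fibre -- that make this length and determinant bookkeeping valid.
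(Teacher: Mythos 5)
Your reductions and the two extreme cases are sound: injectivity of $\Psi$ and the support statement for $Q$, the zero-or-isomorphism trichotomy on the generic fibres of the components (which is exactly the stability input the paper uses), and the $\operatorname{Tor}_1$ argument settling the case $\Gamma=\emptyset$ (a clean alternative to the paper's Euler-characteristic count). You are also right that in the intended application only agreement of the $c_1$'s in $\overline{\mathcal{NS}}$ is available, so that strict $\alpha$-balancedness must carry the weight in the intermediate case.

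The gap is in that intermediate case. From $\emptyset\subsetneq\Gamma\subsetneq\operatorname{Irr}(\Ctilde_\xi)$ you only get $c_1(Q)=\sum_{i\in\Gamma}m_i[Y_i]$ \emph{plus} an uncontrolled punctual part $Q_0$ of $Q$, and $\deg M(E_2)|_{D_j}-\deg M(E_1)|_{D_j}$ picks up a contribution from the length of $Q_0$ over $D_j$ through $\ch_2$ (exactly as in the computation $\deg L_V(E)=\tfrac{d_0}{2}\Delta(E)$). The single relation $\Delta(E_1)=\Delta(E_2)$ does not obviously kill $Q_0$ componentwise, so your assertion that $M(E_2)\otimes M(E_1)^{-1}$ restricts nontrivially to some component is not established: conceivably the $Q_0$-corrections cancel $rd_0(N\cdot D_j)$ on every $D_j$. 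The paper resolves precisely this point before any balancedness enters: it shows $L_0:=f_*\mathcal{H}om(E_1,E_2)$ is an invertible sheaf on $\Ctilde_R$ -- the key step being that near each node $x$ of $\Ctilde_\xi$ the suitably divided map is nonzero on one adjacent component, hence an isomorphism on the stable fibre $f^{-1}(x)$, hence a local generator -- and deduces that the tautological map $E_1\otimes L_0\to E_2$ is nonzero on \emph{every} component. Your $\Gamma=\emptyset$ argument (or the paper's fibrewise $\chi$ count, using that $L_0$ has total degree zero) then yields an actual isomorphism $E_1\otimes L_0\cong E_2$ with no punctual correction, so $M(E_2)=M(E_1)\otimes L_0^{\otimes rd_0}$ exactly, and strict $\alpha$-balancedness of both sides forces $L_0$ to have degree zero on non-exceptional components and hence $L_0\cong\mathcal{O}_{\Ctilde}$, with no appeal to separatedness of the Oda--Seshadri moduli. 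The $f_*\mathcal{H}om$ construction and its local freeness at the nodes is the missing idea rather than deferred bookkeeping; without it the determinant comparison you want to contradict is not available.
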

\begin{proof}
After possibly taking a base change on $R$ and a further expansion of $\Ctilde_R$, we may assume without loss of generality that all irreducible components of $\Ctilde_R$ are regular.
Since $E_1$ and $E_2$ are flat over $\Ctilde_R$, the sheaf $L_0:= f_*\mathcal{H}om(E_1,E_2)$ is a reflexive rank one sheaf on $\Ctilde_R$ and isomorphic to the structure sheaf over $\Ctilde_{\eta}$. 	
As a reflexive sheaf, it is locally free away from the singular points of $\Ctilde_R$, and since it is locally free on the generic fiber, these are the finitely many points $x_i$ in the special fiber in which two irreducible components of $\Ctilde_R$ intersect. By restricting to an open $U_i$ only intersecting of a given $x_i$ the two adjacent components, there is a unique integer $\ell_i\geq 0$, such that $\pi^{- \ell_i}\Phi$ is well-defined on $U$ and is non-zero on the special fiber on one of the components. It follows that it is an isomorphism on the fiber over $x_i$, and hence that it is a generator of $f_*\mathcal{H}om(E_1,E_2)$ around $x_i$. This implies that $L_0$ is indeed locally free. We get a tautological morphism $E_1\otimes L_0\to E_2$, which is non-zero on each component of the special fiber.  Therefore its restriction to $\Xtilde_{\xi}$ is injective with cokernel $Q$ supported on fibers of $\Xtilde_{\xi}\to \Ctilde_{\xi}$. Since $E_1$ and $E_2$ have the same numerical invariants, and $L_0$ has total degree zero on $\Ctilde_{\xi}$, we find that $Q=0$, so $E_1\otimes L_0\simeq E_2$. This implies that both $E_1$ and $E_1\otimes L_0$ are strictly $\alpha$-balanced. This implies that $L_0$ must have degree zero on each non-exceptional component of $\Ctilde_{\xi}$ (otherwise, such a component gives a subcurve violating stability). Since $L_0$ is trivial on $\Ctilde_{\eta}$, this is enough to conclude that in fact $L_0\simeq \mathcal{O}_{\Ctilde}$. Since $\Phi$ is non-zero on $\Xtilde_{\xi}$, it gives a generator of $L_0$, so it must be an isomorphism by what we already argued. 
\end{proof}

\begin{rem}\label{rem_alphas}
Note that by \eqref{eq_defW} and Grothendieck--Riemann--Roch, the total degree of $M(E)$ depends on $E$ only through $c_1(E)$ and $\Delta(E)$. In particular, a formal application of Grothendieck--Riemann--Roch gives a unique number $\alpha(\overline{c_1},\Delta)$  such that $M_{X/C}^\alpha(r,\overline{c_1}, \Delta) = \emptyset$ unless 

\begin{equation}\label{eq_nonemptycondition}
\sum_{D\in \operatorname{Irr}(C)} \alpha(D) = \alpha(\overline{c_1},\Delta).
\end{equation}   

When $C$ is irreducible, a stability condition is just a scalar which determines whether the moduli space is (possibly) nonempty. In this case, we will abbreviate 
\[M^b_{X/C}(r,\overline{c_1},\Delta) := M_{X/C}^{\alpha(\overline{c_1},\Delta)}(r,\overline{c_1},\Delta).\]
\end{rem}

\subsection{Perfect Obstruction Theories}\label{subsec_perf}
We construct the perfect obstruction theory on the moduli stacks $\mathcal{M}_{X_B/C_B}(r,d)$ and their variants. The arguments here are relatively standard and we will not give all details. A \emph{perfect obstruction theory} for a morphism $\mathcal{X}\to \mathcal{Y}$ is an object $E\in D(\mathcal{X})$ that is perfect with amplitude in $[-1,1]$ together with a morphism $E\to L_{\mathcal{X}/\mathcal{Y}}$ that is an isomorphism on $h^1$ and $h^0$ and surjective on $h^{-1}$. This coincides with the usual notion whenever $\mathcal{X}\to \mathcal{Y}$ is of DM-type.  

Let $X_B\to C_B$ be a family of fibered surfaces. We abbreviate $\mathcal{M}:= \mathcal{M}_{X_B/C_B}(r,d)$ and $\Exp:=\Exp_{C_B/B}$. Consider the forgetful morphisms $\mathcal{M}\to \Exp\to B$. We let $\Xtilde\to \Ctilde$ denote the universal expansion on $\Exp$ and let $\mathcal{E}$ denote the universal sheaf on the pullback $\Xtilde_{\mathcal{M}}$ of $\Xtilde$ to $\mathcal{M}$. Let $\pi:\Xtilde_{\mathcal{M}} \to \mathcal{M}$ denote the projection. 
The Atiyah class defines a relative obstruction theory $(R\pi_*R\mathcal{H}om_0(\CE,\CE))^{\vee}[-1]\to L_{\mathcal{M}/\Exp}$. 

We have a factorization $\mathcal{M}\to \mathcal{P}ic_{\Xtilde/\Exp}\to \Exp$ of the forgetful map through the determinant morphism to the Picard stack. Let $\mathcal{L}$ denote the universal line bundle over $\Xtilde_{\mathcal{P}ic_{\Xtilde/B}}$ and let $\pi$ also denote the projection to $\mathcal{P}ic_{\Xtilde/B}$. We have the relative obstruction theory $(R\pi_*R\mathcal{H}om_0(\mathcal{L},\mathcal{L}))^{\vee}[-1]\to L_{\mathcal{P}ic_{\Xtilde/Exp}/Exp}$. It is naturally compatible with the obstruction theory of $\mathcal{M}$ via the trace map. 

Moreover, the trace-free part gives a canonical relative obstruction theory $(R\pi_*R\mathcal{H}om_0(\CE,\CE))^{\vee}[-1]\to L_{\CM/\mathcal{P}ic_{\Xtilde/\Exp}}$. We have a commutative diagram involving the $\mathbb{G}_m$-rigidifications of both stacks
\[\begin{tikzcd}
	\mathcal{M} \ar[r,"r"] \ar[d]& M:=M_{X_B/C_B}(r,d) \ar[d]\\
	\mathcal{P}ic_{\Xtilde/\Exp} \ar[r]& Pic_{\Xtilde/Exp}.
\end{tikzcd}
  \]
 The induced map $r^* L_{M/Pic_{\Xtilde/\Exp}}\to L_{\mathcal{M}/\mathcal{P}ic_{\Xtilde/\Exp}}$ is an isomorphism, so we may compose with its inverse to get a morphism $(R\pi_*R\mathcal{H}om_0(\CE,\CE))^{\vee}[-1]\to r^* L_{M/Pic_{\Xtilde/Exp}}$. 
 This last map descends to a canonical perfect obstruction theory for the determinant morphism $M\to \Pic_{\Xtilde/\Exp}$ over the relative Picard scheme.
 From this discussion, and the properties of virtual pullback, we immediately get
 \begin{prop}\label{prop_perfob}
 	Suppose we are in the situation of Theorem \ref{thm_properdm}.  
 	Then the stack $M^{\alpha}_{X_B/C_B}(r,\overline{c_1},\Delta)$ has a relative perfect obstruction theory over $\Pic_{\Xtilde/\Exp}$. In particular, it has a natural virtual fundamental class given by virtual pullback of the fundamental class of $\Pic_{\Xtilde/\Exp}$. 
 	The formation of the virtual fundamental class is compatible with flat and l.c.i. pullbacks on $B$. 
 \end{prop}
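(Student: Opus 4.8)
The plan is to assemble the relative perfect obstruction theory in three layers, each of which has already been prepared in the preceding discussion, and then invoke the standard machinery of virtual pullback. First I would record the relative obstruction theory for the determinant morphism $\mathcal{M}\to \mathcal{P}ic_{\Xtilde/\Exp}$: this is the trace-free Atiyah-class obstruction theory $(R\pi_*R\mathcal{H}om_0(\CE,\CE))^{\vee}[-1]\to L_{\mathcal{M}/\mathcal{P}ic_{\Xtilde/\Exp}}$, and the point to check is that its cohomology sheaves are supported in the right degrees, i.e.\ that it is perfect of amplitude in $[-1,0]$ after restricting to $M^{\alpha}_{X_B/C_B}(r,\overline{c_1},\Delta)$. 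This follows because fiberwise the complex computes $\Ext^i_0$ on a fibered surface $\Xtilde_b\to \Ctilde_b$ of an $f$-stable sheaf with itself, and $f$-stability forces $\Hom_0(E,E)=0$, while the vanishing of $h^1$ on the rigidified space is exactly the statement that $r^*L_{M/\Pic}\to L_{\mathcal{M}/\mathcal{P}ic}$ is an isomorphism, which was noted just before the proposition. Here properness of $M^{\alpha}_{X_B/C_B}(r,\overline{c_1},\Delta)$ over $B$ (Theorem \ref{thm_properdm}) guarantees we are in the Deligne--Mumford setting where ``perfect obstruction theory'' has its usual meaning.

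Second I would descend this obstruction theory along the $\mathbb{G}_m$-rigidification, using the commutative diagram relating $\mathcal{M}$, $M$, $\mathcal{P}ic_{\Xtilde/\Exp}$ and $Pic_{\Xtilde/\Exp}$ displayed above. Since $r:\mathcal{M}\to M$ is a $\mathbb{G}_m$-gerbe and the trace-free complex $R\pi_*R\mathcal{H}om_0(\CE,\CE)$ carries weight zero for the $\mathbb{G}_m$-action, it descends to a complex on $M$, and the identification $r^*L_{M/Pic}\cong L_{\mathcal{M}/\mathcal{P}ic}$ lets us transport the morphism to a perfect obstruction theory $\mathbb{E}\to L_{M/Pic_{\Xtilde/\Exp}}$ relative to the Picard scheme. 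I would be slightly careful to state this relative to $\Pic_{\Xtilde/\Exp}$ rather than $\mathcal{P}ic$, since the proposition is phrased over the (honest) relative Picard scheme; the map $M\to \Pic_{\Xtilde/\Exp}$ factors the forgetful map, and we may base change the obstruction theory accordingly.

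Third I would apply Manolache's theory of virtual pullback (or the Behrend--Fantechi construction fiberwise over $\Exp$, combined with Proposition \ref{propexpstack} which gives that $\Exp_{C_B/B}\to B$ is flat of pure dimension zero, hence that $\Pic_{\Xtilde/\Exp}\to B$ is smooth over the smooth locus and in any case l.c.i.): the relative perfect obstruction theory $\mathbb{E}\to L_{M/\Pic_{\Xtilde/\Exp}}$ defines a virtual pullback $f^!_{\mathbb{E}}$ along $M\to \Pic_{\Xtilde/\Exp}$, and applying it to the fundamental cycle $[\Pic_{\Xtilde/\Exp}]$ produces the virtual fundamental class $[M^{\alpha}_{X_B/C_B}(r,\overline{c_1},\Delta)]^{\vir}$. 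Compatibility with flat and l.c.i.\ pullbacks on $B$ is then a formal consequence of the base-change compatibility of virtual pullback, together with the fact that the whole construction — the universal sheaf, the Atiyah class, the rigidification, the relative Picard scheme — is functorial under such base change; here one uses Assumption \ref{assu_locfree} and Lemma \ref{lem_pullns} to ensure that $\Pic_{\Xtilde/\Exp}$ and the N\'eron--Severi data behave well in families.

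The main obstacle I anticipate is not any single deep step but the bookkeeping around the rigidification: one must verify that the trace-free obstruction theory genuinely descends (weight-zero check), that the comparison map of cotangent complexes $r^*L_{M/Pic}\to L_{\mathcal{M}/\mathcal{P}ic}$ really is a quasi-isomorphism rather than just an isomorphism on $h^0$ — this uses that a $\mathbb{G}_m$-gerbe is \'etale-locally a product with $B\mathbb{G}_m$ and that $L_{B\mathbb{G}_m/\text{pt}}$ is concentrated appropriately — and that after descent the complex still has amplitude in $[-1,0]$ at every point, which is where the finite-automorphism conclusion of Proposition \ref{prop_finaut} (through genericity of $\alpha$) is implicitly needed to know $M$ is Deligne--Mumford. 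Since the paper explicitly says it will ``not give all details,'' I would present these as routine and cite the standard references (Behrend--Fantechi, Manolache, and the treatment of obstruction theories for moduli of sheaves with fixed determinant) rather than writing them out.
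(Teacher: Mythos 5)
Your proposal is correct and follows essentially the same route as the paper, which derives the proposition directly from the preceding discussion: the trace-free Atiyah-class obstruction theory for $\mathcal{M}\to\mathcal{P}ic_{\Xtilde/\Exp}$, descent along the $\mathbb{G}_m$-rigidification via the isomorphism $r^*L_{M/Pic_{\Xtilde/\Exp}}\to L_{\mathcal{M}/\mathcal{P}ic_{\Xtilde/\Exp}}$, and Manolache's virtual pullback of $[\Pic_{\Xtilde/\Exp}]$. The only cosmetic difference is that the paper's generalized definition of perfect obstruction theory (amplitude in $[-1,1]$, isomorphisms on $h^0$ and $h^1$) lets it defer the $\Hom_0=0$ check that you carry out explicitly.
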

 
 \subsection{Evaluation maps}\label{subsec_eval}
  Let $X_B\to C_B$ be a family of fibered surfaces and let $\sigma_1,\ldots,\sigma_n : B\to C_B$ denote the markings of $C_B$. Let $F_i\to B$ denote the family of smooth curves obtained as the preimage of $\sigma_i$ under $f$. For each $i$, we have a morphism of stacks $\mathcal{M}_{X_B/C_B}(r,d)\to \mathcal{M}_{F_i}(r,d)$. It fits into a commutative diagram
 \begin{equation*}
 	\begin{tikzcd}
 		\mathcal{M}_{X_B/C_B}(r,d)\ar[r]\ar[d]& \mathcal{M}_{F}(r,d)\ar[d] \\
 		\mathcal{P}ic_{\Xtilde/B}\ar[r] & \mathcal{P}ic_{F_i/B}
 	\end{tikzcd}
 \end{equation*}
in which the horizontal maps are the restriction maps and the vertical maps are the determinant morphisms. For each map in this square, the obstruction theories for source and target are naturally compatible. We have an induced square of rigidifications, with induced relative obstruction theories
\begin{equation*}
	\begin{tikzcd}
		M_{X_B/C_B}(r,d)\ar[r]\ar[d]&M_F(r,d) \ar[d] \\
	\Pic_{\Xtilde/B}\ar[r] & \Pic_{F_i/B}.
	\end{tikzcd}
\end{equation*} 
\subsection{Tautological classes}
We want to study invariants which are defined by pairing certain tautological cohomology classes against the virtual fundamental class. We define here what we mean by tautological cohomology class. 
For a Deligne-Mumford stack $\mathcal{Y}$ over $\CC$, we define its rational (co-) homology groups $H_*(\mathcal{Y}, \QQ)$ (resp. $H^*(\mathcal{Y}, \QQ)$) in terms of the simplicial scheme $Y_\bullet$ associated to an \'etale cover $Y_0\to \mathcal{Y}$. When working with rational coefficients -- as we do here -- these are naturally isomorphic to the (co-) homology groups of the coarse moduli space of $Y$. This is nicely explained in the second half of \cite{Beh_Coh}. One also has a natural cycle class map $A_*(\mathcal{Y})\to H^{BM}_*(\mathcal{Y}, \QQ)$ into the Borel--Moore homology (cf. \cite[\S 2]{AGV_Grom}). When $\mathcal{Y}$ is proper, this equivalently gives a map $A_*(\mathcal{Y})\to H_*(\mathcal{Y},\QQ)$.

Let $f:X\to C$ be a fibered surface over a fixed nodal marked curve $C$ with markings $(x_1,\ldots,x_n)$ and let $F_1,\ldots,F_n$ denote the fibers over the markings. Let $L_0$ be a fixed line bundle of degree $d_0>0$ on $X$ and let $\overline{c_1}\in \overline{\mathcal{NS}}_{X}$ be a class of fiber degree $d$. Let also $\Delta\in \mathbb{Z}$. 
Let $\alpha$ be a generic stability condition on $C$. We consider the proper Deligne-Mumford stack $M:= M^{\alpha}_{X/C}(r,\overline{c_1}, \Delta)$. Let $\pi:\Xtilde\to M$ denote the universal expansion over $M$ and $c:\Xtilde\to X$ the contraction map. 
\begin{lem}
	There is a natural map $\pi_{!}: H^*(\Xtilde,\mathbb{Q})\to H^{*-4}(M,\mathbb{Q})$. 
\end{lem}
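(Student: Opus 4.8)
The plan is to construct $\pi_!$ as a Gysin-type pushforward for the universal family of expanded surfaces $\pi : \widetilde{X} \to M$, despite the fact that $\pi$ is neither smooth nor flat and $\widetilde{X}$ is not in general smooth. First I would observe that $\pi$ is a proper morphism of Deligne--Mumford stacks whose geometric fibers are expansions of the fibered surface $X$, hence connected projective surfaces of pure dimension $2$; in particular $\pi$ has constant relative dimension $2$. The existence of the map $\pi_!$ with the claimed shift by $4 = 2\cdot 2$ in cohomological degree (equivalently, $\pi_*$ in Borel--Moore homology without a shift) is exactly what one expects from a ``virtual'' or ``orientation'' class of relative dimension $2$.

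The key steps I would carry out are as follows. First, reduce to the case where $M$ (equivalently $\widetilde{X}$) is a scheme or has an \'etale atlas, using the description of (co)homology of a proper DM stack via a simplicial scheme from an \'etale cover, as recalled just above the statement; it suffices to produce a map compatible with pullback along the cover maps, which then descends. Second, note that $\pi$ factors, \'etale locally on $M$, through the universal expansion over $\Exp_{C/B}$: by Proposition \ref{propexpstack} (and Lemma \ref{lemlocalexp}) the stack of expansions is flat of pure dimension zero over the base with at worst relative normal crossings (toric) singularities, and $\widetilde{X} = X \times_C \widetilde{C}$ is obtained by base change from $X$, which is smooth. Thus $\widetilde{X}$ is a local complete intersection: it is cut out inside a smooth ambient family by the same toric equations defining the expansion, so $\pi$ has a well-defined relative dualizing complex $\omega_{\pi}$ which is a shifted line bundle in degree $-2$, and $\widetilde{X}$ carries a fundamental class in $A_2$ of each fiber. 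Third, use this l.c.i.\ structure to define a proper pushforward on Chow groups $\pi_* : A_*(\widetilde{X}) \to A_{*-2}(M)$ together with, by Poincar\'e--Lefschetz duality on the l.c.i.\ (or directly via the cycle class map $A_* \to H^{BM}_*$ recalled in the excerpt and the proper pushforward on Borel--Moore homology $H^{BM}_k(\widetilde{X}) \to H^{BM}_k(M)$), a map on cohomology. Concretely, since $M$ is proper and smooth-over-$\Pic$ with a perfect obstruction theory but we only need topology here, I would pass through Borel--Moore homology: $H^*(\widetilde{X},\QQ) \xrightarrow{\ \cap [\widetilde{X}]\ } H^{BM}_{2d_{\widetilde{X}} - *}(\widetilde{X},\QQ) \xrightarrow{\ \pi_*\ } H^{BM}_{2d_{\widetilde{X}} - *}(M,\QQ) \cong H^{2d_M - 2d_{\widetilde{X}} + *}(M,\QQ)$, and $2d_M - 2d_{\widetilde X} = -4$, giving $\pi_! : H^*(\widetilde{X},\QQ)\to H^{*-4}(M,\QQ)$. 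Here the cap product with $[\widetilde X]$ makes sense because $\widetilde X$ is l.c.i.\ hence admits a canonical orientation/fundamental class locally, and one checks these glue.

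The main obstacle I expect is the non-flatness and mild singularity of $\pi$: one must either (a) work with the l.c.i.\ structure of $\widetilde X$ carefully enough to produce a well-defined fundamental class $[\widetilde X] \in H^{BM}_{2\dim \widetilde X}(\widetilde X, \QQ)$ and a refined Gysin/pushforward, invoking that $\Exp_{C/B}$ has only toric (hence l.c.i., hence Cohen--Macaulay with rational singularities in char $0$) singularities; or (b) stratify $M$ by the combinatorial type of the expansion, on each stratum $\pi$ restricts to a family which is, up to finite covers, a product of smooth surfaces and chains of ruled surfaces over $\PP^1$-bundles, for which the pushforward is classical, and then check compatibility of the strata. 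Approach (a) is cleaner: the key point is that a proper l.c.i.\ morphism of constant relative dimension $n$ between DM stacks has a Gysin pullback $\pi^! : A_*(M)\to A_{*+n}(\widetilde X)$ and, dually on the proper targets, one gets the pushforward with the stated degree shift. I would then remark (without full detail, as the paper signals it will not give all details here) that the construction is independent of the chosen \'etale atlas and compatible with base change, so it descends to $M$ and is canonical.

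Finally I would record that the only properties of $\pi_!$ that will be used downstream are the projection formula $\pi_!(\pi^*a \cup b) = a \cup \pi_!(b)$ and the normalization $\pi_!(1)$ being (a multiple of) the fundamental class of $M$ when combined with the constant along fibers, both of which follow formally from the l.c.i.\ pushforward. The expected difficulty is entirely in justifying the fundamental class of the possibly-singular total space $\widetilde X$; everything after that is formal manipulation with Borel--Moore homology and the cycle class map already introduced in the text.
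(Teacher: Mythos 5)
Your starting premise is off in a way that matters: $\pi\colon \Xtilde\to M$ \emph{is} flat. By Definition \ref{defexpansion} the universal expansion $\Ctilde_T\to T$ is a flat family of nodal curves, and $\Xtilde_T = X_T\times_{C_T}\Ctilde_T$ is pulled back from the flat morphism $X\to C$, so the composite $\Xtilde_T\to T$ is flat, proper, representable of relative dimension two. The paper's proof leans precisely on this: it passes to the simplicial schemes $\pi_\bullet\colon \Xtilde_\bullet\to M_\bullet$ coming from an \'etale atlas and invokes the Verdier trace map $R\pi_{\bullet *}\QQ\to\QQ[-4]$ for a flat proper morphism of relative dimension two (fiberwise, $R^4\pi_*\QQ$ is spanned by the orientation classes of the irreducible components of the fiber, each sent to $1$). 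This is an entirely relative construction and never touches the global geometry of $\Xtilde$ or $M$. Your detour through the l.c.i.\ structure of $\Exp_{C/B}$ and a fundamental class of $\Xtilde$ is therefore unnecessary, though not in itself wrong.

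The genuine gap is at the end of your chain of maps: the identification $H^{BM}_{2d_{\Xtilde}-*}(M,\QQ)\cong H^{2d_M-2d_{\Xtilde}+*}(M,\QQ)$ is Poincar\'e duality for $M$, which requires $M$ to be smooth (or at least a rational homology manifold) of pure dimension $d_M$. The moduli stack $M=M^{\alpha}_{X/C}(r,\overline{c_1},\Delta)$ is in general singular and possibly of impure or excessive dimension --- that is exactly why the paper equips it with a virtual class rather than using its fundamental class. So Borel--Moore homology of $M$ cannot be converted back into cohomology of $M$, and your proposed $\pi_!$ does not land in $H^{*-4}(M,\QQ)$ as required (nor would the resulting class satisfy the projection formula against cup products in $H^*(M)$, which is what Lemma \ref{lem_split1} needs). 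To repair this you must construct $\pi_!$ at the level of cohomology directly, i.e.\ as a wrong-way map induced by a morphism $R\pi_*\QQ\to\QQ[-4]$ in the derived category of $M$; once you notice that $\pi$ is flat, this is exactly the relative trace map, and the argument collapses to the paper's.
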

\begin{proof}
	Since the morphism $\pi:\Xtilde\to M$ is flat, proper and representable of dimension $2$, any \'etale cover $M_0\to M$ induces an \'etale cover $\Xtilde_0 \to \Xtilde$ by pullback, and we get an induced morphism of simplicial algebraic spaces $\pi_{\bullet}:\Xtilde_{\bullet}\to M_{\bullet}$, which is component-wise flat and proper of relative dimension two. we have a trace map $(R\pi_{\bullet})_*\QQ \to \QQ [-4]$ (see \cite[4.6]{Ver_RR} for the case of schemes, which carries over to our setting). In fact, fiberwise, $R^4\pi_*\QQ$ is a $\QQ$ vector space spanned by the orientation classes of irreducibel components in the fiber, and the map $R^4\pi_*\QQ\to \QQ$ sends each generator to $1$. This induces the desired morphism after passing to cohomology groups.
\end{proof}

Let $\gamma\in H^*(X)$ be a cohomology class. Recall that $M$ is the rigidification of the moduli stack $\mathcal{M}_{X/C}^{\alpha}(r,\overline{c_1}, \Delta)$ and similarly $\Xtilde$ is the rigidification of a family $\widetilde{\mathcal{X}}$. We denote by $\mathcal{E}$ the universal sheaf on $\widetilde{\mathcal{X}}$. While $\mathcal{E}$ does not descend to a $\Xtilde$, the expression $\mathcal{E}\otimes (\det \mathcal{E})^{-(1/r)}$ makes sense as a $K$-theory class and does descend to $\Xtilde$. By abuse of notation, we denote it by $\widehat{\mathcal{E}}$. We make the following definition for $i\geq 0$: 
\begin{equation}\label{eq_deftaut}
\ch_i(\gamma):= \pi_{!}\left(\ch_i(\widehat{\mathcal{E}}) \cup c^*\gamma\right).
\end{equation}
By abuse of notation, we denote by $T_M^{\vir} = -[R\Hom_0(\mathcal{E},\mathcal{E})]$ the $K$-theory class dual to the relative perfect obstruction theory of $M$ over the relative Picard scheme. 
Here is an (incomplete) definition of tautological classes.
\begin{definition}
	We say that a cohomology class in $H^*(M,\QQ)$ is \emph{tautological}, if it lies in the sub-ring generated by classes $\ch_i(\gamma)$ and classes $\ch_i(T_M^{\vir})$.
\end{definition}

More generally, one can also consider classes defined in terms of $K$-theoretic objects, such as virtual Segre or Verlinde invariants (see for example \cite{GK_Sheaves} for an overview). 
 
\section{The degeneration formula} \label{sec_degen}
In this section we state and prove a special case of a degeneration formula, when the base curve has one node and two irreducible pieces. 

Let $f:X\to C$ be a fibered surface and suppose $C=D_1\cup D_2$, where $(D_1,x_1)$ and $(D_2,x_2)$ are smooth curves with a single marking and the union is taken along the marked points. Let $Y_i:=f^{-1}D_i$ and let $F_i:= f^{-1}(x_i)$ for $i=1,2$. 
 
We fix some $L_0$ with fiber degree $d_0>0$ on $X$ and a stability condition $\alpha$ on $C$.  For applications one may always choose $L_0$ as $L$ or $L^{-1}$.
 
 We assume that $\alpha(D_i)\not\in \frac{1}{r d_0}\mathbb{Q}$, in particular that the stability condition $\alpha$ is generic. We let $\alpha_i:= \alpha(D_i)$. 
 We further fix a section $\overline{c_1}$ of $\overline{\mathcal{NS}}_{X}$. We will use the following abuse of notation: If $c_1'$ and $c_1''$ are points in $\mathcal{NS}_{Y_1}$ and $\mathcal{NS}_{Y_2}$ respectively, we write $c_1'+c_1''= \overline{c_1}$ if there exists a lift $c_1$ of $\overline{c_1}$ to $\mathcal{NS}_X$ which restricts to $c_1'$ and $c_1''$ on $Y_1$ and $Y_2$ respectively. 
 
 Finally, we write 
 \[\mathcal{M}_{Y_1/D_1}^{\rough{\alpha_1}}(r,c_1',\Delta_1):=\coprod_{\substack{\beta\in \frac{1}{rd_0}\QQ\\|\beta-\alpha_i|<\frac{1}{2}}}\mathcal{M}_{Y_1/D_1}^{\beta}(r,c_1',\Delta_1),\]
 and similarly for $\mathcal{M}_{Y_2/D_2}^{\rough{\alpha_2}}(r,c_1'',\Delta_2)$ and the $\mathbb{G}_m$-rigidified versions of the stacks. 
Note that at most one term in the disjoint union is nonempty. 
\subsection{Glueing of sheaves}\label{subsec_glueing}
Let $c_1'\in \mathcal{NS}_{Y_1}$, $c_1'' \in \mathcal{NS}_{Y_2}$ such that $c_1'+c_2'' = \overline{c_1}$. Suppose that $\alpha$ satisfies \eqref{eq_nonemptycondition}. Let $\Delta_1,\Delta_2 \in \ZZ$ and $\Delta:=\Delta_1+\Delta_2$. 
There is an associated glueing morphism 
\[\gamma: \mathcal{M}_{Y_1/D_1}^{\rough{\alpha_1}}(r,c_1',\Delta_1)\times_{\mathcal{M}_{F}(r,d)}\mathcal{M}_{Y_2/D_2}^{\rough{\alpha_2}}(r,c_1'', \Delta_2)\to \mathcal{M}^{\alpha}_{X/C}(r, \overline{c_1}, \Delta_1 + \Delta_2).\]

This induces a canonical morphism on $\mathbb{G}_m$-rigidifications
\[\Gamma: M_{Y_1/D_1}^{\rough{\alpha_1}}(r,c_1',\Delta_1)\times_{M_F(r,d)} M_{Y_2/D_2}^{\rough{\alpha_2}}(r,c_1'',\Delta_2)\to M^{\alpha}_{X/C}(r,\overline{c_1}, \Delta).\] 
 
We similarly have a glueing morphism for Picard schemes
\[\Pic_{\Ytilde_1/\Exp_{D_1,x_1}}^{c_1'}\times_{\Pic_F^d} \Pic^{c_1''}_{\Ytilde_2/\Exp_{D_2,x_2}}\to \Pic_{\Xtilde/\Exp_C}^{\overline{c_1}}.\]
Here we use a superscript to denote the component of the Picard schemes mapping into the respective component of the N\'eron--Severi schemes. 
The glueing morphisms are compatible with taking the determinant, i.e. we have a commutative diagram 
\begin{equation}
	\begin{tikzcd}
	 M_{Y_1/D_1}^{\rough{\alpha_1}}(r,c_1',\Delta_1)\times_{M_F(r,d)} M_{Y_2/D_2}^{\rough{\alpha_2}}(r,c_1'',\Delta_2)	\ar[r,"\Gamma"]\ar[d]& M^{\alpha}_{X/C}(r,\overline{c_1}, \Delta). \ar[d] \\
	\Pic^{c'_1}_{\Ytilde_1/\Exp_{D_1}}\times_{\Pic_F^d} \Pic^{c_1''}_{\Ytilde_2/\Exp_{D_2}} \ar[r] & \Pic^{\overline{c_1}}_{\Xtilde/\Exp_C} 
	\end{tikzcd}
\end{equation} 

By taking the union over possible decompositions of the discriminant, we can obtain more
\begin{lem}\label{lem_decfirst}
	The following natural diagram is cartesian: 
	\begin{equation}\label{eq_detglue}
		\begin{tikzcd}
		\coprod\limits_{\Delta_1+\Delta_2 = \Delta}	M_{Y_1/D_1}^{\rough{\alpha_1}}(r,c_1',\Delta_1)\times_{M_F(r,d)} M_{Y_2/D_2}^{\rough{\alpha_2}}(r,c_1'',\Delta_2)	\ar[r,"\Gamma"]\ar[d]& M^{\alpha}_{X/C}(r,\overline{c_1}, \Delta) \ar[d] \\
			\Pic^{c'_1}_{\Ytilde_1/\Exp_{D_1}}\times_{\Pic_F^d} \Pic^{c_1''}_{\Ytilde_2/\Exp_{D_2}} \ar[r,"\Gamma'"] & \Pic^{\overline{c_1}}_{\Xtilde/\Exp_C} 
		\end{tikzcd}
	\end{equation} 
\end{lem}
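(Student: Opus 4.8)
The plan is to show that \eqref{eq_detglue} is cartesian by exhibiting, for any scheme $T$, a natural bijection between $T$-points of the upper-left corner and $T$-points of the fiber product of $M^{\alpha}_{X/C}(r,\overline{c_1},\Delta)$ with the lower-left corner over $\Pic^{\overline{c_1}}_{\Xtilde/\Exp_C}$. A $T$-point of this fiber product consists of an $\alpha$-balanced $f$-stable sheaf $E$ on an expansion $\Xtilde_T\to\Ctilde_T$ of $X\to C$, together with a decomposition data on the Picard side: an expansion of $(D_1,x_1)$ and $(D_2,x_2)$, line bundles $L_i$ on the induced $\Ytilde_{i,T}$ lying in the components indexed by $c_1',c_1''$, an identification of their restrictions to $F$, and an isomorphism of the glued line bundle with $\det E$ over $\Xtilde_T$.

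First I would observe that, since $C=D_1\cup_x D_2$ is of compact type, any expansion $\Ctilde_T\to C_T$ decomposes canonically as a glueing of expansions of $(D_1,x_1)$ and $(D_2,x_2)$ along a chain of rational curves at the node (by the explicit description in Lemma \ref{lem_expprop}, ii'), together with the splitting data recorded on the Picard side; conversely such glueing data determines $\Ctilde_T$. So the expansion part of the correspondence is immediate. Next, given $E$ on $\Xtilde_T$, restrict to $\Ytilde_{1,T}$ and $\Ytilde_{2,T}$ to obtain $E_1, E_2$; these are $f$-stable for the respective fibered surfaces with the induced marking at $F$, because $f$-stability is checked fiber-by-fiber over generic, nodal and marked points, and every such point of $D_i$ is one of $C$, while $F$ is the fiber over the new marking. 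The restriction of $E$ to $F$ supplies the canonical identification of $E_1|_F$ with $E_2|_F$, i.e. lands in the fiber product over $\mathcal{M}_F(r,d)$. The discriminant is additive under this decomposition, $\Delta = \Delta(E_1)+\Delta(E_2)$, which is exactly why one takes the disjoint union over $\Delta_1+\Delta_2=\Delta$ — this matches the additivity already recorded on the $\Pic$ side via the glueing morphism $\Gamma'$, so the chosen point of the lower-left corner pins down which $(\Delta_1,\Delta_2)$-summand we are in.

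The substantive point — and what I expect to be the main obstacle — is matching the balancing conditions: I must check that $E$ is $\alpha$-balanced on $\Xtilde_T$ (with $\alpha(D_i)=\alpha_i$) \emph{if and only if} $E_i$ is $\beta_i$-balanced on $\Ytilde_{i,T}$ for the (unique) $\beta_i\in\frac{1}{rd_0}\QQ$ with $|\beta_i-\alpha_i|<\frac12$, this $\beta_i$ being determined by $c_1',c_1'',\Delta_i$ as in Remark \ref{rem_alphas}. The key input is the weight and normalization properties of $M(-)$ from the remark following \eqref{eq_defW}: $M(E)|_{\Ytilde_i}$ agrees with $M(E_i)$ up to a twist by a line bundle from the exceptional chain, and Lemma \ref{lem_mult} controls the degrees on exceptional components modulo $rd_0$, forcing the shift $\beta_i$ — the condition $\alpha_i\notin\frac{1}{rd_0}\QQ$ guarantees $\beta_i$ is unique and that genericity/strictness passes across. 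One then runs through the inequality \eqref{eqbalanceddef}: a subcurve $Z\subset\Ctilde_T$ decomposes as $Z\cap\Ytilde_1$, $Z\cap\Ytilde_2$ and a sub-chain of exceptional curves at the node, and the balancing inequality for $Z$ follows from those for the two pieces together with the degree-$0$-or-$1$ constraint on exceptional chains coming from Lemma \ref{lem_discpos} and the minimality of the expansion; conversely any destabilizing $Z$ for $E$ restricts to a destabilizing subcurve for one of the $E_i$. Once this equivalence is in place, the identifications of automorphism groupoids are automatic (an automorphism of a glued object is a pair of automorphisms agreeing on $F$), so the glueing morphism $\Gamma$ together with $\Gamma'$ induces an equivalence onto the fiber product, and compatibility with the determinant is built into the construction of $\gamma$; passing to $\mathbb{G}_m$-rigidifications is harmless since the scalar automorphisms match on both sides. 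This yields the cartesian square \eqref{eq_detglue}.
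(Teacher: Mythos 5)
Your proposal is correct and follows essentially the same route as the paper's proof: identify $T$-points of the fiber product as a sheaf $E$ on an expansion of $X$ together with a choice of decomposition $\Xtilde_T=\Ytilde_{T,1}\cup\Ytilde_{T,2}$ recorded by the Picard-side datum, and observe that the balancing inequality \eqref{eqbalanceddef} applied to the subcurve $Z=\Ctilde_1$ forces $\deg M(E)|_{\Ctilde_i}/(rd_0)$ into the window $|\beta_i-\alpha_i|<\tfrac12$, so the restrictions land in $\mathcal{M}^{\rough{\alpha_i}}_{Y_i/D_i}$. The paper compresses the balancing compatibility into one sentence, whereas you correctly flag it as the substantive step and sketch it; no change of method is involved.
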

\begin{proof}
This can be checked before passing to rigidifications. In that case, the fiber product corresponding to the lower right corner of the diagram has $T$-points given by an element of $\mathcal{M}^{\alpha}_{X/C}(r,\overline{c_1},\Delta)(T)$ -- i.e. a sheaf $E_T$ on an expansion $\Xtilde_T$ -- together with a choice of decomposition $\Xtilde_T=\Ytilde_{T,1}\cup \Ytilde_{T,2}$ of the given expansion, such that we have $[\det E|_{\Ytilde_{T,1}}] =  c_1'$ and $[\det E|_{\Ytilde_{T,2}}] = c_1''$. The stability condition on $C$ implies that the restrictions will lie in the prescribed range of stabilities on the $D_i$. 
\end{proof}
It follows from Lemma \ref{lem_decfirst}, that we have a natural relative obstruction theory on each product $M_{Y_1/D_1}^{\rough{\alpha_1}}(r,c_1',\Delta_1)\times_{M_F(r,d)}M_{Y_2/D_2}^{\rough{\alpha_2}}(r,c_1'',\Delta_2)$ over the product of relative Picard schemes $\Pic_{\Ytilde_1/\Exp_{D_1}}^{c_1'}\times_{\Pic_F}\Pic_{\Ytilde_2/\Exp_{D_2}}^{c_1''}$, given by pulling back the obstruction theory via $\Gamma$. In particular, we have a canonical virtual fundamental class on each product, obtained by pulling back the fundamental class of the base. 
\begin{prop}\label{prop_virdec}
	We have an equality in $A_*(M_{X/C}^{\alpha}(r,\overline{c_1}, \Delta))$.
	\[[M_{X/C}^{\alpha}(r,\overline{c_1}, \Delta)]^{\vir} =\hspace{-7pt} \sum_{\substack{c_1'+ c_1'' = \overline{c_1}\\ \Delta_1 + \Delta_2 = \Delta}}\Gamma_*[M_{Y_1/D_1}^{\rough{\alpha_1}}(r,c_1',\Delta_1)\times_{M_F(r,d)}M_{Y_2/D_2}^{\rough{\alpha_2}}(r,c_1'',\Delta_2)]^{\vir}.\]
\end{prop}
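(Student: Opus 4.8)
The plan is to deduce the identity from the cartesian square \eqref{eq_detglue} of Lemma \ref{lem_decfirst} by a formal manipulation of virtual pull-backs, after first analysing the gluing morphism $\Gamma'$ of Picard schemes. \emph{Step one} is to identify $\Gamma'$. Using the local structure of the stack of expansions (Lemma \ref{lemlocalexp}) together with the standard factorization of the expansion of a node at the degenerate fibre into its two one-sided expansions (see \cite[\S 1]{Li_Stab} and \cite[\S 1 + \S 6]{ACFW}), one obtains a canonical isomorphism $\Exp_C\cong\Exp_{(D_1,x_1)}\times\Exp_{(D_2,x_2)}$ under which the universal expansion decomposes as $\Xtilde\cong\Ytilde_1\cup_F\Ytilde_2$; in particular the chain of rational components inserted over the node of $C$ is canonically split between the two sides. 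Over such a decomposition, the Mayer--Vietoris sequence for $\mathcal{O}^{\ast}$ on the pushout $\Ytilde_1\cup_F\Ytilde_2$ shows, using that $F$ is geometrically connected (so that the map $H^0(\mathcal{O}^{\ast}_{\Ytilde_1})\times H^0(\mathcal{O}^{\ast}_{\Ytilde_2})\to H^0(\mathcal{O}^{\ast}_F)$ is surjective), that a line bundle on $\Xtilde$ is precisely a pair of line bundles on $\Ytilde_1,\Ytilde_2$ together with an identification of their restrictions to $F$, with no residual gluing moduli. Sorting by the first Chern class, this says that the disjoint union of $\Gamma'$ over all decompositions $c_1'+c_1''=\overline{c_1}$ is an isomorphism onto $\Pic^{\overline{c_1}}_{\Xtilde/\Exp_C}$, and in particular each $\Gamma'$ (hence, by the cartesianness of \eqref{eq_detglue}, each $\Gamma$) is an open and closed immersion.

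\emph{Step two} is the virtual pull-back computation. Fix $c_1',c_1''$ with $c_1'+c_1''=\overline{c_1}$ and write $\det$ for the determinant morphism $M_{X/C}^{\alpha}(r,\overline{c_1},\Delta)\to\Pic^{\overline{c_1}}_{\Xtilde/\Exp_C}$. Since \eqref{eq_detglue} is cartesian and, by construction, the relative perfect obstruction theory of $M_{X/C}^{\alpha}(r,\overline{c_1},\Delta)$ over $\Pic^{\overline{c_1}}_{\Xtilde/\Exp_C}$ pulls back along $\Gamma$ to the relative obstruction theory defining the virtual class on the product, the compatibility of virtual pull-back with proper pushforward along the cartesian square gives
\[
\sum_{\Delta_1+\Delta_2=\Delta}\Gamma_{\ast}\bigl[M_{Y_1/D_1}^{\rough{\alpha_1}}(r,c_1',\Delta_1)\times_{M_F(r,d)}M_{Y_2/D_2}^{\rough{\alpha_2}}(r,c_1'',\Delta_2)\bigr]^{\vir}=\det\nolimits^{!}\,\Gamma'_{\ast}\bigl[\Pic^{c_1'}_{\Ytilde_1/\Exp_{D_1}}\times_{\Pic_F^d}\Pic^{c_1''}_{\Ytilde_2/\Exp_{D_2}}\bigr],
\]
where on the left I used that the source of $\Gamma$ is the disjoint union over $\Delta_1+\Delta_2=\Delta$ of the factors and that each factor's virtual class is by definition the virtual pull-back of the fundamental class of the corresponding product of Picard schemes. (Because $\Gamma'$, and hence $\Gamma$, is an open and closed immersion, this identity is in fact the transparent statement that $\det\nolimits^{!}$ of the fundamental class restricted to the open and closed piece in question agrees with the virtual class of the moduli space sitting over it.)

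\emph{Step three} is to sum over all decompositions $c_1'+c_1''=\overline{c_1}$. By step one, $\sum_{c_1'+c_1''=\overline{c_1}}\Gamma'_{\ast}\bigl[\Pic^{c_1'}_{\Ytilde_1/\Exp_{D_1}}\times_{\Pic_F^d}\Pic^{c_1''}_{\Ytilde_2/\Exp_{D_2}}\bigr]=\bigl[\Pic^{\overline{c_1}}_{\Xtilde/\Exp_C}\bigr]$, so the sum of the right-hand sides above becomes $\det\nolimits^{!}\bigl[\Pic^{\overline{c_1}}_{\Xtilde/\Exp_C}\bigr]=\bigl[M_{X/C}^{\alpha}(r,\overline{c_1},\Delta)\bigr]^{\vir}$ by Proposition \ref{prop_perfob}, which is the claimed formula.

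I expect the main obstacle to be step one, namely establishing the identification $\Exp_C\cong\Exp_{(D_1,x_1)}\times\Exp_{(D_2,x_2)}$ and the resulting canonical splitting of every expansion of $X$ into an expansion of $Y_1$ and one of $Y_2$ glued along $F$; this is exactly what upgrades $\Gamma$ and $\Gamma'$ from merely proper morphisms to open and closed immersions and makes the relevant Picard scheme decompose as a genuine disjoint union. Once this is in place everything else is formal: it follows from Lemma \ref{lem_decfirst}, the compatibility of the obstruction theories recorded after it, and the standard functoriality of virtual pull-back. There is no further delicate geometry, because the $f$-stability conditions and the choice of stability condition $\alpha$ have already been arranged — in the definitions of the $\rough{\alpha_i}$-spaces and in the proof of Lemma \ref{lem_decfirst} — precisely so that restriction to $Y_1,Y_2$ and gluing along $F$ are mutually inverse at the level of moduli functors.
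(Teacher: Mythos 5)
Your overall strategy --- use the cartesian square of Lemma \ref{lem_decfirst} and the compatibility of virtual pull-back with proper push-forward to reduce everything to a cycle identity on the relative Picard schemes --- is exactly the paper's (which invokes \cite[Theorem 4.1]{Mano_Pull} for Step two). However, Step one contains a genuine error that also contaminates the justification of Step three. There is no canonical isomorphism $\Exp_C\cong\Exp_{(D_1,x_1)}\times\Exp_{(D_2,x_2)}$, and the decomposition of an expansion $\Xtilde=\Ytilde_1\cup_F\Ytilde_2$ is \emph{not} canonical: if $\Ctilde\to C$ inserts a chain $R_1\cup\cdots\cup R_j$ of rational curves over the node, then there are $j+1$ distinct ways to cut that chain into an expansion of $(D_1,x_1)$ glued to an expansion of $(D_2,x_2)$, one for each node of $\Ctilde$ lying over the node of $C$ (this is made explicit in the proof of Lemma \ref{lem_decfirst}, where a point of the fiber product is a sheaf \emph{together with a choice of} decomposition, and in the proof of Lemma \ref{lem_gluePic}, where the fiber of $\Gamma'$ over a point is identified with the set of such choices). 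Consequently $\Gamma'$ is quasi-finite of generic degree one but is not a monomorphism, a fortiori not an open and closed immersion, and the disjoint union of the $\Gamma'$ over all $c_1'+c_1''=\overline{c_1}$ is not an isomorphism onto $\Pic^{\overline{c_1}}_{\Xtilde/\Exp_C}$. Your Mayer--Vietoris argument for a \emph{fixed} decomposition is fine --- that is what makes the Picard scheme of the glued surface a fiber product over $\Pic_F$ --- but it does not give uniqueness of the decomposition itself.

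The identity you need in Step three, namely $[\Pic^{\overline{c_1}}_{\Xtilde/\Exp_C}]=\sum_{c_1'+c_1''=\overline{c_1}}\Gamma'_*[\Pic^{c_1'}_{\Ytilde_1/\Exp_{D_1}}\times_{\Pic_F^d}\Pic^{c_1''}_{\Ytilde_2/\Exp_{D_2}}]$, is still true, but for a weaker reason than the one you give: both sides are fundamental cycles of equidimensional spaces, $\Gamma'$ is proper and quasi-finite (hence projective, which is also what licenses the push-forward in Step two), and over the dense open locus of \emph{trivial} expansions the union of the $\Gamma'$ \emph{is} an isomorphism, since there $\Pic_X^{\overline{c_1}}=\coprod_{c_1'+c_1''=\overline{c_1}}\Pic_{Y_1}^{c_1'}\times_{\Pic_F}\Pic_{Y_2}^{c_1''}$; one then compares cycles over this dense open. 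This is the content of Lemma \ref{lem_gluePic}, and it is the piece your proposal is missing. Once you replace Step one by the properness/quasi-finiteness statement and the generic-isomorphism argument, Steps two and three go through as you wrote them (dropping the parenthetical appeal to $\Gamma$ being an open and closed immersion, which is not needed: Manolache's theorem only requires projectivity of $\Gamma'$).
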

\begin{proof}
By \cite[Theorem 4.1]{Mano_Pull} and Lemma \ref{lem_decfirst}, it is enough to show the analogous formula on the level of Picard schemes, i.e. that
\[ [\Pic_{\Xtilde/\Exp_C}] = \sum_{c_1'+ c_1'' = \overline{c_1}} \Gamma'_*[\Pic_{\Ytilde_1/\Exp_{D_1}}^{c_1'} \times_{\Pic_F^d} \Pic_{\Ytilde_2/\Exp_{D_2}}^{c_1''} ] \]
 and that the glueing morphisms of relative Picard schemes are projective. This is shown in Lemma \ref{lem_gluePic} below.
\end{proof}
 
\begin{lem}\label{lem_gluePic}
	\begin{enumerate}[label = \roman*)] 
		\item The glueing map 
		\[\Gamma': \Pic_{\Ytilde_1/\Exp_{D_1}}^{c_1'}\times_{\Pic_F}\Pic_{\Ytilde_2/\Exp_{D_2}}^{c_1''}\to \Pic_{\Xtilde/\Exp_C}^{\overline{c_1}}\]
		is a proper and quasi-finite morphism, in particular it is projective. 
		\item We have an equality of fundamental classes
		\[[\Pic_{\Xtilde/\Exp_C}^{\overline{c_1}}] = \sum_{c_1'+c_2'' = \overline{c_1}} \Gamma_*([\Pic_{\Ytilde_1/\Exp_{D_1}}^{c_1'}\times_{\Pic_F}\Pic_{\Ytilde_2/\Exp_{D_2}}^{c_1''}]).\]
		This equality is to be understood whenever one restricts both sides to a quasi-compact open subset $U$ of $\Pic_{\Xtilde}^{\overline{c}_1}$ and its respective preimages $\Gamma^{-1}U$ under the glueing maps. 
	\end{enumerate}
\end{lem}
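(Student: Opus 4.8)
The plan is to reduce everything to a combinatorial statement about the stacks of expansions together with the Mayer--Vietoris sequence for line bundles on a pushout. Write $\rho\colon \Exp_{D_1,x_1}\times\Exp_{D_2,x_2}\to\Exp_C$ for the morphism that glues an expansion of $(D_1,x_1)$ and one of $(D_2,x_2)$ along their markings; one checks directly from Definition \ref{defexpansion} that the result is again an expansion of $C$, so $\rho$ is well defined. The first, and I expect main, step is to prove that $\rho$ is \emph{finite} and restricts to an isomorphism over the dense open substack $\Exp_C^{\circ}\subseteq\Exp_C$ of trivial expansions. Quasi-finiteness and separatedness are the easy part: over a geometric point the fibre of $\rho$ is the finite set of ways of splitting the chain over the node into two sub-chains, and for a family over a DVR the splitting of a given family $\Ctilde_R$ is forced, since the two halves must be the scheme-theoretic closures of the two halves of the generic fibre. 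The existence part of the valuative criterion is proved by taking exactly those closures and checking, again with Definition \ref{defexpansion}, that they are expansions of the $(D_i,x_i)$ and that their union is $\Ctilde_R$. Hence $\rho$ is proper and quasi-finite, so finite; the statement over $\Exp_C^{\circ}$ is immediate. This is essentially contained in \cite{ACFW} and also follows from the local model in Lemma \ref{lemlocalexp}.

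Granting this, I would factor $\Gamma'$ as
\[
\Pic_{\Ytilde_1/\Exp_{D_1}}^{c_1'}\times_{\Pic_F}\Pic_{\Ytilde_2/\Exp_{D_2}}^{c_1''}\;\xrightarrow{\ \sim\ }\;\Pic^{\overline{c_1}}_{\Xtilde/\Exp_C}\times_{\Exp_C}\bigl(\Exp_{D_1}\times\Exp_{D_2}\bigr)\;\longrightarrow\;\Pic^{\overline{c_1}}_{\Xtilde/\Exp_C},
\]
where the second arrow is the base change of $\rho$ and the first is the gluing map onto the $(c_1',c_1'')$-summand. The first arrow is an isomorphism by the \emph{relative} Mayer--Vietoris sequence for the Ferrand pushout $\Xtilde_T=\Ytilde_{1,T}\cup_{F_T}\Ytilde_{2,T}$: since $\Ytilde_{1,T}$, $\Ytilde_{2,T}$ and $F_T$ all have geometrically connected fibres, the term $R^{0}\pi_{*}\mathbb{G}_m$ equals $\mathbb{G}_{m,T}$ for each of them, the map $\mathbb{G}_{m,T}^{2}\to\mathbb{G}_{m,T}$ is an fppf epimorphism, and the long exact sequence yields $\Pic_{\Xtilde_T/T}\cong\Pic_{\Ytilde_{1,T}/T}\times_{\Pic_{F_T/T}}\Pic_{\Ytilde_{2,T}/T}$; cutting out the relevant components of the N\'eron--Severi schemes and identifying $\overline{\mathcal{NS}}_{\Xtilde}$ via Lemma \ref{lem_pullns} finishes this step. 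Part (i) then drops out: $\Gamma'$ is the composite of an isomorphism with a finite morphism, hence finite, and a finite morphism is quasi-finite, proper, and projective.

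For part (ii) it then remains to compute $(\rho_{\Pic})_*$ of the fundamental class, where $\rho_{\Pic}$ denotes the base change of $\rho$ to $\Pic^{\overline{c_1}}_{\Xtilde/\Exp_C}$. Here the point is that, although $\rho$ is many-to-one over the boundary of $\Exp_C$, the boundary strata have strictly negative (stacky) dimension while $\Pic^{0}_{\Xtilde/\Exp_C}$ is smooth over $\Exp_C$ by Theorem \ref{thm_picardsmooth} (this is where Assumption \ref{assu_locfree} is used); hence the fundamental class of $\Pic^{\overline{c_1}}_{\Xtilde/\Exp_C}$ is carried by the closures of the components lying over $\Exp_C^{\circ}$, over which $\rho_{\Pic}$ is an isomorphism. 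Thus $\rho_{\Pic}$ is a finite morphism that is an isomorphism over a dense open of the target; restricting to a quasi-compact open $U$ — which meets only finitely many of the infinitely many components — and comparing cycles component by component gives $(\rho_{\Pic})_*[\rho_{\Pic}^{-1}U]=[U]$, which, pushed through the isomorphism of the second paragraph, is the asserted identity in (ii). The one subtlety is multiplicities: one should record that $\Exp_C$ is reduced (by the local model of Lemma \ref{lemlocalexp}), hence so are $\Pic^{\overline{c_1}}_{\Xtilde/\Exp_C}$ and the source along their generic points, so all multiplicities occurring are equal to one.
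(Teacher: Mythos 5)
Your proposal is correct and ultimately rests on the same two facts as the paper's proof: quasi-finiteness comes from the combinatorics of splitting the chain over the node into two sub-chains, and the cycle identity in (ii) is verified over the dense locus of trivial expansions, where the map is an isomorphism onto the relevant components and the boundary strata have strictly smaller dimension. The organization is genuinely different, though. The paper works with $\Gamma'$ directly: it records your Mayer--Vietoris step only implicitly (as the observation that the source is a union of connected components of $\Pic_{\Ytilde_1\cup_F\Ytilde_2/\Exp_{D_1}\times\Exp_{D_2}}$, hence smooth over $\Exp_{D_1}\times\Exp_{D_2}$), proves quasi-compactness of $\Gamma'$ by the argument of Lemma \ref{lem_pullcl}, checks the valuative criterion by hand, and notes that the fibre of $\Gamma'$ over a line bundle on an expansion $\Ctilde_1$ is the set of nodes of $\Ctilde_1$. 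You instead isolate the gluing morphism $\rho\colon \Exp_{D_1,x_1}\times\Exp_{D_2,x_2}\to\Exp_C$, prove it is finite and generically an isomorphism, and exhibit $\Gamma'$ as an open-and-closed piece of the base change of $\rho$ along $\Pic^{\overline{c_1}}_{\Xtilde/\Exp_C}\to\Exp_C$. This makes both parts of the lemma descend formally through the base change, at the price of having to establish properness of $\rho$ as a separate statement, which the paper never formulates.

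The one step in your sketch that needs shoring up is the existence half of the valuative criterion for $\rho$. It is not automatic that the union of the scheme-theoretic closures of the two generic halves is all of $\Ctilde_R$: the special fibre may acquire bubbles that are invisible over the generic point. The saving observation is that $\Ctilde_R$ is flat over the DVR, hence has no irreducible component supported over the closed point, so every such bubble lies in exactly one of the two closures (two components of the total space can only meet inside the relative singular locus, which is finite over $R$, so no bubble lies in both). One must then still check that each closure is a flat family of nodal \emph{marked} curves, with marking $Z_1\cap Z_2$, satisfying i) and ii) of Definition \ref{defexpansion}. With that supplement your argument is complete.
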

\begin{proof}
	Note that $\Pic_{\Ytilde_1/\Exp_{D_1}}^{c_1'}\times_{\Pic_F}\Pic_{\Ytilde_2/\Exp_{D_2}}^{c_1''}$ is smooth over $\Exp_{D_1}\times \Exp_{D_2}$. Indeed, it is a union of connected components of $\Pic_{\Ytilde_1\cup_F\Ytilde_2/\Exp_{D_1}\times \Exp_{D_2}}$, which are all translates of the identity component. 
	
	Regarding i): One shows that $\Gamma'$ is quasi-compact by an argument similar to the one used in the proof of Lemma \ref{lem_pullns}. Then, it is straightforward to check the valuative criteria for properness.  Quasi-finiteness, follows since for a given point $L_1$ of $\Pic_{\Xtilde}^{\overline{c_1}}$ defined on an expansion $\Xtilde_1\to \Ctilde_1$, points in the preimage under $\Gamma$ correspond to a choice of singular point in $\Ctilde_1$. 
	Now ii) follows, since each glueing map is between schemes of the same dimension. Thus, to compute the image of the fundamental cycle $[\Pic_{\Ytilde_1/\Exp_{D_1}}^{c_1'}\times_{\Pic_F}\Pic_{\Ytilde_2/\Exp_{D_2}}^{c_1''}]$ it is enough to do so over a dense open on target and source. Hence, we may restrict to the generic points of $\Exp_{D_i}$ and $\Exp_C$ corresponding to trivial expansions. Then the result follows, since we have an isomorphism 
	\[\Pic_{X}^{\overline{c_1}} = \coprod_{c_1'+c_1'' = \overline{c_1}}\Pic_{Y_1}^{c_1'}\times_{\Pic_F}\Pic_{Y_2}^{c_1''}.\] 
	
\end{proof}  
\subsection{Obstruction theories}
Let $\Delta_1,\Delta_2\in \ZZ$ with $\Delta = \Delta_1+\Delta_2$. For convenience, we abbreviate 
\begin{align*}
	M_{Y_1/D_1} &:= M_{Y_1/D_1}^{\rough{\alpha_1}}(r,c_1',\Delta_1), \\
	\Pic_{\tilde{Y}_1} &:= \Pic^{c_1'}_{\Ytilde_1/\Exp_{D_1}},\\
	M_F &:= M_F(r,d),
\end{align*}
and similarly for $M_{Y_2/D_2}, \Pic_{\Ytilde_2}, M_{X/C}^{\alpha}$ and $\Pic_{\Xtilde}$.
In this subsection, we further analyze the virtual class on $M_{Y_1/D_1}\times_{M_{F}} M_{Y_2/D_2}$ that was constructed in \S \ref{subsec_glueing}.

\begin{prop}\label{prop_virclass}
	\begin{enumerate}[label = \arabic*)]
	\item There is a relative perfect obstruction theory for the morphism $M_{Y_i/D_i}\to M_F$ (for $i=1,2$), which induces a canonical virtual pullback map. 
	\item The following cycle classes on $M_{Y_1/D_1}\times _{M_F} M_{Y_2/D_2}$ agree: 
	\begin{enumerate}[label = \roman*), left = .3 \parindent]
		\item The virtual pullback of the fundamental class of $\Pic_{\Ytilde_1}\times_{\Pic_F^d}\Pic_{\Ytilde_2}$,
		\item The Gysin-pullback of the product of virtual classes on $M_{Y_1/D_1}\times M_{Y_2/D_2}$ along the diagonal map $M_F\to M_F\times M_F$. 
		\item the virtual pullback of the fundamental class of $M_F$ induced by the morphism $M_{Y_1/D_1}\to M_F$,
		\item the virtual pullback of the fundamental class of $M_F$ induced by the morphism $M_{Y_2/D_2}\to M_F$.
	\end{enumerate}	
	\end{enumerate}
\end{prop}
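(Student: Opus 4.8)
The plan is to exploit the compatibility of perfect obstruction theories along the various squares set up in \S\ref{subsec_glueing} and \S\ref{subsec_eval}, and to reduce everything to virtual-pullback functoriality for the Picard schemes, which are smooth over the relevant expansion stacks. First, for part 1), I would construct the relative obstruction theory for $M_{Y_i/D_i}\to M_F$ as follows: the restriction map $\mathcal{M}_{Y_i/D_i}\to \mathcal{M}_F(r,d)$ fits in the commutative square of \S\ref{subsec_eval}, and taking the cone of the compatible (traceless) Atiyah-class obstruction theories for $\mathcal{M}_{Y_i/D_i}\to \mathcal{P}ic_{\Ytilde_i/\Exp_{D_i}}$ and $\mathcal{M}_F\to \mathcal{P}ic_{F_i}$ yields a relative obstruction theory over $\mathcal{M}_F\times_{\Pic_{F}}\Pic_{\Ytilde_i/\Exp_{D_i}}$; since $\Pic_{\Ytilde_i/\Exp_{D_i}}\to \Pic_{F}\times\Exp_{D_i}$ is smooth (it is a union of translates of the identity component, as noted in the proof of Lemma \ref{lem_gluePic}), one can further compose to get a relative perfect obstruction theory for $M_{Y_i/D_i}\to M_F$ in the derived sense, hence a virtual pullback in the sense of \cite{Mano_Pull}. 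The key input here is that $\Pic_{\Ytilde_i/\Exp_{D_i}}\to\Pic_F\times\Exp_{D_i}$ is smooth of the expected relative dimension, so the obstruction-theory cone contributes no extra virtual dimension and the map $h^{-1}$-surjectivity/$h^0,h^1$-isomorphism conditions are inherited from the Atiyah-class theories.

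For part 2), the strategy is to show that all four classes equal the virtual pullback (in the sense of the relative obstruction theory over $\Pic_{\Ytilde_1}\times_{\Pic_F^d}\Pic_{\Ytilde_2}$ pulled back via $\Gamma$) of the fundamental class $[\Pic_{\Ytilde_1}\times_{\Pic_F^d}\Pic_{\Ytilde_2}]$, which is (i) by definition. The identification of (i) with (ii) is the commutativity of virtual pullback with Gysin pullback along a regular embedding: the diagonal $M_F\to M_F\times M_F$ is l.c.i.\ (indeed $M_F$ is smooth), and the fiber square
\[
\begin{tikzcd}
M_{Y_1/D_1}\times_{M_F} M_{Y_2/D_2}\ar[r]\ar[d]& M_{Y_1/D_1}\times M_{Y_2/D_2}\ar[d]\\
M_F\ar[r,"\Delta"]& M_F\times M_F
\end{tikzcd}
\]
together with compatibility of the obstruction theories on $M_{Y_i/D_i}$ over $\Pic_{\Ytilde_i}$ gives $\Delta^{!}\big([M_{Y_1/D_1}]^{\vir}\times[M_{Y_2/D_2}]^{\vir}\big)=[\Pic_{\Ytilde_1}\times_{\Pic_F^d}\Pic_{\Ytilde_2}]^{\vir}$, using that $[\Pic_{\Ytilde_1}]\times[\Pic_{\Ytilde_2}]$ Gysin-pulls back along the smooth $\Delta_{\Pic_F}$ to $[\Pic_{\Ytilde_1}\times_{\Pic_F}\Pic_{\Ytilde_2}]$ (this last statement is where smoothness of $\Pic_F$ and of the $\Pic_{\Ytilde_i}$ over it enters, so that no virtual correction is needed on the Picard side). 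Finally, for (iii) and (iv) one applies the functoriality of virtual pullback for a composition: the morphism $M_{Y_1/D_1}\times_{M_F}M_{Y_2/D_2}\to M_F$ factors through $M_{Y_1/D_1}\to M_F$, the first map being the base change of $M_{Y_2/D_2}\to M_F$ (which carries a virtual pullback by part 1)), and the composition of virtual pullbacks along $M_{Y_1/D_1}\times_{M_F}M_{Y_2/D_2}\to M_{Y_1/D_1}\to \Pic_{\Ytilde_1}$ applied to $[\Pic_{\Ytilde_1}]$ equals, by \cite[Theorem 4.1]{Mano_Pull}, the virtual pullback along $M_{Y_1/D_1}\times_{M_F}M_{Y_2/D_2}\to \Pic_{\Ytilde_1}\times_{\Pic_F^d}\Pic_{\Ytilde_2}$ applied to $[\Pic_{\Ytilde_1}\times_{\Pic_F^d}\Pic_{\Ytilde_2}]$, i.e.\ class (i); symmetrically for (iv).

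The main obstacle I anticipate is bookkeeping the compatibility of the three interlocking obstruction theories — those of $M_{Y_i/D_i}$ over $\Pic_{\Ytilde_i}$, those of $M_F$ over $\Pic_F$, and the relative one of $M_{Y_i/D_i}$ over $M_F$ — and checking that the distinguished triangles relating them are genuinely compatible with the fiber-square structures, so that the hypotheses of \cite[Theorem 4.1]{Mano_Pull} (functoriality of virtual pullback) are literally met. Concretely, one must verify that the relative obstruction theory of $M_{Y_i/D_i}\to M_F$ constructed in part 1) is compatible, in the sense of loc.\ cit., with the obstruction theory of $M_F\to \Pic_{F}$ and with that of $M_{Y_i/D_i}\to \Pic_{\Ytilde_i}$; this is a diagram chase with the traceless Atiyah classes and the restriction map $R\pi_*R\mathcal{H}om_0(\mathcal{E},\mathcal{E})\to R\pi_{F,*}R\mathcal{H}om_0(\mathcal{E}|_F,\mathcal{E}|_F)$, whose cone computes the relative term. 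Everything on the Picard side is elementary because those stacks are smooth over the expansion stacks, so the only real content is this compatibility and the standard functoriality and bivariant-intersection formalism, which I would invoke rather than reprove.
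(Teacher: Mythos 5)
Your proposal is correct and follows essentially the same route as the paper: factor $M_{Y_i/D_i}\to M_F$ through $\Pic_{\Ytilde_i}\times_{\Pic_F^d}M_F$, use the compatible traceless Atiyah-class obstruction theories to get a relative theory over that fiber product, compose with the pullback of $\Pic_{\Ytilde_i}\to \Pic_F^d$, and then deduce the four identifications in 2) from functoriality of virtual pullback and its compatibility with l.c.i.\ Gysin maps on the (smooth) Picard level. One small correction: the restriction map $\Pic_{\Ytilde_i}\to\Pic_F^d$ need not be smooth (the cited observation from Lemma \ref{lem_gluePic} gives smoothness of these stacks \emph{over the expansion stacks}, not of the restriction morphism itself); what your argument actually needs, and what the paper uses, is only that this map is l.c.i., which holds automatically since source and target are smooth over the base.
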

\begin{proof}

	We have a natural commutative diagram
	\begin{equation*}
		\begin{tikzcd}
			M_{Y_1/D_1}\ar[r,"\varphi"]\ar[dr]& \Pic_{\Ytilde_1}\times_{\Pic_F^d}M_F\ar[r]\ar[d]& M_F\ar[d] \\
			&\Pic_{\Ytilde_1} \ar[r] & \Pic_F^d
		\end{tikzcd}
	\end{equation*}
in which the square is cartesian. The vertical maps have natural obstruction theories given by the trace-free part of the Atiyah class of a universal sheaf over $M_F$, and these are naturally compatible with the obstruction theory of $M_{Y_1/C_1}$ over $\Pic_{\Ytilde_1}$. It follows that we have a (non-canonical) relative perfect obstruction theory for the morphism $\varphi:M_{Y_1/D_1}\to \Pic_1\times_{\Pic_{F}^d}M_F$. Since the horizontal maps in the square are l.c.i., we may endow them with their canonical obstruction theory, which are then automatically compatible with the obstruction theory for $\varphi$. There is then an induced obstruction theory for the restriction map $M_{Y_1/D_1}\to M_F$, which has the property that the induced virtual pullback map factors through l.c.i pullback along $\Pic_{\Ytilde_1}\to \Pic_F^d$ followed by the virtual pullback along $\varphi$. Note that since $\Pic_{\Ytilde_1}$ is not Deligne--Mumford (or quasi-compact), there may be subtleties as to how l.c.i. pullback along $\Pic_{\Ytilde_1}\to \Pic^d_F$ interacts with virtual pullbacks. Since one can exhaust $\Pic_{\Ytilde_1}$ by global quotients of algebraic spaces, one can work $GL$-equivariantly on a suitable principal bundle. This proves the first point for $i=1$, and by symmetry for $i=2$.

Now, consider the commutative diagram with cartesian squares
\begin{equation*}
	\begin{tikzcd}
		M_{Y_1/D_1}\times_{M_F}M_{Y_2/D_2}\ar[r]\ar[dr]&M_{Y_1/D_1}\times_{\Pic_F^d} M_{Y_2/D_2}\ar[r]\ar[d]& M_{Y_1/D_1}\times M_{Y_2/D_2}\ar[d] \\
		&\Pic_{\Ytilde_1}\times_{\Pic_F^d}\Pic_{\Ytilde_2}\ar[r]\ar[d] & \Pic_{\Ytilde_1}\times \Pic_{\Ytilde_2} \ar[d]\\
		&\Pic_{F}^d\ar[r,"\Delta"]& \Pic^d_F\times \Pic^d_F
	\end{tikzcd}
\end{equation*}
The vertical maps in the upper square have obstruction theories given by the sum of Atiyah classes. The obstruction theory of the left vertical map is compatible with the one of the diagonal map given by the Atiyah class. We get an induced obstruction theory on the map $M_{Y_1/D_1}\times_{M_F} M_{Y_2/D_2}\to M_{Y_1/D_1}\times_{\Pic_F^d}M_{Y_2/D_2}$ which is isomorphic to $\Ext_0^1(\mathcal{E}_D,\mathcal{E}_D)^{\vee}$ concentrated in degree $-1$. On the other hand, we have the cartesian diagram
\begin{equation*}
	\begin{tikzcd}
		M_{Y_1/D_1}\times_{M_F}M_{Y_2/D_2}\ar[r]\ar[d]& M_{Y_1/D_1}\times_{\Pic_F^d} M_{Y_2/D_2}\ar[d] \\
		M_F\ar[r,"\Delta"] & M_F\times_{\Pic_F^d}M_F
	\end{tikzcd}
\end{equation*}
Since virtual pullback is independent of the precise choice of map in the obstruction theory, this shows that the virtual pullback map for the morphism $M_{Y_1/D_1}\times_{M_F}M_{Y_2/D_2}\to M_{Y_1/D_1}\times_{\Pic_F^d}M_{Y_2/D_2}$ is equal to the Gysin-pullback along the diagonal of $M_F$. Then, considering the diagram with cartesian squares
\begin{equation*}
	\begin{tikzcd}
		M_{Y_1/D_1}\times_{M_F}M_{Y_2/D_2}\ar[r]\ar[d]&M_{Y_1/D_1}\times_{\Pic_F^d}M_{Y_2/D_2}\ar[r]\ar[d]& M_{Y_1/D_1}\times M_{Y_2/D_2}\ar[d] \\
		M_F\ar[r,"\Delta"]&M_F\times_{\Pic_F^d}M_F\ar[r]\ar[d] & M_F\times M_F\ar[d]\\
		&\Pic_F^d\ar[r,"\Delta"]&\Pic_F^d\times \Pic_F^d
	\end{tikzcd}
\end{equation*}
shows that the virtual class on $M_{Y_1/D_1}\times_{M_F}M_{Y_2/D_2}$ is the Gysin-pullback of the one on $M_{Y_1/D_1}\times M_{Y_2/D_2}$ along the diagonal morphism of $M_F\times M_F$. 
The last two equivalences follow, since virtual pullback commutes with l.c.i. pullback, and the fact that $M_{Y_1/D_1}\times_{M_F} M_{Y_2/D_2}$ is identified with the base change of $(M_{Y_1/D_1}\times M_F) \times_{(M_F\times M_F)} (M_F\times M_{Y_2/D_2})$ along the diagonal $M_F\to M_F\times M_F$.\end{proof}

\subsection{Decomposition formulas}

We show some basic results regarding how tautological classes interact with the glueing morphism. 
Let $\gamma$ be a cohomology class on $X$, let $\gamma_i$ be its restriction to $Y_i$ for $i=1,2$, and let $\gamma_F$ be its restriction to the singular fiber $F$. Consider the glueing map 
\[\Gamma: M_{Y_1/D_1}\times_{M_F(r,d)} M_{Y_2/D_2}\to M_{X/C}\]

\begin{lem}\label{lem_split1}
	We have $\Gamma^*\ch_i(\gamma) = \operatorname{pr}_1^*\ch_i(\gamma_1) + \operatorname{pr}_2^*\ch_i(\gamma_2)$.
\end{lem}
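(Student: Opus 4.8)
The plan is to trace through the construction of $\ch_i(\gamma)$ in \eqref{eq_deftaut} and see how each ingredient pulls back along $\Gamma$. Recall $\ch_i(\gamma) = \pi_!\left(\ch_i(\widehat{\mathcal{E}})\cup c^*\gamma\right)$, where $\pi:\Xtilde \to M_{X/C}$ is the universal expansion, $c:\Xtilde \to X$ the contraction, and $\widehat{\mathcal{E}} = \mathcal{E}\otimes(\det\mathcal{E})^{-1/r}$ the normalized universal $K$-theory class. The glueing morphism $\Gamma$ is covered by a decomposition of the universal expansion: over $M_{Y_1/D_1}\times_{M_F}M_{Y_2/D_2}$ the pulled-back universal expansion $\Gamma^*\Xtilde$ is identified with $\Ytilde_1 \cup_{\mathcal{F}} \Ytilde_2$, the union along the universal glueing fiber $\mathcal{F}$ (pulled back from $M_F$), and the pulled-back universal sheaf $\Gamma^*\mathcal{E}$ restricts on each piece $\Ytilde_i$ to (the normalization of) the universal sheaf $\mathcal{E}_i$ on the relative moduli space of $(Y_i,F)$. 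Likewise the contraction $c$ pulls back to the contraction $\Ytilde_i \to Y_i$ on each component, which is compatible with the inclusion $Y_i \hookrightarrow X$, so that $\Gamma^*c^*\gamma$ restricts to $c_i^*\gamma_i$ on $\Ytilde_i$.

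The key step is then a Mayer--Vietoris / excision argument for the pushforward $\pi_!$. Since $\Gamma^*\Xtilde = \Ytilde_1\cup_{\mathcal{F}}\Ytilde_2$ with the two components meeting along a codimension-one locus $\mathcal{F}$ which maps to the moduli space with relative dimension $1$, the trace map $R\pi_*\mathbb{Q}\to \mathbb{Q}[-4]$ factors through the direct sum of the trace maps for $\Ytilde_1$ and $\Ytilde_2$: concretely, $R^4\pi_*\mathbb{Q}$ fiberwise is spanned by orientation classes of the irreducible surface components, and these are partitioned between the two pieces $\Ytilde_1$ and $\Ytilde_2$ (the glueing fiber $\mathcal{F}$ contributes nothing in top degree since it has one dimension less). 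Hence for any class $\alpha \in H^*(\Gamma^*\Xtilde)$ that is the restriction of a class on $\Xtilde$, we get $\pi_!\alpha = \pi_{1,!}(\alpha|_{\Ytilde_1}) + \pi_{2,!}(\alpha|_{\Ytilde_2})$, where $\pi_i:\Ytilde_i \to M_{Y_i/D_i}\times_{M_F}M_{Y_2/D_2}$ and the right-hand side is understood after the further pushforward along the two projections. Applying this with $\alpha = \ch_i(\widehat{\mathcal{E}})\cup c^*\gamma$, and using that $\widehat{\mathcal{E}}|_{\Ytilde_i}$ agrees with the normalized universal class $\widehat{\mathcal{E}_i}$ pulled back from $M_{Y_i/D_i}$ and $c^*\gamma|_{\Ytilde_i} = c_i^*\gamma_i$, yields $\Gamma^*\ch_i(\gamma) = \operatorname{pr}_1^*\ch_i(\gamma_1) + \operatorname{pr}_2^*\ch_i(\gamma_2)$, since $\pi_{i,!}(\ch_i(\widehat{\mathcal{E}_i})\cup c_i^*\gamma_i)$ is precisely the pullback to the fiber product of $\ch_i(\gamma_i)$ on $M_{Y_i/D_i}$.

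The main obstacle is making precise the claim that the trace/pushforward splits as a sum over the two components — i.e. that there is no contribution from the glueing divisor $\mathcal{F}$ and no correction term from the fact that $\Gamma^*\Xtilde$ is reducible along $\mathcal{F}$. This is where one must be careful: one should express $\pi_!$ via the exact triangle relating $\mathbb{Q}_{\Gamma^*\Xtilde}$ to $\mathbb{Q}_{\Ytilde_1}\oplus\mathbb{Q}_{\Ytilde_2}$ and $\mathbb{Q}_{\mathcal{F}}$ (a Mayer--Vietoris triangle for the closed cover), push forward, and observe that the $\mathcal{F}$-term lands in degrees that do not affect the degree-$4$ part computing $\pi_!$; equivalently one notes that $\pi_!$ only sees the $H^4$ of fibers, and $\mathcal{F}$ has two-dimensional fibers of its own... wait, no: $\mathcal{F}$ is a relative curve, so its fibers are $1$-dimensional and contribute at most $H^2$, hence nothing to the relative $H^4$. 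One must also confirm that the normalized class $\widehat{\mathcal{E}}$ genuinely descends and restricts componentwise as claimed, which follows from the compatibility of the determinant line bundles under glueing recorded in the diagram of \S\ref{subsec_glueing}. Modulo these bookkeeping points the identity is essentially formal.
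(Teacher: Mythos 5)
Your proof is correct and follows essentially the same route as the paper: the paper also identifies $\Gamma^*\Xtilde$ with $\operatorname{pr}_1^*\Ytilde_1\cup_F\operatorname{pr}_2^*\Ytilde_2$ and reduces everything to the identity $(\pi_\Gamma)_! = (\pi_{12})_!\circ\sigma^*$, where $\sigma$ is the partial normalization (disjoint union of the two pieces) mapping to the glued family --- which is precisely your splitting of the Gysin map with no contribution from the glueing fiber. Your justification via the description of $R^4\pi_*\QQ$ as spanned by orientation classes of the irreducible surface components (so that the relative curve $\mathcal{F}$ contributes nothing in top degree) is exactly the content of the ``one can check'' step that the paper leaves implicit.
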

\begin{proof}
Let $\Xtilde\to M_{X/C}$ denote the universal expansion, and $\Gamma^*\Xtilde$ its pullback to $M_{Y_1/D_1}\times_{M_F(r,d)} M_{Y_2/D_2}$, so that $\Gamma^*\Xtilde = \operatorname{pr}^*_1\Ytilde_1\cup_F\operatorname{pr}_2^*\Ytilde_2$. Recall that $\ch_i(\gamma)$ is defined via \eqref{eq_deftaut} in terms of a Gysin map $\pi_!$, which commutes with the pullback along $\Gamma$. Then consider the following diagram of maps
\begin{equation*}
	\begin{tikzcd}
		\operatorname{pr}_1^*\Ytilde_1\coprod \operatorname{pr}_2^*\Ytilde_2 \ar[r,"\sigma"]\ar[dr,"\pi_{12}"]& \operatorname{pr}_1^*\Ytilde_1\cup_F \operatorname{pr}_2^*\Ytilde_2 \ar[d,"\pi_{\Gamma}"] \\
		& M_{Y_1/D_1}\times_{M_F(r,d)} M_{Y_2/D_2}
	\end{tikzcd}
\end{equation*}
Then one can check that we have an identity $(\pi_\Gamma)_! = (\pi_{12})_!\circ \sigma^*$ of maps $H^*(\operatorname{pr}_1^*\Ytilde_1\cup_F \operatorname{pr}_2^*\Ytilde_2)\to H^{*-2}(M_{Y_1/D_1}\times_{M_F(r,d)} M_{Y_2/D_2})$. The lemma follows from this.
\end{proof}

We consider the restriction of classes derived from the virtual tangent bundle. 
\begin{lem}\label{lem_split2}
	We have 
	\[\Gamma^*\ch_i(T_{M_{X/C}}^{\vir}) = \operatorname{pr}_1^*\ch_i(T_{M_{Y_1/D_1}}^{\vir})+\operatorname{pr}_2^*\ch_i(T_{M_{Y_2/D_2}}^{\vir}) - \operatorname{pr}_F^*\ch_i(T_{M_F/\Pic_F}).\]
\end{lem}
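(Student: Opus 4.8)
The idea is to deduce the formula from a Mayer--Vietoris distinguished triangle for the endomorphism complex of the universal sheaf across the gluing, and then apply the Chern character, which is additive on $K$-theory classes.

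First I would fix notation. Write $N := M_{Y_1/D_1}\times_{M_F}M_{Y_2/D_2}$, let $\mathcal{Y}:=\Gamma^*\Xtilde = \operatorname{pr}_1^*\Ytilde_1\cup_F\operatorname{pr}_2^*\Ytilde_2$ with projection $\pi:\mathcal{Y}\to N$, set $Z_i:=\operatorname{pr}_i^*\Ytilde_i$ with closed immersions $j_i:Z_i\hookrightarrow\mathcal{Y}$ and projections $\pi_i:Z_i\to N$, and let $j_F:F\times N\hookrightarrow\mathcal{Y}$ be the glued locus with projection $\pi_F$. Let $\mathcal{E}$ be the pullback under $\Gamma$ of the (twisted) universal sheaf on $M_{X/C}$, $\mathcal{E}_i:=\mathcal{E}|_{Z_i}$ (equivalently, the pullback of the universal sheaf of $M_{Y_i/D_i}$), and $\mathcal{E}_F:=\mathcal{E}|_{F\times N}$ (the pullback of the universal bundle of $M_F$ along the evaluation map $N\to M_F$); this last identification uses the construction of the glueing morphism in \S\ref{subsec_glueing} and \S\ref{subsec_eval}. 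Since $R\mathcal{H}om_0(-,-)$ is unchanged under twists by line bundles and is invariant under the scalar $\mathbb{G}_m$-action, all the $K$-theory classes below descend to the rigidified spaces, and it suffices to prove
\[\Gamma^*[R\Hom_0(\mathcal{E},\mathcal{E})] = \operatorname{pr}_1^*[R\Hom_0(\mathcal{E}_1,\mathcal{E}_1)] + \operatorname{pr}_2^*[R\Hom_0(\mathcal{E}_2,\mathcal{E}_2)] - \operatorname{pr}_F^*[R\Hom_0(\mathcal{E}_F,\mathcal{E}_F)]\]
in $K$-theory and then apply $\ch_i$ termwise. Indeed $-[R\Hom_0(\mathcal{E},\mathcal{E})]=\Gamma^*T^{\vir}_{M_{X/C}}$ and $-[R\Hom_0(\mathcal{E}_i,\mathcal{E}_i)]=T^{\vir}_{M_{Y_i/D_i}}$, while $-[R\Hom_0(\mathcal{E}_F,\mathcal{E}_F)]=[T_{M_F/\Pic_F}]$ because $M_F\to\Pic_F$ is smooth and stable bundles on a curve are simple, so $R\pi_{F*}R\mathcal{H}om_0(\mathcal{E}_F,\mathcal{E}_F)$ has vanishing $h^0$ and $h^1$ equal to the relative tangent bundle; substituting these identities turns the displayed equation into the assertion of the lemma.

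Next I would establish the $K$-theoretic identity. By $f$-stability (Definition \ref{deffstable}, i)) every sheaf parametrized by $N$ is locally free along the fibre over the node, so $\mathcal{E}$ is locally free in a neighbourhood $U$ of $F\times N$ in $\mathcal{Y}$ (fibrewise local freeness plus flatness over $N$). Tensoring the Mayer--Vietoris sequence $0\to\mathcal{O}_{\mathcal{Y}}\to(j_1)_*\mathcal{O}_{Z_1}\oplus(j_2)_*\mathcal{O}_{Z_2}\to(j_F)_*\mathcal{O}_{F\times N}\to 0$ with $\mathcal{E}$ keeps it exact: over $U$ we tensor with a locally free sheaf, and over $\mathcal{Y}\setminus(F\times N)=(Z_1\setminus F)\sqcup(Z_2\setminus F)$ the sequence already splits. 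This yields a short exact sequence $0\to\mathcal{E}\to(j_1)_*\mathcal{E}_1\oplus(j_2)_*\mathcal{E}_2\to(j_F)_*\mathcal{E}_F\to 0$ on $\mathcal{Y}$. Applying $R\mathcal{H}om_{\mathcal{Y}}(\mathcal{E},-)$ and the adjunction identifying $R\mathcal{H}om_{\mathcal{Y}}(\mathcal{E},R(j_i)_*M)$ with $R(j_i)_*R\mathcal{H}om_{Z_i}(Lj_i^*\mathcal{E},M)$ (and similarly for $j_F$), together with $Lj_i^*\mathcal{E}=\mathcal{E}_i$ and $Lj_F^*\mathcal{E}=\mathcal{E}_F$ — no higher $\mathrm{Tor}$, again because $\mathcal{E}$ is locally free along $F\times N$ — produces a distinguished triangle in $D(\mathcal{Y})$
\[R\mathcal{H}om_{\mathcal{Y}}(\mathcal{E},\mathcal{E})\to(j_1)_*R\mathcal{H}om_{Z_1}(\mathcal{E}_1,\mathcal{E}_1)\oplus(j_2)_*R\mathcal{H}om_{Z_2}(\mathcal{E}_2,\mathcal{E}_2)\to(j_F)_*R\mathcal{H}om_F(\mathcal{E}_F,\mathcal{E}_F)\xrightarrow{+1}.\]
Pushing forward along $\pi$ gives a triangle relating $R\Hom(\mathcal{E},\mathcal{E})$, $R\Hom(\mathcal{E}_1,\mathcal{E}_1)\oplus R\Hom(\mathcal{E}_2,\mathcal{E}_2)$ and $R\Hom(\mathcal{E}_F,\mathcal{E}_F)$; since the trace maps to $R\pi_*\mathcal{O}_{\mathcal{Y}}$, $R\pi_{i*}\mathcal{O}_{Z_i}$, $R\pi_{F*}\mathcal{O}_{F\times N}$ are functorial, the triangle splits as the direct sum of $R\pi_*$ of the Mayer--Vietoris sequence (the trace part) and the trace-free triangle $R\Hom_0(\mathcal{E},\mathcal{E})\to R\Hom_0(\mathcal{E}_1,\mathcal{E}_1)\oplus R\Hom_0(\mathcal{E}_2,\mathcal{E}_2)\to R\Hom_0(\mathcal{E}_F,\mathcal{E}_F)\xrightarrow{+1}$. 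Taking classes in $K$-theory gives the displayed identity, and applying $\ch_i$ finishes the proof.

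I expect the main obstacle to be making the distinguished triangle genuinely precise — specifically, ruling out a correction term coming from the singularity of $\mathcal{Y}$ along the glued fibre. The subtlety is that the structure sheaves $\mathcal{O}_{Z_i}$ have infinite projective dimension over the nodal scheme $\mathcal{Y}$, so $R\mathcal{H}om_{\mathcal{Y}}(\mathcal{O}_{Z_i},-)$ is unbounded in general; it is exactly the local freeness of $\mathcal{E}$ along $F\times N$ imposed by $f$-stability that forces $Lj_i^*\mathcal{E}=\mathcal{E}_i$ and collapses the computation to the clean triangle above. I would handle this by a local computation in the \'etale model $\{t_1t_2=0\}$ of $\mathcal{Y}$ near $F\times N$ (with $Z_i$ cut out by $t_j$), checking that $\mathrm{Tor}^{\mathcal{O}_{\mathcal{Y}}}_{>0}(\mathcal{O}_{Z_i},\mathcal{O}_{\mathcal{Y}})=0$ so that the vanishing persists after tensoring with the locally free $\mathcal{E}$ — this is where the hypotheses are essentially used, everything else being formal manipulation of derived categories and $K$-theory.
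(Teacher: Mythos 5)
Your proof is correct and follows essentially the same route as the paper: the paper's argument is exactly the $K$-theoretic identity $[\Gamma^*\mathcal{E}] = [\operatorname{pr}_1^*\mathcal{E}_1]+[\operatorname{pr}_2^*\mathcal{E}_2]-[\operatorname{pr}_F^*\mathcal{E}_F]$ followed by adjunction and additivity of $\ch_i$, which is precisely what you carry out, only with the Mayer--Vietoris sequence, the local freeness along the glued fibre, and the trace-free splitting made explicit. The one cosmetic slip is in your final paragraph, where the condition to verify is $\mathrm{Tor}^{\mathcal{O}_{\mathcal{Y}}}_{>0}(\mathcal{O}_{Z_i},\mathcal{E})=0$ rather than the (trivially true) vanishing of $\mathrm{Tor}^{\mathcal{O}_{\mathcal{Y}}}_{>0}(\mathcal{O}_{Z_i},\mathcal{O}_{\mathcal{Y}})$; this follows at once from the local freeness of $\mathcal{E}$ near $F\times N$, as you already use in the body of the argument.
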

\begin{proof}
The obstruction theory on $M_{X/C}$ is a descent of $R\Hom_0(\mathcal{E},\mathcal{E})$, where $\mathcal{E}$ is the universal sheaf on the un-rigidified moduli stack.  Since $[\Gamma^*\mathcal{E}] =[\operatorname{pr}_1^*\mathcal{E}_1] + [\operatorname{pr}_2^*\mathcal{E}_2] - [\operatorname{pr}_F^*\mathcal{E}_F]$, it follows from adjunction that 
\[T_{M_{X/C}}^{\vir} = \operatorname{pr}_1^*T_{M_{Y_1/D_1}}^{\vir} + \operatorname{pr}_2^*T_{M_{Y_2/D_2}}^{\vir} - \operatorname{pr}_F^*T_{M_{Y_F}/\Pic_F},\]
and all of these are perfect objects. The result follows from this. 
\end{proof}

\subsection{Fixed determinant spaces} \label{subsec_fixdet}
In order to give more precise statements for some of the invariants we consider, we want to work in some `fixed determinant' theory. We make this precise here in the two cases we are interested in: For a simple degeneration and for a surface fibered over a smooth curve with a single marked point. 

\paragraph{Simple Degeneration.}
Let $B$ be regular one-dimensional base, and $X_B\to C_B$ a family of fibered surface. Assume the total space $C_B$ is regular, that $C_B\to B$ is smooth outside $b_0\in B$, and that $C_{b_0}$ is a union of two components along a simple node. We say that $X_B\to C_B$ is a simple degeneration of fibered surfaces.

We consider the stack $\Exp_{C_B/B}\to B$ with universal expansions $\Xtilde_B\to \Ctilde_B$, and the relative Picard scheme $\Pic_{\Xtilde_B/\Exp_{C_B/B}}$. 

For the following lemma, we introduce some notation: Given an \'etale morphism $\beta: B\to \mathbb{A}^1$, such that $b_0 = \beta^{-1}(0)$, let $B[n]:=B\times_{\AA^1}\AA^{n+1}$ and $C_B[n]\to B[n]$ be the standard degeneration as in \cite[\S 1.1]{Li_Stab}. Let also $X_B[n]:=X_B\times_{C_B} C_B[n]$. This defines a smooth morphism $\beta_n :B[n] \to \Exp_{C_B/B}$. Say $Y_1$ and $Y_2$ are the irreducible components of $X_{b_0}$. Then let $Y_{1,k}\subset X_B[n]$ denote the divisor that corresponds to $Y_1$ over the $k$-th coordinate hyperplane. 

\begin{lem}\label{lem_detfix}
	\begin{enumerate}[label = \alph*)]
		\item There is a minimal closed sub-stack $\overline{e}\subset \Pic_{\Xtilde_{B}/B}$ through which the identity section $\Exp_{C_B/B}\to \Pic_{\Xtilde_B/\Exp_{C_B/B}}$ factors.
		\item The stack $\overline{e}$ is naturally a subgroup of $\Pic_{\Xtilde_{B}/\Exp{C_B/B}}$ and the structure map $\overline{e}\to \Exp_{C_B/B}$ is \'etale.
		\item For $\beta:B\to \AA^1$ as above, we have that $\beta[n]^{-1}\overline{e}\subseteq \Pic_{X_B[n]/B[n]}$ is equal to the reduced subscheme supported on the union of sections defined by line bundles $\mathcal{O}_{X_B[n]}(\sum_{i=1}^{n+1} a_k Y_{1,k})$ for $a_k\in \ZZ$, which is a closed set.
	\end{enumerate}
\end{lem}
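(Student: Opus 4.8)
The plan is to analyze the locus $\overline{e}$ by working \'etale-locally on the base $\Exp_{C_B/B}$, using the explicit description of expansions coming from Lemma \ref{lemlocalexp} and the charts $B[n]\to \Exp_{C_B/B}$. For part a), I would first observe that the identity section $\Exp_{C_B/B}\to \Pic_{\Xtilde_B/\Exp_{C_B/B}}$ exists because the universal expansion carries the trivial line bundle; then $\overline{e}$ is defined as the scheme-theoretic image (or rather the smallest closed substack through which the section factors), which makes sense once we know $\Pic_{\Xtilde_B/\Exp_{C_B/B}}$ is a reasonable algebraic stack. The point is that the identity section is not a closed immersion: along the degenerate locus, the trivial bundle acquires ``ghost'' deformations by twisting with components of the special fiber, i.e. with the divisors $Y_{1,k}$. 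So $\overline{e}$ will be a positive-dimensional (relative to $\Exp$, really \'etale) family of line bundles, not just the identity.

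For part b), I would argue that $\overline{e}$ is a subgroup by functoriality: the tensor product of two line bundles, each of which becomes trivial after contracting the expansion to $X_B$ (equivalently, each of which is a twist by components of the special fiber as in part c)), is again of this form, and likewise for inverses; more precisely the set of such bundles is the kernel of the pullback-to-generic-fiber map composed with the contraction, which is a subgroup functor. For \'etaleness of $\overline{e}\to \Exp_{C_B/B}$, the cleanest route is to compute on the charts $B[n]$: there the relevant line bundles are exactly $\mathcal{O}_{X_B[n]}(\sum_k a_k Y_{1,k})$ for $(a_k)\in\ZZ^{n+1}$ (modulo the relation that the total sum of all components is a pullback from $B[n]$, so really $\ZZ^{n+1}/\ZZ \cong \ZZ^n$ worth of distinct bundles, matching the $n$ nodes of the fiber $C_B[n]$). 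Since these are a disjoint (reduced) union of sections of $\Pic_{X_B[n]/B[n]}\to B[n]$, the structure map is \'etale; and since $B[n]\to\Exp_{C_B/B}$ is a smooth cover, \'etaleness descends. The unramifiedness also follows from the fact that $\overline{e}$ sits inside $\Pic^0$ but contains no nontrivial connected families, by Assumption \ref{assu_locfree} and Theorem \ref{thm_picardsmooth} — the identity component of each fiberwise Picard scheme contributes only the origin, so the whole thing is discrete over $\Exp_{C_B/B}$.

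Part c) is really the computational heart, and I expect it to be the main obstacle, though it is a standard computation for simple degenerations (compare \cite{LiWu_Good}, \cite{Li_Stab}). The key facts are: (i) on $X_B[n]$ the special fiber $X_{b_0}[n]$ has irreducible components obtained by reading off the components of $C_B[n]$, and because the node of $C_{b_0}$ is simple and the expansion inserts chains of $\PP^1$'s, the components of $X_B[n]$ pulled back along $f$ are precisely the $Y_{1,k}$ together with $Y_{2,k}$, with $Y_{1,k}+Y_{2,k}$ summing up telescopically; (ii) a line bundle $L$ on $X_B[n]$ that restricts trivially to the generic fiber is, after twisting by a pullback from the base, supported numerically on vertical divisors, hence of the form $\mathcal{O}(\sum_k a_k Y_{1,k} + \sum_k b_k Y_{2,k})$, and the constraint that the total degeneration fiber is principal lets us normalize all $b_k=0$; (iii) conversely every such $\mathcal{O}(\sum a_k Y_{1,k})$ does restrict trivially to the generic fiber and maps to the identity section after contraction, so lies in $\beta[n]^{-1}\overline{e}$. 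Finally I would note that the locus of these sections in $\Pic_{X_B[n]/B[n]}$ is closed (it is a countable disjoint union of copies of $B[n]$, each closed, and local finiteness over any quasi-compact piece makes the union closed), and reduced by construction, so it equals $\beta[n]^{-1}\overline{e}$ on the nose. The one genuine subtlety to be careful about is checking that $\overline{e}$, defined abstractly as a minimal closed substack, pulls back under the smooth charts $\beta[n]$ to exactly this reduced union and not something with embedded or thickened structure; this follows because $\beta[n]$ is smooth (hence flat) so commutes with scheme-theoretic image, and because the union of sections is already reduced and the section $\mathrm{id}$ factors through it.
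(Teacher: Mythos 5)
Your overall picture is right, and your candidate description of $\overline{e}$ matches the paper's, but the step you yourself flag as the heart of the matter --- why the union of the sections $\mathcal{O}_{X_B[n]}(\sum_k a_k Y_{1,k})$ is closed and \'etale over $B[n]$ --- rests on a false premise. These sections are \emph{not} disjoint: over the open locus of $B[n]$ where the fiber of $C_B[n]$ is smooth (or merely less degenerate), the divisors $Y_{1,k}$ become principal relative to the base and many of the sections coincide with the identity section; they only separate over the deeper strata. So the union is not ``a countable disjoint union of copies of $B[n]$,'' and ``local finiteness over a quasi-compact piece'' is exactly the thing that needs an argument rather than an assumption --- infinitely many of the sections pass through every point of the identity section over the smooth locus. (A related slip: the relation $\sum_k Y_{1,k}\sim$ pullback from $B[n]$ does not hold; the multidegree of $\mathcal{O}(\sum_k D_{1,k})$ on the deepest fiber is $e_{n+1}-e_0\neq 0$, so over the deepest stratum there are genuinely $\ZZ^{n+1}$ distinct sections, matching the $n+1$ nodes, not $\ZZ^n$.)

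The paper's proof supplies precisely the missing local statement, and in a way you could adopt. First it reduces from $X_B[n]$ to $C_B[n]$ via the closed immersion $\Pic_{C_B[n]/B[n]}\hookrightarrow\Pic_{X_B[n]/B[n]}$ of Lemma \ref{lem_pullcl}, so that one only has to deal with twists $\mathcal{O}(\sum a_k D_{1,k})$ on the base curve. Then the key observation is that the identity component $\Pic^0_{C_B[n]/B[n]}$ is an \emph{open, separated} subgroup scheme (Theorem \ref{thm_picardsmooth}), so its translates $\xi\,\Pic^0$ by the sections $\xi$ form an open cover of all of $\Pic_{C_B[n]/B[n]}$, and on each translate the union of \emph{all} the sections restricts to the single closed section $\xi$ (two twists $\mathcal{O}(\sum a_kD_{1,k})$, $\mathcal{O}(\sum a'_kD_{1,k})$ lying in the same $\Pic^0$-translate over a point have difference of multidegree zero, hence are equal there). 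Closedness, reducedness, \'etaleness over $B[n]$, and the identification with $\beta[n]^{-1}\overline{e}$ all follow at once from this local picture, and the subgroup claim in b) is then immediate since the collection of twists forms a group. Without some version of this ``locally a single section'' argument, parts a)--c) of your proof do not close up.
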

\begin{proof}
	We will show that the union of sections $\mathcal{O}_{X_B[n]}(\sum_{i=1}^{n+1} a_k Y_{1,k})$ defines a closed sub-space of $\Pic(X_B[n]/B[n])$, which is the closure of the identity section $B[n]\to \Pic_{X_B[n]/B[n]}$. It is then straightforward to see that the collection of such closed substacks for all $n$ induces a closed substack of $\Pic_{\Xtilde_B/B}$, which is the minimal substack containing the identity section.
	
	Since the pull-back $\Pic_{C_B[n]/B[n]}\to \Pic_{X_B[n]/B[n]}$ is closed, it is enough to show the analogous statement for $\Pic_{C_B[n]/B[n]}$. Let $D_{1,k}$ denote the image of $Y_{1,k}$ in $C_{B}[n]$. 
	Since the identity component $\Pic^0_{C_B[n]/B[n]}$ is separated, it follows that the identity section is closed in it and doesn't contain any other section that agrees with the identity section generically over $B[n]$. Then for any other section $\xi$ given by a line bundle $\mathcal{O}_{X_B[n]}(\sum a_k D_{1,k})$, we get a closed immersion $\xi \subseteq  \xi \Pic^0_{C_B[n]/B[n]}$. It follows that the union over all such sections $\xi$ is a closed subset in $\cup_{\xi} \xi \Pic^0_{X_B[n]/B[n]}$. But in fact $\cup_{\xi} \Pic^0_{C_B[n]/B[n]}= \Pic_{C_B[n]/B}$.  
	
	To see b), it is enough to show that $\cup_{\xi} \xi\subset \Pic_{C_B[n]/B[n]}$  is a subgroup-space and \'etale over $B[n]$. The first point is clear, since the collection of $\xi$'s forms a group. To see that it is \'etale, we may work locally on the domain. But $\cup_{\xi} \xi\cap \xi_0 \Pic^0_{C_B[n]/B[n]} = \xi_0$, which is clearly \'etale over $B[n]$. 
\end{proof}

Let $L$ be a line bundle on $X_B$ with degree $d$ on fibers over $C_B$, let $\alpha$ be a generic stability condition on $C_B$ and let $L_0$ be a line bundle on $X_B$ with fiber degree $d_0>0$.  Let $\Delta\in \mathbb{Z}$.
\begin{definition}
	We let $\mathcal{M}^{\alpha}_{X_B/C_B}(r, L, \Delta)$ denote the moduli stack of $\alpha$-balanced $f$-stable sheaves on minimal expansions of $X_B$ whose determinant map factors through $L\overline{e}$ and whose discriminant is $\Delta$. 
	 This is naturally a closed substack of $\mathcal{M}^{\alpha}_{X_B/C_B}(r, \overline{c_1(L)}, \Delta)$, where $\overline{c_1(L)}$ is the section $\Exp_{C_B/B}\to \overline{\mathcal{NS}}_{X_B/B}$ induced by $c_1(L)$.
	We also let $M^{\alpha}_{X_B/C_B}(r, L, \Delta)$ denote the $\mathbb{G}_m$-rigidification of $\mathcal{M}^{\alpha}_{X_B/C_B}(r, L, \Delta)$. 
\end{definition}

\begin{rem}
We have an analogous result if $X\to C$ is a fibered surface with $C$ a union of two smooth curves along a single node (e.g. if $X\to C$ is the central fiber of a simple degeneration, but without assuming a smoothing exists). We use the notation $\mathcal{M}_{X/B}^{\alpha}(r,L,\Delta)$ and $M_{X/B}^{\alpha}(r,L,\Delta)$ for the resulting moduli stacks. We leave the details to the reader, who may alternatively always assume that we are working with the central fiber of a simple degeneration.
\end{rem}

\paragraph{Expansions.}
Let $(C,x)$ be a smooth marked curve and $X\to C$ be a fibered surface. We consider the stack of expansions $Exp_{C}$ and the relative Picard-scheme $\Pic_{\Xtilde/\Exp_{C}}$.   The stack $\Exp_C$ has a cover by affine spaces $\alpha_n:\mathbb{A}^n\to \Exp_{C}$, together with a standard expansion $X[n]\to C[n]$ \cite[\S 4.1]{Li_Stab}. We let $Y_{k}$ denote the closure of the component induced by $X$ over the $k$-th coordinate hyperplane in $\mathbb{A}^n$. 
Then, we have the analogue to Lemma \ref{lem_detfix}, with essentially the same proof.
\begin{lem}
	\begin{enumerate}[label = \alph*)]
		\item There is a minimal closed sub-stack $\overline{e}\subseteq \Pic_{\Xtilde/\Exp_C}$ through which the identity section $\Exp_{C,x}\to \Pic_{\Xtilde_B/\Exp_{C,x}}$ factors.
		\item The stack $\overline{e}$ is naturally a subgroup of $\Pic_{\Xtilde/\Exp_{C}}$ and the structure map $\overline{e}\to \Exp_{C,x}$ is \'etale.
		\item Given an \'etale morphism $\beta: B\to \mathbb{A}^1$,  Then, $\alpha_n^{-1}\overline{e}\subseteq \Pic(X[n]/\AA^n)$ is equal to the reduced subscheme supported on the union of sections defined by line bundles $\mathcal{O}_{X_B[n]}(\sum_{i=1}^{n+1} a_k Y_{k})$ for $a_k\in \ZZ$, which is a closed set.
	\end{enumerate}
\end{lem}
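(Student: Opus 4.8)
The plan is to follow the proof of Lemma~\ref{lem_detfix} almost verbatim, working chart by chart on the smooth cover $\alpha_n\colon\AA^n\to\Exp_C$ of \cite[\S4.1]{Li_Stab} with its standard expansion $X[n]\to C[n]$. For each $n$ let $e_n\subseteq\Pic_{X[n]/\AA^n}$ denote the reduced subspace supported on the union of the sections $s_{\mathbf a}\colon\AA^n\to\Pic_{X[n]/\AA^n}$, $\mathbf a=(a_1,\dots,a_n)\in\ZZ^n$, determined by the line bundles $\mathcal{O}_{X[n]}(\sum_k a_k Y_k)$. First I would check that $e_n$ is a closed subspace and equals the scheme-theoretic closure of the identity section; granting this, the $e_n$ are compatible under the transition maps of the atlas of $\Exp_C$ (a twist by $\sum a_k Y_k$ pulls back, under the maps between charts, to a twist of the same shape, by construction of the standard expansions), so they glue to a closed substack $\overline e\subseteq\Pic_{\Xtilde/\Exp_C}$; and since on each chart $e_n$ is the smallest closed subspace through which the identity section factors, the same holds for $\overline e$ over $\Exp_C$. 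This gives (a), and then (c) is just the identification $\alpha_n^{-1}\overline e=e_n$, which is its definition.

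To prove that $e_n$ is closed and is the closure of the identity section, I would reduce to the base curve as in Lemma~\ref{lem_detfix}. By Lemma~\ref{lem_pullcl} the pullback map $\Pic_{C[n]/\AA^n}\to\Pic_{X[n]/\AA^n}$ is a closed immersion, and $\mathcal{O}_{X[n]}(\sum a_k Y_k)$ is the pullback of $\mathcal{O}_{C[n]}(\sum a_k D_k)$, where $D_k$ is the image of $Y_k$; so it suffices to prove the statement for $\Pic_{C[n]/\AA^n}$. The identity component $\Pic^0_{C[n]/\AA^n}$ is smooth and separated over $\AA^n$ (cf.\ Theorem~\ref{thm_picardsmooth} and \cite{Klei_Pi}; the fibers of $C[n]\to\AA^n$ have tree-shaped dual graphs, so these are abelian varieties), hence the identity section $\AA^n\to\Pic^0_{C[n]/\AA^n}$ is a closed immersion, and therefore so is each translate $s_{\mathbf a}\colon\AA^n\to s_{\mathbf a}\cdot\Pic^0_{C[n]/\AA^n}$. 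Consequently $e_n\cap\bigl(\bigcup_{\mathbf a}s_{\mathbf a}\cdot\Pic^0_{C[n]/\AA^n}\bigr)$ is closed in that union. Finally $\bigcup_{\mathbf a}s_{\mathbf a}\cdot\Pic^0_{C[n]/\AA^n}$ is all of $\Pic_{C[n]/\AA^n}$, because over every geometric point of $\AA^n$ the fiber of $C[n]$ is a chain of rational curves glued to $C$ and the classes $[\sum_k a_k D_k]$ exhaust the component group $\Pic/\Pic^0$ of that fiber. Reducedness of $e_n$ is clear, each $s_{\mathbf a}$ being a section over the reduced base $\AA^n$; and $e_n$ is the closure of the identity section $s_{\mathbf 0}$, since it is a closed set containing it, while by separatedness of $\Pic^0$ no other section meets $\Pic^0$ generically in the identity, so no proper closed subset of $e_n$ containing $s_{\mathbf 0}$ is closed.

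For (b), $e_n$ is a subgroup of $\Pic_{C[n]/\AA^n}$, hence of $\Pic_{X[n]/\AA^n}$, because $\{s_{\mathbf a}\}$ is a group under tensor product and $\Pic^0_{C[n]/\AA^n}$ is a subgroup with $e_n\cdot\Pic^0_{C[n]/\AA^n}=\Pic_{C[n]/\AA^n}$; this structure passes to $\overline e\subseteq\Pic_{\Xtilde/\Exp_C}$. Étaleness of $\overline e\to\Exp_C$ may be checked locally on the source, where $e_n\cap\bigl(s_{\mathbf a_0}\cdot\Pic^0_{C[n]/\AA^n}\bigr)=s_{\mathbf a_0}$ is a single section, an isomorphism onto $\AA^n$ and in particular étale; gluing over the charts finishes (b). The main obstacle, beyond the routine (if slightly fiddly) compatibility of the $e_n$ under the transition maps of the atlas and the identification of the scheme-theoretic image, is the claim used above that the divisor classes $[\sum_k a_k D_k]$ generate the component group of every geometric fiber of $C[n]\to\AA^n$: over the deepest stratum this amounts to observing that the $n$ divisors $D_k$, each a connected subchain of the chain of rational curves attached to $C$, give a triangular $\ZZ$-basis of $\Pic(\text{chain})/\Pic^0\cong\ZZ^n$ — exactly as in the proof of Lemma~\ref{lem_detfix}.
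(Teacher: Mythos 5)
Your proof is correct and takes essentially the same approach as the paper: for this lemma the paper gives no separate argument, merely asserting that the proof of Lemma \ref{lem_detfix} carries over, and your chart-by-chart adaptation (reduction to $\Pic_{C[n]/\AA^n}$ via Lemma \ref{lem_pullcl}, closedness of the translated identity sections from separatedness of $\Pic^0$, and the observation that the twists $\mathcal{O}(\sum a_k D_k)$ exhaust the relevant components of the fiberwise Picard groups) is precisely that adaptation. The only caveat is cosmetic: the twists exhaust the total-degree-zero part of the component group, i.e.\ the connected component of $\Pic_{C[n]/\AA^n}$ containing the identity section, not the full component group --- but this is all the argument needs, and the paper's own wording in Lemma \ref{lem_detfix} has the same imprecision.
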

\begin{proof}
\end{proof}
Let $L$ be a line bundle on $X$. Let $\alpha\in \mathbb{Q}$ be arbitrary.
\begin{definition}
	We let $\mathcal{M}_{X/C}^{\alpha}(r,L,\Delta)$ denote the moduli stack of $\alpha$-balanced $f$ -stable sheaves on minimal expansions of $X$ whose determinant map factors through $L\overline{e}$ and whose discriminant is $\Delta$. This is naturally a closed substack of $\mathcal{M}^{\alpha}(r,\overline{c_1}(L), \Delta)$. We also denote by $M^{\alpha}_{X/C}(r,L,\Delta)$  the $\mathbb{G}_m$-rigidification. 
\end{definition}

The results of this section carry over to this setting in the obvious way. In particular this applies to Lemmas \ref{lem_split1} and \ref{lem_split2} and to Propositions \ref{prop_virdec} and \ref{prop_virclass}.

For clarity, we restate Proposition \ref{prop_virdec} for this setting explicitly. For this, suppose we are in the situation of Proposition \ref{prop_virdec} and that we have also fixed a line bundle $L$ on $X$ in class $\overline{c}_1$. By abuse of notation, write $L_1+L_2 = \overline{L}$, if $L_1$ and $L_2$ are line bundles on $Y_1$ and $Y_2$ respectively, whose restrictions to $F$ are isomorphic and such that there exists an integer $\ell$ with $L|_{Y_1}\simeq L_1(-\ell F)$ and $L|_{Y_2}\simeq L_2(\ell F)$. Let also $L_F:=L|_F$. 

\begin{prop}\label{prop_fixdetfundclass}
	We have an equality in $A_*(M^{\alpha}_{X/C}(r,L,\Delta))$. 
	\[[M_{X/C}^{\alpha}]^{\vir} = \sum_{\substack{L_1 + L_2 = \overline{L}\\ \Delta_1+\Delta_2 = \Delta}} \Gamma_*[M_{Y_1/D_1}^{\rough{\alpha_1}}(r,L_1,\Delta_1)\times_{M_F(r,L_F)} M_{Y_2/D_2}^{\rough{\alpha_2}}(r,L_2,\Delta_2)]^{\vir}\]
\end{prop}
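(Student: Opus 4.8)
The plan is to mirror the proof of Proposition~\ref{prop_virdec}, replacing the relative Picard schemes throughout by the \'etale group subspaces $\overline{e}$ of Lemma~\ref{lem_detfix} (and its analogue for expansions). Fixing the determinant to lie in $L\overline{e}$ amounts to working over the translate $L\overline{e}\subseteq\Pic_{\Xtilde/\Exp_C}$, and likewise over translates $L_i\overline{e}_i\subseteq\Pic_{\Ytilde_i/\Exp_{D_i}}$ on the two sides, where $\overline{e}_i$ denotes the minimal closed subgroup associated to $Y_i\to D_i$. The glueing morphism $\Gamma$ restricts to a morphism of fixed-determinant stacks lying over a glueing morphism of Picard subgroups
\[\Gamma'_L:\coprod_{L_1+L_2=\overline{L}}\left(L_1\overline{e}_1\times_{\{L_F\}}L_2\overline{e}_2\right)\longrightarrow L\overline{e},\]
and the first step is to establish the fixed-determinant analogue of Lemma~\ref{lem_decfirst}, namely that the square whose corners are the left-hand side of the asserted formula, $M^\alpha_{X/C}(r,L,\Delta)$, the source of $\Gamma'_L$, and its target $L\overline{e}$ is cartesian. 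As with Lemma~\ref{lem_decfirst} I would check this before passing to rigidifications: a point of the fibre product over the lower-right corner is a sheaf on an expansion $\Xtilde_T$ together with a splitting $\Xtilde_T=\Ytilde_{T,1}\cup_F\Ytilde_{T,2}$ for which the two restrictions of $\det$ land in the prescribed translates of $\overline{e}$, and genericity of $\alpha$ puts those restrictions into the indicated ranges of stability.

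The second step is the fixed-determinant analogue of Lemma~\ref{lem_gluePic}: that $\Gamma'_L$ is proper and quasi-finite, hence projective, and that after restricting to any quasi-compact open one has the cycle identity $[L\overline{e}]=\sum_{L_1+L_2=\overline{L}}\Gamma'_{L,*}[L_1\overline{e}_1\times_{\{L_F\}}L_2\overline{e}_2]$. Since $\overline{e}$ and the $\overline{e}_i$ are \'etale over the respective stacks of expansions, and $\Exp_C\cong\Exp_{D_1}\times\Exp_{D_2}$, this reduces, exactly as in Lemma~\ref{lem_gluePic}, to the generic points of the stacks of expansions. Over the trivial expansion $\overline{e}_i$ is trivial (each $Y_i$ is an irreducible smooth surface), while $\overline{e}$ is generated by $\mathcal{O}_X(Y_1)$ subject only to the fibre relation $\mathcal{O}_X(Y_1+Y_2)\simeq\mathcal{O}_X$; the decompositions $L_1+L_2=\overline{L}$ are then in bijection with the integer $\ell$ of $L|_{Y_1}\simeq L_1(-\ell F)$, $L|_{Y_2}\simeq L_2(\ell F)$ (using $F^2=0$, so that the restrictions to $F$ stay isomorphic), each contributing a single section, whence $\Gamma'_L$ is a bijection there and the cycle identity follows.

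With these two steps in place the argument concludes exactly as for Proposition~\ref{prop_virdec}. Since $L\overline{e}$ is \'etale over $\Exp_C$ by Lemma~\ref{lem_detfix}(b), the trace-free perfect obstruction theory of \S\ref{subsec_perf} is a relative perfect obstruction theory for $M^\alpha_{X/C}(r,L,\Delta)$ over $L\overline{e}$, and its pullback along $\Gamma$ equips each product $M_{Y_1/D_1}^{\rough{\alpha_1}}(r,L_1,\Delta_1)\times_{M_F(r,L_F)}M_{Y_2/D_2}^{\rough{\alpha_2}}(r,L_2,\Delta_2)$ with a relative obstruction theory over $L_1\overline{e}_1\times_{\{L_F\}}L_2\overline{e}_2$ (cf. Proposition~\ref{prop_virclass}). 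Applying the functoriality of virtual pullback \cite[Theorem~4.1]{Mano_Pull} to the cartesian square of the first step, together with the projectivity of $\Gamma'_L$ and the cycle identity of the second step, then yields the asserted equality in $A_*(M^\alpha_{X/C}(r,L,\Delta))$. I expect the main obstacle to be the second step -- the combinatorics of component twists on the glued expansion, and in particular making the bookkeeping of the integer $\ell$ in the relation $L_1+L_2=\overline{L}$ precise enough that each decomposition of $\overline{L}$ is counted exactly once and none is omitted; the remaining points are a formal transcription of arguments already carried out in \S\ref{sec_degen}.
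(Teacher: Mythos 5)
Your proposal matches the paper's (essentially unwritten) proof: the paper simply asserts that the results of \S\ref{sec_degen}, in particular Lemmas \ref{lem_decfirst} and \ref{lem_gluePic} and Proposition \ref{prop_virdec}, ``carry over in the obvious way'' to the fixed-determinant setting, with the relative Picard schemes replaced throughout by the translates $L\overline{e}$ of the \'etale subgroups of Lemma \ref{lem_detfix}, which is exactly the substitution you carry out, including the identification of the decompositions $L_1+L_2=\overline{L}$ with the integer $\ell$ over the trivial expansion. The only imprecision is the parenthetical claim $\Exp_C\cong\Exp_{D_1}\times\Exp_{D_2}$ (the natural map is quasi-finite but not an isomorphism, since a chain inserted at the node can be split between the two sides in several ways), but this does not affect your argument, since the reduction to generic points in Lemma \ref{lem_gluePic} only uses that the glueing map is proper and quasi-finite between spaces of the same dimension.
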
 
\subsection{Invariants}\label{subsec_invs}
We define the type of invariants that we want to consider in the degeneration formula. For simplicity, we restrict the discussion to a specific type of invariant and to the fixed determinant case. We expect that similar formulas hold for, say Segre and (with some more work) Verlinde invariants. It also shouldn't be essential to work with the fixed determinant version, but then one should consider insertions coming from the Picard scheme in order to get non-trivial invariants when the two theories differ.

Let $A$ be a multiplicative genus (e.g. the Chern polynomial, $\chi_y$-genus or elliptic genus. Let $X\to C$ be a fibered surface, where $C$ is either smooth, with possibly a marked point, or a union of two irreducible components along a single node. We assume we have fixed data $\alpha, L, L_0$ as in \S \ref{subsec_fixdet}.  

For $\mathcal{M}\to M$ a moduli stack and its $\mathbb{G}_m$-rigidification as considered throughout, we will consider cohomology classes of the form 
\begin{equation}\label{eq:insertions}
\Phi(\mathcal{E}) = A(T^{\vir})B(\mathcal{E}).
\end{equation}
Here
\begin{enumerate}[label = \roman*)]
\item $\mathcal{E}$ denotes the universal sheaf on some family of expansions over $\mathcal{M}$ of a given fibered surface $X\to C$,
\item $A$ is a multiplicative transformation from the $K$-theory of perfect objects to cohomology with coefficients in some ring $\Lambda$ containing $\QQ$,
\item $B(\mathcal{E}) = \exp(\sum_{i,\gamma} \ch_i(\gamma) q_{\gamma,i})$, where $i$ ranges through integers $\geq 2$ and $\gamma$ ranges through a basis of cohomology of $X$. 

\end{enumerate}

Let $K:=\Lambda[[(q_{\gamma,i})_{\gamma,i}]] $ denote the coefficient field of $\Phi$. We let $\mathcal{E}$ denote the universal sheaf over $\mathcal{M}^{\alpha}_{X/C}(r,L,\Delta)$. We assume that $\alpha$ is generic and that it satisfies \eqref{eq_nonemptycondition}.

If $X\to C$ has no marked fiber (so either $C$ is smooth or has two components and a single node), we define an invariant simply as
\[I^{\Phi}_{X/C}(r,L,\Delta):=\int \Phi(\mathcal{E})\cap [M^{\alpha}_{X/C}(r,L,\Delta)]^{\vir}\in K.\]
Here, the left hand side a-priori implicitly depends on $\alpha$, but it will follow from the decomposition formula that it is actually independent for any generic $\alpha$ satisfying \eqref{eq_nonemptycondition}.

If $C$ is smooth with a single marked point, we let $L_F$ denote the restriction of $L$ to the marked fiber, and set
\[V:= H_*(M_F(r,L_F), K).\]
Then we obtain invariants valued in $V$ by pushing forward along the evaluation map
\[I^{\Phi}_{X/C}(r,L,\Delta):=\operatorname{ev}_* \left(\Phi(\mathcal{E})\cap [M^{\alpha}_{X/C}(r,L,\Delta)]^{\vir}\right)\in V.\]

If $C$ has a marked point, let $F$ be the marked fiber. Otherwise, let $F$ denote the fiber over an arbitrary smooth point of $C$. We set

\[Z_{X/C,\Phi}(q) := \sum_{\substack{\Delta\in \ZZ\\0\leq \ell<r}} I_{X/C}^{\Phi}(r,L+ \ell[F],\Delta)\,  q^{\Delta - (r^2-1)\chi(\mathcal{O}_X)}.\]
This is valued in $K[[q]]$ or $V[[q]]$ respectively and depends implicitly on $r,L$ and $L_0$. Here, we choose a different generic $\alpha$ satisfying \eqref{eq_nonemptycondition} for each $\Delta$ and $\ell$. 

\begin{rem}\label{rem_coeffs}
Note that for $c_1= L+\ell F$, the discrimant and second chern class are related by 
\[\Delta = 2 rc_2 - (r-1) c_1^2 = 2 r c_2 - (r-1) L^{\cdot 2} - 2 \ell (r-1) d.\]
 As $c_2$ ranges through the integers and $\ell$ ranges in $[0,r-1]$, we find that the possible exponents of $q$ for which the coefficient of $Z_{X/C,\Phi}$ is non-empty, lie in $2\ZZ+ (r-1)L^{\cdot 2} -(r^2-1)\chi(\mathcal{O}_X)$, and each such integer corresponds to a unique choice of $\ell$ and $c_2$. In other words, we have  
\begin{gather*}
	Z_{X/C,\Phi}(q) = \qquad \qquad \qquad \qquad \qquad \qquad \qquad \\
	q^{-(r-1)L^{\cdot 2}-(r^2-1)\chi(\mathcal{O}_X)}\sum_{\substack{c_2\in \ZZ\\0\leq \ell < r}} I_{X/C}^{\Phi}(r,L+ \ell[F],\Delta(c_2,\ell))\,  q^{2 (r c_2 - \ell (r-1) d)}.
\end{gather*}
We see that the exponents of $q$ in the sum range exactly through $2\ZZ$.
\end{rem}

\subsection{Degeneration formula for multiplicative classes}
Let $X\to C$ be a fibered surface, where $C$ is the union of two smooth curves along a single node and let the set-up be as in \S \ref{subsec_invs}. 
We let $Y_i\to D_i$ be the surfaces fibered over a smooth curve with a single marking obtained as the components of $X\to C$ and let $F$ denote the fiber of $X$ over the node.
 
Let
\[V:= H_*(M_F(r,L),K)\]
so that the invariants associated to $Y_i/D_i$ for $i=1,2$ are valued in $V$.
Note that $V$ has a ring structure with respect to intersection product, which is commutative as cohomology in odd degrees vanishes. We denote the intersection product of cycles by $\alpha\cdot \beta$. We define a bilinear pairing on $V$ as 
\begin{align*}
	*_{\Phi} :\, V\times V &\to K \\
	\alpha , \beta &\mapsto \int_{M_F(r,L)} A(T_{M_F(r,L)})^{-1}\cap \alpha\cdot\beta 
\end{align*}

We extend the multiplication $*_{\Phi}$ to Laurent series in $q$ over $V$ and $K$ by applying it coefficientwise and dividing the final result by $q^{\dim M_F(r,L)}$.
\begin{thm}\label{thm_decform}
	\[	Z_{X/C,\Phi}(q) = Z_{X_1/C_1, \Phi}(q) *_{\Phi} Z_{X_2/C_2,\Phi}(q)\]
\end{thm}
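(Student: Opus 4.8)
The plan is to prove the identity one coefficient of $q$ at a time, using Proposition~\ref{prop_fixdetfundclass} to break up the virtual class on the total space, the splitting Lemmas~\ref{lem_split1} and~\ref{lem_split2} to decompose the insertion $\Phi(\mathcal{E})$, Proposition~\ref{prop_virclass} to rewrite the virtual class on the fibre products as a Gysin pull-back along the diagonal of $M_F$, and finally a short Euler--characteristic computation to match the powers of $q$. Throughout I set $L_F:=L|_F$ and $M_F:=M_F(r,L_F)$, which is a smooth projective variety (by Proposition~\ref{propmodoncurve} when $g\geq 2$, and trivially otherwise), and on each $Y_i$ I use the insertions obtained by restricting the chosen basis of $H^*(X)$ and keeping the same formal variables $q_{\gamma,i}$, so that ``the same $\Phi$'' is meant on $X$, $Y_1$ and $Y_2$.

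Fix $0\leq\ell<r$ and $\Delta\in\ZZ$ and write $c_1:=L+\ell[F]$. By Proposition~\ref{prop_fixdetfundclass} and the projection formula, $I^{\Phi}_{X/C}(r,c_1,\Delta)$ equals the sum over decompositions $L_1+L_2=\overline{c_1}$, $\Delta_1+\Delta_2=\Delta$, of $\int_{P}\Gamma^*\Phi(\mathcal{E})\cap[P]^{\vir}$, where $P:=M_{Y_1/D_1}^{\rough{\alpha_1}}(r,L_1,\Delta_1)\times_{M_F}M_{Y_2/D_2}^{\rough{\alpha_2}}(r,L_2,\Delta_2)$. Since $\Phi(\mathcal{E})=A(T^{\vir})\,B(\mathcal{E})$ with $B(\mathcal{E})=\exp(\sum\ch_i(\gamma)q_{\gamma,i})$, the multiplicativity of $A$ together with the fixed--determinant analogue of Lemma~\ref{lem_split2}, and the exponential form of $B$ together with Lemma~\ref{lem_split1}, give
\[\Gamma^*\Phi(\mathcal{E})=\operatorname{pr}_1^*\Phi(\mathcal{E}_1)\cdot\operatorname{pr}_2^*\Phi(\mathcal{E}_2)\cdot\operatorname{ev}^*A(T_{M_F})^{-1},\]
where $\operatorname{pr}_i$ and $\operatorname{ev}$ are the projections from $P$, $\mathcal{E}_i$ is the universal sheaf on the expanded $Y_i$, and I have identified the restriction of $T_{M_F(r,d)/\Pic_F}$ to $M_F=M_F(r,L_F)$ with $T_{M_F}$. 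Now Proposition~\ref{prop_virclass}(2)(ii) identifies $[P]^{\vir}$ with the Gysin pull-back along the diagonal $M_F\to M_F\times M_F$ of $[M_{Y_1/D_1}^{\rough{\alpha_1}}(r,L_1,\Delta_1)]^{\vir}\times[M_{Y_2/D_2}^{\rough{\alpha_2}}(r,L_2,\Delta_2)]^{\vir}$, so combining this with the projection formula for $\operatorname{ev}$ and the self-intersection formula for the diagonal of the smooth projective variety $M_F$ yields
\[\int_{P}\Gamma^*\Phi(\mathcal{E})\cap[P]^{\vir}=\int_{M_F}A(T_{M_F})^{-1}\cap\Bigl(\operatorname{ev}_{1,*}\bigl(\Phi(\mathcal{E}_1)\cap[M_{Y_1/D_1}^{\rough{\alpha_1}}(r,L_1,\Delta_1)]^{\vir}\bigr)\cdot\operatorname{ev}_{2,*}\bigl(\Phi(\mathcal{E}_2)\cap[M_{Y_2/D_2}^{\rough{\alpha_2}}(r,L_2,\Delta_2)]^{\vir}\bigr)\Bigr),\]
which is precisely $I^{\Phi}_{Y_1/D_1}(r,L_1,\Delta_1)*_{\Phi}I^{\Phi}_{Y_2/D_2}(r,L_2,\Delta_2)$. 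This gives the coefficient-wise identity $I^{\Phi}_{X/C}(r,L+\ell[F],\Delta)=\sum_{L_1+L_2=\overline{L+\ell[F]},\ \Delta_1+\Delta_2=\Delta}I^{\Phi}_{Y_1/D_1}(r,L_1,\Delta_1)*_{\Phi}I^{\Phi}_{Y_2/D_2}(r,L_2,\Delta_2)$.

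Finally I would assemble the generating series. Using $\Delta=\Delta_1+\Delta_2$, the Mayer--Vietoris identity $\chi(\mathcal{O}_X)=\chi(\mathcal{O}_{Y_1})+\chi(\mathcal{O}_{Y_2})-\chi(\mathcal{O}_F)$, the equality $\chi(\mathcal{O}_F)=1-g$, and the dimension formula $\dim M_F=(r^2-1)(g-1)$, one checks
\[\Delta-(r^2-1)\chi(\mathcal{O}_X)=\bigl(\Delta_1-(r^2-1)\chi(\mathcal{O}_{Y_1})\bigr)+\bigl(\Delta_2-(r^2-1)\chi(\mathcal{O}_{Y_2})\bigr)-\dim M_F,\]
so the $q$-exponent on the left is exactly the one produced on the right by $*_{\Phi}$, which by definition divides by $q^{\dim M_F}$. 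For the indexing: tensoring a sheaf on $Y_i$ with the pull-back of $\mathcal{O}_{D_i}(x)$ shifts $L_i$ by $r[F]$, leaves $\Delta_i$ and all tautological insertions unchanged, and (since that line bundle restricts trivially to $F$) induces an isomorphism of moduli spaces compatible with the evaluation map to $M_F$; hence $I^{\Phi}_{Y_i/D_i}(r,L|_{Y_i}+a[F],\Delta_i)$ depends on $a$ only modulo $r$, while by Remark~\ref{rem_alphas} for each $\Delta_i$ exactly one residue of $a$ gives a nonempty $M_{Y_i/D_i}^{\rough{\alpha_i}}$. Thus the sum over $\overline{\mathcal{NS}}_{X}$-decompositions $L_1+L_2=\overline{L+\ell[F]}$ with $0\leq\ell<r$ matches termwise the double sum over $0\leq\ell_1,\ell_2<r$ built into $Z_{Y_1/D_1,\Phi}(q)*_{\Phi}Z_{Y_2/D_2,\Phi}(q)$, and the theorem follows by feeding the coefficient-wise identity into Remark~\ref{rem_coeffs} (this also yields, as a byproduct, independence of $Z_{X/C,\Phi}$ from the generic $\alpha$). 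The step I expect to be the main obstacle is exactly this last bookkeeping: reconciling the $\overline{\mathcal{NS}}_{X}$-decomposition sum coming from Proposition~\ref{prop_fixdetfundclass} with the $\ell_i\in\{0,\dots,r-1\}$ ranges in the definition of $Z_{Y_i/D_i,\Phi}$, identifying the relevant fiber classes in $\overline{\mathcal{NS}}_{X}$, and pinning down the $q^{\dim M_F}$ normalization in $*_{\Phi}$, all of which require carefully unwinding the $\rough{\cdot}$-conventions, the balancing condition, and Remarks~\ref{rem_alphas} and~\ref{rem_coeffs}; by contrast the geometric input --- the virtual-class decomposition and the reduction of the fibre-product class to a diagonal Gysin pull-back --- is already supplied by Propositions~\ref{prop_fixdetfundclass} and~\ref{prop_virclass}.
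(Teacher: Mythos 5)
Your proposal is correct and follows essentially the same route as the paper's proof: it combines Proposition \ref{prop_fixdetfundclass}, the splitting Lemmas \ref{lem_split1} and \ref{lem_split2}, Proposition \ref{prop_virclass}, and the twisting/balancing bookkeeping of Remarks \ref{rem_alphas} and \ref{rem_coeffs}, merely running the chain of identities from the absolute side to the relative side rather than the other way around. The bookkeeping step you flag as the main obstacle is handled in the paper exactly as you sketch it, via the unique representatives $\ell_i'\equiv\ell_i\bmod r$ compatible with the $\rough{\alpha_i}$ ranges.
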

\begin{proof}
Comparing coefficients, and in view of Remark \ref{rem_coeffs}, we may consider $c_2$ and $k$, and therefore $\Delta$ as fixed, and we need to show that -- for some fixed chosen stability condition $\alpha$ on $C$ -- we have
\[I^{\Phi}_{X/C}(r,L +\ell[F], \Delta) = \sum I^{\Phi}_{Y_1/D_1}(r,L_1 + \ell_1 [F], \Delta_1) *_\Phi I^{\Phi}_{Y_2/D_2}(r,L_2 + \ell_2 [F], \Delta_2),\]
where the sum ranges over all $0\leq \ell_1,\ell_2<r$ and over all $\Delta_1,\Delta_2\in \ZZ$ such that 
\[\Delta-(r^2-1)\chi(\mathcal{O}_X) = \Delta_1 +\Delta_2 - (r^2-1)(\chi(\mathcal{O}_{Y_1})+\chi(\mathcal{O}_{Y_2})) - \dim M_F(r,L_F), \]
or equivalently, such that $\Delta = \Delta_1+\Delta_2$.
Writing out the definition of invariants, we have the equivalent formula
\begin{equation}\label{eq_newform}
\begin{gathered}
\int \Phi(\mathcal{E})\cap [M_{X/C}^{\alpha}(r,L + \ell[F],\Delta)]^{\vir} =\\ 
\sum\left(\operatorname{ev}_{*}(\Phi(\mathcal{E}_1)\cap [M_{Y_1/D_1}^{b}(r, L_1+\ell_1 F, \Delta_1)]^{\vir})\, *_{\Phi}\right.\\ 
\left.\operatorname{ev}_{*}(\Phi(\mathcal{E}_2)\cap [M_{Y_2/D_2}^{b}(r, L_2+\ell_2 F, \Delta_2)]^{\vir})\right)
\end{gathered}
\end{equation}
for some choice of generic $\alpha$ satisfying \eqref{eq_nonemptycondition} and where we use the notation of Remark \ref{rem_alphas}. 

Since $\Delta = \Delta_1+\Delta_2$ and by the dependence of the discriminant on first and second Chern classes, we have that each term of the sum for which the moduli spaces are non-empty, that $\ell \equiv \ell_1 + \ell_2 \mod r$. In particular, for each such term there exists unique representatives $\ell_i' \equiv \ell_i \mod r$, such that $|\alpha(c_1(L_i+\ell_i'F), \Delta_i)-\alpha_i|<1/2$. It follows that $\ell =\ell_1' + \ell_2'$. Letting $L_i':= L_i+\ell_i' F$ and $L':= L+\ell F$, we in particular have $L_1'+ L_2' = \overline{L'}$ in the notation preceding Proposition \ref{prop_fixdetfundclass}. Since twisting by a line bundle induces an isomorphism between moduli spaces, we have 
\begin{align*}
I^{\Phi}_{Y_i/D_i}(r,L_1 + \ell_1 F, \Delta_1)
&= I^{\Phi}_{Y_i/D_i}(r,L_1 + \ell_1'F, \Delta_1) \\
&= \operatorname{ev}_*\left(\Phi(\mathcal{E}_i)\cap [M^{\rough{\alpha_i}}_{Y_i/D_i}(r, L_1',\Delta_i)]^{\vir} \right) 
\end{align*}

In summary, we may rewrite the right hand side of \eqref{eq_newform} as 
\begin{gather*}
\sum_{\substack{\Delta_1+ \Delta_2 = \Delta\\ L_1' + L_2' = \overline{L'}}}  \left(\operatorname{ev}_*(\Phi(\mathcal{E}_1)\cap [M_{Y_1/D_1}^{\rough{\alpha_1}}(r,L_1',\Delta_1)]^{\vir}) *_\Phi \right. \\
\qquad \qquad\qquad\left. \operatorname{ev}_*(\Phi(\mathcal{E}_2)\cap [M_{Y_2/D_2}^{\rough{\alpha_2}}(r,L_2',\Delta_2)]^{\vir})\right) 
\end{gather*}
We examine each term of this sum. 
Using the definition of $*_\Phi$, the projection formula and Proposition \ref{prop_virclass}, we may rewrite a single term in this sum as the pushforward to a point of
\begin{gather*}
	\operatorname{pr}_F^*A(T_{M_F})^{-1}\cap \operatorname{pr}_1^*\Phi(\mathcal{E}_1)\cap\operatorname{pr}_2^*\Phi(\mathcal{E}_2)\cap \\ \qquad \qquad \qquad [M_{Y_1/D_1}^{\rough{\alpha_1}}(r,L_1',\Delta_1)\times_{M_F} M_{Y_2/D_2}^{\rough{\alpha_2}}(r,L_2',\Delta_2) ]^{\vir}.
\end{gather*}
Then by Lemmas \ref{lem_split1} and \ref{lem_split2}, this is equal to 
\[\Gamma^*\Phi(\mathcal{E})\cap [M_{Y_1/D_1}^{\rough{\alpha_1}}(r,L_1',\Delta_1)\times_{M_F} M_{Y_2/D_2}^{\rough{\alpha_2}}(r,L_2',\Delta_2) ]^{\vir},\]
where $\Gamma$ is the glueing map to $M^{\alpha}_{X/C}(r,L', \Delta)$. Using this, we have reduced equation \eqref{eq_newform} to Proposition \ref{prop_virdec} and are done. 
\end{proof}

\subsection{Application to Elliptic Fibrations}\label{subsec_ellfib}
Suppose that $X\to C$ has genus one fibers in the situation of Theorem \ref{thm_decform}. In this case, we have $V=H_*(M_F(r,L),K)  = H_*(\operatorname{pt},K)\simeq K$, so the relative invariants are simply power series valued in the coefficient ring. In this case, the statment of Theorem \ref{thm_decform} becomes especially simple

\begin{cor}\label{cor_elldec}
	If $g=1$, we have the following identity in $K[[q]]$:
	\[Z_{X/C,\Phi}(q) = Z_{Y_1/D_1,\Phi}(q)\,Z_{Y_2/D_2,\Phi}(q).\]
\end{cor}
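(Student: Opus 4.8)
The plan is to deduce the identity directly from Theorem \ref{thm_decform} by unwinding the definition of the multiplication $*_{\Phi}$ in the case $g=1$. The starting point is the fact, already used in the proof of Lemma \ref{lem_discpos}, that for a smooth genus one curve $F$ and coprime integers $r>0$, $d$, every stable bundle of rank $r$ with fixed determinant $L$ of degree $d$ is rigid; concretely, by Atiyah's classification the determinant morphism $M_F(r,d)\to \Pic^d F$ is an isomorphism, so $M_F(r,L)$ is a single reduced point. I would first record this, so that $V = H_*(M_F(r,L),K) = H_*(\mathrm{pt},K)$ is canonically identified with $K$, and in particular $\dim M_F(r,L) = 0$ and the tangent complex $T_{M_F(r,L)}$ vanishes.

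Next I would identify the pairing $*_{\Phi}$ under this identification. Since $A$ is a multiplicative transformation, $A$ of the zero $K$-theory class equals $1$, so $A(T_{M_F(r,L)})^{-1} = 1$; and since $M_F(r,L)$ is a reduced point, the integral $\int_{M_F(r,L)}$ is just evaluation. Hence for $\alpha,\beta\in V = K$ one gets $\alpha *_{\Phi}\beta = \alpha\beta$, i.e. $*_{\Phi}$ is ordinary multiplication in $K$. Moreover, because $\dim M_F(r,L) = 0$, the normalization by $q^{\dim M_F(r,L)}$ in the definition of $*_{\Phi}$ on Laurent series is trivial, so the extension of $*_{\Phi}$ to $V[[q]] = K[[q]]$ is simply coefficientwise multiplication, which is the usual product of power series. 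One should also note that the $V$-valued invariants $I^{\Phi}_{Y_i/D_i}$, defined via pushforward along the evaluation map to $M_F(r,L)$, collapse under the identification $V\cong K$ precisely to the scalar invariants $\int \Phi(\mathcal{E}_i)\cap [M^{\rough{\alpha_i}}_{Y_i/D_i}]^{\vir}$, since the target of the evaluation map is a reduced point.

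Finally I would invoke Theorem \ref{thm_decform}: it gives $Z_{X/C,\Phi}(q) = Z_{X_1/C_1,\Phi}(q) *_{\Phi} Z_{X_2/C_2,\Phi}(q)$, and by the previous paragraph the right-hand side is just the product $Z_{Y_1/D_1,\Phi}(q)\, Z_{Y_2/D_2,\Phi}(q)$ in $K[[q]]$ (here $X_i/C_i$ denotes the same fibered surface as $Y_i/D_i$ in the notation of \S \ref{subsec_invs}). I do not anticipate any genuine obstacle; the only point requiring a little care is the bookkeeping in the previous paragraph, checking that the collapse $V\cong K$ is compatible with the evaluation maps and with the $q$-grading conventions of \S \ref{subsec_invs}, after which the corollary is immediate.
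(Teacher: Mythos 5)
Your proposal is correct and follows exactly the route the paper intends: the corollary is stated as an immediate consequence of Theorem \ref{thm_decform} once one observes that for $g=1$ and $\gcd(r,d)=1$ the fixed-determinant moduli space $M_F(r,L)$ is a reduced point, so that $V\simeq K$, the pairing $*_{\Phi}$ collapses to ordinary multiplication, and the $q^{\dim M_F(r,L)}$ normalization is trivial. Your write-up simply makes explicit the bookkeeping the paper leaves to the reader.
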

In the rest of this subsection, we give a prove of Theorem \ref{thm_ellgen}. We will consider the special case 
\[\Phi(\mathcal{E}) = A_{y,q}(T^{\vir})\]
 of \eqref{eq:insertions} obtained by setting $B=1$, and taking $A_{y,q}$ to be the insertion considered in \cite[\S 4.8]{GL_Toric} which defines the virtual elliptic genus \cite{FG_RR}. 
By results of de Jong and Friedman, we have suitable degenerations:
\begin{thm}\label{thm_enoughdefs}
	Let $X$ be an elliptic surface over $\PP^1$ of degree $e\geq 2$ without multiple or reduced fibers. Let $D$ be a $d$-section on $X$, and suppose that there exist no $d'$-sections for any $1\leq d'<d$. Then there exists a connected base $B$ and a family of elliptic surfaces $X_B \to C_B\to B$ together with a family of $n$-sections $D_B\subset X_B$ such that
	\begin{enumerate}[label = \roman*)]
		\item For some $b_0\in B$, the triple $D_{b_0}\subset X_{b_0}\to C_{b_0}$ is isomorphic to $D\subset  X \to \PP^1$.
		\item For some $b_1\in B$, we have: 
		\begin{itemize}
			\item $X_{b_1}\to C_{b_1}$ is obtained from glueing two elliptic surfaces $Y_1\to \PP^1$ and $Y_2\to \PP^1$ along an isomorphic fiber, where $Y_1$ is a degree $e-1$ elliptic surface and $Y_2$ is a rational elliptic surface.
			\item  The divisor $D_{b_1}\subset X_{b_1}$ restricts to a $d$-section on $Y_1$ and to a smooth rational curve $D$ satisfying $D^2 = d-2$ on $Y_2$. Moreover, if $e\geq 2$, then $Y_1$ has no $d'$-sections for $1\leq d'<  d$. 
		\end{itemize}
	\end{enumerate}
\end{thm}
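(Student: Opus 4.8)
The plan is to realize every surface that appears as a relative Weierstrass fibration, and then to invoke the cited results of de Jong and Friedman to control the degenerate multisection. Recall first the Weierstrass description: an elliptic surface over $\PP^1$ of degree $e$ without multiple or reducible fibers is cut out of the $\PP^2$-bundle $\PP(\mathcal{O}\oplus \mathcal{O}(2e)\oplus\mathcal{O}(3e))$ over $\PP^1$ by $y^2z = x^3 + Axz^2 + Bz^3$ with $(A,B)\in H^0(\PP^1,\mathcal{O}(4e))\oplus H^0(\PP^1,\mathcal{O}(6e))$ and reduced discriminant $4A^3+27B^2\in H^0(\mathcal{O}(12e))$. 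I would then take a smooth connected curve $B$ with marked points $b_0,b_1$ and a flat proper family $\mathcal{C}_B\to B$ of genus-zero nodal curves with $\mathcal{C}_{b_0}\cong\PP^1$ and $\mathcal{C}_{b_1}\cong \PP^1\cup_{pt}\PP^1$ (a compactified versal smoothing of a node), together with a line bundle $\mathcal{L}$ on $\mathcal{C}_B$ of total fibre degree $e$ whose restriction to $\mathcal{C}_{b_1}$ has bidegree $(e-1,1)$ on the two components.

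Because $\mathcal{L}^{\otimes 4}$ and $\mathcal{L}^{\otimes 6}$ have positive degree on every component of every fibre, their pushforwards $p_*\mathcal{L}^{\otimes 4}$ and $p_*\mathcal{L}^{\otimes 6}$ are locally free on $B$ (Riemann--Roch on genus-zero nodal curves gives constant $h^0$ and vanishing $h^1$). I would extend the Weierstrass data $(A,B)$ of the given $X$ from $b_0$ to a section $(\mathcal{A},\mathcal{B})$ of these bundles over $B$ — shrinking $B$ around $b_0$ and passing to a finite cover if necessary — chosen so that at $b_1$ it restricts on the two components of $\mathcal{C}_{b_1}$ to Weierstrass data of a prescribed degree $e-1$ elliptic surface $Y_1$ and a prescribed rational ($=$ degree $1$) elliptic surface $Y_2$, automatically glued along the fibre over the node since that fibre carries the same elliptic curve from both sides. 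Keeping $4\mathcal{A}^3+27\mathcal{B}^2$ reduced along $\mathcal{C}_{b_0}$ — an open condition satisfied by $X$ — gives part i) and the first bullet of part ii).

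Next I would handle the $d$-section. Inside the relative Hilbert scheme of $\mathcal{X}_B\to\mathcal{C}_B$ parametrizing one-dimensional subschemes finite of degree $d$ over $\mathcal{C}_B$, take the component through $[D]$ over $b_0$ and form the flat limit of $D$ over a punctured neighbourhood of $b_1$ (after base change and, if needed, a semistable modification of the total space of $\mathcal{C}_B$). The point where the results of de Jong and Friedman enter is the identification of this limit: on $X_{b_1}=Y_1\cup_F Y_2$ they produce a $d$-section restricting to a $d$-section on $Y_1$ and to a smooth rational curve $D\subset Y_2$ with $D^2=d-2$ (consistently, since $K_{Y_2}=-F$ and $D\cdot F=d$ force $g(D)=0$ by adjunction), the two meeting $F$ in a common length-$d$ subscheme; this produces $D_{b_1}$, i.e. the second bullet except for minimality. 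For the minimality claim, suppose $Y_1$ carried a $d'$-section with $1\leq d'<d$; using that $Y_2$ carries a $d'$-section meeting any prescribed length-$d'$ subscheme of a chosen smooth fibre (translating sections inside $\mathrm{MW}(Y_2)$, which surjects onto each smooth fibre), one would glue to a $d'$-section of $Y_1\cup_F Y_2$ and deform it along the smoothing to a $d'$-section of the nearby fibre $X$, contradicting minimality of $d$ for $X$ — the hypothesis on $e$ being what places us in the regime where this mechanism of de Jong and Friedman applies. Assembling $B$, $b_0$, $b_1$, the family $\mathcal{X}_B\to\mathcal{C}_B\to B$ and the multisection constructed above proves the theorem. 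The main obstacle is precisely the control of the degenerate multisection — its numerical type, its gluing along $F$, and the unobstructedness used in the minimality argument — which is the content of the results being invoked; carrying it out from scratch would require a detailed analysis of the deformation theory of $D$ across the degeneration and of the Mordell--Weil group of a rational elliptic surface.
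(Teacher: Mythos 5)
There is a genuine gap at the very first step: the Weierstrass reduction. A genus-one fibration $X\to\PP^1$ admits a Weierstrass model $y^2z=x^3+Axz^2+Bz^3$ in $\PP(\mathcal{O}\oplus\mathcal{O}(2e)\oplus\mathcal{O}(3e))$ only when it has a \emph{section}, i.e.\ only when $d=1$. The hypotheses here explicitly allow $d>1$ (and the application to Theorem \ref{thm_ellgen} is only interesting for $d>1$ coprime to $r$), in which case $X$ is a torsor over its Jacobian with no section and cannot be cut out of that $\PP^2$-bundle by any choice of $(\mathcal{A},\mathcal{B})$. So your family $\mathcal{X}_B\to\mathcal{C}_B$ is never constructed in the relevant case. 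To salvage the strategy one would have to put the \emph{Jacobian} fibration in Weierstrass form over $\mathcal{C}_B$ and then extend the twisting datum classifying $X$ (its class in the Tate--Shafarevich group, equivalently in $H^1$ of the sheaf of sections of $J(X)$) across the nodal fibre $\mathcal{C}_{b_1}$, compatibly with a degenerating $d$-section. Controlling that extension --- not merely ``identifying the flat limit of $D$'' --- is exactly the content of Theorem 4.9 and the constructions in Claim 5.7 of the de Jong--Friedman paper that the proof in the text invokes wholesale; your deferral to those results therefore conceals the construction of the family itself, not just of $D_{b_1}$.

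Two further points. First, the $e=2$ case needs separate treatment: there $Y_1$ has degree $1$, the de Jong--Friedman regime ($e\geq 3$) does not apply, and the paper instead produces the degeneration via the Torelli theorem for lattice-polarized K3 surfaces; your argument does not distinguish this case. Second, the pieces of your outline that are independent of the Weierstrass assumption are sound and consistent with what the cited results deliver: the adjunction check that a curve on a rational elliptic surface $Y_2$ with $D\cdot F=d$ and $D^2=d-2$ is a smooth rational $d$-section, the requirement that the two halves of the limit meet $F$ in a common length-$d$ subscheme, and the Mordell--Weil translation argument for propagating minimality of $d$ back to the general fibre.
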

\begin{proof}
	For $e\geq 3$, this follows from Theorem 4.9 in \cite{dJFr_On} together with constructions going into Claim 5.7 in \cite{dJFr_On}. For $e=2$, it follows from similar arguments using the Torelli theorem for lattice polarized K3 surfaces.
\end{proof}

We will also need the following vanishing result
\begin{prop}\label{prop_trivfactor}
Let $E$ be an elliptic curve and consider $X = E\times \PP^1 \to \mathbb{P}^1$, and let $x_1,\ldots,x_n$ be distinct points on $\mathbb{P}^1$. Let $r>0$ and let $L$ be a line bundle on $E\times \PP^1$ that has degree $d$ on fibers, with $d$ coprime to $r$. Then we have
\[Z^{\Ell}_{X/(\mathbb{P}^1, (x_1,\ldots,x_n))} = 1.\]
\end{prop}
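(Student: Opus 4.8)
The plan is to identify the moduli spaces occurring in $Z^{\Ell}_{X/(\PP^1,(x_1,\dots,x_n))}$ with moduli of \emph{rank one} sheaves on the dual elliptic fibration, where the statement becomes the known elliptic genus computation for Hilbert schemes of points and vanishes because $\chi(\mathcal O_{E\times\PP^1})=0$.

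First I would reduce to the case of no marked points. Colliding two of the $x_i$ in a pencil degenerates the base $\PP^1$ to a nodal curve $\PP^1\cup_q\PP^1$, and accordingly $X$ to $(E\times\PP^1)\cup_{E}(E\times\PP^1)$; since $X\to\PP^1$ is an honest relative scheme Assumption \ref{assu_locfree} holds, so by Theorem \ref{thm_mainthm} the series $Z^{\Ell}$ is unchanged, and by the genus one degeneration formula (Corollary \ref{cor_elldec}, used with spectator markings on the two components — the proof of Theorem \ref{thm_decform} only exploits the geometry along the gluing fiber) one gets multiplicativity relations among the $Z$'s for varying numbers of marked points. Writing $Z_m$ for the series with $m$ special points (markings plus the relative fiber) on a genus zero base, the split sending one point plus the node to each side gives $Z_2=Z_2^2$, hence $Z_2=1$; breaking the unmarked $\PP^1$ gives $Z_0=Z_1^2$; and the split putting all points plus the node on one side and only the node on the other gives $Z_{n}=Z_1\cdot Z_{n+1}$. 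Once $Z_0=1$ is known these force $Z_1=1$ and then $Z_n=1$ for all $n$, so it suffices to treat $E\times\PP^1\to\PP^1$ with no markings. In that case $\Exp_{\PP^1}$ is a point and there are no special fibers, so by Theorem \ref{thm_boundedwalls} the relevant moduli stack is the moduli space of $H_t$‑Gieseker‑stable sheaves on the surface $E\times\PP^1$ for $t\gg0$, with its standard obstruction theory (Proposition \ref{prop_virclass}). The relative Fourier--Mukai transform along $E\times\PP^1\to\PP^1$ (with Poincaré kernel normalized along a section) is an equivalence carrying $f$-stable sheaves of rank $r$ and fiber degree $d$, $\gcd(r,d)=1$, to rank one torsion-free sheaves on the dual fibration $\hat E\times\PP^1\cong E\times\PP^1$, and — this is the $g=1$ Friedman--Morgan--Witten spectral picture — identifies the $f$-stable moduli space with $\operatorname{Hilb}_{m}(E\times\PP^1)$ for the matching $m$. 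Since Fourier--Mukai preserves $R\Hom_0(\CE,\CE)$ it identifies the virtual tangent complexes, hence the elliptic genus insertions, giving $Z^{\Ell}_{E\times\PP^1/\PP^1}(p)=\sum_m\Ell(\operatorname{Hilb}_m(E\times\PP^1))\,p^{2m}$. Finally $T_E$ is trivial, so $\Ell(E)=0$ and $\Ell(E\times\PP^1)=\Ell(E)\,\Ell(\PP^1)=0$; the Dijkgraaf--Moore--Verlinde--Verlinde / Borcherds lift formula for Hilbert schemes then yields $\sum_m\Ell(\operatorname{Hilb}_m(E\times\PP^1))p^{2m}=\bigl(\mathbf L(\phi_{0,1},p^2)\bigr)^{-\chi(\mathcal O_{E\times\PP^1})}=1$ since $\chi(\mathcal O_{E\times\PP^1})=0$.

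The main obstacle is making the Fourier--Mukai step precise in the presence of all the extra structure of the moduli problem: one must check that the (relative, hence expansion-compatible) transform matches the local-freeness and fiberwise-stability conditions at special fibers with the absence of torsion or embedded points of the dual rank one sheaf over those fibers, that it matches the $\alpha$-balancing condition with the balancing in the Li--Wu good degeneration of Hilbert schemes, and that it is compatible with the perfect obstruction theory. Carrying this through — or, alternatively, running the transform directly over an arbitrary $(\PP^1;x_1,\dots,x_n)$ so as to avoid the degeneration reduction and invoke the rank one theory of \cite{LiWu_Good} in families — is where the real work lies; everything else is bookkeeping with Chern characters and the already established deformation invariance and degeneration formula.
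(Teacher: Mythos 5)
Your proposal is not a proof of this proposition so much as a reduction of it to two substantial statements that are neither in the paper nor carried out by you. The first is the use of the degeneration formula ``with spectator markings'': Theorem \ref{thm_decform} is only established for a base curve with two components each carrying the single marking at the node, and your relations $Z_2=Z_2^2$ and $Z_n = Z_1\cdot Z_{n+1}$ require a version with additional marked fibers on the components. This is plausibly true (for genus one fibers $M_F(r,L_F)$ is a point, so the extra evaluation data is vacuous), but it is an extension of \S 4, not an application of it. The second and more serious gap is the Fourier--Mukai step, which you yourself flag as ``where the real work lies'': you would need to show that the relative transform matches $f$-stability with the relevant conditions on rank one sheaves, that it is compatible with the perfect obstruction theories, and --- most delicately --- that it respects the fixed-determinant condition. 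This last point is genuinely problematic: the transform does not carry fixed-determinant families to fixed-determinant families (only the numerical class $c_1(\Phi(E))$ is determined by $\ch(E)$, not the determinant line bundle), so the natural identification is with a fiber of a map from $\operatorname{Hilb}_m\times\Pic^0$ to a Picard variety rather than with $\operatorname{Hilb}_m(E\times\PP^1)$ itself. Since the final answer only needs these spaces to have vanishing elliptic genus, this does not sink the strategy, but it does mean your intermediate identity $Z^{\Ell}=\sum_m\Ell(\operatorname{Hilb}_m(E\times\PP^1))p^{2m}$ is both stronger than needed and unestablished.

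The paper's own proof is a two-line structural argument that bypasses all of this and treats every $n$ at once: for each $\Delta$ for which $M_{X/(\PP^1,(x_1,\ldots,x_n))}(r,L,\Delta)$ is nonempty, the space is either a point (the minimal $\Delta$, where the sheaf is a pullback from $E$ twisted by a line bundle from $\PP^1$, by Lemma \ref{lem_discpos}) or admits, up to a finite \'etale cover, an elliptic curve factor --- coming from the translation and tensoring action of $E\times\widehat{E}$ on sheaves on $E\times\PP^1$, which for $\gcd(r,d)=1$ acts with finite stabilizers while preserving the numerical invariants up to the fixed-determinant normalization. Since all virtual Chern numbers of a space with such a factor vanish (the spaces here are in fact unobstructed, as $K_{E\times\PP^1}$ is antieffective on fibers), every positive-dimensional moduli space contributes zero and the series collapses to $1$. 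I would encourage you to notice that the vanishing $\Ell(\operatorname{Hilb}_m(E\times\PP^1))=0$ that you invoke at the very end is itself a consequence of exactly this elliptic-curve-factor phenomenon; applying that observation directly to the rank $r$ moduli spaces removes the need for both the marked-point reduction and the Fourier--Mukai transform.
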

\begin{proof}
For any $\Delta$ for which the moduli space 
\[M_{X/(\PP^1,(x_1,\ldots,x_n))}(r,L,\Delta)\]
is non-empty one can show that either it is a point, or it admits -- up to a finite \'etale cover -- an elliptic curve factor. In the latter case, all virtual Chern numbers vanish. The result follows from this.
\end{proof}

\begin{proof}[Proof of Theorem \ref{thm_ellgen}]
Since enumerative invariants for surfaces with $p_g(X)>0$ are independent of choice of polarization, we may use Theorem \ref{thm_boundedwalls}, and compute invariants using moduli spaces of $f$-stable sheaves.  

In the case $e=2$, the result follows from the DMVV formula and the fact that any moduli space of Gieseker-stable sheaves on a K3 surface is deformation invariant to a Hilbert scheme of points when stability equals semi-stability. 
Next, we show that for any rational elliptic surface $Y\to \mathbb{P}^1$ and any divisor $D$ on $Y$ of fiber degree coprime to $r$, we have (when taking invariants and generating series with respect to the moduli spaces of fiber-stable objects)
\[Z_{Y/\PP^1}^{\Ell} = (Z_{K3}^{\Ell})^{1/2}.\]
By Proposition \ref{prop_trivfactor}, the relative and absolute invariants agree. In particular, we may glue two identical copies of $Y$ together along a fiber and deform the resulting surface to a smooth K3 surface, while preserving the divisor class, see \cite[Theorem 5.10]{Fri_Glo} and \cite[Proposition 4.3]{Fri_New}.

Now, we can argue inductively on $e$, with base case $e=2$: By Theorem \ref{thm_enoughdefs} and Corollary \ref{cor_elldec}, to obtain an identity 
\[Z_{X/\PP^1}^{\Ell} = Z^{\Ell}_{X'/(\PP^1, 0)}\, Z^{\Ell}_{Y,(\PP^1,0)},\]
where $X'$ is an elliptic surface of degree $e-1$ with a chosen $d$-section $D'$ (and which possesses) no $d'$-sections for $1\leq d'< d$, and where $Y$ is a rational elliptic surface with a chosen rational curve $D$ satisfying $D^{\cdot 2} = d-2$. 
Using Proposition \ref{prop_trivfactor} again and by the inductive hypothesis
\[Z_{X/\PP^1}^{\Ell} = Z^{\Ell}_{X'/\PP^1}\, Z^{\Ell}_{Y,\PP^1} = (Z_{K3}^{\Ell})^{(e-1)/2}(Z_{K3}^{\ell})^{1/2}.\]
This finishes the proof.

\end{proof}

\bibliography{references}
\bibliographystyle{amsalpha}

\end{document}